\definecolor{link}{rgb}{0.18,0.25,0.78}
\newcommand{\scalprod}[3]{\left\langle #2,#3\right\rangle_{#1}}
\newcommand{\norm}[2]{\left\lVert#2\right\rVert_{#1}}
\renewcommand{\@seccntformat}[1]{\csname the#1\endcsname.\quad}
\newcommand{\re}{\mathbb{R}}
\newcommand{\diverg}{{\rm div}}
\newcommand{\dom}{{\rm dom}}
\newcommand{\Sgn}{{\rm Sgn}}
\newcommand{\sgn}{{\rm sgn}}
\newcommand{\changeurlcolor}[1]{\hypersetup{urlcolor=#1}} 
\numberwithin{equation}{section}
\theoremstyle{plain}
\newtheorem{thrm}{Theorem}[section]
\theoremstyle{definition}
\theoremstyle{plain}
\newtheorem{prop}[thrm]{Proposition}
\theoremstyle{plain}
\theoremstyle{plain}
\newtheorem{lem}[thrm]{Lemma}
\theoremstyle{plain}
\newtheorem{rem}[thrm]{Remark}
\g@addto@macro{\endabstract}{\@setabstract}
\newcommand{\authorfootnotes}{\renewcommand\thefootnote{\@fnsymbol\c@footnote}}%
\begin{document}

\changeurlcolor{black}

\normalsize
 \begin{center}
 \large
  \textbf{  INFIMAL CONVOLUTION REGULARISATION FUNCTIONALS OF BV AND $\boldsymbol{\mathrm{L}}^{\boldsymbol{p}}$ SPACES. \\ PART I: THE FINITE $\mathbf{p}$ CASE}. \par \bigskip \bigskip
  
   \normalsize
  \textsc{Martin Burger} \textsuperscript{$\dagger$}, \textsc{Konstantinos Papafitsoros} \textsuperscript{$\ddagger$},
  \textsc{Evangelos Papoutsellis} \textsuperscript{$\ddagger$} \textsc{and} \textsc{Carola-Bibiane Sch{\"o}nlieb} \textsuperscript{$\ddagger$} \par \bigskip 

\let\thefootnote\relax\footnote{\textbf{Emails:} \href{mailto:martin.burger@wwu.de}{\nolinkurl{martin.burger@wwu.de}}, \href{mailto:kp366@cam.ac.uk}{\nolinkurl{kp366@cam.ac.uk}}, \href{mailto:ep374@cam.ac.uk}{\nolinkurl{ep374@cam.ac.uk}}, \href{mailto:cbs31@cam.ac.uk}{\nolinkurl{cbs31@cam.ac.uk}}}

\end{center}
\noindent
\textsuperscript{$\dagger$}Institute for Computational and Applied Mathematics, University of M\"{u}nster, Germany \\
\textsuperscript{$\ddagger$}Department of Applied Mathematics and Theoretical Physics, University of Cambridge, UK\\
  \par \bigskip



\begin{abstract}
We study a general class of infimal convolution type regularisation functionals suitable for applications in image processing. These functionals incorporate a combination of the  total variation ($\mathrm{TV}$) seminorm and $\mathrm{L}^{p}$ norms. A unified well-posedness analysis is presented and a detailed study of the one dimensional model is performed, by computing exact solutions for the corresponding denoising problem and the case $p=2$. Furthermore, the dependency of the regularisation properties of this infimal convolution approach to the choice of $p$ is studied.
It turns out that in the case $p=2$ this regulariser is equivalent to Huber-type variant of total variation regularisation. We provide numerical examples for image decomposition as well as for image denoising. We show that our model is capable of eliminating the staircasing effect,   a well-known disadvantage of total variation regularisation. Moreover as $p$ increases we obtain almost piecewise affine reconstructions, leading also to a better preservation of hat-like structures.\vspace{0.1cm}

\noindent
\textbf{Keywords}: Total Variation, Infimal convolution, Denoising, Staircasing, $\mathrm{L}^{p}$ norms, Image decomposition
\end{abstract}

\section{Introduction}
 In this paper we introduce  a family of novel $\mathrm{TV}$--$\mathrm{L}^{p}$ infimal convolution type functionals with applications in image processing:
 \begin{equation}\label{uw_min}
\mathrm{TVL}_{\alpha,\beta}^{p}(u):=\inf_{w\in \mathrm{L}^{p}(\Omega)} \alpha \|Du-w\|_{\mathcal{M}}+\beta\|w\|_{\mathrm{L}^{p}(\Omega)},\quad \alpha,\beta>0 \quad \text{and} \quad p>1.
\end{equation}
Here $\|\cdot\|_{\mathcal{M}}$ denotes the Radon norm of a measure.
The functional \eqref{uw_min} is suitable to be used as a regulariser in the context of variational non-smooth regularisation in imaging applications. We study the properties of \eqref{uw_min}, its regularising mechanism for different values of $p$ and apply it successfully to image denoising.

\subsection{Context}
After the introduction of the total variation ($\mathrm{TV}$) for image reconstruction purposes \cite{Rudin}, the use of non-smooth regularisers has become increasingly popular during the last decades (cf. \cite{tvzoo}). They are typically used in the context of variational regularisation, where the reconstructed image is obtained as a solution of a minimisation problem of the type:
\begin{equation}\label{kostas_general}
\min_{u} \frac{1}{s}\|f-Tu\|_{\mathrm{L}^{s}(\Omega)}^{s}+\Psi(u).
\end{equation}

The \emph{regulariser} is denoted here by $\Psi$. We assume that the data $f$, defined on a domain $\Omega\subset \mathbb{R}^{2}$, have been corrupted through a bounded, linear operator $T$ and additive (random) noise. Different values of $s$ can be considered  for the first term of \eqref{kostas_general}, the \emph{fidelity term}. For example, models incorporating a $\mathrm{L}^{2}$ fidelity term (resp. $L^{1}$) have be shown to be efficient for the restoration of images corrupted by Gaussian noise (resp. impulse noise). Of course, other types of noise can also be considered  and in those cases the 
form of the fidelity term is adjusted accordingly. Typically, one or more parameters within $\Psi$ balance the strength of regularisation against the fidelity term in the minimisation \eqref{kostas_general}.

The advantage of using non-smooth regularisers is that the regularised images have sharp edges (discontinuities). For instance, it is a well-known fact that $\mathrm{TV}$ regularisation promotes piecewise constant reconstructions, thus preserving discontinuities. However, this also leads to blocky-like artifacts in the reconstructed image, an effect known as \emph{staircasing}.  Recall at this point that for two dimensional images $u\in \mathrm{L}^{1}(\Omega)$, the definition of the total variation functional reads:
\begin{equation}\label{kostas_TV}
\mathrm{TV}(u):=\sup \left \{\int_{\Omega}u\, \mathrm{div}\phi\,dx: v\in C_{c}^{\infty}(\Omega,\mathbb{R}^{2}),\;\|\phi\|_{\infty}\le 1 \right\}.
\end{equation}
The total variation uses only first-order derivative information in the regularisation process. This can be  seen from that fact that for $\mathrm{TV}(u)<\infty$ the distributional derivative $Du$ is a finite Radon measure and  $\mathrm{TV}(u)=\|Du\|_{\mathcal{M}}$. Moreover if $u\in \mathrm{W}^{1,1}(\Omega)$ then $\mathrm{TV}(u)=\int_{\Omega}|\nabla u|\,dx$, i.e., the total variation is the $\mathrm{L}^{1}$ norm of the gradient of $u$. Higher-order extensions of the total variation functional are widely explored in the literature e.g. \cite{ChambolleLions, chan2001high, CEP07, LLT03, LT06, lefkimmiatis2010hessian, Piffet, TGV, mineJMIV}. The incorporation of second-order derivatives is shown to reduce or even eliminate the staircasing effect.  The most successful regulariser of this kind is the second order total generalised variation (TGV) introduced by Bredies et al. \cite{TGV}. Its definition reads
\begin{equation}\label{kostas_TGV}
\mathrm{TGV}_{\alpha,\beta}^{2}(u):=\min_{w\in\mathrm{BD}(\Omega)} \alpha \|Du-w\|_{\mathcal{M}}+\beta\|\mathcal{E}w\|_{\mathcal{M}}.
\end{equation}
Here $\alpha,\beta$ are positive parameters and $\mathrm{BD}(\Omega)$ is the space of functions of bounded deformation, i.e., the space of all $\mathrm{L}^{1}(\Omega)$ functions $w$, whose symmetrised distributional derivative $\mathcal{E}w$ is a finite Radon measure. This is a less regular space than the usual space of functions of bounded variation $\mathrm{BV}(\Omega)$ for which the full gradient $Du$ is required to be a finite Radon measure. 
Note that if  the variable $w$ in the definition \eqref{kostas_TGV} is  forced to be the gradient of another function then we obtain the classical infimal convolution regulariser of Chambolle--Lions \cite{ChambolleLions}. In that sense $\mathrm{TGV}$ can be seen as a particular instance of infimal convolution, optimally balancing first and second-order information.

In the discrete formulation of $\mathrm{TGV}$ (as well as for $\mathrm{TV}$) the Radon norm is interpreted as an $\mathrm{L}^{1}$ norm.
The motivation for the current and the follow-up paper is to explore the capabilities of  $\mathrm{L}^{p}$ norms within first-order regularisation functionals designed for image processing purposes. The use of $\mathrm{L}^{p}$ norms for $p> 1$ has been exploited in different contexts -- infinity and $p$-Laplacian (cf. e.g. \cite{ElionVese} and \cite{Kuijper} respectively).

\subsection{Our contribution}

Comparing the definition \eqref{uw_min} with the definition of $\mathrm{TGV}$ in \eqref{kostas_TGV}, we see that the Radon norm of the symmetrised gradient of $w$ has been substituted by the $\mathrm{L}^{p}$ norm of $w$, thus reducing the order of regularisation. Up to our knowledge, this is the first paper that provides a thorough analysis of $\mathrm{TV}$--$\mathrm{L}^{p}$ infimal convolution models \eqref{uw_min} in this generality.
We show that the minimisation in \eqref{uw_min} is well-defined and that $\mathrm{TVL}_{\alpha,\beta}^{p}(u)<\infty$ if and only if $\mathrm{TV}(u)<\infty$.
Hence $\mathrm{TVL}_{\alpha,\beta}^{p}$ regularised images belong to $\mathrm{BV}(\Omega)$ as desired.

In order to get more insight in the regularising mechanism of the  $\mathrm{TVL}_{\alpha,\beta}^{p}$ functional we provide a detailed and rigorous analysis of its one dimensional version of the corresponding $\mathrm{L}^{2}$ fidelity denoising problem
\begin{equation}\label{intro_denoising}
\min_{u\in\mathrm{BV}(\Omega)}\frac{1}{2}\|f-u\|_{\mathrm{L}^{2}(\Omega)}^{2}+\mathrm{TVL}_{\alpha,\beta}^{2}(u).
\end{equation}
 For the  denoising problem \eqref{intro_denoising} with $p=2$ we also compute exact solutions for simple one dimensional data. We show that the obtained solutions are piecewise smooth, in contrast to $\mathrm{TV}$ (piecewise constant) and $\mathrm{TGV}$ (piecewise affine) solutions.
Moreover, we show that for $p=2$, the $2$-homogeneous analogue of the functional \eqref{uw_min}
\begin{equation}\label{intro_phomo}
F(u)=\min_{w\in \mathrm{L}^{2}(\Omega)} \alpha \|Du-w\|_{\mathcal{M}}+\frac{\beta}{2}\|w\|_{\mathrm{L}^{2}(\Omega)}^{2},
\end{equation}
is  equivalent to a variant of \emph{Huber} $\mathrm{TV}$ \cite{Huber}, with the functional \eqref{intro_phomo} having a close connection with \eqref{uw_min} itself.
Huber total variation is a smooth approximation of total variation and even though it has been widely used in the imaging and inverse problems community, it has not been analysed adequately. Hence, as a by-product of our analysis, we compute exact solutions of the one dimensional Huber TV denoising problem.

We proceed with exhaustive numerical experiments focusing on \eqref{intro_denoising}. Our analysis is confirmed by the fact that the analytical results coincide with the numerical ones. Furthermore, we observe that even though a first-order regularisation functional is used, we are capable of eliminating the staircasing effect, similarly to Huber $\mathrm{TV}$.  By \emph{Bregmanising} our method \cite{Osher1}, we are also able to enhance the contrast of the reconstructed images, obtaining results very similar in quality to the $\mathrm{TGV}$ ones. We observe numerically that high values of $p$ promote almost affine structures similar to second-order regularisation methods. We shed more light of this behaviour in the follow-up paper \cite{partII} where we study in depth the case $p=\infty$. Let us finally note that we also consider a modified version of the functional \eqref{uw_min} where $w$ is restricted to be a gradient of another function leading to the more classical infimal convolution setting. Even though, this modified model is not so successful  in staircasing reduction, it is effective in decomposing an image into piecewise constant and smooth parts.

\subsection{Organisation of the paper}

After the introduction we proceed with the introduction of our model in Section \ref{sec:TVLp}. We prove the well-posedness of \eqref{uw_min}, we provide an equivalent definition and we prove its Lipschitz equivalence with the $\mathrm{TV}$ seminorm.
 We finish this section with a well-posedness result of the corresponding $\mathrm{TVL}_{\alpha,\beta}^{p}$ regularisation problem using standard tools.
 
 In Section \ref{sec:phomo}  we establish a link between the $\mathrm{TVL}_{\alpha,\beta}^{p}$ functional and its $p$-homogeneous analogue (using the $p$-th power of $\|\cdot\|_{\mathrm{L}^{p}(\Omega)}$). The $p$-homogeneous functional (for $p=2$) is further shown to be equivalent to Huber total variation.

We study the corresponding one dimensional model in Section \ref{sec:onedimensional} focusing on the $\mathrm{L}^{2}$ fidelity denoising case. More specifically, after deriving the optimality conditions using Fenchel--Rockafellar duality in Section \ref{sec:optimality}, we explore the structure of solutions in Section \ref{sec:structure}. In Section \ref{sec:exact} we compute exact solutions for the case $p=2$, considering a simple step function as data. 

In Section \ref{sec:infimal} we present a variant of our model suitable for image decomposition purposes, i.e.,  geometric decomposition into piecewise constant and smooth structures.

Section \ref{sec:numerics} focuses on numerical experiments. Confirmation of the obtained one dimensional analytical results
is done in Section \ref{sec:numerics:1d}, while two dimensional denoising experiments are performed in Section \ref{sec:numerics:2d} using the split Bregman method.
There, we show that our approach can lead to  elimination of the staircasing effect and we also show that by using a Bregmanised version we can also enhance the contrast,  achieving  results very close to $\mathrm{TGV}$, a method considered state of the art in the context of variational regularisation. We finish the section with some image decomposition examples and we summarise our results in Section \ref{sec:conclusion}.

In the appendix, we remind the reader of some basic facts from the theory of Radon measures and $\mathrm{BV}$ functions.

\section{Total variation and $\mathrm{L}^{p}$ regularisation}\label{sec:TVLp}

In this section we introduce  the $\mathrm{TV}$--$\mathrm{L}^{p}$ functional \eqref{uw_min} as well as some of its main properties.
For $\alpha,\beta>0$ and $1<p\le \infty$, we define  $\mathrm{TVL}^{p}_{\alpha,\beta}: \mathrm{L}^{1}(\Omega)\rightarrow\overline{\re}$ as follows:
\begin{equation*}
\mathrm{TVL}^{p}_{\alpha,\beta}(u):=\min_{w\in \mathrm{L}^{p}(\Omega)} \alpha\norm{\mathcal{M}}{Du-w}+\beta\norm{\mathrm{L}^{p}(\Omega)}{w}.
\end{equation*}
The next proposition asserts that the minimisation in \eqref{uw_min} is indeed well-defined. We omit the proof, which is based on standard coercivity and weak lower semicontinuity techniques:

\begin{prop}
Let $u\in\mathrm{ BV(\Omega)}$ with $1< p\leq\infty$ and $\alpha,\beta>0$. Then the minimum in the definition \eqref{uw_min} is attained.
\end{prop}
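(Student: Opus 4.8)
The plan is to establish existence by the direct method of the calculus of variations. Write $J(w) := \alpha\norm{\mathcal{M}}{Du-w} + \beta\norm{\mathrm{L}^{p}(\Omega)}{w}$ for $w \in \mathrm{L}^{p}(\Omega)$; since $\Omega$ is bounded we have the continuous embedding $\mathrm{L}^{p}(\Omega) \hookrightarrow \mathrm{L}^{1}(\Omega)$, so each $w$ induces a finite Radon measure $w\,\mathcal{L}^{2}$, and because $Du$ is a finite measure (as $u \in \mathrm{BV}(\Omega)$) the quantity $\norm{\mathcal{M}}{Du-w}$ is well defined and $J$ is $[0,\infty)$-valued. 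First I would observe that the infimum $m := \inf_{w} J(w)$ is finite, using $w=0$ as a competitor to get $m \le \alpha\,\mathrm{TV}(u) < \infty$, and then fix a minimising sequence $(w_{n})$ with $J(w_{n}) \to m$.

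Next comes coercivity and compactness. From $\beta\,\norm{\mathrm{L}^{p}(\Omega)}{w_{n}} \le J(w_{n})$ the sequence $(w_{n})$ is bounded in $\mathrm{L}^{p}(\Omega)$. Here the argument splits according to $p$: for $1 < p < \infty$ the space $\mathrm{L}^{p}(\Omega)$ is reflexive, so a subsequence converges weakly, $w_{n_{k}} \rightharpoonup w^{\ast}$; for $p = \infty$ I would instead invoke the Banach--Alaoglu theorem together with the identification $\mathrm{L}^{\infty} = (\mathrm{L}^{1})^{\ast}$ to extract a weak-$\ast$ convergent subsequence $w_{n_{k}} \overset{\ast}{\rightharpoonup} w^{\ast}$. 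In either case, since $\Omega$ is bounded we have $C_{0}(\Omega) \subset \mathrm{L}^{p'}(\Omega)$, so testing against continuous functions yields $\int_{\Omega} w_{n_{k}}\phi\,dx \to \int_{\Omega} w^{\ast}\phi\,dx$ for all $\phi \in C_{0}(\Omega)$; that is, the measures $w_{n_{k}}\mathcal{L}^{2}$ converge weakly-$\ast$ to $w^{\ast}\mathcal{L}^{2}$, and consequently $Du - w_{n_{k}}\mathcal{L}^{2} \overset{\ast}{\rightharpoonup} Du - w^{\ast}\mathcal{L}^{2}$ in the space of finite Radon measures.

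Finally I would verify lower semicontinuity of the two terms along the extracted subsequence. The $\mathrm{L}^{p}$ norm is convex and strongly continuous, hence weakly (resp. weakly-$\ast$) lower semicontinuous, giving $\norm{\mathrm{L}^{p}(\Omega)}{w^{\ast}} \le \liminf_{k} \norm{\mathrm{L}^{p}(\Omega)}{w_{n_{k}}}$. For the measure term I would use the weak-$\ast$ lower semicontinuity of the total variation norm on Radon measures, which follows from the dual representation $\norm{\mathcal{M}}{\mu} = \sup\{\int_{\Omega}\phi\,d\mu : \phi \in C_{0}(\Omega,\mathbb{R}^{2}),\ \|\phi\|_{\infty} \le 1\}$ as a supremum of weak-$\ast$ continuous functionals; this produces $\norm{\mathcal{M}}{Du - w^{\ast}} \le \liminf_{k} \norm{\mathcal{M}}{Du - w_{n_{k}}}$. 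Adding the two estimates and using the superadditivity of $\liminf$ gives $J(w^{\ast}) \le \liminf_{k} J(w_{n_{k}}) = m$, so $w^{\ast}$ attains the minimum. The only genuinely delicate point is the passage carried out in the second paragraph: one must go from weak (weak-$\ast$) convergence of the densities $w_{n}$ in $\mathrm{L}^{p}$ to weak-$\ast$ convergence of the associated measures, so that the total variation term can be controlled, and treat $p=\infty$ through Banach--Alaoglu rather than reflexivity; once that passage is justified, the lower semicontinuity of $\norm{\mathcal{M}}{\cdot}$ is classical and the proof concludes.
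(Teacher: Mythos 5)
Your argument is correct and is exactly the proof the paper has in mind: the authors omit the details, stating only that the result follows from ``standard coercivity and weak lower semicontinuity techniques,'' which is precisely the direct method you carry out (coercivity of the $\beta\norm{\mathrm{L}^{p}(\Omega)}{\cdot}$ term, weak compactness for $1<p<\infty$ and weak-$\ast$ compactness via Banach--Alaoglu for $p=\infty$, passage from weak convergence of the densities to weak-$\ast$ convergence of the measures $w_{n_k}\,\mathcal{L}^{d}$ by testing against $C_{0}(\Omega)$, and weak-$\ast$ lower semicontinuity of $\norm{\mathcal{M}}{\cdot}$ together with weak lower semicontinuity of the $\mathrm{L}^{p}$ norm). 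The one point you handle that deserves the care you give it --- and which you correctly flag as the delicate step --- is the transfer from weak ($\ast$) convergence in $\mathrm{L}^{p}$ to weak-$\ast$ convergence in $\mathcal{M}(\Omega)$, which is valid since $C_{0}(\Omega)\subset\mathrm{L}^{q}(\Omega)$ on the bounded domain $\Omega$; no gap remains.
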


Another useful formulation of the definition \eqref{uw_min} is the  \emph{dual} formulation:
\begin{equation}
\mathrm{TVL}^{p}_{\alpha,\beta}(u)=\sup\left\{\int_{\Omega}u\,\diverg \phi \,dx:\; \phi\in C^{1}_{c}(\Omega), \norm{\infty}{\phi}\leq\alpha,\; \norm{\mathrm{L}^q(\Omega)}{\phi}\leq\beta \right\},
\label{sup_uw}
\end{equation}
 The next Proposition shows  that the two expressions coincide indeed.
 \begin{prop} Let $u\in\mathrm{ BV(\Omega)}$ and $1<p\leq\infty$ then 
 \begin{align*}
& \min_{w\in\mathrm{L}^{p}(\Omega)}\alpha\norm{\mathcal{M}}{Du-w}+\beta\norm{\mathrm{L}^{p}(\Omega)}{w} = 
\sup\left\{\int_{\Omega}u\,\diverg \phi\,dx:\; \phi\in C^{1}_{c}(\Omega), \norm{\infty}{\phi}\leq\alpha,\;\norm{\mathrm{L}^q\Omega)}{\phi}\leq\beta \right\}.
 \end{align*}
 \begin{proof} First notice that in \eqref{sup_uw}, we can replace $C^1_c(\Omega)$ by $C^1_0(\Omega)$, since $\overline{C^1_c(\Omega)}=C^1_0(\Omega)$ with the closure taken with respect to the uniform norm. We define 
 \begin{align*}
& X=C^1_0(\Omega),\\
& F_{1}:X\rightarrow \overline{\re}\mbox{, with } F_{1}(\phi)=\mathbb{I}_{\left\{\norm{\mathrm{L}^{q}(\Omega)}{\cdot}\leq\beta\right\}}(\phi),\\
& F_{2}:X\rightarrow \overline{\re}\mbox{, with } F_{2}(\phi)=\mathbb{I}_{\left\{\norm{\infty}{\cdot}\leq\alpha\right\}}(\phi)-\int_{\Omega}u\,\diverg\phi \, dx.
 \end{align*}
 Then, we can rewrite \eqref{sup_uw} as
 $$\mathrm{TVL}^{p}_{\alpha,\beta}(u)=
 -\inf_{\substack{\phi\in X \\ \norm{\infty}{\phi}\leq\alpha \\ \norm{\mathrm{L}^{q}(\Omega)}{\phi}\leq\beta}}
 \left\{-\int_{\Omega}u\,\diverg \phi \, dx\right\}
 =-\inf_{\phi\in X} F_{1}(\phi)+F_{2}(\phi).$$ 
 The Fenchel--Rockafellar duality theory, see \cite{Ekeland}, allows to establish a relation between the primal problem
 \[-\inf_{\phi\in X} F_{1}(\phi)+F_{2}(\phi), \]
  and its dual
 \[ \min_{w\in X^{*}}F_{1}^{*}(-w)+F_{2}^{\ast}(w).\]
 Here $F_{1}^{*}$ and $F_{2}^{*}$ denote the convex conjugate of $F_{1}$ and $F_{2}$ respectively.
  In order to obtain such a connection, we follow  \cite{attouch_brezis} where it suffices to show that $$\bigcup_{\lambda\geq0}\lambda\left(\dom F_{2} - \dom F_{1}\right)$$ is a closed vector space. Indeed, we have that 
   $$\bigcup_{\lambda\geq0}\lambda\left(\dom F_{2} - \dom F_{1}\right)\subset X,$$
 and for every $\phi \in X$, we can write $\phi=\lambda(\lambda^{-1}\phi-0)$ with $\norm{\infty}{\lambda^{-1}\phi}\leq\alpha$ and $0\in\dom F_{1}$. Hence, $\underset{\lambda\geq0}{\bigcup}\lambda\left(\dom F_{2} - \dom F_{1}\right)=X$ is a closed vector space and there is no duality gap i.e., 
 $$\inf_{\phi\in X} \left\{F_{1}(\phi)+F_{2}(\phi)\right\}+\min_{w\in X^{*}}\left\{F_{1}^{*}(w)+F_{2}^{\ast}(w)\right\}=0.$$
 Finally, we have
 $$F_{1}^{*}(-w)=\sup_{\substack{\phi\in C^{1}_{0}(\Omega)\\  \norm{\mathrm{L}^{q}(\Omega)}{\phi}\leq\beta   }}\scalprod{}{\phi}{w}
 =\beta\norm{\mathrm{L}^{p}(\Omega)}{w},$$ 
 and similarly, $$F_{2}^{*}(w)=\sup_{\substack{\phi\in C^{1}_{0}(\Omega)\\  \norm{\infty}{\phi}\leq\alpha   }}\scalprod{}{w}{\phi}+\scalprod{}{u}{\phi'}=\alpha\norm{\mathcal{M}}{Du-w}.$$
 Thus the desired equality is proven.
 \end{proof}
 \end{prop}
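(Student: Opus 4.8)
The plan is to read the right-hand supremum as the negative of a convex minimisation over smooth vector fields and to match its Fenchel--Rockafellar dual with the primal infimal convolution on the left. Before invoking any duality theorem I would first replace the test space $C^1_c(\Omega)$ by its uniform closure $X:=C^1_0(\Omega)$: since $\phi\mapsto\int_\Omega u\,\diverg\phi\,dx$ and the two constraint functionals $\norm{\infty}{\phi}\le\alpha$, $\norm{\mathrm{L}^q(\Omega)}{\phi}\le\beta$ are all continuous for the uniform norm, the supremum is unchanged, and working in the complete space $X$ is what the duality machinery requires.

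It is worth isolating first the \emph{easy} inequality (weak duality), which I would prove by hand. For any admissible $\phi$ and any $w\in\mathrm{L}^p(\Omega)$, integration by parts gives $\int_\Omega u\,\diverg\phi\,dx=-\scalprod{}{Du}{\phi}=-\scalprod{}{Du-w}{\phi}-\scalprod{}{w}{\phi}$, and estimating the first pairing by $\norm{\infty}{\phi}\,\norm{\mathcal{M}}{Du-w}\le\alpha\norm{\mathcal{M}}{Du-w}$ and the second by H\"older, $\norm{\mathrm{L}^q(\Omega)}{\phi}\,\norm{\mathrm{L}^p(\Omega)}{w}\le\beta\norm{\mathrm{L}^p(\Omega)}{w}$, shows the supremum is bounded above by the infimum. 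The whole content is therefore the reverse inequality, i.e. the absence of a duality gap.

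For strong duality I would split the objective into $F_1(\phi)=\mathbb{I}_{\{\norm{\mathrm{L}^q(\Omega)}{\cdot}\le\beta\}}(\phi)$ and $F_2(\phi)=\mathbb{I}_{\{\norm{\infty}{\cdot}\le\alpha\}}(\phi)-\int_\Omega u\,\diverg\phi\,dx$ on $X$, so that the supremum equals $-\inf_{\phi\in X}\{F_1(\phi)+F_2(\phi)\}$, with Fenchel dual $\min_{w\in X^*}\{F_1^*(-w)+F_2^*(w)\}$. Then I would compute the two conjugates: $F_1^*(-w)$ is the support function of the $\mathrm{L}^q$-ball of radius $\beta$, which by the $\mathrm{L}^q$--$\mathrm{L}^p$ duality ($1/p+1/q=1$) equals $\beta\norm{\mathrm{L}^p(\Omega)}{w}$; and $F_2^*(w)$, after the same integration by parts, is the support function of the sup-norm ball of radius $\alpha$ evaluated at $w-Du$, which by the duality between $C_0$ and Radon measures equals $\alpha\norm{\mathcal{M}}{Du-w}$. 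Summing reproduces exactly $\alpha\norm{\mathcal{M}}{Du-w}+\beta\norm{\mathrm{L}^p(\Omega)}{w}$; note that the finiteness of $F_1^*(-w)$ forces the a priori dual variable $w\in X^*$ to lie in $\mathrm{L}^p(\Omega)$, so the dual feasible set coincides with that of the infimal convolution.

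The step I expect to be the main obstacle is certifying strong duality, i.e. verifying the Attouch--Brezis qualification that $\bigcup_{\lambda\ge0}\lambda(\dom F_2-\dom F_1)$ is a closed vector subspace of $X$. Here this check is the genuinely theorem-invoking point, but it turns out to be painless: $0\in\dom F_1$ and $\dom F_2$ is the entire sup-norm ball, so every $\phi\in X$ factors as $\phi=\lambda(\lambda^{-1}\phi-0)$ with $\lambda$ large enough that $\norm{\infty}{\lambda^{-1}\phi}\le\alpha$; hence the cone is all of $X$, which is trivially closed. Once this hypothesis is in place the Fenchel--Rockafellar theorem simultaneously yields the equality of optimal values and the attainment of the dual minimum, matching both the claimed identity and the existence of a minimiser asserted in the preceding proposition.
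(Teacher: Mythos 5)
Your proposal is correct and follows essentially the same route as the paper: the same passage from $C^1_c(\Omega)$ to $X=C^1_0(\Omega)$, the same splitting into $F_1$ and $F_2$, the same Attouch--Brezis qualification verifying that $\bigcup_{\lambda\ge0}\lambda(\dom F_2-\dom F_1)=X$, and the same computation of the conjugates $F_1^*(-w)=\beta\norm{\mathrm{L}^p(\Omega)}{w}$ and $F_2^*(w)=\alpha\norm{\mathcal{M}}{Du-w}$. Your two small additions --- the hand-proved weak duality inequality via integration by parts and H\"older, and the explicit remark that finiteness of $F_1^*(-w)$ forces the dual variable $w\in X^*$ to be represented by an $\mathrm{L}^p(\Omega)$ function --- are correct refinements that the paper leaves implicit, but they do not change the argument.
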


\begin{rem} The dual formulation of $\mathrm{TVL}^{p}_{\alpha,\beta}:\mathrm{L}^{1}(\Omega)\rightarrow\overline{\re}$ is useful since one can easily derive that $\mathrm{TVL}^{p}_{\alpha,\beta}$ is lower semicontinuous with respect to the strong $\mathrm{L}^{1}$ topology since it is a pointwise supremum of continuous functions. 
\label{rem_tvlp}
\end{rem}

The following lemma shows that the  $\mathrm{TVL}^{p}_{\alpha,\beta}$ functional is Lipschitz equivalent to the total variation seminorm.
  
\begin{lem} Let $u\in\mathrm{L^{1}(\Omega)}$ and $1<p\leq\infty$. Then $\mathrm{TVL}^{p}_{\alpha,\beta}(u)<\infty$ if and only if $u\in\mathrm{BV}(\Omega)$ and there exist constants $0<C_{1},C_{2}<\infty$ such that 
\begin{equation}
C_{2}\norm{\mathcal{M}}{Du}\leq \mathrm{TVL}^{p}_{\alpha,\beta} (u) \leq C_{1} \norm{\mathcal{M}}{Du}.
\label{tvlp_ineq}
\end{equation}
\begin{proof}
Let $u\in\mathrm{BV}(\Omega)$. Using \eqref{uw_min} we have that
$$\mathrm{TVL}^{p}_{\alpha,\beta}(u)\leq\alpha\norm{\mathcal{M}}{Du-w}+\beta\norm{\mathrm{L}^{p}(\Omega)}{w}.$$
for every $w\in\mathrm{L}^{p}(\Omega)$.
Setting $w=0$ and $C_{1}=\alpha$,  we obtain
 $$\mathrm{TVL}^{p}_{\alpha,\beta}(u)\leq C_{1} \norm{\mathcal{M}}{Du}.$$
For the other direction, we have for any $w\in\mathrm{L}^{p}(\Omega)\subset\mathrm{L^{1}(\Omega)}$ by the triangle inequality
\begin{align*}
\norm{\mathcal{M}}{Du} & \leq \norm{\mathcal{M}}{Du-w}+\norm{\mathrm{L}^{1}(\Omega)}{w}\leq \norm{\mathcal{M}}{Du-w}+|\Omega|^{\frac{1}{q}}\norm{\mathrm{L}^{p}(\Omega)}{w}\\
                                         & \leq C ( \norm{\mathcal{M}}{Du-w} + \norm{\mathrm{L}^{p}(\Omega)}{w}),
\end{align*}
with $C=\max(1,|\Omega|^{\frac{1}{q}})$. For appropriate choice of $C_2$ we obtain
$$ C_{2}\norm{\mathcal{M}}{Du}\leq\alpha\norm{\mathcal{M}}{Du-w} +\beta\norm{\mathrm{L}^{p}(\Omega)}{w},$$ which yields the left-hand side inequality.
\end{proof}
\label{lem_tvlp_ineq}
\end{lem}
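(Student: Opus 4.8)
The plan is to read off both halves of the two-sided bound \eqref{tvlp_ineq} directly from the primal definition \eqref{uw_min}, since each estimate reduces to an elementary manipulation. First I would dispose of the upper bound together with the implication $u\in\mathrm{BV}(\Omega)\Rightarrow\mathrm{TVL}^{p}_{\alpha,\beta}(u)<\infty$. Because the zero function belongs to $\mathrm{L}^{p}(\Omega)$ (here boundedness of $\Omega$ guarantees $\mathrm{L}^{p}(\Omega)$ contains the constants), it is an admissible competitor in the minimisation, so $\mathrm{TVL}^{p}_{\alpha,\beta}(u)\le\alpha\norm{\mathcal{M}}{Du}$. This gives the right-hand inequality with $C_{1}=\alpha$ and at the same time shows finiteness whenever $\norm{\mathcal{M}}{Du}<\infty$.

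For the lower bound I would again use that $\Omega$ is bounded, so every $w\in\mathrm{L}^{p}(\Omega)$ lies in $\mathrm{L}^{1}(\Omega)$ and may be identified with the finite Radon measure $w\,dx$, for which $\norm{\mathcal{M}}{w}=\norm{\mathrm{L}^{1}(\Omega)}{w}$. The triangle inequality for the Radon norm then yields, for every admissible $w$,
\[ \norm{\mathcal{M}}{Du}\le\norm{\mathcal{M}}{Du-w}+\norm{\mathrm{L}^{1}(\Omega)}{w}, \]
while Hölder's inequality with conjugate exponent $q$ supplies the embedding estimate $\norm{\mathrm{L}^{1}(\Omega)}{w}\le|\Omega|^{1/q}\norm{\mathrm{L}^{p}(\Omega)}{w}$ (with the convention $q=1$ for $p=\infty$). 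Combining these and setting $C=\max(1,|\Omega|^{1/q})$ produces $\norm{\mathcal{M}}{Du}\le C(\norm{\mathcal{M}}{Du-w}+\norm{\mathrm{L}^{p}(\Omega)}{w})$. Choosing $C_{2}=\min(\alpha,\beta)/C$ lets me absorb the constant into the defining functional, so that $C_{2}\norm{\mathcal{M}}{Du}\le\alpha\norm{\mathcal{M}}{Du-w}+\beta\norm{\mathrm{L}^{p}(\Omega)}{w}$ holds for each $w$; passing to the infimum over $w$ gives the left-hand inequality.

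It then remains to close the equivalence of finiteness, that is, to show $\mathrm{TVL}^{p}_{\alpha,\beta}(u)<\infty\Rightarrow u\in\mathrm{BV}(\Omega)$ for a general $u\in\mathrm{L}^{1}(\Omega)$, where a priori $Du$ need not be a measure at all. Here I would pick any $w\in\mathrm{L}^{p}(\Omega)$ realising a finite value of the functional (such a $w$ exists precisely because the infimum is finite). Then $Du-w$ is a finite Radon measure and $w$ is itself the density of a finite Radon measure, so by linearity their sum $Du=(Du-w)+w$ is a finite Radon measure and $u\in\mathrm{BV}(\Omega)$; the lower bound above then applies with $\norm{\mathcal{M}}{Du}$ finite.

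The computations are routine, and the only point requiring genuine care is this last step: justifying that finiteness of the \emph{infimum} already forces $u\in\mathrm{BV}(\Omega)$ without circularly presupposing it. The clean way around the difficulty is to argue via the decomposition $Du=(Du-w)+w$ for a single admissible $w$ of finite energy, rather than to invoke the two-sided bound \eqref{tvlp_ineq} itself, which only makes sense once $\norm{\mathcal{M}}{Du}$ is known to be meaningful. Throughout, boundedness of $\Omega$ is used crucially, both to produce the embedding constant $|\Omega|^{1/q}$ and to identify $\mathrm{L}^{1}$ functions with finite measures.
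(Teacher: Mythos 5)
Your proof is correct and follows essentially the same route as the paper's: the upper bound via the competitor $w=0$ with $C_{1}=\alpha$, and the lower bound via the triangle inequality for the Radon norm together with the embedding $\norm{\mathrm{L}^{1}(\Omega)}{w}\leq|\Omega|^{1/q}\norm{\mathrm{L}^{p}(\Omega)}{w}$ and the constant $C=\max(1,|\Omega|^{1/q})$. In fact your write-up is slightly more complete than the paper's, which silently assumes $u\in\mathrm{BV}(\Omega)$ throughout: your final step, deducing $u\in\mathrm{BV}(\Omega)$ from finiteness of the infimum via the decomposition $Du=(Du-w)+w$ for a single admissible $w$ of finite energy, is exactly the non-circular argument needed to establish the stated equivalence.
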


Having shown the basic properties of the  $\mathrm{TVL}^{p}_{\alpha,\beta}$ functional, we can use it as a regulariser for variational imaging problems, by minimising
\begin{equation}\label{kostas_general_TVLp}
\min_{u\in\mathrm{BV}(\Omega)} \frac{1}{s} \|f-Tu\|_{\mathrm{L}^{s}(\Omega)}^{s}+\mathrm{TVL}^{p}_{\alpha,\beta}(u),\quad s\geq 1,
\end{equation}
where $T: \mathrm{L}^{2}(\Omega)\to \mathrm{L}^{2}(\Omega)$ is a bounded, linear operator and $f\in \mathrm{L}^{2}(\Omega)$. 
We conclude our analysis with existence and uniqueness results for the minimisation problem \eqref{kostas_general_TVLp}.

\begin{thrm} Let $1<p\leq\infty$ and $f\in\mathrm{L}^{2}(\Omega)$. If $T(\mathcal{X}_{\Omega})\ne 0$ then there exists a solution $u\in\mathrm{L}^{2}(\Omega)\cap\mathrm{BV}(\Omega)$ for the problem \eqref{kostas_general_TVLp}.  If $s>1$ and $T$ is injective then the solution is unique.
\begin{proof} 
The proof is a straightforward application of the direct method of calculus of variations. We simply take advantage of
\eqref{tvlp_ineq} and the compactness theorem in $\mathrm{BV}(\Omega)$ along with the lower semicontinuity property of $\mathrm{TVL}^{p}_{\alpha,\beta}$. We also refer the reader to the corresponding proofs in \cite{Luminita, mineJMIV}.
\end{proof}
\label{tvlp_exis}
\end{thrm}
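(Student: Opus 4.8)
The plan is to prove existence by the direct method of the calculus of variations and uniqueness by strict convexity. Write $J(u):=\frac{1}{s}\|f-Tu\|_{\mathrm{L}^{s}(\Omega)}^{s}+\mathrm{TVL}^{p}_{\alpha,\beta}(u)$. Both summands are nonnegative, so $J$ is bounded below, and $J(0)<\infty$ shows the infimum $m:=\inf J$ is finite. I would fix a minimising sequence $(u_{n})\subset\mathrm{BV}(\Omega)$ with $J(u_{n})\to m$; since $\big(J(u_{n})\big)$ is bounded, the fidelity and the regularisation terms are separately uniformly bounded along the sequence.

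The heart of the argument is to upgrade these bounds to a uniform bound in $\mathrm{L}^{2}(\Omega)\cap\mathrm{BV}(\Omega)$. Boundedness of $\mathrm{TVL}^{p}_{\alpha,\beta}(u_{n})$ together with Lemma \ref{lem_tvlp_ineq} (the inequality \eqref{tvlp_ineq}) immediately yields a uniform bound on $\norm{\mathcal{M}}{Du_{n}}$. I would then split each iterate as $u_{n}=c_{n}\mathcal{X}_{\Omega}+u_{n}^{0}$, where $c_{n}=\frac{1}{|\Omega|}\int_{\Omega}u_{n}\,dx$ and $u_{n}^{0}$ has zero mean, so that $\norm{\mathcal{M}}{Du_{n}^{0}}=\norm{\mathcal{M}}{Du_{n}}$. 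The Poincar\'e--Wirtinger inequality (recall $\Omega\subset\mathbb{R}^{2}$, so $\mathrm{BV}(\Omega)\hookrightarrow\mathrm{L}^{2}(\Omega)$) bounds $\|u_{n}^{0}\|_{\mathrm{L}^{2}(\Omega)}$ by $\norm{\mathcal{M}}{Du_{n}}$, hence uniformly. It remains to control the constants $c_{n}$: from $Tu_{n}=c_{n}T(\mathcal{X}_{\Omega})+Tu_{n}^{0}$, the boundedness of the fidelity term and of $\|Tu_{n}^{0}\|\le\|T\|\,\|u_{n}^{0}\|_{\mathrm{L}^{2}(\Omega)}$ force $\|c_{n}T(\mathcal{X}_{\Omega})\|$ to be bounded, and since $T(\mathcal{X}_{\Omega})\ne 0$ the sequence $(c_{n})$ is bounded. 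Consequently $(u_{n})$ is bounded in $\mathrm{L}^{2}(\Omega)$ and in $\mathrm{BV}(\Omega)$.

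I expect this last step --- controlling the constant part of the minimiser --- to be the main obstacle, since it is exactly where the hypothesis $T(\mathcal{X}_{\Omega})\ne 0$ is indispensable (without it the mean is uncontrolled and a minimiser may fail to exist). Once the bound is secured, the remainder is routine. By the compactness theorem in $\mathrm{BV}(\Omega)$ a subsequence converges strongly in $\mathrm{L}^{1}(\Omega)$ to some $u^{*}\in\mathrm{BV}(\Omega)$, and the $\mathrm{L}^{2}$ bound gives, after passing to a further subsequence, $u_{n_{k}}\rightharpoonup u^{*}$ weakly in $\mathrm{L}^{2}(\Omega)$, so $u^{*}\in\mathrm{L}^{2}(\Omega)$. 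The regulariser is lower semicontinuous with respect to strong $\mathrm{L}^{1}$ convergence by Remark \ref{rem_tvlp}, whereas $u\mapsto\frac{1}{s}\|f-Tu\|_{\mathrm{L}^{s}(\Omega)}^{s}$ is convex and lower semicontinuous on $\mathrm{L}^{2}(\Omega)$ (being a continuous convex function of the residual composed with the bounded linear map $T$), hence weakly lower semicontinuous. Combining the two gives $J(u^{*})\le\liminf_{k}J(u_{n_{k}})=m$, so $u^{*}$ is a minimiser.

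For uniqueness when $s>1$ and $T$ is injective, I would argue by strict convexity. The regulariser $\mathrm{TVL}^{p}_{\alpha,\beta}$ is convex, being a pointwise supremum of functionals that are affine in $u$ by the dual representation \eqref{sup_uw}. Since $t\mapsto\frac{1}{s}|t|^{s}$ is strictly convex for $s>1$, the map $u\mapsto\|f-Tu\|_{\mathrm{L}^{s}(\Omega)}^{s}$ is strictly convex along every direction in which $Tu$ varies, and injectivity of $T$ guarantees $Tu$ varies whenever $u$ does. Hence $J$ is strictly convex, and if $u_{1}\ne u_{2}$ were two minimisers, evaluating $J$ at $\tfrac{1}{2}(u_{1}+u_{2})$ would give a value strictly below $m$, a contradiction. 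Therefore the minimiser is unique.
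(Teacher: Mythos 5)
Your proof is correct and follows exactly the route the paper sketches: the direct method using the equivalence \eqref{tvlp_ineq}, $\mathrm{BV}$ compactness, and the lower semicontinuity of $\mathrm{TVL}^{p}_{\alpha,\beta}$ from Remark \ref{rem_tvlp}, with strict convexity giving uniqueness. The mean-value decomposition $u_{n}=c_{n}\mathcal{X}_{\Omega}+u_{n}^{0}$ with Poincar\'e--Wirtinger and the control of $c_{n}$ via $T(\mathcal{X}_{\Omega})\ne 0$ --- which you rightly identify as the crux --- is precisely the argument in the references \cite{Luminita, mineJMIV} that the paper delegates this step to, so you have in effect written out the proof the paper leaves implicit.
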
 

Since we are mainly interested in studying the regularising properties of $\mathrm{TVL}^{p}_{\alpha,\beta}$, from now on we focus on the case where $s=2$ and $T$ is the identity function (denoising task) where rigorous analysis can be carried out. We thus define the following problem
\begin{equation*}
\min_{u\in\mathrm{BV}(\Omega)}\frac{1}{2}\norm{\mathrm{L}^{2}(\Omega)}{f-u}^{2}+ \mathrm{TVL}^{p}_{\alpha,\beta}(u),
\end{equation*}
or equivalently 
\begin{equation}
\min_{\substack{u\in\mathrm{BV}(\Omega)\\ w\in \mathrm{L}^{p}(\Omega)}}\frac{1}{2}\norm{\mathrm{L}^{2}(\Omega)}{f-u}^{2}+ \alpha \|Du-w\|_{\mathcal{M}}+ \beta \|w\|_{\mathrm{L}^{p}(\Omega)}.
\tag{$\mathcal{P}$}
\label{tvlp_min}
\end{equation}

\section{The $p$-homogeneous analogue and relation to Huber $\mathrm{TV}$}\label{sec:phomo}

Before we proceed to a detailed analysis of the one dimensional version of \eqref{tvlp_min}, in this section we consider its $p$-homogeneous analogue
\begin{equation}\label{kostas_phomo}\tag{$\mathcal{P}_{p-hom}$}
\min_{\substack{u\in\mathrm{BV}(\Omega) \\ w\in\mathrm{L}^{p}(\Omega)}} \frac{1}{2}\|f-u\|_{\mathrm{L}^{2}(\Omega)}^{2}+\alpha \|Du-w\|_{\mathcal{M}}+\frac{\beta}{p}\|w\|_{\mathrm{L}^{p}(\Omega)}^{p},\quad 1<p<\infty.
\end{equation}
We  show in Proposition \ref{kostas_phomo_1homo} that there is a strong connection between the models \eqref{tvlp_min} and \eqref{kostas_phomo}.
The reason for the introduction of \ref{kostas_phomo} is that, in certain cases,  it is technically easier to derive exact solutions for \eqref{kostas_phomo} rather than for \eqref{tvlp_min}
straightforwardly, see Section \ref{sec:exact}.   Moreover, here we can  guarantee the uniqueness of the optimal $w^{\ast}$, since 
$$w^{\ast}=\underset{w\in\mathrm{L}^p(\Omega)}{\operatorname{argmin}}\;\alpha\norm{\mathcal{M}}{Du-w}+\frac{\beta}{p}\norm{\mathrm{L}^{p}(\Omega)}{w}^p,$$ 
and thus $w^{\ast}$ is unique as a minimiser of a  strictly convex  functional since $1<p<\infty$. Hence, compared to $\eqref{tvlp_min}$, an optimal solution pair of \eqref{kostas_phomo} is unique.
The next proposition says that, unless $f$ is a constant function then the optimal $w$ in \eqref{kostas_phomo} cannot be zero but nonetheless converges to $0$ as $\beta\to\infty$. In essence, this means that one cannot obtain $\mathrm{TV}$ type solutions with the $p$-homogeneous model.

\begin{prop}
Let  $1<p<\infty$, $f\in\mathrm{L}^{2}(\Omega)$ and let $(w^{\ast},u^{\ast})$ be an optimal solution pair of the $p$-homogeneous problem \eqref{kostas_phomo}. Then $w^{\ast}=0$ if and only if $f$ is a constant function.  For general data $f$, we have that  $w^{\ast}\to 0$ in $\mathrm{L}^{p}(\Omega)$ for $\beta\to \infty$.
\begin{proof}
It follows immediately that if $f$ is constant then $(0,f)$ is the optimal pair for $\eqref{kostas_phomo}$.
Suppose now that $(w^{\ast},u^{\ast})$ solve \eqref{kostas_phomo}. It is easy to check that the following also hold:
\begin{align}
w^{\ast} &\in \underset{w\in\mathrm{L}^{p}(\Omega)}{\operatorname{argmin}}\; \alpha\|\nabla u^{\ast}-w\|_{\mathrm{L}^{1}(\Omega)}+\frac{\beta}{p} \|w\|_{\mathrm{L}^{p}(\Omega)}^{p},\label{kostas_min1}\\
u^{\ast} &= \underset{u\in\mathrm{BV}(\Omega)}{\operatorname{argmin}} \; \frac{1}{2} \|f-u\|_{\mathrm{L}^{2}(\Omega)}^{2}+\alpha \|Du-w^{\ast}\|_{\mathcal{M}}.\label{kostas_min2}
\end{align}
In particular, \eqref{kostas_min1} implies that 
\begin{equation}
0\in \alpha \partial \|\nabla u^{\ast}-\cdot\|_{\mathrm{L}^{1}(\Omega)}(w^{\ast})+\beta |w^{\ast}|^{p-2}w^{\ast},\label{kostas_sub1}
\end{equation}
Suppose now that $w^{\ast}=0$. Then \eqref{kostas_min2} becomes 
\begin{equation}\label{kostas_min2_2}
u^{\ast} = \underset{u\in\mathrm{BV}(\Omega)}{\operatorname{argmin}} \; \frac{1}{2} \|f-u\|_{\mathrm{L}^{2}(\Omega)}^{2}+\alpha \|Du\|_{\mathcal{M}}.
\end{equation}
Furthermore, since $(0,u^{\ast})$ solve \eqref{kostas_phomo}, then for every $u\in C_{c}^{\infty}(\Omega)$ and $\epsilon>0$, the pair $(\epsilon\nabla u,u^{\ast}+\epsilon u )$ is suboptimal for \eqref{kostas_phomo}, i.e.,
\begin{align*}
 \frac{1}{2}\|f-u^{\ast}\|_{\mathrm{L}^{2}(\Omega)}^{2}+\alpha \|Du^{\ast}\|_{\mathcal{M}}&\le  \frac{1}{2}\|(f-u^{\ast})-\epsilon u\|_{\mathrm{L}^{2}(\Omega)}^{2}+\alpha \|D(u^{\ast}+\epsilon u)-\epsilon \nabla u\|_{\mathcal{M}}+\frac{\beta}{p}\|\epsilon\nabla u\|_{\mathrm{L}^{p}(\Omega)}^{p},
\end{align*}
from which we take
\begin{align*}
 \frac{1}{2}\|f-u^{\ast}\|_{\mathrm{L}^{2}(\Omega)}^{2}&\le  \frac{1}{2}\|(f-u^{\ast})-\epsilon u\|_{\mathrm{L}^{2}(\Omega)}^{2}+\frac{\beta}{p}\|\epsilon\nabla u\|_{\mathrm{L}^{p}(\Omega)}^{p}\\
0&\le    \frac{\epsilon^{2}}{2}\|u\|_{\mathrm{L}^{2}(\Omega)}^{2}-\epsilon\int_{\Omega}(f-u^{\ast})u\,dx+\frac{\beta\epsilon^{p}}{p}\|\epsilon\nabla u\|_{\mathrm{L}^{p}(\Omega)}^{p}.
\end{align*}
By dividing the last inequality by $\epsilon$ and taking the limit $\epsilon\to 0$ we have that $\int_{\Omega}(f-u^{\ast})u\,dx\le 0$. By considering the analogous perturbations $u^{\ast}-\epsilon u$ , we obtain similarly that $\int_{\Omega}(f-u^{\ast})u\,dx\ge 0$ and thus
\[\int_{\Omega}(f-u^{\ast})u\,dx= 0,\quad \forall u\in C_{c}^{\infty}(\Omega).\]
Hence $u^{\ast}=f$ and since $u^{\ast}$ solves \eqref{kostas_min2_2} this can only happen when $f$ is a constant function.

%
For the last part of the proposition, (supposing $f\ne 0$), simply observe that for every $u\in\mathrm{BV}(\Omega)$ and $w\in \mathrm{L}^{p}(\Omega)$ we have that
\begin{equation*}
\frac{1}{2}\|f-u^{\ast}\|_{\mathrm{L}^{2}(\Omega)}^{2}+\alpha \|Du^{\ast}-w^{\ast}\|_{\mathcal{M}}+\frac{\beta}{p} \|w^{\ast}\|_{\mathrm{L}^{p}(\Omega)}^{p}
\le \frac{1}{2}\|f-u\|_{\mathrm{L}^{2}(\Omega)}^{2}+\alpha \|Du-w\|_{\mathcal{M}}+\frac{\beta}{p} \|w\|_{\mathrm{L}^{p}(\Omega)}^{p},
\end{equation*}
and setting $u=w=0$, we obtain
\[\frac{1}{p} \|w^{\ast}\|_{\mathrm{L}^{p}(\Omega)}^{p}\le\frac{1}{2\beta} \|f\|_{\mathrm{L}^{2}(\Omega)}^{2},\]
and thus $\|w^{\ast}\|_{\mathrm{L}^{p}(\Omega)}^{p}\to 0$ when $\beta\to \infty$.
\end{proof}
\label{kostas_w_0}
\end{prop}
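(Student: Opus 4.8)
The plan is to treat the two assertions separately: first the equivalence $w^\ast=0\iff f$ constant, then the asymptotic statement. The direction ``$f$ constant $\Rightarrow w^\ast=0$'' is immediate, so I would dispose of it first. If $f\equiv c$, then $Df=0$ and the pair $(0,f)$ makes all three nonnegative terms of \eqref{kostas_phomo} vanish, hence is a global minimiser. Since $1<p<\infty$ makes $w\mapsto\frac{\beta}{p}\|w\|_{\mathrm{L}^{p}(\Omega)}^{p}$ strictly convex, the optimal $w^\ast$ is unique (as already noted above), which forces $w^\ast=0$.

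For the converse I would argue by perturbation. The starting point is that at an optimal pair each variable is optimal with the other frozen; in particular, with $w^\ast=0$ the function $u^\ast$ solves the pure total variation denoising problem $\min_{u}\tfrac12\|f-u\|_{\mathrm{L}^{2}(\Omega)}^{2}+\alpha\|Du\|_{\mathcal{M}}$. The key idea is to perturb $u^\ast$ and $w^\ast$ \emph{simultaneously} along a smooth direction so that the middle measure term stays frozen: for $\varphi\in C_{c}^{\infty}(\Omega)$ and $\epsilon>0$, compare $(0,u^\ast)$ with the admissible competitor $(\epsilon\nabla\varphi,\,u^\ast+\epsilon\varphi)$. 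Because $D(u^\ast+\epsilon\varphi)-\epsilon\nabla\varphi=Du^\ast$, the term $\alpha\|Du-w\|_{\mathcal{M}}$ is identical for both, so the suboptimality inequality collapses, after expanding the quadratic fidelity, to
\[
0\le -\epsilon\int_{\Omega}(f-u^\ast)\varphi\,dx+\tfrac{\epsilon^{2}}{2}\|\varphi\|_{\mathrm{L}^{2}(\Omega)}^{2}+\tfrac{\beta\epsilon^{p}}{p}\|\nabla\varphi\|_{\mathrm{L}^{p}(\Omega)}^{p}.
\]
Dividing by $\epsilon$ and letting $\epsilon\downarrow0$ annihilates the last two terms precisely because $p>1$ (so $\epsilon^{p-1}\to0$), giving $\int_{\Omega}(f-u^\ast)\varphi\,dx\le0$; repeating with $-\varphi$ yields the reverse inequality, so the integral vanishes for every $\varphi\in C_{c}^{\infty}(\Omega)$ and hence $u^\ast=f$ almost everywhere.

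It remains to convert $u^\ast=f$ into constancy of $f$. Since $u^\ast=f$ solves the total variation denoising problem, its optimality condition reads $f-u^\ast=0\in\alpha\,\partial\mathrm{TV}(f)$, so $0\in\partial\mathrm{TV}(f)$, i.e.\ $f$ globally minimises $\mathrm{TV}$. As $\mathrm{TV}\ge0$ with value zero attained on constants, this forces $\mathrm{TV}(f)=0$, whence $f$ is constant on the (connected) domain $\Omega$, completing the equivalence.

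For the asymptotics I would simply test optimality of $(u^\ast,w^\ast)$ against the trivial competitor $(0,0)$, which gives $\frac{\beta}{p}\|w^\ast\|_{\mathrm{L}^{p}(\Omega)}^{p}\le\frac12\|f\|_{\mathrm{L}^{2}(\Omega)}^{2}$ and hence $\|w^\ast\|_{\mathrm{L}^{p}(\Omega)}^{p}\le\frac{p}{2\beta}\|f\|_{\mathrm{L}^{2}(\Omega)}^{2}\to0$ as $\beta\to\infty$. I expect the only genuine obstacle to be the converse direction, specifically the design of the coupled perturbation $(\epsilon\nabla\varphi,\,u^\ast+\epsilon\varphi)$ that keeps $Du-w$ invariant so that a clean first-order expansion survives; the remaining steps reduce to that expansion and to the standard fact that total variation denoising returns its datum only for constants.
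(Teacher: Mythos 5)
Your proof is correct and follows essentially the same route as the paper's: the same coupled perturbation $(\epsilon\nabla\varphi,\,u^\ast+\epsilon\varphi)$ chosen so that $Du-w$ stays equal to $Du^\ast$, the same first-order expansion yielding $u^\ast=f$, and the same comparison with the trivial competitor $(0,0)$ for the $\beta\to\infty$ asymptotics. Your only additions --- invoking strict convexity to force $w^\ast=0$ in the forward direction, and spelling out via $0\in\partial\mathrm{TV}(f)$ why $u^\ast=f$ together with ROF-optimality forces $f$ constant --- merely make explicit steps the paper leaves terse.
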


 We can further establish a connection between the $1$-homogeneous $\eqref{tvlp_min}$ and the $p$-homogeneous model \eqref{kostas_phomo}:
 
\begin{prop}
Let  $1<p<\infty$ and $f\in\mathrm{L}^{2}(\Omega)$ not a constant. A pair $(w^{\ast},u^{\ast})$ is a solution of \eqref{kostas_phomo}
 with parameters $(\alpha,\beta_{p-hom})$ if and only if it is also a solution of \eqref{tvlp_min} with parameters $(\alpha,\beta_{1-hom})$ where $\beta_{1-hom}=\beta_{p-hom}\|w^{\ast}\|_{\mathrm{L}^{p}(\Omega)}^{p-1}$.
\begin{proof}
Since $f$ is not a constant by the previous proposition we have that $w^{\ast}\ne 0$. 
Note that for an arbitrary function $u\in\mathrm{BV}(\Omega)$:
\begin{align*}
w^{\ast} & \in \underset{w\in\mathrm{L}^{p}(\Omega)}{\operatorname{argmin}}\;\alpha\norm{\mathcal{M}}{Du-w}+\frac{\beta_{p-hom}}{p}\norm{\mathrm{L}^{p}(\Omega)}{w}^{p}\Leftrightarrow\\
0 & \in\alpha\partial\norm{\mathcal{M}}{Du-\cdot}(w^{\ast})+\beta_{p-hom}|w^{\ast}|^{p-2}w^{\ast}\Leftrightarrow\\
0 & \in\alpha\partial\norm{\mathcal{M}}{Du-\cdot}(w^{\ast})+\frac{\beta_{1-hom}}{\norm{\mathrm{L}^{p}(\Omega)}{w^{\ast}}^{p-1}  }|w^{\ast}|^{p-2}w^{\ast}\Leftrightarrow
\\
w^{\ast} & \in \underset{w\in\mathrm{L}^p(\Omega)}{\operatorname{argmin}}\;\alpha\norm{\mathcal{M}}{Du-w}+\beta_{1-hom}\norm{\mathrm{L}^{p}(\Omega)}{w}.
\end{align*}
This means that $w^{\ast}$ is an admissible solution for both problems \eqref{tvlp_min} and \eqref{kostas_phomo}, with the corresponding set of parameters $(\alpha,\beta_{1-hom})$ and $(\alpha,\beta_{p-hom})$ respectively. The fact that the same holds for $u^{\ast}$ as well, comes from the fact that in both problems we have
\[u^{\ast}\in\underset{u\in\mathrm{BV}(\Omega)}{\operatorname{argmin}} \frac{1}{2}\|f-u\|_{\mathrm{L}^{2}(\Omega)}^{2}+\alpha \|Du-w^{\ast}\|_{\mathcal{M}}.\]
\end{proof}
\label{kostas_phomo_1homo}
\end{prop}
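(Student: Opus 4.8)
The plan is to reduce the claimed equivalence of the two joint minimisation problems to a comparison of their optimality conditions in the variable $w$ alone, since the two models differ only in the penalty placed on $w$ and are identical in how $u$ enters. First I would record the standing fact that, because $f$ is not constant, Proposition~\ref{kostas_w_0} forces $w^{\ast}\neq0$; this is the hypothesis that makes the $\mathrm{L}^{p}$ norm differentiable at $w^{\ast}$ and is used decisively below. I would then observe that, in both \eqref{tvlp_min} and \eqref{kostas_phomo}, partial optimality in $u$ amounts to the single condition
\[
u^{\ast}\in\operatorname*{argmin}_{u\in\mathrm{BV}(\Omega)}\;\tfrac12\norm{\mathrm{L}^{2}(\Omega)}{f-u}^{2}+\alpha\norm{\mathcal{M}}{Du-w^{\ast}},
\]
which does not see the $w$-penalty at all and is therefore common to the two models. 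Hence everything hinges on the $w$-conditions.

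The crux is a gradient comparison at the point $w^{\ast}$. For $1<p<\infty$ the smooth penalty $\frac{\beta_{p-hom}}{p}\norm{\mathrm{L}^{p}(\Omega)}{\cdot}^{p}$ is Gâteaux differentiable everywhere with derivative $w\mapsto\beta_{p-hom}\,|w|^{p-2}w\in\mathrm{L}^{q}(\Omega)$, whereas at the \emph{nonzero} point $w^{\ast}$ the $1$-homogeneous penalty $\beta_{1-hom}\norm{\mathrm{L}^{p}(\Omega)}{\cdot}$ is also Gâteaux differentiable, with derivative $\beta_{1-hom}\,|w^{\ast}|^{p-2}w^{\ast}/\norm{\mathrm{L}^{p}(\Omega)}{w^{\ast}}^{p-1}$. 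Substituting the prescribed relation $\beta_{1-hom}=\beta_{p-hom}\norm{\mathrm{L}^{p}(\Omega)}{w^{\ast}}^{p-1}$ shows that these two derivatives \emph{coincide} at $w^{\ast}$. Consequently the two inclusions
\[
0\in\alpha\,\partial\norm{\mathcal{M}}{Du^{\ast}-\cdot}(w^{\ast})+\beta_{p-hom}\,|w^{\ast}|^{p-2}w^{\ast}
\quad\text{and}\quad
0\in\alpha\,\partial\norm{\mathcal{M}}{Du^{\ast}-\cdot}(w^{\ast})+\beta_{1-hom}\,\partial\norm{\mathrm{L}^{p}(\Omega)}{\cdot}(w^{\ast})
\]
are literally the same, so $w^{\ast}$ is partially optimal for one model exactly when it is partially optimal for the other; reversing the substitution recovers $\beta_{p-hom}=\beta_{1-hom}/\norm{\mathrm{L}^{p}(\Omega)}{w^{\ast}}^{p-1}$, which makes the correspondence symmetric.

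To upgrade these matching partial statements into a genuine equivalence of the \emph{joint} problems, I would be careful about the passage from coordinatewise to joint optimality, and this is the step I expect to be the main obstacle: for a non-smooth convex functional, partial optimality in each variable does not in general assemble into joint optimality, because the two coordinate conditions may a priori use different subgradients of the coupling term $\norm{\mathcal{M}}{Du-w}$. I would circumvent this by carrying a single dual certificate. Writing the Euler--Lagrange system of whichever problem is assumed optimal via the subdifferential sum rule (legitimate here because the fidelity term is finite and continuous, the same qualification already invoked for the Fenchel--Rockafellar duality in \eqref{sup_uw}), joint optimality of $(u^{\ast},w^{\ast})$ for \eqref{kostas_phomo} produces one admissible dual field $\phi$ in the subdifferential of $\norm{\mathcal{M}}{\cdot}$ at $Du^{\ast}-w^{\ast}$ that simultaneously solves the $u$-equation and the $w$-equation $\alpha L_{w}^{\ast}\phi+\beta_{p-hom}\,|w^{\ast}|^{p-2}w^{\ast}=0$. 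The $u$-equation is common to both models, and the gradient comparison of the previous paragraph rewrites the $w$-equation, with the \emph{same} $\phi$, as $0\in\alpha L_{w}^{\ast}\phi+\beta_{1-hom}\,\partial\norm{\mathrm{L}^{p}(\Omega)}{\cdot}(w^{\ast})$; this certifies joint optimality of $(u^{\ast},w^{\ast})$ for \eqref{tvlp_min}. The converse implication is entirely symmetric. Keeping the single certificate extracted from the joint condition, rather than combining two independently obtained partial conditions, is precisely what resolves the difficulty, and the nonvanishing of $w^{\ast}$ from Proposition~\ref{kostas_w_0} is what keeps the $\mathrm{L}^{p}$-norm subdifferential single-valued so that the rescaling defining $\beta_{1-hom}$ is well posed.
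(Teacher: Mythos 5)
Your proof is correct, and its computational core is exactly the paper's: differentiate the two penalties at the nonzero point $w^{\ast}$ (with Proposition \ref{kostas_w_0} supplying $w^{\ast}\neq 0$, so that $\norm{\mathrm{L}^{p}(\Omega)}{\cdot}$ is differentiable there) and observe that the rescaling $\beta_{1-hom}=\beta_{p-hom}\norm{\mathrm{L}^{p}(\Omega)}{w^{\ast}}^{p-1}$ makes the two derivatives coincide, while the partial $u$-condition is common to both models. Where you genuinely depart from the paper is in the passage from coordinatewise to joint optimality. The paper verifies the partial condition in $w$ (for fixed $u$) and the partial condition in $u$, and then simply declares $(w^{\ast},u^{\ast})$ optimal for the joint problem; as you correctly observe, for a convex functional with the non-smooth coupling $\norm{\mathcal{M}}{Du-w}$ this inference is not automatic (coordinatewise optimality can fail to be joint optimality, e.g.\ for $(x,y)\mapsto\max\{|x|,|y|\}$ at $(1,1)$), so your extra step addresses a real, if fixable, gap in the paper's own write-up. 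Your single-certificate repair is sound: joint optimality for one problem yields one $\phi\in\partial\norm{\mathcal{M}}{\cdot}(Du^{\ast}-w^{\ast})$ solving both Euler--Lagrange equations, and since the $w$-penalties have equal gradients at $w^{\ast}$, the same $\phi$ certifies joint optimality for the other problem. One simplification worth noting: you do not even need the chain rule through the Radon norm that your certificate invokes. Writing $J_{1}=\Phi+\beta_{1-hom}\norm{\mathrm{L}^{p}(\Omega)}{\cdot}$ and $J_{p}=\Phi+\frac{\beta_{p-hom}}{p}\norm{\mathrm{L}^{p}(\Omega)}{\cdot}^{p}$ with $\Phi(u,w)=\frac{1}{2}\norm{\mathrm{L}^{2}(\Omega)}{f-u}^{2}+\alpha\norm{\mathcal{M}}{Du-w}$ kept abstract, the exact sum rule for a convex function plus an everywhere-continuous convex term differentiable at the point gives $\partial J_{1}(u^{\ast},w^{\ast})=\partial\Phi(u^{\ast},w^{\ast})+\left\{\left(0,\beta_{1-hom}\tfrac{|w^{\ast}|^{p-2}w^{\ast}}{\norm{\mathrm{L}^{p}(\Omega)}{w^{\ast}}^{p-1}}\right)\right\}$ and likewise for $J_{p}$, so under the rescaling the two joint subdifferentials at $(u^{\ast},w^{\ast})$ are literally the same set and $0\in\partial J_{p}(u^{\ast},w^{\ast})\Leftrightarrow 0\in\partial J_{1}(u^{\ast},w^{\ast})$. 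What the paper's looser formulation buys is brevity; what your argument buys is a genuinely complete equivalence of the joint problems.
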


Finally,  it turns out that for $p=2$, problem \eqref{kostas_phomo} is essentially equivalent to the widely used Huber total variation regularisation, \cite{Huber}. We show that in the next proposition.

\begin{prop}
Consider the functional $F:\mathrm{BV}(\Omega)\to \mathbb{R}$ with 
\begin{equation}
F(u)=\min_{w\in\mathrm{L}^{2}(\Omega)} \alpha \|Du-w\|_{\mathcal{M}}+\frac{\beta}{2}\|w\|_{\mathrm{L}^{2}(\Omega)}^{2}.
\end{equation}
Then 
\[F(u)=\int_{\Omega}\varphi(\nabla u)\,dx + \alpha |D^{s}u|(\Omega),\]
where $\varphi:\mathbb{R}^{d}\to\mathbb{R}$ with
\[\varphi(x)=
\begin{cases}
\alpha |x|-\frac{\alpha^{2}}{2\beta}, & |x|\ge \frac{\alpha}{\beta},\\
\frac{\beta}{2} |x|^{2}, 		      & |x|\le \frac{\alpha}{\beta}. 	
\end{cases}
\]
\label{equiv_tv_huber}
\end{prop}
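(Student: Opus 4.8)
The plan is to exploit the fact that the competitor $w$ lives in $\mathrm{L}^{2}(\Omega)\subset\mathrm{L}^{1}(\Omega)$ and therefore interacts only with the absolutely continuous part of the measure $Du$. Writing the Lebesgue decomposition $Du=\nabla u\,\mathcal{L}^{d}+D^{s}u$, where $\nabla u\in\mathrm{L}^{1}(\Omega,\re^{d})$ is the density of the absolutely continuous part and $D^{s}u$ is singular with respect to Lebesgue measure, the measure $w\,\mathcal{L}^{d}$ is absolutely continuous, so that
\[Du-w=(\nabla u-w)\,\mathcal{L}^{d}+D^{s}u,\]
with the two summands mutually singular. Since the Radon norm is additive over mutually singular measures, I would first record
\[\norm{\mathcal{M}}{Du-w}=\int_{\Omega}|\nabla u-w|\,dx+|D^{s}u|(\Omega).\]
The singular contribution $\alpha|D^{s}u|(\Omega)$ is independent of $w$ and factors out of the minimisation, reducing the problem to
\[F(u)=\alpha|D^{s}u|(\Omega)+\min_{w\in\mathrm{L}^{2}(\Omega)}\int_{\Omega}\Bigl(\alpha|\nabla u-w|+\tfrac{\beta}{2}|w|^{2}\Bigr)\,dx.\]

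Next I would localise the remaining minimisation. For fixed $x$, set $a=\nabla u(x)$ and minimise $g(y)=\alpha|a-y|+\tfrac{\beta}{2}|y|^{2}$ over $y\in\re^{d}$. A short argument shows the minimiser lies on the segment $[0,a]$: splitting $y$ into components parallel and orthogonal to $a$ shows the orthogonal component can be set to zero, and the optimal $s$ in $y=sa$ with $s\in[0,1]$ is found by elementary calculus. This gives $s^{\ast}=\min\{1,\alpha/(\beta|a|)\}$ and the minimal value $\varphi(a)$ exactly as stated, the two branches corresponding to $|a|\ge\alpha/\beta$ (interior minimiser, $w=\tfrac{\alpha}{\beta|a|}a$, value $\alpha|a|-\tfrac{\alpha^{2}}{2\beta}$) and $|a|\le\alpha/\beta$ (boundary minimiser, $w=a$, value $\tfrac{\beta}{2}|a|^{2}$).

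To pass from this pointwise computation to the integral functional, I would introduce the explicit candidate
\[w^{\ast}(x)=\begin{cases}\dfrac{\alpha}{\beta|\nabla u(x)|}\,\nabla u(x), & |\nabla u(x)|\ge\alpha/\beta,\\[4pt]\nabla u(x), & |\nabla u(x)|<\alpha/\beta,\end{cases}\]
which is a measurable function of $\nabla u$ and satisfies $|w^{\ast}|\le\alpha/\beta$ pointwise, hence $w^{\ast}\in\mathrm{L}^{\infty}(\Omega)\subset\mathrm{L}^{2}(\Omega)$ for bounded $\Omega$. Since the integrand dominates $\varphi(\nabla u(x))$ pointwise for every admissible $w$, while equality is attained at $w^{\ast}$, the infimum equals $\int_{\Omega}\varphi(\nabla u)\,dx$ and is a genuine minimum. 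Adding back the factored singular term yields the claim.

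The main obstacle is the two genuinely measure-theoretic points rather than the calculus: first, justifying the additivity $\norm{\mathcal{M}}{Du-w}=\int_{\Omega}|\nabla u-w|\,dx+|D^{s}u|(\Omega)$, which rests on the mutual singularity of the absolutely continuous and singular parts of a $\mathrm{BV}$ derivative; and second, the interchange of minimisation and integration, which is clean here precisely because the explicit pointwise minimiser $w^{\ast}$ is itself admissible in $\mathrm{L}^{2}(\Omega)$. The scalar minimisation producing $\varphi$, including the reduction to the segment $[0,a]$ and the convexity in $s$, is routine once these structural points are in place.
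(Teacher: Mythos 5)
Your proposal is correct and takes essentially the same route as the paper's proof: split off the mutually singular term $\alpha|D^{s}u|(\Omega)$, solve the pointwise minimisation of $\alpha|a-y|+\tfrac{\beta}{2}|y|^{2}$ to obtain $\varphi$, and verify that the resulting pointwise minimiser $w^{\ast}$ (identical to the paper's) lies in $\mathrm{L}^{\infty}(\Omega)\subset\mathrm{L}^{2}(\Omega)$ and attains the value $\int_{\Omega}\varphi(\nabla u)\,dx$. Your extra care in justifying the additivity of the Radon norm over mutually singular measures and the interchange of minimisation and integration merely spells out steps the paper leaves implicit.
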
 
\begin{proof}
We have
\begin{align*}
F(u)&= \min_{w\in\mathrm{L^{2}(\Omega)}} \alpha \|Du-w\|_{\mathcal{M}}+\frac{\beta}{2}\|w\|_{\mathrm{L}^{2}(\Omega)}^{2}\\
        &= \alpha |D^{s}u|(\Omega)+\alpha \min_{w\in \mathrm{L}^{2}(\Omega)} \int_{\Omega} |\nabla u-w|+\frac{\beta}{2\alpha}|w|^{2}\,dx.
\end{align*}
So we focus on the minimisation problem
\begin{equation}\label{hubermin1}
\min_{w\in \mathrm{L}^{2}(\Omega)} \int_{\Omega} |\nabla u-w|+\frac{\beta}{2\alpha}|w|^{2}\,dx.
\end{equation}
Baring in mind that (as it can easily checked) for $c\in\mathbb{R}^{d}$ and $\lambda>0$, 
\[\underset{y\in\mathbb{R}^{d}}{\operatorname{argmin}}\; |c-y|+\frac{\lambda}{2}|y|^{2}=
\begin{cases}
\frac{1}{\lambda}\frac{c}{|c|} & \mbox{ if } |c|\ge \frac{1}{\lambda},\\
c & \mbox{ if } |c|< \frac{1}{\lambda},
\end{cases}
\]
and
\[\min_{y\in \mathbb{R}^{d}} |c-y|+\frac{\lambda}{2}|y|^{2}=
\begin{cases}
|c|-\frac{1}{2\lambda} & \mbox{ if } |c|\ge \frac{1}{\lambda},\\
\frac{\lambda}{2}|c|^{2} & \mbox{ if } |c|< \frac{1}{\lambda},
\end{cases}
\]
it is straightforwardly verified setting $\lambda=\beta/\alpha$ that the function
\[w^{\ast}=\frac{\alpha}{\beta}\frac{\nabla u}{|\nabla u|}\mathcal{X}_{\{|\nabla u|\ge \frac{\alpha}{\beta}\}}+\nabla u \mathcal{X}_{\{|\nabla u|< \frac{\alpha}{\beta}\}},\] 
belongs to $\mathrm{L}^{\infty}(\Omega)\subset \mathrm{L}^{2}(\Omega)$ and solves \eqref{hubermin1} with optimal value equal to $\frac{1}{\alpha}\int_{\Omega}\varphi(\nabla u)\,dx$.
\end{proof}

\section{The one dimensional case}\label{sec:onedimensional}
In order to get more insight into the structure of solutions of the problem \eqref{tvlp_min}, in this section we study its one dimensional version. As above, we focus on the finite $p$ case, i.e., $1<p<\infty$. The case $p=\infty$ leads to several additional complications and will be  subject of a forthcoming paper \cite{partII}. For this section $\Omega\subset\re$ is an open and bounded interval, i.e., $\Omega=(a,b)$. 
Our analysis follows closely the ones in \cite{Bredies} and \cite{papafitsoros} where the one dimensional $\mathrm{L}^{1}$--$\mathrm{TGV}$ and $\mathrm{L}^{2}$--$\mathrm{TGV}$ problems are studied respectively.

\subsection{Optimality conditions}\label{sec:optimality}
\label{sec_opt_cond}
In this section, we derive the optimality conditions  for the one dimensional problem \eqref{tvlp_min}. We initially start our analysis by defining the predual problem \eqref{predual}, proving existence and uniqueness for its solutions. 
We will employ again the Fenchel--Rockafellar duality theory in order  to find a connection between their solution pairs.

We define the predual problem \eqref{predual} as ($q$ H\"older conjugate to $p$):

\begin{equation*}
\tag{$\mathcal{P}^{*}$}-\inf\left\{\int_{\Omega}f\phi'dx+\frac{1}{2}\int_{\Omega}(\phi')^{2}dx: \;\phi\in\mathrm{H}^{1}_{0}(\Omega), \norm{\mathrm{L}^{q}(\Omega)}{\phi}\leq\beta, \norm{\infty}{\phi}\leq\alpha\right\}.
\label{predual}
\end{equation*}
Existence and uniqueness can be verified by standard arguments:

\begin{prop} For $f \in L^2(\Omega)$, the predual problem \eqref{predual}  admits a unique solution in $\mathrm{H}_{0}^{1}(\Omega).$
\label{existence}
\end{prop}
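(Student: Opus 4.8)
The plan is to apply the direct method of the calculus of variations to the constrained minimisation underlying \eqref{predual}. Write $J(\phi)=\int_{\Omega}f\phi'\,dx+\frac{1}{2}\int_{\Omega}(\phi')^{2}\,dx$ and let $K=\{\phi\in\mathrm{H}^{1}_{0}(\Omega):\norm{\mathrm{L}^{q}(\Omega)}{\phi}\leq\beta,\ \norm{\infty}{\phi}\leq\alpha\}$, so that solving \eqref{predual} amounts to minimising $J$ over the convex set $K$. First I would establish coercivity: by Cauchy--Schwarz $J(\phi)\geq\frac{1}{2}\norm{\mathrm{L}^{2}(\Omega)}{\phi'}^{2}-\norm{\mathrm{L}^{2}(\Omega)}{f}\,\norm{\mathrm{L}^{2}(\Omega)}{\phi'}$, and since $\phi(a)=0$ the Poincar\'e inequality makes $\norm{\mathrm{L}^{2}(\Omega)}{\phi'}$ equivalent to the full $\mathrm{H}^{1}_{0}$ norm; hence $J$ is bounded below and its sublevel sets are bounded in $\mathrm{H}^{1}_{0}(\Omega)$.

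Next I would verify weak lower semicontinuity and weak closedness. The quadratic term $\phi\mapsto\frac{1}{2}\int_{\Omega}(\phi')^{2}\,dx$ is convex and strongly continuous, hence weakly lower semicontinuous, while $\phi\mapsto\int_{\Omega}f\phi'\,dx$ is a bounded linear functional on $\mathrm{H}^{1}_{0}(\Omega)$ and therefore weakly continuous; thus $J$ is weakly lower semicontinuous. The set $K$ is convex and strongly closed, so by Mazur's lemma it is weakly closed. Alternatively, in one space dimension $\mathrm{H}^{1}_{0}(\Omega)\hookrightarrow C(\overline{\Omega})$ compactly, so weak $\mathrm{H}^{1}$ convergence yields uniform convergence, which immediately preserves both the $\mathrm{L}^{q}$ and the $\mathrm{L}^{\infty}$ constraints.

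With these ingredients, existence follows in the standard way: take a minimising sequence $(\phi_{n})\subset K$; by coercivity it is bounded in the Hilbert space $\mathrm{H}^{1}_{0}(\Omega)$, so a subsequence converges weakly to some $\phi^{\ast}$; weak closedness gives $\phi^{\ast}\in K$ and weak lower semicontinuity gives $J(\phi^{\ast})\leq\liminf_{n}J(\phi_{n})$, so $\phi^{\ast}$ is a minimiser. For uniqueness I would invoke strict convexity: on $\mathrm{H}^{1}_{0}(\Omega)$ the map $\phi\mapsto\phi'$ is injective, because the boundary condition $\phi(a)=0$ fixes the additive constant, so if $\phi_{1}\neq\phi_{2}$ then $\phi_{1}'\neq\phi_{2}'$ on a set of positive measure; strict convexity of $t\mapsto t^{2}$ then makes $J$ strictly convex on $\mathrm{H}^{1}_{0}(\Omega)$, and since $K$ is convex the minimiser is unique.

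The only point requiring a little care --- and the main (minor) obstacle --- is that $K$ is \emph{not} bounded in $\mathrm{H}^{1}_{0}(\Omega)$, since the constraints control $\phi$ but not $\phi'$; compactness of the minimising sequence must therefore come from the coercivity of $J$ rather than from boundedness of the feasible set. Preserving the $\mathrm{L}^{\infty}$ constraint under weak convergence is handled most cleanly through the compact one-dimensional Sobolev embedding noted above.
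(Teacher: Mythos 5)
Your proof is correct and is essentially the argument the paper has in mind: the paper omits the proof entirely, stating only that ``existence and uniqueness can be verified by standard arguments,'' and your write-up supplies exactly those standard direct-method ingredients (coercivity via Poincar\'e, weak lower semicontinuity, weak closedness of the convex constraint set, and strict convexity through the injectivity of $\phi\mapsto\phi'$ on $\mathrm{H}^{1}_{0}(\Omega)$ for uniqueness). Your closing observation --- that the feasible set is unbounded in $\mathrm{H}^{1}_{0}(\Omega)$ so compactness must come from the coercivity of the functional, with the constraints preserved via the compact one-dimensional embedding into $C(\overline{\Omega})$ --- is precisely the point where care is needed, and you handle it correctly.
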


Observe now that we can also write down the predual problem \eqref{predual} using the following equivalent formulation:
\begin{equation}
-\inf_{(\phi,\Omega)\in X}F_{1}(\phi,\xi) + F_{2}(K(\phi,\xi)),
\label{sum2fun}
\end{equation}
where $X=\mathrm{H_{0}^{1}(\Omega)}\times\mathrm{H_{0}^{1}(\Omega)}$, $Y=\mathrm{H_{0}^{1}(\Omega)}\times \mathrm{L^{2}(\Omega)}$ and 
\begin{equation}
\begin{aligned}
& K : X\rightarrow Y \mbox{,  }K(\phi,\xi)=(\xi-\phi,\xi'),\\
& F_{1}: X \rightarrow\overline{\re} \mbox{, with  }F_{1}(\phi,\xi)= \mathbb{I}_{\left\{\norm{\mathrm{L}^q(\Omega)}{\cdot}\leq\beta\right\}}(\phi)+\mathbb{I}_{\left\{\norm{\infty}{\cdot}\leq\alpha\right\}}(\xi),\\
& F_{2}: Y \rightarrow \overline{\re} \mbox{, with }F_{2}(\phi,\psi)= \mathbb{I}_{\left\{0\right\}}(\phi)+\int_{\Omega}f\psi\, dx+\frac{1}{2}\int_{\Omega}\psi^{2}dx.
\end{aligned}
\label{functionals}
\end{equation}
We denote the infimum in \eqref{predual}  as $\inf\mathcal{P}^{*}$. Then, it is immediate that
 $$-\inf\mathcal{P}^{*}=-\inf_{(\phi,\xi)\in X}F_{1}(\phi,\xi) + F_{2}(K(\phi,\xi)).$$
The dual problem of \eqref{sum2fun}, see \cite{Ekeland}, is defined as
\begin{equation}
\min_{(w,u)\in Y^{*}} F_{1}^{*}(- K^{\star}(w,u))+F_{2}^{*}(w,u),
\label{dual}
\end{equation}
where $K^{\star}$ here denotes the adjoint of $K$.
Let $(\sigma,\tau)$ be elements of $\mathrm{H}_{0}^{1}(\Omega)^{*}\times \mathrm{H}_{0}^{1}(\Omega)^{*}$ acting as distributions.
 For the convex conjugate of $F_{1}$, we write
\begin{equation}
F_{1}^{*}(\sigma,\tau)  = \sup_{\substack{(\phi,\xi)\in X \\ \norm{\mathrm{L}^q(\Omega)}{\phi}\leq\beta \\ \norm{\infty}{\xi}\leq\alpha}} \scalprod{}{\sigma}{\phi}+\scalprod{}{\tau}{\xi} =
                                          \beta \sup_{\substack{\phi\in\mathrm{H}_{0}^{1}(\Omega)\\ \norm{\mathrm{L}^q(\Omega)}{\phi}\leq1}} \scalprod{}{\sigma}{\phi} + \alpha\sup_{\substack{\xi\in\mathrm{H}_{0}^{1}(\Omega)\\ \norm{\infty}{\xi}\leq1}} \scalprod{}{\tau}{\xi}.
\label{F1conj1}                          
\end{equation}
However, by standard density arguments we have:
\begin{equation}
F_{1}^{*}(\sigma,\tau)=  \beta \sup_{\substack{\phi\in C_{c}^{\infty}(\Omega)\\ \norm{\mathrm{L}^q(\Omega)}{\phi}\leq1}} \scalprod{}{\sigma}{\phi} + \alpha\sup_{\substack{\xi\in C^{\infty}_{c}(\Omega)\\ \norm{\infty}{\xi}\leq1}} \scalprod{}{\tau}{\xi}= \beta\norm{\mathrm{L}^p(\Omega)}{\sigma}+ \alpha\norm{\mathcal{M}}{\tau}.
\label{F1conj2}
\end{equation}
Moreover, let $K^{\star}:Y^{*}\rightarrow X^{*}$ with 
\begin{align*}
\scalprod{}{-K^{\star}(w,u)}{(\phi,\xi)} & =-\scalprod{}{(w,u)}{K(\phi,\xi)} =-\scalprod{}{(w,u)}{(\xi-\phi,\xi')}\\
                                                              & = -\scalprod{}{w}{\xi}+\scalprod{}{w}{\phi}-\scalprod{}{u}{\xi'} = \scalprod{}{Du-w}{\xi}+\scalprod{}{w}{\phi}.
\end{align*}
Hence, we obtain
\begin{equation}
F_{1}^{*}(-K^{\star}(w,u))= \beta\norm{\mathrm{L}^p(\Omega)}{w}+ \alpha\norm{\mathcal{M}}{Du-w},
\label{F1conj3}
\end{equation}
and 
\begin{align}
F_{2}^{*}(w,u) & =\sup_{\substack{(\phi,\psi)\in Y \\ \phi=0}}\scalprod{}{w}{\phi}+\scalprod{}{u}{\psi}-\scalprod{}{f}{\psi}-\frac{1}{2}\int_{\Omega}\psi^{2}\notag\\
                         & = \sup_{\psi\in L^{2}(\Omega)}\scalprod{}{u-f}{\psi}-\frac{1}{2}\int_{\Omega}\psi^{2}dx\notag\\
                         & := \left( \frac{1}{2}\norm{\mathrm{L}^{2}(\Omega)}{\cdot}^{2}\right)^{*}(u-f) = \frac{1}{2}\norm{\mathrm{L}^2(\Omega)}{u-f}^{2}.\label{F2conj}
\end{align}
Therefore, we have proved the following:
\begin{prop} The dual problem of \eqref{predual}  is equivalent to the \emph{primal} problem \eqref{tvlp_min} in the sense that $(w,u)\in Y^{*}$ is a solution of the dual of \eqref{predual}   if and only if $(w,u)\in  \mathrm{L}^{p}(\Omega)\times\mathrm{BV}(\Omega)$ is a solution of \eqref{tvlp_min}.
\label{equivalence}
\end{prop}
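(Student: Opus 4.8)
The plan is to read off the dual problem \eqref{dual} explicitly by inserting the conjugate expressions that have already been computed, and then to check that the result coincides with \eqref{tvlp_min} both term-by-term and domain-by-domain. Substituting $F_{1}^{*}(-K^{\star}(w,u))=\beta\norm{\mathrm{L}^{p}(\Omega)}{w}+\alpha\norm{\mathcal{M}}{Du-w}$ from \eqref{F1conj3} and $F_{2}^{*}(w,u)=\tfrac{1}{2}\norm{\mathrm{L}^{2}(\Omega)}{u-f}^{2}$ from \eqref{F2conj} into \eqref{dual} turns the dual problem into
\[
\min_{(w,u)\in Y^{*}} \frac{1}{2}\norm{\mathrm{L}^{2}(\Omega)}{u-f}^{2} + \alpha\norm{\mathcal{M}}{Du-w} + \beta\norm{\mathrm{L}^{p}(\Omega)}{w},
\]
whose objective is verbatim that of \eqref{tvlp_min}. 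So the only genuine content remaining is to show that minimising this functional over the abstract dual space $Y^{*}=\mathrm{H}_{0}^{1}(\Omega)^{*}\times\mathrm{L}^{2}(\Omega)$ is the same as minimising it over $\mathrm{L}^{p}(\Omega)\times\mathrm{BV}(\Omega)$, i.e. that the effective domains agree.

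Next I would match these domains. Given $(w,u)\in Y^{*}$, the objective is finite only if both $\norm{\mathrm{L}^{p}(\Omega)}{w}<\infty$ and $\norm{\mathcal{M}}{Du-w}<\infty$. The first forces $w$, a priori merely an element of $\mathrm{H}_{0}^{1}(\Omega)^{*}$, to be represented by a genuine function in $\mathrm{L}^{p}(\Omega)$. Since $\mathrm{L}^{p}(\Omega)\subset\mathrm{L}^{1}(\Omega)$ embeds into the finite Radon measures, the second finiteness together with $Du=(Du-w)+w$ shows that $Du$ is a finite Radon measure, hence $u\in\mathrm{BV}(\Omega)$; in the one-dimensional setting $\mathrm{BV}(\Omega)\hookrightarrow\mathrm{L}^{\infty}(\Omega)\subset\mathrm{L}^{2}(\Omega)$, so the $\mathrm{L}^{2}$ membership required by $Y^{*}$ is automatic. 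Conversely, every $(w,u)\in\mathrm{L}^{p}(\Omega)\times\mathrm{BV}(\Omega)$ produces a finite value and is a legitimate element of $Y^{*}$ through the embedding $\mathrm{L}^{p}(\Omega)\hookrightarrow\mathrm{H}_{0}^{1}(\Omega)^{*}$ (valid in one dimension since $\mathrm{H}_{0}^{1}(\Omega)\hookrightarrow\mathrm{L}^{\infty}(\Omega)$). Thus the sets on which the two objectives are finite coincide.

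Since the two minimisation problems then share the same objective on the same effective domain, a pair $(w,u)$ minimises one precisely when it minimises the other, which is exactly the asserted equivalence of solution sets; existence of such a minimiser is already guaranteed by Theorem \ref{tvlp_exis} (with $s=2$ and $T=\mathrm{Id}$), so the ``$\min$'' in \eqref{dual} is attained. I expect the main obstacle to be not the algebra — the conjugates are in hand — but the careful bookkeeping that upgrades the abstract dual variables: justifying that finiteness of $\norm{\mathrm{L}^{p}(\Omega)}{w}$ identifies $w\in\mathrm{H}_{0}^{1}(\Omega)^{*}$ with an honest $\mathrm{L}^{p}$ function, and that the pairing $\scalprod{}{Du-w}{\xi}$ underlying \eqref{F1conj3} is the one furnished by the standard theory of $\mathrm{BV}$ functions and Radon measures recalled in the appendix. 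Strong duality (no gap between \eqref{predual} and \eqref{dual}), obtained from the Attouch--Brezis qualification condition exactly as in the earlier dual-formulation proposition, is what legitimises the whole Fenchel--Rockafellar framework, but it is not needed for the bare equivalence of solutions claimed here.
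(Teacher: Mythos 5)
Your proposal is correct and follows essentially the same route as the paper, which also proves the proposition simply by substituting the already-computed conjugates \eqref{F1conj3} and \eqref{F2conj} into the abstract dual \eqref{dual} and reading off that the objective is verbatim that of \eqref{tvlp_min}. Your extra bookkeeping --- identifying $w\in\mathrm{H}_{0}^{1}(\Omega)^{*}$ with an $\mathrm{L}^{p}$ function and deducing $u\in\mathrm{BV}(\Omega)$ from finiteness of the Radon norm, so that the effective domains of the two problems coincide --- is exactly what the paper leaves implicit in the density argument \eqref{F1conj2}, and your observation that no-duality-gap is not needed for this bare equivalence matches the paper's structure, since the gap is closed separately in Proposition \ref{geom_condition}.
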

It remains to verify that we have no duality gap  between the  two minimisation problems \eqref{tvlp_min} and \eqref{predual}. The proof of the following proposition follows the proof of the corresponding proposition in \cite{Bredies}. We slightly modify for our case.
\begin{prop} Let $F_{1}, F_{2}, K$ be defined  as in \eqref{functionals}. Then
\begin{equation}
Y=\bigcup_{\lambda\geq0}\lambda(\dom F_{2} - K (\dom F_{1}))
\label{FRcond}
\end{equation}
is a closed vector space and thus \cite{attouch_brezis}
\begin{equation}
\min_{(\phi,\xi)\in X}F_{1}((\phi,\xi))+F_{2}(K(\phi,\xi)) + \min_{(w,u)\in Y^{*}} F_{1}^{*}(- K^{\star}(w,u))+F_{2}^{*}((w,u))=0.
\label{duality_gap}
\end{equation}
\begin{proof}
Let $(\phi,\psi)\in Y$ and define $\psi_{0}(x)=c_{1}+c_{2}x$, where $c_{1},c_{2}$ are constants that are uniquely determined by the following conditions
\begin{equation}
\int_{\Omega}\psi_{0}(x)\, dx=\int_{\Omega}\psi(x)\, dx\mbox{,  }\int_{\Omega}x\psi_{0}(x)\,dx=\int_{\Omega}x\psi(x)+\phi(x)\, dx.
\end{equation} 
 Let $\xi(x)=\int_{a}^{x}(\psi_{0}-\psi)(y) \,dy$. Since by construction, $\xi'=\psi_{0}-\psi\in \mathrm{L}^{2}(\Omega)$ with $\xi(a)=\xi(b)=0 $, we have that $\xi\in\mathrm{H}_{0}^{1}(\Omega)$. Furthermore, let $\phi= - (\phi+\xi)\in\mathrm{H}_{0}^{1}(\Omega)$ and $(\phi,\xi)\in X$ with
$$(\phi,\psi)=(\xi-\phi,\psi_{0}-\xi')=(0,\psi_{0})-(\xi-\phi,\xi')=(0,\psi_{0})-K(\phi,\xi).$$ 
Choosing appropriately $\lambda>0$ such that $\norm{\mathrm{L}^q(\Omega)}{\lambda^{-1}\phi}\leq\beta$, $\norm{\infty}{\lambda^{-1}\xi}\leq\alpha$, we can write 
$$(\phi,\psi)=\lambda((0,\lambda^{-1}\psi_{0})-K(\lambda^{-1}\phi,\lambda^{-1}\xi)),$$
 with $\dom F_{2} =  \{0\} \times \mathrm{L}^{2}(\Omega)$ and $ \dom F_{1} = \{(\phi,\xi): \norm{\mathrm{L}^q(\Omega)}{\phi}\leq\beta, \norm{\infty}{\xi}\leq\alpha\}$. Since $(\phi,\psi)\in Y$ were chosen arbitrarily, \eqref{FRcond} holds. 
\end{proof}
\label{geom_condition}
\end{prop}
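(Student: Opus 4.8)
The plan is to reduce the statement to a single nontrivial inclusion. Since $K$ maps $X$ into $Y$ and $\dom F_{2}\subseteq Y$, every set $\lambda(\dom F_{2} - K(\dom F_{1}))$ is contained in $Y$, so the union in \eqref{FRcond} is automatically a subset of $Y$. Hence it suffices to prove the reverse inclusion $Y\subseteq\bigcup_{\lambda\geq0}\lambda(\dom F_{2}-K(\dom F_{1}))$; once this is established the union equals the whole space $Y$, which is trivially a closed vector space, and the no-duality-gap identity \eqref{duality_gap} follows directly from the Attouch--Brezis criterion \cite{attouch_brezis}.

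For the reverse inclusion I would fix an arbitrary $(g,h)\in Y=\mathrm{H}_{0}^{1}(\Omega)\times\mathrm{L}^{2}(\Omega)$ and try to represent it in the form $(g,h)=(0,\psi_{0})-K(a,\xi)$, where $\psi_{0}\in\mathrm{L}^{2}(\Omega)$ is free (so that $(0,\psi_{0})\in\dom F_{2}=\{0\}\times\mathrm{L}^{2}(\Omega)$) and $(a,\xi)\in X$ will afterwards be rescaled into $\dom F_{1}$. Writing $K(a,\xi)=(\xi-a,\xi')$ and comparing components, this amounts to solving $\xi'=\psi_{0}-h$ in the $\mathrm{L}^{2}$ slot and $a=g+\xi$ in the $\mathrm{H}_{0}^{1}$ slot. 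The first equation is handled by choosing $\psi_{0}$ with $\int_{\Omega}\psi_{0}\,dx=\int_{\Omega}h\,dx$ (for instance the constant $\psi_{0}\equiv|\Omega|^{-1}\int_{\Omega}h\,dx$, or an affine function as in \cite{Bredies}) and then setting $\xi(x):=\int_{a}^{x}(\psi_{0}-h)(y)\,dy$; the mean condition guarantees $\xi(a)=\xi(b)=0$, so $\xi\in\mathrm{H}_{0}^{1}(\Omega)$. The second equation then forces $a:=g+\xi$, which lies in $\mathrm{H}_{0}^{1}(\Omega)$ because $g,\xi$ both do, so $(a,\xi)\in X$.

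It remains to absorb $(a,\xi)$ into $\dom F_{1}=\{(\phi,\xi):\|\phi\|_{\mathrm{L}^{q}(\Omega)}\leq\beta,\ \|\xi\|_{\infty}\leq\alpha\}$ by scaling. Here I would invoke the one-dimensional Sobolev embedding $\mathrm{H}_{0}^{1}(\Omega)\hookrightarrow C(\overline{\Omega})\subseteq\mathrm{L}^{\infty}(\Omega)\subseteq\mathrm{L}^{q}(\Omega)$, valid since $\Omega$ is a bounded interval, to conclude $\|a\|_{\mathrm{L}^{q}(\Omega)}<\infty$ and $\|\xi\|_{\infty}<\infty$. Choosing $\lambda>0$ large enough that $\|\lambda^{-1}a\|_{\mathrm{L}^{q}(\Omega)}\leq\beta$ and $\|\lambda^{-1}\xi\|_{\infty}\leq\alpha$ places $(\lambda^{-1}a,\lambda^{-1}\xi)$ in $\dom F_{1}$, and by the linearity of $K$ we obtain $(g,h)=\lambda\bigl((0,\lambda^{-1}\psi_{0})-K(\lambda^{-1}a,\lambda^{-1}\xi)\bigr)\in\lambda(\dom F_{2}-K(\dom F_{1}))$, which proves the reverse inclusion.

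I expect the only genuinely delicate points to be bookkeeping rather than analysis: ensuring the correct boundary behaviour of $\xi$ (so that it truly belongs to $\mathrm{H}_{0}^{1}(\Omega)$ and not merely $\mathrm{H}^{1}(\Omega)$), which is exactly what the mean-matching condition on $\psi_{0}$ buys, and verifying that the $\mathrm{L}^{q}$ and $\mathrm{L}^{\infty}$ norms of the constructed pair are finite so that the rescaling is legitimate — the latter relying crucially on the one-dimensional embedding. No compactness or duality machinery is needed beyond the final appeal to \cite{attouch_brezis}.
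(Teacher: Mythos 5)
Your proof is correct and follows the paper's overall strategy---represent an arbitrary $(g,h)\in Y$ as $(0,\psi_{0})-K(a,\xi)$ with $\xi$ an antiderivative, then rescale into $\dom F_{1}$---but it is genuinely leaner at one point, and the comparison is instructive. The paper takes $\psi_{0}$ \emph{affine}, $\psi_{0}(x)=c_{1}+c_{2}x$, imposing two conditions: the mean-matching condition $\int_{\Omega}\psi_{0}\,dx=\int_{\Omega}\psi\,dx$ (which, exactly as in your argument, is what forces $\xi(b)=0$ and hence $\xi\in\mathrm{H}_{0}^{1}(\Omega)$), and additionally the first-moment condition $\int_{\Omega}x\psi_{0}\,dx=\int_{\Omega}x\psi+\phi\,dx$. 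An integration by parts shows that this second condition amounts to requiring $\int_{\Omega}(\phi+\xi)\,dx=0$, i.e., that the first predual variable have zero mean; but since $\dom F_{1}$ consists only of the norm constraints $\|\cdot\|_{\mathrm{L}^{q}(\Omega)}\leq\beta$ and $\|\cdot\|_{\infty}\leq\alpha$, no mean-zero condition is needed here, and your constant choice $\psi_{0}\equiv|\Omega|^{-1}\int_{\Omega}h\,dx$ already suffices. (The superfluous moment condition appears to be a vestige of the analogous argument in \cite{Bredies}, where the predual problem for $\mathrm{TGV}$ has more structure.) Your bookkeeping is also the correct one: with $K(\phi,\xi)=(\xi-\phi,\xi')$, matching first components in $(g,h)=(0,\psi_{0})-K(a,\xi)$ forces $a=g+\xi$, whereas the paper's line ``$\phi=-(\phi+\xi)$'' is a sign slip---with that choice the first component of $(0,\psi_{0})-K(\phi,\xi)$ would be $-\phi-2\xi$ rather than $\phi$. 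The remaining steps---finiteness of $\|a\|_{\mathrm{L}^{q}(\Omega)}$ and $\|\xi\|_{\infty}$ via the one-dimensional embedding $\mathrm{H}_{0}^{1}(\Omega)\hookrightarrow C(\overline{\Omega})$, linearity of $K$ under the rescaling, closedness of $Y$ itself, and the final appeal to \cite{attouch_brezis}---match the paper and are sound.
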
 

Since there is no duality gap, we can find a relationship between the solutions of \eqref{predual}  and \eqref{tvlp_min}  via the optimality conditions,  see \cite[Prop. 4.1 (III)]{Ekeland}.

%

\begin{thrm}[Optimality conditions] Let $1<p<\infty$ and $f\in \mathrm{L}^{2}(\Omega)$. A pair $(w,u)\in\mathrm{L}^{p}(\Omega) \times\mathrm{BV}(\Omega)$  is a solution of \eqref{tvlp_min} if and only if  there exists a function $\phi\in\mathrm{H_{0}^{1}(\Omega)}$ such that 
\begin{equation}
\begin{aligned}
& \phi' = u-f,\\
& \phi \in \alpha\Sgn(Du-w),\\
\end{aligned}
\label{conditions1}
\end{equation}
and
\begin{equation}
\begin{aligned}
\begin{cases}
  \phi\in\left\{ \tilde{\phi}\in \mathrm{L}^{q}(\Omega) : \norm{\mathrm{L}^{q}(\Omega)}{\tilde{\phi}}\leq\beta\right\} &  \mbox{ if }w=0,\\
  \phi=\beta\frac{|w|^{p-2}w}{\norm{\mathrm{L}^{p}(\Omega)}{w}^{(p-1)}} &  \mbox{ if }w\neq0.
\end{cases}
\end{aligned}
\label{conditions2}
\end{equation}

\begin{proof}
Since there is not duality gap, the optimality conditions read \cite[Prop. 4.1 (III)]{Ekeland}:
\begin{align}
(\phi,\xi) & \in \partial F_{1}^{*}(-K^{\star}(w,u)),\label{opt1}\\
K(\phi,\xi) & \in \partial F_{2}^{*} (w,u),\label{opt2}
\end{align}
for every $(\phi,\xi)$ and $(w,u)$ solutions of \eqref{predual}  and \eqref{tvlp_min} respectively. Note that in dimension one we have $\mathrm{H}_{0}^{1}(\Omega)\subseteq C_{0}(\Omega)$. Hence, for every $(\sigma,\tau)\in X^{*}$, we have the following:
\begin{align*}
& F_{1}^{*}(\sigma,\tau) \geq F_{1}^{*}(-K^{\star}(w,u))+\scalprod{}{(\sigma,\tau)+K^{\star}(w,u)}{(\phi,\xi)}\Leftrightarrow\\
& \alpha\norm{\mathcal{M}}{\tau}+\beta\norm{\mathrm{L}^{p}(\Omega)}{\sigma} \geq \alpha\norm{\mathcal{M}}{Du-w}+\beta\norm{\mathrm{L}^{p}(\Omega)}{w}+\scalprod{}{(\sigma,\tau)-(w,Du-w)}{(\phi,\xi)}\Leftrightarrow\\
& \alpha\norm{\mathcal{M}}{\tau}+\beta\norm{\mathrm{L}^{p}(\Omega)}{\sigma} \geq \alpha\norm{\mathcal{M}}{Du-w}+\beta\norm{\mathrm{L}^{p}(\Omega)}{w}+\scalprod{}{(\sigma-w)}{\phi}+\scalprod{}{\tau-(Du-w)}{\xi}\Leftrightarrow\\
& \begin{cases}
& \alpha\norm{\mathcal{M}}{\tau} \geq\alpha\norm{\mathcal{M}}{Du-w}+\scalprod{}{\tau-(Du-w)}{\xi}\mbox{, }\forall\tau\in\mathrm{H}^{1}_{0}(\Omega)^{*},\\
& \beta\norm{\mathrm{L}^{p}(\Omega)}{\sigma} \geq \beta\norm{\mathrm{L}^{p}(\Omega)}{w}+\scalprod{}{\sigma-w}{\phi}\mbox{, }\forall\sigma\in\mathrm{H}^{1}_{0}(\Omega)^{*}
\end{cases}
\qquad\Leftrightarrow\\
& \begin{cases}
& \alpha\norm{\mathcal{M}}{\tau} \geq\alpha\norm{\mathcal{M}}{Du-w}+\scalprod{}{\tau-(Du-w)}{\xi}\mbox{, }\forall\tau\in \mathcal{M}(\Omega),\\
& \beta\norm{\mathrm{L}^{p}(\Omega)}{\sigma} \geq \beta\norm{\mathrm{L}^{p}(\Omega)}{w}+\scalprod{}{\sigma-w}{\phi}\mbox{, }\forall\sigma\in\mathrm{L}^{p}(\Omega)
\end{cases}
\qquad\Leftrightarrow\\
&\begin{cases}
& \xi\in\alpha\partial\norm{\mathcal{M}}{\cdot}(Du-w),\\
& \phi\in \beta \partial \norm{\mathrm{L}^{p}(\Omega)}{\cdot}(w),
\end{cases}
\end{align*}
and using \eqref{sign2} we can simplify the last expressions with:
\begin{equation}
\xi \in \alpha\Sgn(Du-w),
\end{equation}
and
\begin{equation}
\begin{aligned}
\begin{cases}
\phi\in\{ \tilde{\phi}\in \mathrm{L}^{q}(\Omega) : \norm{\mathrm{L}^{q}(\Omega)}{\tilde{\phi}}\leq\beta\} &\text{ if }w=0,\\
\phi=\beta\frac{|w|^{p-2}w}{\norm{\mathrm{L}^{p}(\Omega)}{w}^{(p-1)}} &\text{ if } w\neq0.
\end{cases}
\label{cond_proof1}
\end{aligned}
\end{equation}
Indeed,  the $\mathrm{L}^{p}$ norm is an one homogeneous functional and thus its subdifferential  reads: 
$$\partial\norm{\mathrm{L}^{p}(\Omega)}{\cdot}(w)=\left\{ z\in (\mathrm{L}^{p}(\Omega))^{*} : \scalprod{}{z}{w}=\norm{\mathrm{L}^{p}(\Omega)}{w}\mbox{,  }\scalprod{}{z}{\sigma}\leq\norm{\mathrm{L}^{p}(\Omega)}{\sigma}\mbox{,  }\forall\sigma\in\mathrm{L}^{p}(\Omega)\right\}.$$
Clearly, for $w=0$, the above expression reduces to $\norm{\mathrm{L}^{p}(\Omega)}{\sigma}\geq\scalprod{}{z}{\sigma}\mbox{, }\forall\sigma\in \mathrm{L}^{p}(\Omega)$ which is valid for any $z\in \mathrm{L}^{q}(\Omega)$ with $\norm{\mathrm{L}^{q}(\Omega)}{z}\leq1$, i.e., the unit ball of $\mathrm{L}^{q}(\Omega)$. If $w\neq0$ then the subdifferential  reduces to the G\^ateaux derivative of  the $\mathrm{L}^{p}$ norm,  i.e., $\partial\norm{\mathrm{L}^{p}(\Omega)}{\cdot}(w)=\frac{|w|^{p-2}w}{\norm{\mathrm{L}^{p}(\Omega)}{w}^{p-1}}$.
Finally, from \eqref{opt2} we have for every $(\hat{w},\hat{u})\in Y^{*}$ 
\begin{align*}
& F_{2}^{*}(\hat{w},\hat{u})  \geq F_{2}^{*}(w,u) +\scalprod{}{K(\phi,\xi)}{((\hat{w},\hat{u})-(w,u)}\Leftrightarrow\\
& \frac{1}{2}\int_{\Omega}(f-\hat{u})^{2} \, dx \geq  \frac{1}{2}\int_{\Omega}(f-u)^{2}\, dx+\scalprod{}{(\xi-\phi,\xi')}{(\hat{w}-w,\hat{u}-u)}\Leftrightarrow\\
& \frac{1}{2}\int_{\Omega}(f-\hat{u})^{2} \, dx \geq  \frac{1}{2}\int_{\Omega}(f-u)^{2}\, dx+\scalprod{}{(\xi-\phi}{\hat{w}-w}+\scalprod{}{\xi'}{\hat{u}-u}\Leftrightarrow\\
& \begin{cases} 
& \scalprod{}{\xi-\phi}{\hat{w}-w} \leq0\mbox{, }\forall\hat{w}\in (\mathrm{H}^{1}_{0}(\Omega))^{*},\\
& \frac{1}{2}\int_{\Omega}(f-u)^{2}+\scalprod{}{\xi'}{\hat{u}-\hat{u}}\leq\frac{1}{2}\int_{\Omega}(f-\hat{u})^{2}\mbox{, }\forall\hat{u}\in (\mathrm{L}^{2}(\Omega))^{*}\\
\end{cases}
\qquad\Leftrightarrow\\
& \begin{cases} 
& \xi = \phi, \\
& \xi' \in\partial \left(\frac{1}{2}\norm{2}{f-\cdot}^{2}\right)(u) = u -f .
\end{cases}
\end{align*}

Combining all the above results, we obtain the optimality conditions \eqref{conditions1} and \eqref{conditions2}.
\end{proof}
\end{thrm}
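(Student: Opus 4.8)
The plan is to invoke the abstract Fenchel--Rockafellar optimality conditions directly, since all the machinery has already been assembled. Proposition \ref{equivalence} identifies the dual of the predual \eqref{predual} with the primal \eqref{tvlp_min}, and Proposition \ref{geom_condition} guarantees the absence of a duality gap through the Attouch--Brezis geometric condition. Consequently, by \cite[Prop. 4.1 (III)]{Ekeland}, a predual solution $(\phi,\xi)$ and a primal/dual solution $(w,u)$ are jointly optimal if and only if the two subdifferential inclusions
\begin{align*}
(\phi,\xi) &\in \partial F_{1}^{*}(-K^{\star}(w,u)),\\
K(\phi,\xi) &\in \partial F_{2}^{*}(w,u)
\end{align*}
hold, with $F_{1},F_{2},K$ as in \eqref{functionals}. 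The entire argument then reduces to unpacking these two inclusions coordinate-wise.

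For the first inclusion I would use the explicit conjugate \eqref{F1conj3}, write out the defining subgradient inequality against an arbitrary $(\sigma,\tau)\in X^{*}$, and insert the action of $-K^{\star}$, namely $-K^{\star}(w,u)=(w,\,Du-w)$ as computed before \eqref{F1conj3}. Since the resulting inequality splits additively into a term depending only on $\sigma$ (paired with $\phi$) and one depending only on $\tau$ (paired with $\xi$), the single inclusion decouples into the two independent conditions $\xi\in\alpha\,\partial\|\cdot\|_{\mathcal{M}}(Du-w)$ and $\phi\in\beta\,\partial\|\cdot\|_{\mathrm{L}^{p}(\Omega)}(w)$. The former, via the characterisation of the subdifferential of the Radon norm, becomes $\xi\in\alpha\Sgn(Du-w)$. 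For the latter I would exploit that $\|\cdot\|_{\mathrm{L}^{p}(\Omega)}$ is positively one-homogeneous: its subdifferential at $w=0$ is the $\beta$-scaled dual unit ball $\{\tilde\phi:\|\tilde\phi\|_{\mathrm{L}^{q}(\Omega)}\le\beta\}$, while at $w\neq0$ it collapses to the single-valued G\^ateaux derivative $\beta|w|^{p-2}w/\|w\|_{\mathrm{L}^{p}(\Omega)}^{p-1}$, producing exactly the dichotomy \eqref{conditions2}.

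For the second inclusion I would use $F_{2}^{*}(w,u)=\tfrac12\|u-f\|_{\mathrm{L}^{2}(\Omega)}^{2}$ from \eqref{F2conj} together with $K(\phi,\xi)=(\xi-\phi,\xi')$, and test the subgradient inequality separately in the two components of $Y^{*}$. Because $F_{2}^{*}$ is independent of the first slot, the pairing with $\hat w - w$ must vanish for all admissible perturbations, forcing $\xi-\phi=0$, i.e. $\xi=\phi$; the second component yields $\xi'\in\partial\big(\tfrac12\|f-\cdot\|_{\mathrm{L}^{2}(\Omega)}^{2}\big)(u)=u-f$. Substituting $\xi=\phi$ back into the conditions derived from the first inclusion turns them into $\phi\in\alpha\Sgn(Du-w)$, $\phi'=u-f$, and the $w$-dependent alternative, which is precisely \eqref{conditions1}--\eqref{conditions2}.

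I expect the main obstacle to be the careful bookkeeping of the dual pairings: one must keep track that $\sigma,\tau$ range over $\mathrm{H}_{0}^{1}(\Omega)^{*}$ acting as distributions, argue by density that it suffices to test against measures in $\mathcal{M}(\Omega)$ and functions in $\mathrm{L}^{p}(\Omega)$ respectively, and correctly transpose $K$ so that the two inclusions genuinely decouple. The one-homogeneity computation of $\partial\|\cdot\|_{\mathrm{L}^{p}(\Omega)}$ at the origin, distinguishing the set-valued regime $w=0$ from the single-valued regime $w\neq0$, is the only other point requiring genuine, though routine, convex-analytic care.
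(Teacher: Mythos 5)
Your proposal is correct and follows essentially the same route as the paper's proof: it invokes the Fenchel--Rockafellar conditions from \cite[Prop. 4.1 (III)]{Ekeland} (justified by Propositions \ref{equivalence} and \ref{geom_condition}), decouples the inclusion $(\phi,\xi)\in\partial F_{1}^{*}(-K^{\star}(w,u))$ into $\xi\in\alpha\Sgn(Du-w)$ and $\phi\in\beta\,\partial\norm{\mathrm{L}^{p}(\Omega)}{\cdot}(w)$ with the same $w=0$ versus $w\neq 0$ dichotomy from one-homogeneity, and extracts $\xi=\phi$ and $\phi'=u-f$ from $K(\phi,\xi)\in\partial F_{2}^{*}(w,u)$ exactly as the paper does. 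The points you flag as requiring care (the density argument passing from $\mathrm{H}_{0}^{1}(\Omega)^{*}$ pairings to testing against $\mathcal{M}(\Omega)$ and $\mathrm{L}^{p}(\Omega)$, and the subdifferential computation at the origin) are precisely the steps the paper handles, so no gap remains.
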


\begin{rem}
We observe that if $w=0$ then the conditions \eqref{conditions1} coincide with the optimality conditions for the
 $\mathrm{L}^{2}$--$\mathrm{TV}$ minimisation problem (ROF) with parameter $\alpha$, i.e.,
\begin{equation}
\min_{u\in\mathrm{BV(\Omega)}}\frac{1}{2}\norm{2}{f-u}^2+\alpha\norm{\mathcal{M}}{Du},
\label{rof}
\end{equation}
see also \cite{Ring}. On the other hand when $w\neq0$, the additional condition \eqref{conditions2} depends on the value of $p$ and as we will see later it allows a certain degree of smoothness in the final solution $u$. 
\end{rem}

\subsection{Structure of the solutions}\label{sec:structure}
\label{section_stru}

The optimality conditions \eqref{conditions1} and \eqref{conditions2} are an important asset since we  can determine exactly the structure of the solutions for the problem \eqref{tvlp_min} as this is determined by  the regularising parameters $\alpha, \beta$ and the value of $p$. 

We initially discuss the cases where the solution $u$ of $\eqref{tvlp_min}$ is a solution of a corresponding ROF minimisation problem i.e., $w=0$.

\begin{prop}[ROF-solutions] Let $q$ be the conjugate exponent of $p\in (1,\infty]$ as this is defined in \eqref{conj_expon}. If
\begin{equation}
\frac{\beta}{\alpha}\geq |\Omega|^{\frac{1}{q}},
\label{cond1}
\end{equation}
 then the solution $u$ of \eqref{tvlp_min} coincides with the solution of the ROF minimisation problem \eqref{rof} and $w=0$.
\begin{proof}
Let $(w^{*},u^{*})$ be a solution pair for ($\mathcal{P}$), then for every $(w,u)\in \mathrm{L}^{p}(\Omega)\times\mathrm{BV}(\Omega)$, 
$$\frac{1}{2}\norm{\mathrm{L}^2(\Omega)}{f-u^{*}}^{2}+\alpha\norm{\mathcal{M}}{Du^{*}-w^{*}}+\beta\norm{\mathrm{L}^p(\Omega)}{w^{*}}\leq \frac{1}{2}\norm{\mathrm{L}^2(\Omega)}{f-u}^{2}+\alpha\norm{\mathcal{M}}{Du-w}+\beta\norm{\mathrm{L}^p(\Omega)}{w}.$$
 Setting $w=0$, we get 
\begin{equation}
\frac{1}{2}\norm{\mathrm{L}^2(\Omega)}{f-u^{*}}^{2}+\alpha\norm{\mathcal{M}}{Du^{*}-w^{*}}+\beta\norm{\mathrm{L}^p(\Omega)}{w^{*}}\leq \frac{1}{2}\norm{\mathrm{L}^2(\Omega)}{f-u}^{2}+\alpha\norm{\mathcal{M}}{Du}.
\label{rof_type1}
\end{equation}
Since $w\in\mathrm{L}^{p}(\Omega)$ with $p\in(1,\infty]$, using the inequality \eqref{Lp_inclusion} we have that $$\norm{\mathcal{M}}{w^{*}}=\norm{\mathrm{L}^{1}(\Omega)}{w^{*}}\leq|\Omega|^{1-\frac{1}{p}}\norm{\mathrm{L}^p(\Omega)}{w^{*}},$$ 
and using the condition \eqref{cond1} we get
\begin{align}
\alpha\norm{\mathcal{M}}{Du^{*}} & \leq\alpha\norm{\mathcal{M}}{Du^{*}-w^{*}}+\alpha\norm{\mathcal{M}}{w^{*}}\notag\\
                             & \leq\alpha\norm{\mathcal{M}}{Du^{*}-w^{*}}+\beta|\Omega|^{\frac{1}{p}-1}\norm{\mathcal{M}}{w^{*}}\notag\\ 
														 & \leq\alpha\norm{\mathcal{M}}{Du^{*}-w^{*}}+\beta\norm{\mathrm{L}^p(\Omega)}{w^{*}}.\label{rof_type2}
\end{align}

From \eqref{rof_type1} and \eqref{rof_type2} we conclude that for every $u\in\mathrm{BV(\Omega)}$, 
$$\frac{1}{2}\norm{\mathrm{L^2(\Omega)}}{f-u^{*}}^{2}+\alpha\norm{\mathcal{M}}{Du^{*}}\leq\frac{1}{2}\norm{\mathrm{L^2(\Omega)}}{f-u}^{2}+\alpha\norm{\mathcal{M}}{Du},$$
 i.e., $u^{*}$ solves
  $$\min_{u\in\mathrm{BV(\Omega)}}\frac{1}{2}\norm{\mathrm{L}^2(\Omega)}{f-u}^{2}+\alpha\norm{\mathcal{M}}{Du}=\min_{\substack{u\in \mathrm{BV}(\Omega)\\w=0}} \frac{1}{2}\norm{\mathrm{L^2(\Omega)}}{f-u}^{2}+\alpha\norm{\mathcal{M}}{Du-w}+\beta\norm{\mathrm{L}^p(\Omega)}{w}.$$
\end{proof}
\label{equivalent_ROF}
\end{prop}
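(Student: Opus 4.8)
The plan is to take an arbitrary solution pair $(w^{*},u^{*})$ of \eqref{tvlp_min} and argue that, under the threshold \eqref{cond1}, replacing $w^{*}$ by $0$ never increases the objective, so that $u^{*}$ must in fact minimise the ROF energy \eqref{rof}. First I would use the optimality of $(w^{*},u^{*})$ against the admissible competitor $(u,0)$ for an arbitrary $u\in\mathrm{BV}(\Omega)$; since $\norm{\mathcal{M}}{Du-0}=\norm{\mathcal{M}}{Du}$ and the $\mathrm{L}^{p}$ term vanishes, this bounds the optimal value from above by the ROF energy at any $u$:
\begin{equation*}
\frac{1}{2}\norm{\mathrm{L}^{2}(\Omega)}{f-u^{*}}^{2}+\alpha\norm{\mathcal{M}}{Du^{*}-w^{*}}+\beta\norm{\mathrm{L}^{p}(\Omega)}{w^{*}}\leq \frac{1}{2}\norm{\mathrm{L}^{2}(\Omega)}{f-u}^{2}+\alpha\norm{\mathcal{M}}{Du}.
\end{equation*}

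The decisive step is to bound the left-hand side from below by the ROF energy evaluated at $u^{*}$. By the triangle inequality for the Radon norm, $\norm{\mathcal{M}}{Du^{*}}\leq \norm{\mathcal{M}}{Du^{*}-w^{*}}+\norm{\mathcal{M}}{w^{*}}$, so it suffices to absorb the extra term into the $\mathrm{L}^{p}$ penalty, i.e.\ to show $\alpha\norm{\mathcal{M}}{w^{*}}\leq\beta\norm{\mathrm{L}^{p}(\Omega)}{w^{*}}$. Here I would use that $w^{*}\in\mathrm{L}^{p}(\Omega)\subset\mathrm{L}^{1}(\Omega)$, so $\norm{\mathcal{M}}{w^{*}}=\norm{\mathrm{L}^{1}(\Omega)}{w^{*}}$, together with the embedding inequality \eqref{Lp_inclusion}, $\norm{\mathrm{L}^{1}(\Omega)}{w^{*}}\leq |\Omega|^{\frac{1}{q}}\norm{\mathrm{L}^{p}(\Omega)}{w^{*}}$. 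The hypothesis \eqref{cond1}, $\beta/\alpha\geq|\Omega|^{\frac{1}{q}}$, is exactly what is needed to conclude $\alpha\norm{\mathcal{M}}{w^{*}}\leq\alpha|\Omega|^{\frac{1}{q}}\norm{\mathrm{L}^{p}(\Omega)}{w^{*}}\leq\beta\norm{\mathrm{L}^{p}(\Omega)}{w^{*}}$. This is the only place where the assumption enters, and it is the conceptual heart of the statement: the condition guarantees that the $\mathrm{L}^{1}$-cost one could save by shifting mass into $w$ can never outweigh its $\mathrm{L}^{p}$-penalty.

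Chaining the two inequalities gives $\frac{1}{2}\norm{\mathrm{L}^{2}(\Omega)}{f-u^{*}}^{2}+\alpha\norm{\mathcal{M}}{Du^{*}}\leq\frac{1}{2}\norm{\mathrm{L}^{2}(\Omega)}{f-u}^{2}+\alpha\norm{\mathcal{M}}{Du}$ for every $u\in\mathrm{BV}(\Omega)$, so $u^{*}$ solves \eqref{rof}; since the ROF functional is strictly convex in $u$ its minimiser is unique, which pins down $u^{*}$. To obtain $w=0$ I would then observe that the feasible pair $(u^{*},0)$ has objective value equal precisely to the ROF energy at $u^{*}$, and the chain above shows this value is no larger than the optimal value of \eqref{tvlp_min}; hence $(u^{*},0)$ is itself optimal and $w=0$ is an admissible optimal choice. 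I do not anticipate a genuine obstacle here, since the argument is a threshold/triangle-inequality estimate; the only point requiring care is matching the embedding constant $|\Omega|^{\frac{1}{q}}$ to the exponent in \eqref{cond1}, so that the conjugacy relation $1/p+1/q=1$ is used consistently.
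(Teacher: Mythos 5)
Your proposal is correct and follows essentially the same route as the paper's proof: compare the optimal pair $(w^{*},u^{*})$ against the competitor $(u,0)$, then use the triangle inequality for the Radon norm together with the embedding $\norm{\mathrm{L}^{1}(\Omega)}{w^{*}}\leq|\Omega|^{\frac{1}{q}}\norm{\mathrm{L}^{p}(\Omega)}{w^{*}}$ from \eqref{Lp_inclusion} and the threshold \eqref{cond1} to absorb $\alpha\norm{\mathcal{M}}{w^{*}}$ into $\beta\norm{\mathrm{L}^{p}(\Omega)}{w^{*}}$, exactly as in \eqref{rof_type2}. Your closing observations --- that strict convexity of the fidelity term pins down $u^{*}$ uniquely, and that $(u^{*},0)$ attains the optimal value so $w=0$ is an admissible (though not necessarily unique) optimal choice --- are a slightly more careful rendering of the paper's own remark following the proposition.
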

In fact what we have essentially proved above is that when the condition \eqref{cond1} holds then
\[\mathrm{TVL}_{\alpha,\beta}^{p}(u)=\alpha \|Du\|_{\mathcal{M}},\quad \forall u \in\mathrm{BV}(\Omega).\]
Notice also that when \eqref{cond1} holds then we can show that $w=0$ is an admissible solution but in general we cannot prove that this solution is unique.

The condition \eqref{cond1} is valid for any dimension $d\geq1$. It provides a rough threshold for obtaining ROF type solutions in terms of the regularising parameters $\alpha,\beta$ and the image domain $\Omega$. However, the condition is not sharp  since as we will see  in the following sections we can obtain a sharper estimate for specific data $f$. \par

The following proposition in the spirit of \cite{Bredies, papafitsoros} gives more insight into the structure of solutions of \eqref{tvlp_min}.
 
\begin{prop} Let $f\in\mathrm{BV(\Omega)}$ and suppose that $(w,u)\in \mathrm{L}^{p}(\Omega)\times\mathrm{BV}(\Omega)$ is a solution pair for \eqref{tvlp_min} with $p\in(1,\infty)$. Suppose that $u>f$ (or $u<f$ ) on an open interval $I\subset\Omega$ then $(Du-w)\lfloor I = 0$ i.e., $u'=w$ on $I$ and $|D^{s}u|(I)=0$.
\label{prop_struct}
\end{prop}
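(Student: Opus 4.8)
We have a solution pair $(w,u)$ of $(\mathcal{P})$ with $p \in (1,\infty)$, and an open interval $I$ on which $u > f$ (the case $u < f$ is symmetric). We want to conclude $(Du-w)\lfloor I = 0$, which splits into the absolutely continuous part $u' = w$ on $I$ and the singular part $|D^su|(I)=0$.

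The key tool is the optimality system. From Theorem on optimality conditions, there's a potential $\phi \in H^1_0(\Omega)$ with $\phi' = u - f$ and $\phi \in \alpha\,\mathrm{Sgn}(Du - w)$, plus the condition linking $\phi$ to $w$ through the $L^p$ subdifferential.

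On $I$, since $u > f$, we have $\phi' = u - f > 0$, so $\phi$ is strictly increasing on $I$. In particular $\phi$ cannot be locally constant anywhere on $I$, and $|\phi| < \alpha$ can only fail on a set where... wait, actually I need $|\phi| < \alpha$ to conclude $Du - w = 0$ via the Sgn condition. The sign condition $\phi \in \alpha\,\mathrm{Sgn}(Du-w)$ means: where $|\phi| < \alpha$, we must have $Du - w = 0$; and $|\phi| = \alpha$ where $Du - w \neq 0$, with $\phi$ matching the sign of $Du-w$.

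Let me reconsider. The signed measure $Du - w$ (where $w$ is the $L^p$, hence $L^1$, absolutely continuous part being subtracted from the measure $Du$). The condition $\phi \in \alpha\,\mathrm{Sgn}(Du-w)$ should mean $\phi = \alpha \frac{d(Du-w)}{d|Du-w|}$ $|Du-w|$-a.e., and $|\phi| \le \alpha$ everywhere. So $|\phi| = \alpha$ on the support of $Du - w$.

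**The plan.** On $I$, $\phi$ is $C^1$ (since $\phi' = u-f \in BV \subset L^\infty$, actually $\phi' = u - f$ with $f \in BV$ and $u \in BV$, so $\phi' \in BV$, hence $\phi \in C^0$ and strictly monotone). The strict monotonicity $\phi' > 0$ means $\phi$ attains the value $\alpha$ (or $-\alpha$) on at most... if $\phi \equiv \alpha$ on a subinterval, then $\phi' = 0$ there, contradicting $\phi' = u-f > 0$. So $\{|\phi| = \alpha\}$ has empty interior in $I$; in fact since $\phi$ is strictly increasing, the level set $\{\phi = \alpha\}$ is at most a single point, likewise $\{\phi = -\alpha\}$. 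Thus $\{|\phi| = \alpha\} \cap I$ contains at most two points, hence has Lebesgue measure zero and, being finite, carries no mass for the singular part either, as long as $Du - w$ has no atoms there — but I should argue directly: the support of $Du - w$ restricted to $I$ lies in $\{|\phi| = \alpha\}$, a finite set, so $(Du-w)\lfloor I$ is a purely atomic measure supported on those $\le 2$ points. The final step is to rule out atoms: if $Du-w$ had an atom at a point $x_0 \in I$, then $Du$ has an atom there (since $w \in L^1$ is non-atomic), so $u$ has a jump; but one checks via the monotonicity/sign structure that this forces $\phi$ to change consistently with the jump direction, and a short argument (or an energy/competitor comparison) excludes it.

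**Main obstacle and resolution.** The delicate point is the singular part. The inclusion $(Du-w)\lfloor I$ being supported on the finite set $\{|\phi|=\alpha\}\cap I$ kills the absolutely continuous part immediately (giving $u' = w$ a.e. on $I$), and kills any diffuse singular (Cantor) part. What remains is to exclude jump atoms of $u$ inside $I$. I expect this is handled exactly as in \cite{Bredies, papafitsoros}: at a jump point the sign of the jump of $Du - w$ must equal $\mathrm{sgn}(\phi)$, but combined with $\phi' = u - f > 0$ forcing $\phi$ to cross the level transversally, one shows such an atom is incompatible either with $\phi \in H^1_0$ (an $H^1$ function with $\phi' = u-f$ having a fixed sign cannot sit at the level $\pm\alpha$ across an atom while staying within $[-\alpha,\alpha]$) or with optimality. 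Concretely: if $x_0 \in I$ is an atom of $Du-w$ with positive mass, then $\phi(x_0) = \alpha$ and $Du - w \ge 0$ locally forces $u$ to jump up; but $\phi$ strictly increasing through $\alpha$ would require $\phi > \alpha$ just after $x_0$, violating $|\phi| \le \alpha$. This contradiction (and its mirror) removes the atoms, yielding $(Du-w)\lfloor I = 0$, i.e. both $u' = w$ a.e. and $|D^su|(I) = 0$.
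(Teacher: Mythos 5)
Your proposal is correct, and it is exactly the argument the paper has in mind: the paper itself omits the proof, deferring to the analogous TGV propositions in \cite{Bredies, papafitsoros}, and those proofs run precisely along your lines --- use the dual variable $\phi\in\mathrm{H}^{1}_{0}(\Omega)$ from the optimality conditions \eqref{conditions1}, note that $\phi'=u-f>0$ a.e.\ on $I$ makes the continuous function $\phi$ strictly increasing there, and combine this with $\phi\in\alpha\Sgn(Du-w)$, which forces $\|\phi\|_{\infty}\leq\alpha$ and $|\phi|=\alpha$ $|Du-w|$-a.e.

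One remark: your detour through the atomic part is unnecessary. Since $I$ is open and $\phi$ is strictly increasing on $I$ with $\|\phi\|_{\infty}\leq\alpha$, the value $\alpha$ cannot be attained at any point of $I$ (there would be points of $I$ to its right where $\phi>\alpha$), and symmetrically $-\alpha$ cannot be attained (points to its left would have $\phi<-\alpha$); hence $|\phi|<\alpha$ \emph{everywhere} on $I$, and the Sgn condition gives $|Du-w|(I)=0$ in one stroke, killing the absolutely continuous, Cantor and jump parts simultaneously. Your separate treatment of the two level points and the atom-exclusion step is logically sound but redundant. The only genuine technical caveat --- which the paper explicitly flags --- is that the hypothesis $u>f$ on $I$ must be read in terms of the precise representatives of the $\mathrm{BV}$ functions $u$ and $f$, so that $\phi'=u-f>0$ holds a.e.\ on $I$; with that reading, your integral argument for strict monotonicity of $\phi$ is exactly right.
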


The above proposition is formulated rigorously via the use of \emph{precise representatives} of $\mathrm{BV}$ functions, see \cite{Ambrosio}, but  for the sake of simplicity we rather not get into the details here. Instead we refer the reader to \cite{Bredies, papafitsoros} where the analogue propositions are shown for the $\mathrm{TGV}$ regularised solutions and whose proofs are similar to the one of Proposition \ref{prop_struct}. 

We now consider the case where the solution is constant in $\Omega$, which in fact coincides with the mean value $\tilde{f}$ of the data f:
\begin{equation}
\tilde{f} := \underset{\mbox{$u$ constant}}{\operatorname{argmin}}\;\frac{1}{2}\norm{\mathrm{L}^2(\Omega)}{f-u}^{2}=\frac{1}{|\Omega|}\int_{\Omega}f\,dx.
\label{L2_regr}
\end{equation}

\begin{prop}[Mean value solution] If the following conditions hold
\begin{equation}
\begin{aligned}
& \alpha\geq \|f-\tilde{f}\|_{\mathrm{L}^1(\Omega)},\\
& \beta\geq|\Omega|^{\frac{1}{q}}\|f-\tilde{f}\|_{\mathrm{L}^1(\Omega)},
\end{aligned}
\label{L2_regr_cond}
\end{equation}
then the solution of \eqref{tvlp_min} is constant and  it is equal to $\tilde{f}$.
\begin{proof} Clearly, if $u$ is a constant solution of \eqref{tvlp_min}, then $Du=0$ and from  \eqref{tvlp_ineq} we get $\mathrm{TVL}_{\alpha,\beta}^{p}(u)=0$. Hence, we have $u=\tilde{f}$.  In order to have $u=\tilde{f}$, from the optimality conditions \eqref{conditions1} and \eqref{conditions2}, it suffices to find a function $\phi\in \mathrm{H}^{1}_{0}(\Omega)$ such that $\phi(a)=\phi(b)=0$ and
 $$\phi'=f-\tilde{f},\quad\norm{\infty}{\phi}\leq\alpha,\quad\norm{\mathrm{L}^q(\Omega)}{\phi}\leq\beta.$$
Letting $\phi(x)=\int_{a}^{x} (f(s)-\tilde{f}) \,ds$, then obviously $\phi(a)=\phi(b)=0$ and
 $$|\phi(x)| \leq \int_{a}^{x} |f(s)-\tilde{f}| \,ds \leq \|f-\tilde{f}\|_{\mathrm{L}^1(\Omega)}<\infty.$$
  Therefore, $\norm{\infty}{\phi}\leq\|f-\tilde{f}\|_{\mathrm{L}^1(\Omega)}$. Also, since $\mathrm{L}^{\infty}(\Omega)\subset \mathrm{L}^{q}(\Omega)$ we obtain 
   $$\norm{\mathrm{L}^{q}(\Omega)}{\phi}\leq |\Omega|^{\frac{1}{q}}\norm{\infty}{v}\leq  |\Omega|^{\frac{1}{q}}\|f-\tilde{f}\|_{\mathrm{L}^1(\Omega)}.$$ 
 Hence, it suffices to choose $\alpha$ and $\beta$ as in \eqref{L2_regr_cond}. 
\end{proof}
\end{prop}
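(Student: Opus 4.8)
The plan is to exhibit $(w,u)=(0,\tilde{f})$ as a solution pair and to invoke the optimality conditions \eqref{conditions1}--\eqref{conditions2} as a sufficient certificate of optimality. First I would argue that any constant solution is forced to equal $\tilde{f}$: if $u$ is constant then $Du=0$, so the Lipschitz equivalence \eqref{tvlp_ineq} gives $\mathrm{TVL}^{p}_{\alpha,\beta}(u)=0$, and the objective in \eqref{tvlp_min} collapses to the fidelity $\frac{1}{2}\norm{\mathrm{L}^2(\Omega)}{f-u}^2$, which over constants is minimised exactly at the mean value $\tilde{f}$ defined in \eqref{L2_regr}. It therefore remains to show that under \eqref{L2_regr_cond} the pair $(0,\tilde{f})$ actually satisfies the optimality conditions.

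Since $u=\tilde{f}$ is constant and $w=0$, we have $Du-w=0$, so the inclusion $\phi\in\alpha\Sgn(Du-w)$ in \eqref{conditions1} degenerates to the single requirement $\norm{\infty}{\phi}\leq\alpha$ (the subdifferential of the Radon norm at $0$ being the unit ball), while the $w=0$ branch of \eqref{conditions2} asks only for $\norm{\mathrm{L}^q(\Omega)}{\phi}\leq\beta$. Hence the whole problem reduces to producing a single test function $\phi\in\mathrm{H}^1_0(\Omega)$ with $\phi'=u-f=\tilde{f}-f$ that lies inside both constraint balls simultaneously.

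The natural candidate is the antiderivative $\phi(x)=\int_a^x(f(s)-\tilde{f})\,ds$ (the sign being immaterial for the constraint balls). The decisive point — and the only place where the mean-value hypothesis is genuinely used — is that this $\phi$ belongs to $\mathrm{H}^1_0(\Omega)$: one has $\phi(a)=0$ trivially, and $\phi(b)=\int_\Omega f-|\Omega|\tilde{f}=0$ precisely because $\tilde{f}$ is the average of $f$ over $\Omega$. The two norm bounds are then routine: $\norm{\infty}{\phi}\leq\norm{\mathrm{L}^1(\Omega)}{f-\tilde{f}}$ by the fundamental theorem of calculus, and, since $\Omega$ is bounded so that $\mathrm{L}^\infty(\Omega)\hookrightarrow\mathrm{L}^q(\Omega)$, one gets $\norm{\mathrm{L}^q(\Omega)}{\phi}\leq|\Omega|^{1/q}\norm{\infty}{\phi}\leq|\Omega|^{1/q}\norm{\mathrm{L}^1(\Omega)}{f-\tilde{f}}$. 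Imposing \eqref{L2_regr_cond} makes both right-hand sides no larger than $\alpha$ and $\beta$ respectively, so $\phi$ is admissible and the certificate is complete.

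I do not expect a serious obstacle here; the content is essentially a dual-certificate construction reducing to an elementary one-dimensional integration estimate. The subtlety worth flagging is the \emph{joint} satisfaction of the two constraints: the bound on $\alpha$ alone controls $\norm{\infty}{\phi}$, and it is the extra factor $|\Omega|^{1/q}$ in the second line of \eqref{L2_regr_cond} that upgrades this sup-norm control into the required $\mathrm{L}^q$ bound via the bounded-domain embedding. Finally, uniqueness of $u=\tilde{f}$ is inherited from the strict convexity of the quadratic fidelity (Theorem \ref{tvlp_exis} with $s=2$ and $T$ the identity), so the constant mean value is not merely \emph{a} solution but \emph{the} solution.
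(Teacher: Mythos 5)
Your proposal is correct and follows essentially the same route as the paper: both arguments exhibit the antiderivative $\phi(x)=\int_a^x(f(s)-\tilde{f})\,ds$ as a dual certificate, use the mean-value property to get $\phi(b)=0$ so that $\phi\in\mathrm{H}^1_0(\Omega)$, and verify $\norm{\infty}{\phi}\leq\norm{\mathrm{L}^1(\Omega)}{f-\tilde{f}}$ together with $\norm{\mathrm{L}^q(\Omega)}{\phi}\leq|\Omega|^{1/q}\norm{\mathrm{L}^1(\Omega)}{f-\tilde{f}}$ via the bounded-domain embedding. Your additional observations (the sign of $\phi'$ being immaterial since $Du-w=0$ makes the $\Sgn$ condition a symmetric ball constraint, and uniqueness via strict convexity of the fidelity) are correct refinements but do not change the argument.
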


\begin{figure}[h]
 \includegraphics[scale=0.33]{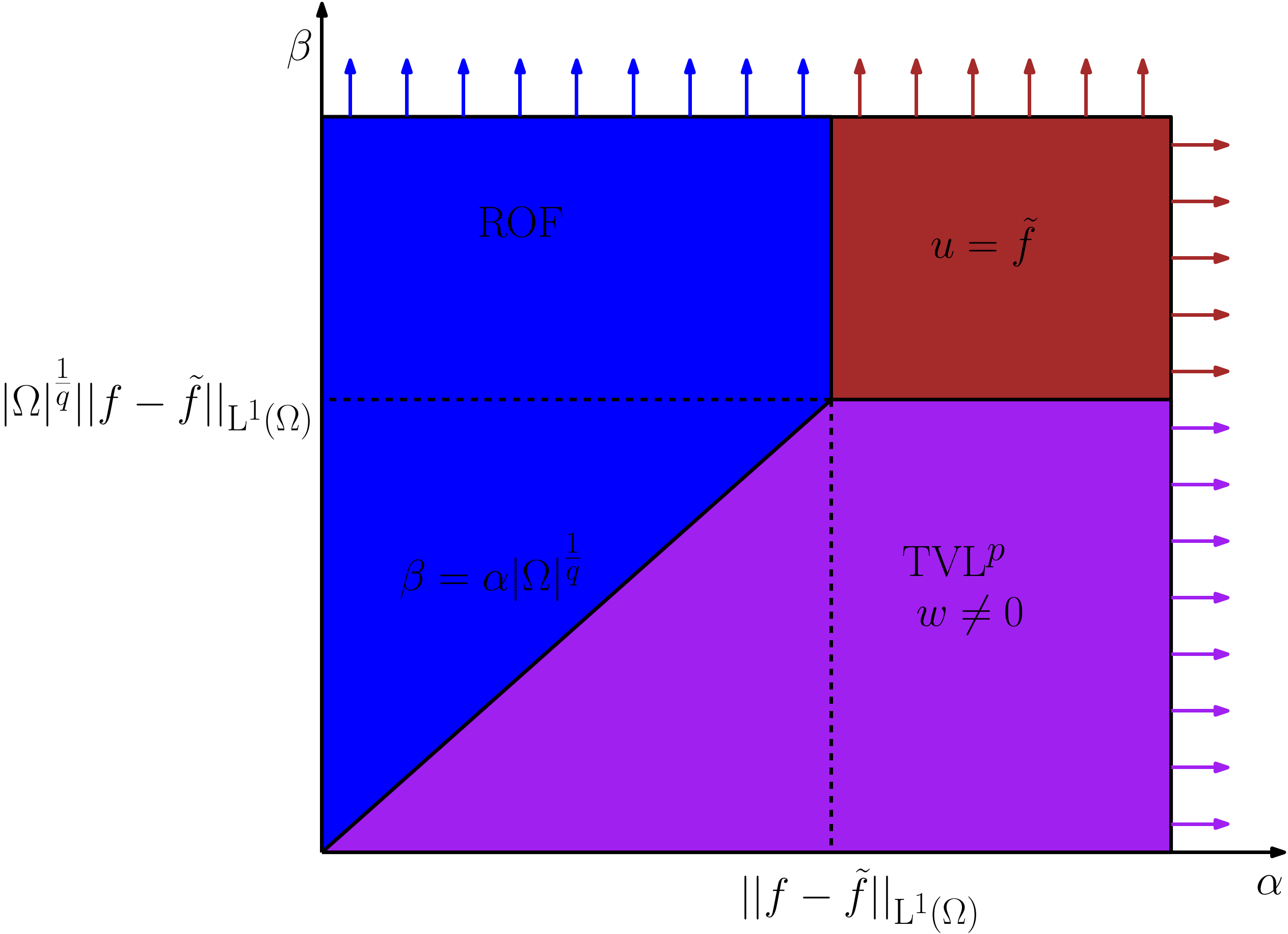}
 \caption{ Characterisation of solutions of \eqref{tvlp_min} for any data $f$: The blue/red areas correspond to the ROF type solutions ($w=0$) and the purple area corresponds to the $\mathrm{TVL}^{p}$ solutions ($w\neq0)$ for $1<p<\infty$. We note that the red/purple areas are potentially larger/smaller as the conditions we have derived are not sharp.}
 \label{TVS_p_graph}
\end{figure}

In Figure \ref{TVS_p_graph}, we summarise our results so far. There, we have partitioned the set $\{\alpha>0,\beta>0\}$ into different areas that correspond to different types of solutions of the problem \eqref{tvlp_min}. 
 The brown area, arising from thresholds \eqref{L2_regr_cond} corresponds to the choices of $\alpha$ and $\beta$ that produce constant solutions while the blue area corresponds to ROF type solutions, according to threshold \eqref{cond1}. 
 Therefore, we can determine the area where the non-trivial solutions are obtained i.e., $w\neq0$, see purple region. Note that since the conditions \eqref{cond1} and \eqref{L2_regr_cond} are not sharp the red  and the purple areas are potentially larger or smaller respectively  than it is shown in Figure \ref{TVS_p_graph}. 
 
 The following proposition reveals more information about the structure of solutions in the case $w
 \ne 0$.

\begin{prop}[$\mathrm{TVL}^p$-solutions]Let $f\in\mathrm{BV(\Omega)}$ and suppose that $(w,u)\in\mathrm{L}^{p}(\Omega)\times\mathrm{BV}(\Omega)$ is a solution pair for \eqref{tvlp_min} with $p\in(1,\infty)$ and $w\neq0$. Suppose that $u>f$ (or $u<f$) on an open interval $I\subset\Omega$ then the solution $u$ of \eqref{tvlp_min} is obtained by 
\begin{equation}
-C(|u'(x)|^{p-2}u'(x))'+u(x)=f(x),\quad \forall x\in I \quad\mbox{where } C=\frac{\beta}{\norm{\mathrm{L}^{p}(\Omega)}{w}^{p-1}}.
\label{pLaplacian}
\end{equation}
\begin{proof} Since $1<p<\infty$, $w\neq0$ using Proposition \ref{prop_struct} and the second optimality condition of \eqref{conditions2}, we have that
 $$\phi=\beta\frac{|u'|^{p-2}u'}{\norm{\mathrm{L}^{p}(\Omega)}{w}^{p-1}}.$$ 
 Hence, by \eqref{conditions1} we obtain \eqref{pLaplacian} where $C=\frac{\beta}{\norm{\mathrm{L}^{p}(\Omega)}{w}^{p-1}}$.  
\end{proof}
\label{tvlp_1_hom}
\end{prop}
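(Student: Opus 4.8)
The plan is to combine the structural result of Proposition \ref{prop_struct} with the two optimality conditions \eqref{conditions1} and \eqref{conditions2}, reducing the whole argument to a single substitution followed by one differentiation. The key observation is that, since $w\neq0$ and $1<p<\infty$, the relevant optimality relation is available in its smooth (single-valued) form rather than as a set-valued inclusion, so no case distinction is needed.

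First I would invoke Proposition \ref{prop_struct} on the open interval $I$ where $u>f$ (or $u<f$): it yields $(Du-w)\lfloor I=0$, so that $u'=w$ almost everywhere on $I$ and $|D^{s}u|(I)=0$. In particular $u$ carries no singular part over $I$, which is precisely what allows me to treat $w$ and the classical derivative $u'$ interchangeably there and to work with an honest function rather than a measure.

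Next, because $w\neq0$, the second branch of the optimality condition \eqref{conditions2} applies, giving the pointwise identity $\phi=\beta\,|w|^{p-2}w/\norm{\mathrm{L}^{p}(\Omega)}{w}^{p-1}$. Substituting $w=u'$ on $I$ turns this into $\phi=C|u'|^{p-2}u'$ on $I$, where $C=\beta/\norm{\mathrm{L}^{p}(\Omega)}{w}^{p-1}$ is exactly the constant announced in the statement. I would then apply the first optimality condition \eqref{conditions1}, namely $\phi'=u-f$; differentiating the previous identity gives $C\,(|u'|^{p-2}u')'=u-f$, which rearranges directly to \eqref{pLaplacian}.

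The step requiring the most care is the interpretation of this differentiation. The correct reading is that $\phi'=u-f$ holds in the weak sense guaranteed by $\phi\in\mathrm{H}_{0}^{1}(\Omega)$, so that $\phi'\in\mathrm{L}^{2}(\Omega)$ is a genuine $\mathrm{L}^{2}$ function; this in turn certifies that $|u'|^{p-2}u'$ admits a weak derivative on $I$, equal to $(u-f)/C$, and hence that \eqref{pLaplacian} is to be understood distributionally on $I$ rather than pointwise. The only other subtlety is the precise-representative machinery underlying Proposition \ref{prop_struct}, but these technicalities are inherited from that proposition and need not be re-derived here.
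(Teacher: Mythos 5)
Your proposal is correct and follows essentially the same route as the paper's own proof: invoke Proposition \ref{prop_struct} to identify $w=u'$ on $I$, substitute into the second branch of \eqref{conditions2} to get $\phi=C|u'|^{p-2}u'$, and differentiate using $\phi'=u-f$ from \eqref{conditions1}. Your added remarks on the weak ($\mathrm{H}_0^1$/distributional) interpretation of the differentiation are a sound elaboration of what the paper leaves implicit, not a departure from its argument.
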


\subsection{Exact solutions of \eqref{tvlp_min} for a  step function}\label{sec:exact}
\label{section_exact}

In what follows we compute explicit solutions of the $\mathrm{TVL}^{p}$ denoising model \eqref{tvlp_min} for the case $p=2$ for a simple data function.
We define the step function in $\Omega=(-L,L)$, $L>0$ as: 
\begin{equation}
f(x)=
\begin{cases}
0&\text{ if   }x\in(-L,0],\\
h&\text{ if }x\in(0,L).
\end{cases}
\label{step_function}
\end{equation}
We first investigate conditions under which we obtain ROF type solutions, that is $w=0$.
\subsubsection{ROF type solutions}
We are initially interested in solutions that respect the discontinuity at $x=0$ and are piecewise constant. From the optimality conditions \eqref{conditions1}--\eqref{conditions2}, it suffices to find a function $v\in\mathrm{H}^{1}_{0}(\Omega)$ such that 
\begin{equation}
\phi(-L)=\phi(L)=0,\quad \norm{\infty}{\phi}\leq\alpha, \quad \phi(0)=\alpha,
\label{exact_cond1} 
\end{equation}
and it is also piecewise affine.
It is easy to see that by setting $\phi(x)=\frac{\alpha}{L}(L-|x|)$,  the conditions \eqref{exact_cond1} are satisfied and the solution $u$ is piecewise constant. The first condition of \eqref{conditions2} implies that $\norm{\mathrm{L}^q(\Omega)}{\phi}\leq\beta\Leftrightarrow \frac{\beta}{\alpha}\geq(\frac{2L}{q+1})^{\frac{1}{q}}$  and provides  a necessary and sufficient condition that need to be fulfilled in order for $u$ to be piecewise constant, that is to say 
\begin{equation}
u(x)=
\begin{cases}
\frac{\alpha}{L}& \mbox{ if  }x\in(-L,0]\\
h-\frac{\alpha}{L}& \mbox{ if  }x\in(0,L)
\end{cases}
\quad \Leftrightarrow \quad \frac{\beta}{\alpha}\geq\left(\frac{2L}{q+1}\right)^{\frac{1}{q}}.
\label{pc_cond}
\end{equation}
A special case of the ROF-type solution is when $u$ is constant, i.e., when  $u=\tilde{f}$, the mean value of $f$.

 We define $\phi(x)=\frac{h}{2}(L-|x|)$ and in that case we have that $\norm{\infty}{\phi}\leq\alpha\Leftrightarrow\alpha\geq\frac{hL}{2}$ and $\norm{\mathrm{L}^q(\Omega)}{\phi}\leq\beta\Leftrightarrow \beta\geq\frac{h}{2}(\frac{2L^{q+1}}{q+1})^{\frac{1}{q}}$. This implies that 
\begin{equation}
u=\tilde{f}=\frac{h}{2}\quad\Leftrightarrow \quad
\alpha\geq\frac{hL}{2}\quad\mbox{and}\quad  \beta\geq\frac{h}{2}\left(\frac{2L^{q+1}}{q+1}\right)^{\frac{1}{q}}\quad \text{with}\quad\frac{1}{p}+\frac{1}{q}=1.
\label{c_cond}
\end{equation}
Using now  \eqref{pc_cond}--\eqref{c_cond} we can draw the  exact regions in the quadrant  of $\{\alpha>,\beta>0\}$ that correspond to these two types of solutions, see the left graph in Figure \ref{tvl2_graph_1_hom} for the special case $p=2$. Notice that in these regions $w=0$ and the estimates are valid for any $p\in(1,\infty)$. 

\subsubsection{$\mathrm{TVL}^2$ type solutions}

For simplicity reasons, we examine here only the case  $p=2$ with $w\neq0$ in $\Omega$. However, we refer the reader to  Section \ref{sec:numerics:1d} where we compute numerically solutions for $p\ne 2$. Using Proposition \ref{tvlp_1_hom}, we observe that the solution is given by the following second order differential equation:
\begin{equation}
-Cu''(x)-u(x)=f(x),\quad\mbox{subject to}\quad C=\frac{\beta}{\norm{\mathrm{L}^2(\Omega)}{w}}.
\label{tvl2_sol}
\end{equation}
Even though we can tell that the solution of \eqref{tvl2_sol} has an exponential form, the fact that the constraint on $C$ depends on the solution $w$, creates a difficult computation in order to recover $u$ analytically. In order to overcome this obstacle, we consider the one dimensional version of the  $2$-homogeneous analogue of \eqref{tvlp_min} that was introduced in Section \ref{sec:phomo}:
\begin{equation}
\min_{\substack{u\in\mathrm{BV}(\Omega)\\w\in\mathrm{L}^2(\Omega)}}\frac{1}{2}\norm{\mathrm{L}^2(\Omega)}{f-u}^2+\alpha\norm{\mathcal{M}}{Du-w}+\frac{\beta_{2-hom}}{2}\norm{\mathrm{L}^{2}(\Omega)}{w}^2.
\label{tvlp_min_p_hom}
\end{equation}
Similarly to Section \ref{sec_opt_cond}, one can derive the optimality conditions for \eqref{tvlp_min_p_hom}. A pair $(w, u)$ is a solution of \eqref{tvlp_min_p_hom} if and only if 
there exists a function $\phi\in\mathrm{H}^{1}_{0}(\Omega)$ such that
\begin{equation}
\begin{aligned}
\phi' & =u-f, \\
\phi & \in\alpha\Sgn(Du-w),\\
\phi & =\beta_{2-hom}w.
\end{aligned}
\label{opt_cond_p_hom}
\end{equation}

In order to recover analytically the solutions of \eqref{tvlp_min} for $p=2$ and determine the purple region in Figure \ref{TVS_p_graph} it suffices to solve the equivalent model \eqref{tvlp_min_p_hom} where $w\neq0$. We may restrict our computations only on $I=(-L,0]\subset\Omega$ and due to symmetry the solution in $I=(0,L)$ is given by $u(x)+u(-x)=h$. The optimality condition \eqref{opt_cond_p_hom}  results to
\begin{equation}
-u''(x)+ku(x)=0,\quad \mbox{where} \quad k^2=\frac{1}{\beta}\quad \mbox{and}\quad x\in I=(-L,0]\subset\Omega.
\end{equation}
Then, we get $u(x)=c_{1}e^{kx}+c_{2}e^{-kx}$ with $\phi(x)=\frac{c_{1}}{k}e^{kx}-\frac{c_{2}}{k}e^{-kx}+c_{3}$ for all $x\in(-L,0]$. Firstly, we examine solutions that are continuous  which due to symmetry much have the value $\frac{h}{2}$ at the $x=0$, i.e., $u(0)=\frac{h}{2}$. Since $\phi\in\mathrm{H}^{1}_{0}(-L,L)$, we have $\phi(-L)=0$ and also $u'(-L)=0$. Finally, we  require that $\phi(0)<\alpha$. After some computations, we conclude that 
\begin{equation}
u(x)=
\begin{cases}
c_{1}e^{kx}+c_{2}e^{-kx} &\mbox{ if }x\in(-L,0],\\
h-c_{1}e^{-kx}-c_{2}e^{kx}&\mbox{ if }x\in(0,L)
\end{cases}\quad
\Leftrightarrow \quad\frac{\tanh(kL)}{k}<\frac{2\alpha}{h},
\label{cont_sol}
\end{equation}
where  $c_{1}=c_{2}e^{2kL}$, $c_{2}=\frac{h}{2(e^{2kL}+1)}$ and  $k=\frac{1}{\sqrt{\beta}}$.

On the other hand, in order to get solutions that  preserve the discontinuity at $x=0$, we require the following:
\begin{equation}
\begin{aligned}
& \phi(-L)=0,\quad u'(-L)=0,\\
& u(0)<\frac{h}{2},\quad \phi(0)=\alpha.
\end{aligned}
\end{equation}
Then we get
\begin{equation}
u(x)=
\begin{cases}
c_{1}e^{kx}+c_{2}e^{-kx} &\mbox{ if }x\in(-L,0],\\
h-c_{1}e^{-kx}-c_{2}e^{kx}&\mbox{ if }x\in(0,L)
\end{cases}\quad
\Leftrightarrow \quad\frac{\tanh(kL)}{k}>\frac{2\alpha}{h},
\label{disc_sol}
\end{equation}
where  $c_{1}=c_{2}e^{2kL}$, $c_{2}=\frac{\alpha k}{e^{2kL}-1}$ and  $k=\frac{1}{\sqrt{\beta}}$. Notice that the conditions for $\alpha$ and $\beta$ in \eqref{cont_sol} and \eqref{disc_sol} are supplementary and thus only these type of solutions can occur, see the quadrant of $\{\alpha>0, \beta>0\}$ as it presented
 in Figure \ref{tvl2_graph_2_hom}. Letting $g(\beta)=\sqrt{\beta}\tanh{(\frac{L}{\sqrt{\beta}})}$, if $g(\beta)<\frac{2\alpha}{h}$ then the solution is of the form \eqref{cont_sol}, see the blue region in Figure \ref{tvl2_graph_2_hom}. On the other hand in the complementary green region we obtain the solution \eqref{disc_sol}. For extreme cases where $\beta\rightarrow\infty$, i.e., $k\rightarrow0$ we obtain $\frac{\tanh(kL)}{k}\rightarrow L$, which means that there is an asymptote of $g$ at $\alpha=\frac{hL}{2}$. 
Although, we know the form of the inverse function of the hyperbolic tangent, we cannot compute analytically the inverse $f^{-1}$. However, we can obtain an approximation using a Taylor expansion which leads to 
\begin{equation}
\sqrt{\beta}\tanh{\left(\frac{L}{\sqrt{\beta}}\right)}=L - \frac{L^3}{3\beta} + \mathcal{O}\left(\frac{1}{\beta^2}\right)=\frac{2\alpha}{h}\Leftrightarrow \beta=\frac{hL^3}{3(hL-2\alpha)},
\end{equation}
where $\alpha>0$ and $\alpha\neq\frac{hL}{2}$.

Finally, we would like to describe the solution on the limiting case  $\beta\rightarrow\infty$. Letting $\beta\rightarrow\infty$ in \eqref{cont_sol}, we have that $c_{1},c_{2}\rightarrow\frac{h}{2}$ and $u(x)\rightarrow\frac{h}{2}$ for every $x\in\Omega$, which in fact is the mean value obtained from \eqref{tvlp_min}. For the discontinuous solutions, we have that $c_{1},c_{2}\rightarrow\frac{\alpha}{2L}$ and 
$$u(x)\rightarrow
\begin{cases}
\frac{\alpha}{L}&\mbox{ if  }x\in(-L,0],\\
h-\frac{\alpha}{L}&\mbox{ if  }x\in(0,L),\\
\end{cases}
$$
i.e., we converge to the solution \eqref{pc_cond}. We also get that 
\begin{equation}
w(x)=kc_{2}
\begin{cases}
e^{2kL+kx}-e^{-kx}&\mbox{ if }x\in(-L,0],\\
e^{2kL-kx}-e^{kx}&\mbox{ if }x\in(0,L],\\
\end{cases}
\end{equation}
with $\norm{\mathrm{L}^2(\Omega)}{w}=c_{2}k\sqrt{2}e^{kL}(\sinh(2kL)-2kL)^{\frac{1}{2}}$ and $c_{2}$ is given either from \eqref{cont_sol} or \eqref{disc_sol}. Then, in both cases we have  $w\rightarrow0$ as $k\rightarrow0$. Observe that the product of $\beta_{2-hom}\norm{\mathrm{L^2}(\Omega)}{w}$ is bounded as $\beta_{2-hom}\rightarrow\infty$ for both types of solutions and in fact corresponds to the bounds found in \eqref{pc_cond} and \eqref{c_cond}. Indeed, since 
$$\frac{(\sinh(2kL)-2kL)^{\frac{1}{2}}}{k^{\frac{3}{2}}}\rightarrow2\sqrt{\frac{L^3}{3}},\quad\mbox{as }k\rightarrow0,$$ 
if $\alpha>\frac{hL}{2}$ then
$$\beta_{2-hom}\norm{\mathrm{L^2}(\Omega)}{w}\rightarrow\frac{h}{2}\sqrt{\frac{2L^3}{3}},\quad\mbox{as }\beta_{2-hom}\rightarrow\infty,$$
 while if $\alpha\le\frac{hL}{2}$
$$\beta_{2-hom}\norm{\mathrm{L^2}(\Omega)}{w}\rightarrow\alpha\sqrt{\frac{2L}{3}},\quad\mbox{as }\beta_{2-hom}\rightarrow\infty.$$ 

The last result is yet another verification of Theorem \ref{kostas_phomo_1homo} and it shows that there is an 
one to one correspondence,  $\beta_{2-hom}\norm{\mathrm{L^{2}(\Omega)}}{w}\leftrightarrow\beta_{1-hom}$ and the purple region of Figure \ref{tvl2_graph_1_hom} is characterised by the solutions obtained in Figure \ref{tvl2_graph_2_hom}.

%

\begin{figure}
 \includegraphics[scale=0.33]{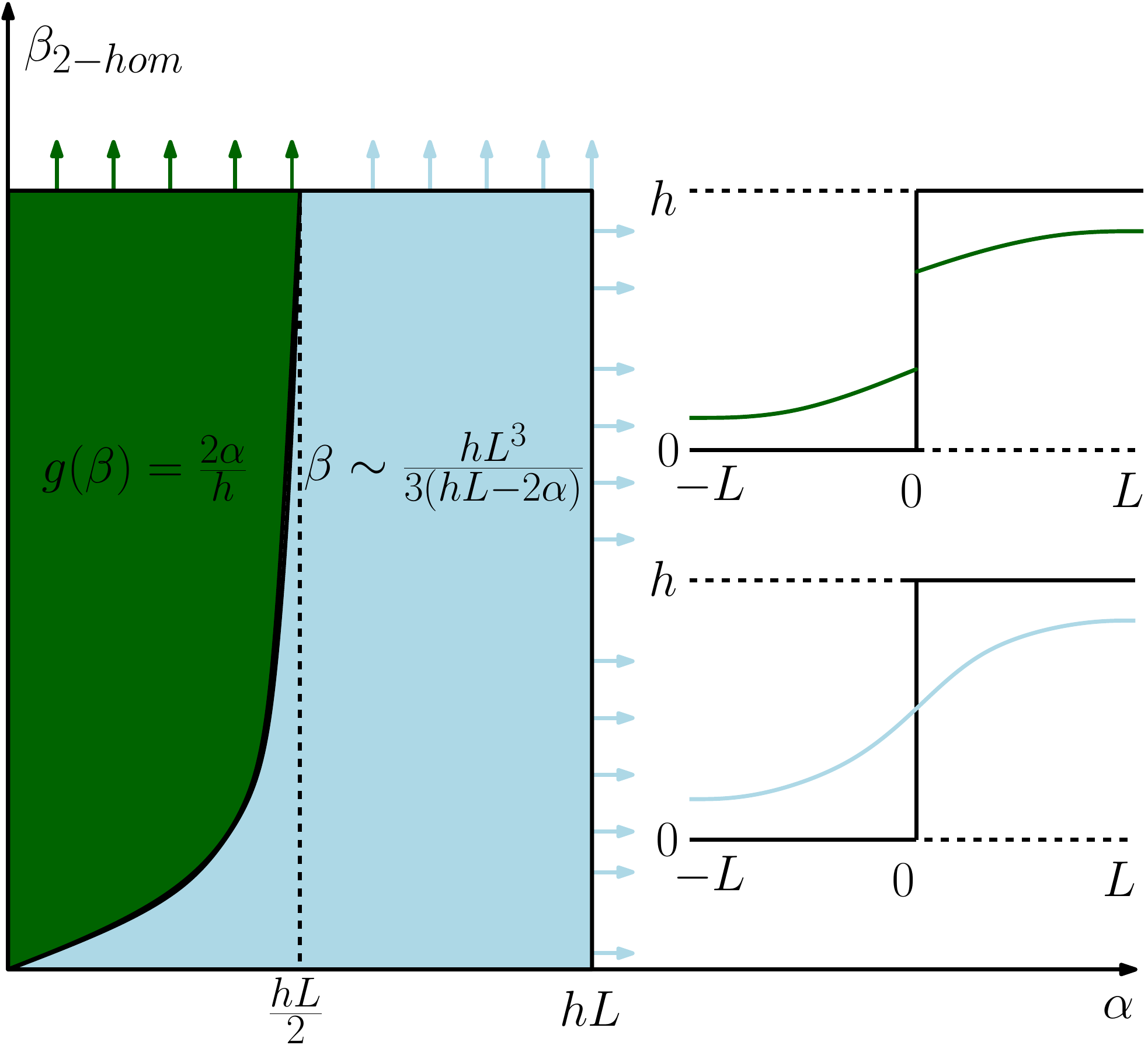}
 \caption{
Characterisation of solutions of \eqref{tvlp_min_p_hom} for data $f$ being a step function. The green region corresponds to solutions that preserve the discontinuity at $x=0$, \eqref{disc_sol}, while the blue region corresponds to continuous solutions, \eqref{cont_sol}, both having an exponential form.}
 \label{tvl2_graph_2_hom}
\end{figure}

\begin{figure}
 \includegraphics[scale=0.33]{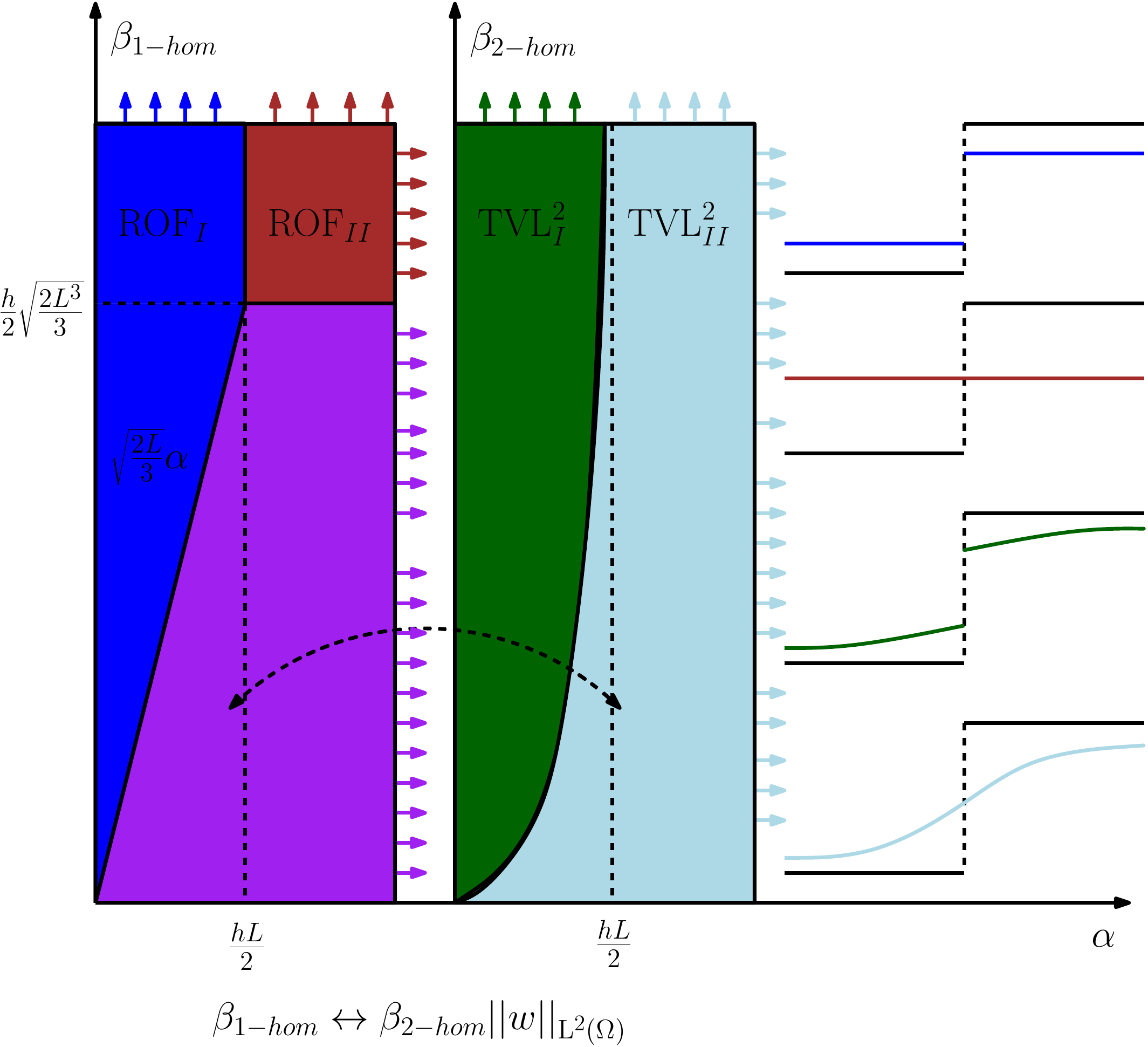}
 \caption{Characterisation of solutions of \eqref{tvlp_min} for $p=2$ for data $f$ being a step function. The type of solutions in the purple region of the left graph are exactly the solutions obtained for the 2-homogenous problem \eqref{tvlp_min_p_hom}, on the right graph.}
 \label{tvl2_graph_1_hom}
\end{figure}

\section{An image decomposition approach}
\label{sec:infimal}
In this section, we present another formulation for the problem \eqref{tvlp_min}, where we decompose an image into a $\mathrm{BV}$ part (piecewise constant) and a part that belongs to $W^{1,p}(\Omega)$ (smooth). Let $1<p\leq\infty$ and $\Omega\subset\re^{d}$ and consider the following minimisation
 problem:
\begin{equation}
\min_{\substack{u\in \mathrm{BV}(\Omega)\\ v\in \mathrm{W}^{1,p}(\Omega)}} L(u,v):= \frac{1}{2}\norm{\mathrm{L}^{2}(\Omega)}{f-u-v}^2+\alpha\norm{\mathcal{M}}{Du}+\beta\norm{\mathrm{L}^{p}(\Omega)}{\nabla v}.
\label{inf_conv}
\end{equation}
In this way, we can decompose our image into two geometric components. The second term captures the piecewise constant structures in the image, whereas the third term captures the smoothness that depends on the value of $p$. In the one dimensional setting, we can prove that the problems \eqref{tvlp_min} and \eqref{inf_conv} are 
equivalent.
\begin{prop} Let $\Omega=(a,b)\subset\re$, then a pair $(v^{\ast},u^{\ast})\in \mathrm{W}^{1,p}(\Omega)\times\mathrm{BV}(\Omega) $ is a solution of \eqref{inf_conv} if and only if $(\nabla v^{\ast}, u^{\ast}+v^{\ast})\in \mathrm{L}^p(\Omega)\times\mathrm{BV}(\Omega)$ is a solution of \eqref{tvlp_min}.
\begin{proof}
Let $\overline{u}=u+v$ then, we have the following
\begin{align*}
(v^{\ast},u^{\ast}) & \in\underset{\substack{u\in \mathrm{BV}(\Omega) \\ v\in\mathrm{W}^{1,p}(\Omega)}}{\operatorname{argmin}}\;\frac{1}{2}\norm{\mathrm{L}^{2}(\Omega)}{f-u-v}^{2}+\alpha\norm{\mathcal{M}}{Du}+\beta\norm{\mathrm{L}^{p}(\Omega)}{\nabla v}\Leftrightarrow
\end{align*}
\begin{align*}
(v^{\ast},u^{\ast}) & \in\underset{\substack{u\in \mathrm{BV}(\Omega) \\ v\in\mathrm{W}^{1,p}(\Omega)}}{\operatorname{argmin}}\;\frac{1}{2}\norm{\mathrm{L}^{2}(\Omega)}{f-u-v}^{2}+\alpha \sup_{\substack{\phi\in C_{c}^{\infty}(\Omega)\\\norm{\infty}{\phi}\leq1}}\left\{\scalprod{}{u}{\phi'}\right\}+\beta\norm{\mathrm{L}^{p}(\Omega)}{\nabla v}\Leftrightarrow\\
(v^{\ast},\overline{u^{\ast}}) & \in\underset{\substack{\overline{u}\in \mathrm{BV}(\Omega) \\ v\in\mathrm{W}^{1,p}(\Omega)}}{\operatorname{argmin}}\;\frac{1}{2}\norm{\mathrm{L}^{2}(\Omega)}{f-\overline{u}}^{2}\alpha \sup_{\substack{\phi\in C_{c}^{\infty}(\Omega)\\\norm{\infty}{\phi}\leq1}}\left\{\scalprod{}{\overline{u}}{\phi'}+\scalprod{}{\nabla v}{\phi}\right\}+\beta\norm{\mathrm{L}^{p}(\Omega)}{\nabla v}\Leftrightarrow\\
(w^{\ast},\overline{u^{\ast}}) & \in \underset{\substack{\overline{u}\in \mathrm{BV}(\Omega) \\ w =\nabla v\\ v\in\mathrm{W}^{1,p}(\Omega)}}{\operatorname{argmin}}\;\frac{1}{2}\norm{\mathrm{L}^{2}(\Omega)}{f-\overline{u}}^{2}+\alpha\norm{\mathcal{M}}{D\overline{u} - w}+\beta\norm{\mathrm{L}^{p}(\Omega)}{w}.
\end{align*}
However, we can eliminate the last constraint since
\begin{equation}
\left\{w\in\mathrm{L}^{p}(\Omega): \;\exists v\in\mathrm{W}^{1,p}(\Omega),\; w=\nabla v\right\}=\mathrm{L}^{p}(\Omega).
\label{inf_conv_cons}
\end{equation}
Indeed, let $w\in \mathrm{L}^{p}(\Omega)\subset \mathrm{L}^{1}(\Omega)$ for $p\in(1,\infty]$ and define $v(x)=\int_{a}^{x}w(s)\,ds$ for $x\in\Omega\subset\re$. Clearly,  $v'=w$ a.e and by Jensen's inequality 
$$|v(x)|^{p}=\left|\int_{a}^{x}w(s)\,ds\right|^{p}\leq\int_{a}^{x}|w(s)|^{p}\,ds<\infty,$$
 and $v\in\mathrm{W}^{1,p}(\Omega)$ for $p\in(1,\infty)$.  Finally, for the case $p=\infty$, let  $C>0$ be a constant such that $|w(x)|\leq C$ a.e. on $\Omega$. In that case we have $|v(x)|\leq\int_{a}^{x}|w(s)|\,ds\leq C|\Omega|<\infty$ and
  $$|v(y)-v(x)|\leq\int_{x}^{y}|w(s)|\,ds\leq C |y-x|, \quad \forall x,y \in\Omega,$$
   i.e., $v\in\mathrm{W}^{1,\infty}(\Omega)$, from Rademacher's theorem. Therefore,
   $$(w^{\ast},\overline{u^{\ast}})  \in \underset{\substack{\overline{u}\in \mathrm{BV}(\Omega) \\ w\in\mathrm{L}^{p}(\Omega)}}{\operatorname{argmin}}\;\frac{1}{2}\norm{\mathrm{L}^{2}(\Omega)}{f-\overline{u}}^{2}\alpha\norm{\mathcal{M}}{D\overline{u} - w}+\beta\norm{\mathrm{L}^{p}(\Omega)}{w},$$
where $\overline{u^{\ast}}=u^{\ast}+v^{\ast}$ and $w^{\ast}=\nabla v^{\ast}$.
\end{proof}
\label{prop_equivalence}
\end{prop}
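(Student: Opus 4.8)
The plan is to exhibit a change of variables that turns \eqref{inf_conv} into \eqref{tvlp_min} term by term, and then to check that the implicit constraint introduced by this substitution is vacuous in one dimension. First I would set $\overline{u} = u + v$ and $w = \nabla v$. The fidelity term $\tfrac{1}{2}\norm{\mathrm{L}^{2}(\Omega)}{f - u - v}^2$ immediately becomes $\tfrac{1}{2}\norm{\mathrm{L}^{2}(\Omega)}{f - \overline{u}}^2$. Since on a bounded interval one has $\mathrm{W}^{1,p}(\Omega) \subset \mathrm{BV}(\Omega)$, the sum $\overline{u}$ lies in $\mathrm{BV}(\Omega)$, and $Dv = \nabla v\,dx = w$, so that $Du = D\overline{u} - w$ and hence $\alpha\norm{\mathcal{M}}{Du} = \alpha\norm{\mathcal{M}}{D\overline{u} - w}$. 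The third term is unchanged, $\beta\norm{\mathrm{L}^{p}(\Omega)}{\nabla v} = \beta\norm{\mathrm{L}^{p}(\Omega)}{w}$. Thus in the variables $(\overline{u}, w)$ the functional $L$ coincides exactly with the objective of \eqref{tvlp_min}, the only apparent difference being that $w$ ranges over the image of the gradient map $v \mapsto \nabla v$ rather than over all of $\mathrm{L}^{p}(\Omega)$.

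It therefore remains to show that this apparent constraint is no constraint at all, i.e.\ that
\[\left\{w \in \mathrm{L}^{p}(\Omega) : w = \nabla v \text{ for some } v \in \mathrm{W}^{1,p}(\Omega)\right\} = \mathrm{L}^{p}(\Omega).\]
This is the step that genuinely uses $\Omega = (a,b) \subset \re$. Given any $w \in \mathrm{L}^{p}(\Omega) \subset \mathrm{L}^{1}(\Omega)$ (the inclusion holding because $\Omega$ is bounded), I would take the antiderivative $v(x) = \int_{a}^{x} w(s)\,ds$, which satisfies $v' = w$ almost everywhere. For finite $p$, a Jensen/H\"older estimate bounds $\norm{\mathrm{L}^{p}(\Omega)}{v}$ in terms of $\norm{\mathrm{L}^{p}(\Omega)}{w}$, placing $v \in \mathrm{W}^{1,p}(\Omega)$; for $p = \infty$ the same $v$ is Lipschitz, hence in $\mathrm{W}^{1,\infty}(\Omega)$ by Rademacher's theorem.

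The hard part will be precisely this surjectivity of the gradient: in higher dimensions the image of $v \mapsto \nabla v$ consists only of curl-free fields, so the two models would \emph{not} be equivalent, and it is the one-dimensional assumption that removes the obstruction. Once surjectivity is established, the correspondence of minimisers is immediate, since the map $(u,v) \mapsto (\overline{u}, w) = (u+v, \nabla v)$ carries the feasible set of \eqref{inf_conv} onto that of \eqref{tvlp_min} while preserving the value of the objective, so minimisers are sent to minimisers in both directions. I would finally observe that $L(u,v)$ is invariant under $(u,v) \mapsto (u - c, v + c)$ for a constant $c$ (constants affect neither $Du$ nor $\nabla v$ and leave $u+v$ fixed), which is exactly the freedom in choosing the additive constant of the antiderivative; this explains why the backward passage is well defined only up to this harmless invariance, and confirms that the stated forward correspondence $(v^{\ast}, u^{\ast}) \mapsto (\nabla v^{\ast}, u^{\ast} + v^{\ast})$ is the right one.
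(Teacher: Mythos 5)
Your proposal is correct and follows essentially the same route as the paper: the substitution $(\overline{u},w)=(u+v,\nabla v)$ turning $L$ into the objective of \eqref{tvlp_min}, followed by the one-dimensional surjectivity of the gradient map proved via the antiderivative $v(x)=\int_a^x w(s)\,ds$ with the Jensen/H\"older estimate for finite $p$ and Rademacher's theorem for $p=\infty$. The only cosmetic difference is that you verify $\alpha\norm{\mathcal{M}}{Du}=\alpha\norm{\mathcal{M}}{D\overline{u}-w}$ directly from $Dv=\nabla v\,dx$, whereas the paper reaches the same identity by rewriting the Radon norm in its dual (sup) formulation; your remark on the constant-shift invariance $(u,v)\mapsto(u-c,v+c)$ is a correct additional observation not made explicit in the paper.
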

Even though for $d=1$ it is true that every $\mathrm{L}^{p}$ function can be written as a gradient, this is not true for higher dimensions. In fact, as we show in the following sections, this constraint is quite restrictive and for example the staircasing effect cannot always be eliminated in the denoising process, see for instance Figure \ref{infimal_2D_1}.

The existence of minimisers of \eqref{inf_conv} is shown following again the same techniques as in Theorem \ref{tvlp_exis}. Moreover, due to the strict convexity on the fidelity term of \eqref{inf_conv}, one can prove that the sum $u+v\in\mathrm{BV(\Omega)}$ is unique for a solution  $(u,v)\in\mathrm{W}^{1,p}(\Omega)\times\mathrm{BV}(\Omega)$. This result coincides with the uniqueness of \eqref{tvlp_min} problem for $u$. Finally, if $(u_{1},v_{1}), (u_{2},v_{2})$ are two minimisers of \eqref{inf_conv}, then from the convexity of $L(u,v)$ we have for $0\leq\lambda\leq1$
$$L(\lambda(u_{1},v_{1})+(1-\lambda)(u_{2},v_{2}))\leq\lambda L(u_{1},v_{1})+(1-\lambda)L(u_{2},v_{2}).$$
Since $(u_{1},v_{1}), (u_{2},v_{2})$ are both minimisers, the above inequality is in fact an equality. Since $u_{1}+v_{1}=u_{2}+v_{2}$, we obtain
\begin{align}
& \alpha|D(\lambda u_{1} + (1-\lambda)u_{2}|(\Omega) + \beta\norm{\mathrm{L}^{p}(\Omega)}{\nabla(\lambda v_{1} + (1-\lambda)v_{2})}\notag\\
& =\alpha(\lambda|Du_{1}|(\Omega)
+(1-\lambda)|Du_{2}|)+\beta(\lambda\norm{\mathrm{L}^{p}(\Omega)}{\nabla v_{1}}+(1-\lambda)\norm{\mathrm{L}^{p}(\Omega)}{\nabla v_{2}}).
\label{equality1}
\end{align}
If we assume that 
$$\norm{\mathrm{L}^{p}(\Omega)}{\nabla(\lambda v_{1} + (1-\lambda) v_{2})}<\lambda\norm{\mathrm{L}^{p}(\Omega)}{\nabla v_{1}}+(1-\lambda)\norm{\mathrm{L}^{p}(\Omega)}{\nabla v_{2}},$$
 then we contradict the equality on \eqref{equality1}. Hence, the Minkowski inequality becomes an equality which is equivalent to the existence of $\mu>0$ such that $\nabla v_{2}=\mu\nabla v_{1}$. In other words, we have proved the following proposition that was also shown in \cite{Kim} in a similar context:

\begin{prop} Let $(u_{1},v_{1}), (u_{2},v_{2})$ be two minimisers of \eqref{inf_conv}. Then 
\begin{align}
&u_{1}+v_{1} =u_{2}+v_{2},\label{sum_unique}\\
&\exists\mu>0 \mbox{ such that }\nabla v_{2} =\mu\nabla v_{1}.
\end{align}
\label{prop_uniq1}
\end{prop}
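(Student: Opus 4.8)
The plan is to exploit the convexity of the functional $L$ together with the strict convexity of its data term and of the $\mathrm{L}^{p}$ norm for finite $p$. First I would observe that the minimiser set of a convex functional is convex and, more precisely, that for two minimisers $(u_{1},v_{1})$ and $(u_{2},v_{2})$ every convex combination $\lambda(u_{1},v_{1})+(1-\lambda)(u_{2},v_{2})$ with $\lambda\in(0,1)$ is again a minimiser. This follows because convexity of $L$ gives $L(\lambda(u_{1},v_{1})+(1-\lambda)(u_{2},v_{2}))\le \lambda L(u_{1},v_{1})+(1-\lambda)L(u_{2},v_{2})$, and since both $(u_{i},v_{i})$ attain the minimal value the right-hand side equals that value while the left-hand side cannot be smaller; hence the inequality is an equality. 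As $L$ is a sum of three convex terms, each term must then satisfy its own convexity inequality with equality.

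For the identity \eqref{sum_unique} I would use the strict convexity of the fidelity term. Setting $\overline{u}_{i}=u_{i}+v_{i}$, the term $\tfrac{1}{2}\norm{\mathrm{L}^{2}(\Omega)}{f-\overline{u}}^{2}$ is a strictly convex functional of $\overline{u}\in\mathrm{L}^{2}(\Omega)$, so equality in its convexity inequality for $\lambda\in(0,1)$ forces $\overline{u}_{1}=\overline{u}_{2}$ in $\mathrm{L}^{2}(\Omega)$, which is exactly $u_{1}+v_{1}=u_{2}+v_{2}$.

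For the proportionality I would turn to the two regularisers. Once the sum is fixed the fidelity contributions agree and cancel, leaving the equality \eqref{equality1} for the combined regularisation. Since both $\alpha|D(\cdot)|(\Omega)$ and $\beta\norm{\mathrm{L}^{p}(\Omega)}{\nabla(\cdot)}$ satisfy the triangle inequality and $\alpha,\beta>0$, a strict inequality in either would make the sum strict; hence each is an equality, and in particular $\norm{\mathrm{L}^{p}(\Omega)}{\nabla(\lambda v_{1}+(1-\lambda)v_{2})}=\lambda\norm{\mathrm{L}^{p}(\Omega)}{\nabla v_{1}}+(1-\lambda)\norm{\mathrm{L}^{p}(\Omega)}{\nabla v_{2}}$. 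Because $1<p<\infty$ the space $\mathrm{L}^{p}(\Omega)$ is strictly convex, so equality in Minkowski's inequality between two nonzero elements forces one to be a positive scalar multiple of the other, giving $\nabla v_{2}=\mu\nabla v_{1}$ for some $\mu>0$.

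The main obstacle will be making this last step fully rigorous. The equality-in-Minkowski argument only yields a strictly positive $\mu$ when both $\nabla v_{1}$ and $\nabla v_{2}$ are nonzero, so I would state the proportionality under the implicit non-degeneracy $\nabla v_{1}\ne 0$ (which then forces $\nabla v_{2}\ne 0$), and note that the argument genuinely needs $p<\infty$, since $\mathrm{L}^{\infty}(\Omega)$ is not strictly convex. I would also spell out the equality case for $\mathbb{R}^{d}$-valued maps, namely that $\norm{\mathrm{L}^{p}(\Omega)}{F+G}=\norm{\mathrm{L}^{p}(\Omega)}{F}+\norm{\mathrm{L}^{p}(\Omega)}{G}$ with $F,G\ne 0$ implies $F=cG$ pointwise almost everywhere for some constant $c>0$, which one obtains by combining strict convexity of $t\mapsto|t|^{p}$ with the equality case of the scalar Minkowski inequality applied to the pointwise Euclidean norms $|\nabla v_{1}|,|\nabla v_{2}|$.
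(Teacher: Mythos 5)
Your proposal is correct and follows essentially the same route as the paper: equality in the convexity inequality for $L$ at a convex combination of the two minimisers, strict convexity of the fidelity term to obtain $u_{1}+v_{1}=u_{2}+v_{2}$, and then equality in the Minkowski inequality for the strictly convex $\mathrm{L}^{p}$ norm ($1<p<\infty$) to conclude $\nabla v_{2}=\mu\nabla v_{1}$ with $\mu>0$. Your extra care about the degenerate case $\nabla v_{1}=0$ (where equality in Minkowski holds trivially and no positive $\mu$ exists unless $\nabla v_{2}=0$ too) and about the vector-valued equality case is a sharpening of a point the paper's own argument passes over silently, not a different method.
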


\section{Numerical Experiments}\label{sec:numerics}

In this section we present our numerical simulations for the  problem \eqref{tvlp_min}. We begin with  the one dimensional case where we verify numerically the analytical solutions obtained in Section \ref{section_exact}. We also describe the type of structures that are promoted for different values of $p$. Finally, we proceed to the two dimensional case where we focus on image denoising tasks and in particular on the elimination of the staircasing effect.

We start by defining the discretised version of problem \eqref{tvlp_min}  
\begin{equation}
\min_{u\in\re^{n\times m}}\frac{1}{2}\norm{2}{f-u}^{2}+\mathrm{TVL}_{\alpha,\beta}^{p}(u).
\label{discrete_tvlp}
\end{equation}
Here $\mathrm{TVL}_{\alpha,\beta}^{p}:\re^{n\times m}\rightarrow\re$ is defined  as 
\begin{equation}
\mathrm{TVL}_{\alpha,\beta}^{p}(u)=\underset{w\in(\re^{n\times m})^2}{\operatorname{argmin}}\alpha\norm{1}{\nabla u - w}+\beta\norm{p}{w},
\end{equation}
where for $x\in\re^{n\times m}$, we set $\norm{p}{x}=(\sum_{i,j=1}^{n,m}|x(i,j)|^{p})^{\frac{1}{p}}$ and for $x=(x_{1},x_{2})\in(\re^{n\times m})^2$ we define
\begin{equation}
\norm{p}{x}=\left(\sum_{i,j=1}^{n,m}   \left(\sqrt{ (x_{1}(i,j))^{2} + (x_{2}(i,j))^{2} }\right)^{p}\right)^{\frac{1}{p}}.\label{lpnorm_1}
\end{equation}
We denote by $\nabla=(\nabla_{1},\nabla_{2})$ the discretised gradient with forward differences and zero Neumann boundary conditions defined as
\begin{align*}
(\nabla_{1} u)_{i,j}&=
                \begin{cases}
                 \frac{u(i+1,j)-u(i,j)}{t}&\mbox{    if   }1\leq i< n,\; 1\leq j\leq m,\\
                 0              &\mbox{    if   }i=n,\; 1\leq j\leq m,
                 \end{cases}\\
(\nabla_{2} u)_{i,j}&=
                \begin{cases}
                 \frac{u(i,j+1)-u(i,j)}{t}&\mbox{    if   }1\leq i\leq n,\; 1\leq j< m,\\
                 0             &\mbox{    if   }1\leq i\leq n,\; j=m.
                 \end{cases}          
\end{align*}
where $t$ denotes the step size. The discrete version of the divergence operator is defined as the adjoint of $\nabla$. That is, for every $w=(w_{1},w_{2})\in(\re^{n\times m})^2$ and $u\in\re^{n\times m}$, we have that $\scalprod{}{-\diverg w}{u}=\scalprod{}{w}{\nabla u}$ with 
\begin{equation}
\begin{aligned}
(\diverg w)_{i,j} &=
                \begin{cases}
                 \frac{w_{1}(i,j)-w_{1}(i,j-1)}{t}&\mbox{    if   }1\leq j\leq m,\; 1\leq i\leq n,\\
                 \frac{w_{1}(i,j)}{t}             &\mbox{    if   }j=1,\; 1\leq i\leq n,\\
		   -\frac{w_{1}(i,j-1)}{t}          &\mbox{    if   }j=m,\; 1\leq i\leq n,\\
                 \end{cases}\\                
                 &+
                \begin{cases}
                 \frac{w_{2}(i,j)-w_{2}(i-1,j)}{t}&\mbox{    if   } 1<i<n,\; 1\leq j\leq m,\\
                 \frac{w_{2}(i,j)}{t}             &\mbox{    if   }i=1,\; 1\leq j\leq m,\\
		   -\frac{w_{2}(i-1,j)}{t}          &\mbox{    if   }i=m,\; 1\leq j\leq m.
                 \end{cases}
\end{aligned}
\label{divergence}
\end{equation}

We solve the minimisation problem \eqref{discrete_tvlp} in two ways. The first one is by using the CVX
optimisation package with MOSEK solver (interior point methods). This method is efficient for small--medium scale optimisation problems and thus it is a suitable choice in order to replicate  one dimensional solutions. On the other hand, we prefer to solve large scale two dimensional versions of \eqref{discrete_tvlp} with the split Bregman method \cite{Osher} which has been widely used for the fast solution of non-smooth  minimisation problems. 

\subsection{Split Bregman for L$\mathbf{^{2}}$--TVL$\mathbf{^{p}}$}\label{sec:SB}

In this section we describe how we adapt the split Bregman algorithm to our discrete model \eqref{discrete_tvlp}. Letting $z=\nabla u -w$,  the corresponding unconstrained problem becomes 

\begin{equation}
\min_{\substack{u\in\re^{n\times m}\\ w\in(\re^{n\times m})^2\\z\in(\re^{n\times m})^2 }}\frac{1}{2}\norm{2}{f-u}^{2}+ \alpha\norm{1}{z} + \beta\norm{p}{w},\quad \mbox{such that}\quad z=\nabla u - w.
\label{discrete_tvlp_p_hom_cons}
\end{equation}

Replacing the constraint, using a Lagrange multiplier $\lambda$, we obtain the following unconstrained formulation:

\begin{equation}
\min_{\substack{u\in\re^{n\times m}\\ w\in(\re^{n\times m})^2\\z\in(\re^{n\times m})^2 }}\frac{1}{2}\norm{2}{f-u}^{2}+ \alpha\norm{1}{z} + \beta\norm{p}{w} +\frac{\lambda}{2}\norm{2}{z-\nabla u + w}^{2}.
\label{discrete_tvlp_p_hom_uncons}
\end{equation}

The  Bregman iteration, see \cite{Osher1}, that corresponds to the minimisation \eqref{discrete_tvlp_p_hom_uncons} leads to  the following two step algorithm:

\begin{align}
(u^{k+1},z^{k+1},w^{k+1}) & =\underset{u,z,w}{\operatorname{argmin}} \;\frac{1}{2}\norm{2}{f-u}^{2}+ \alpha\norm{1}{z} + \beta\norm{p}{w} + \label{kostas_b1}
      \frac{\lambda}{2}\norm{2}{b^{k}-z+\nabla u - w}^{2},\\
b^{k+1} & =b^{k}+z^{k+1}-\nabla u ^{k+1} -w^{k+1}.\label{kostas_b2}
\end{align}

Since solving \eqref{kostas_b1} at once is a difficult task,  we employ a splitting technique and  minimise alternatingly for $u, z$ and $w$. This yields the split Bregman iteration for our method:
\begin{align}
u^{k+1}&=\underset{u\in\re^{n\times m}}{\operatorname{argmin}}\;\frac{1}{2}\norm{2}{f-u}^2 + \frac{\lambda}{2}\norm{2}{b^{k}+z^{k}-\nabla u + w^{k}}^{2},\label{sub_u}\\
z^{k+1}&=\underset{z\in(\re^{n\times m})^2}{\operatorname{argmin}}\;\alpha\norm{1}{z} + \frac{\lambda}{2}\norm{2}{b^{k}+z-\nabla u^{k+1} + w^{k}}^{2},\label{sub_z}\\
w^{k+1}&=\underset{w\in(\re^{n\times m})^{2}}{\operatorname{argmin}}\;\beta\norm{p}{w} + \frac{\lambda}{2}\norm{2}{b^{k}+z^{k+1}-\nabla u^{k+1} + w}^{2},\label{sub_w}\\
b^{k+1} & =b^{k}+z^{k+1}-\nabla u ^{k+1} -w^{k+1}.
\end{align}

Next, we discuss how we  solve  each of  the subproblems \eqref{sub_u}--\eqref{sub_w}. The first-order optimality condition of \eqref{sub_u} results into the following linear system: 
\begin{equation}
\underbrace{(I-\lambda\Delta)}_\text{A}u = \underbrace{f - \lambda\diverg(b^{k} + z^{k} - w^{k})}_\text{c}.
\label{th_matrix}
\end{equation}
Here $A$ is a sparse, symmetric, positive definite  and strictly diagonal dominant matrix, thus we can easily solve \eqref{th_matrix} with an iterative solver such as conjugate gradients or Gauss--Seidel.  However, due to the zero Neumann boundary conditions, the matrix $A$ can be efficiently diagonalised by the two dimensional discrete cosine transform, 
\begin{equation}
A=W_{nm}^{\intercal} D W_{nm},
\label{decomp}
\end{equation}
where here $W_{nm}$ is the discrete cosine matrix and $D=diag(\mu_{1},\cdots,\mu_{n*m})$ is the diagonal matrix of the eigenvalues of $A$. In that case, $A$ has a particular structure of a block symmetric \textit{Toeplitz-plus-Hankel} matrix with \textit{Toeplitz-plus-Hankel} blocks and one can obtain the solution of \eqref{sub_u} by three operations involving the two dimensional discrete cosine transform \cite{hansen} as follows: Firstly, we calculate the eigenvalues of $A$ by multiplying \eqref{decomp} with $e_{1}=(1,0,\cdots,0)^{\intercal}$ from both sides and using the fact that $W_{nm}^{\intercal}W_{nm}=W_{nm}W_{nm}^{\intercal}=I_{nm}$, we get
\begin{equation}
D_{i,i}=\frac{[W_{nm}Ae_{1}]_{i}}{[W_{nm}e_{1}]_{i}},\mbox{  }i=1,2,\cdots,nm.
\label{eigenvalues}
\end{equation}
Then, the  solution of \eqref{sub_u}  is computed exactly by
\begin{equation}
u=W_{nm}^{\intercal} D^{-1} W_{nm} c.
\label{sol_u1}
\end{equation}
The solution of the subproblem \eqref{sub_z} is  obtained in a closed form  via the following \emph{shrinkage operator}, see also \cite{Osher, Wang08anew}. Indeed, for $i=1,2$ we have
\begin{equation}
z_{i}^{k+1}=\mathrm{shrink}_{\frac{\alpha}{\lambda}}(\underbrace{b_{i}^{k}-\nabla_{i} u^{k+1} + w_{i}^{k}}_{g_{i}}):=\max\left(\norm{2}{g} - \frac{\alpha}{\lambda} \right)\frac{g_{i}}{\norm{2}{g}}.
\label{shrink}
\end{equation}
Finally, we discuss the solution of the subproblem \eqref{sub_w}. In the spirit of \cite{vogel}, we solve \eqref{sub_w} by a fixed point iteration scheme. Letting $\kappa=\frac{\beta}{\lambda}$ and  $\eta=-b^{k}-z^{k+1}+\nabla u^{k+1}$, the first-order optimality condition of \eqref{sub_w}  becomes
\begin{equation}
\kappa\frac{|w|^{p-2}w}{\norm{p}{w}^{p-1}}+w-\eta=0
\label{sub_w1}
\end{equation}
For given $w^{k}$, we obtain $w^{k+1}$ by the following fixed point iteration
\begin{equation}
w_{i}^{k+1}=\frac{\eta_{i}\norm{p}{w^{k}}^{p-1}}{\kappa|w^{k}|^{p-2}+\norm{p}{w^{k}}^{p-1}},
\label{sol_w}
\end{equation} 
under the convention that $0/0=0$. We can also consider solving the $p$-homogenous analogue \eqref{kostas_phomo}, where for certain values of $p$, e.g. $p=2$, we can solve exactly \eqref{sol_w}, since in that case $w_{i}^{k+1}=\frac{\eta_{i}}{\kappa+1}$. However, we observe numerically that there is no significant  computational difference between these two methods. Let us finally mention that since we do not solve exactly all the subproblems \eqref{sub_u}--\eqref{sub_w}, we do not have a convergence proof for the split Bregman iteration. However in practice,  the algorithm converges to the right solutions after comparing them with the corresponding solutions obtained with the CVX package.

\subsection{One dimensional results}\label{sec:numerics:1d}

For this section, we set $m=1$ and thus  $u\in\re^{n\times1}$, $w\in(\re^{n\times 1})^{2}$. Initially, we compare our numerical solutions with the analytical ones, obtained in Section \ref{section_exact} for the step function, setting $p=2$, $h=100$, $L=1$ and $\Omega=[-1,1]$. The domain $\Omega$ is discretised into $2000$  points.
 We first examine the cases of where ROF solutions are obtained, i.e., the parameters $\alpha$ and $\beta$ are selected according to the conditions \eqref{pc_cond} and \eqref{c_cond}, see Figure \ref{compare_ROF_sol}. There we see that the analytical solutions coincide with the numerical ones. 
\begin{figure}[h]
\begin{center}
\begin{subfigure}[t]{0.32\textwidth}
                \centering                                                  
                \includegraphics[width=1\linewidth]{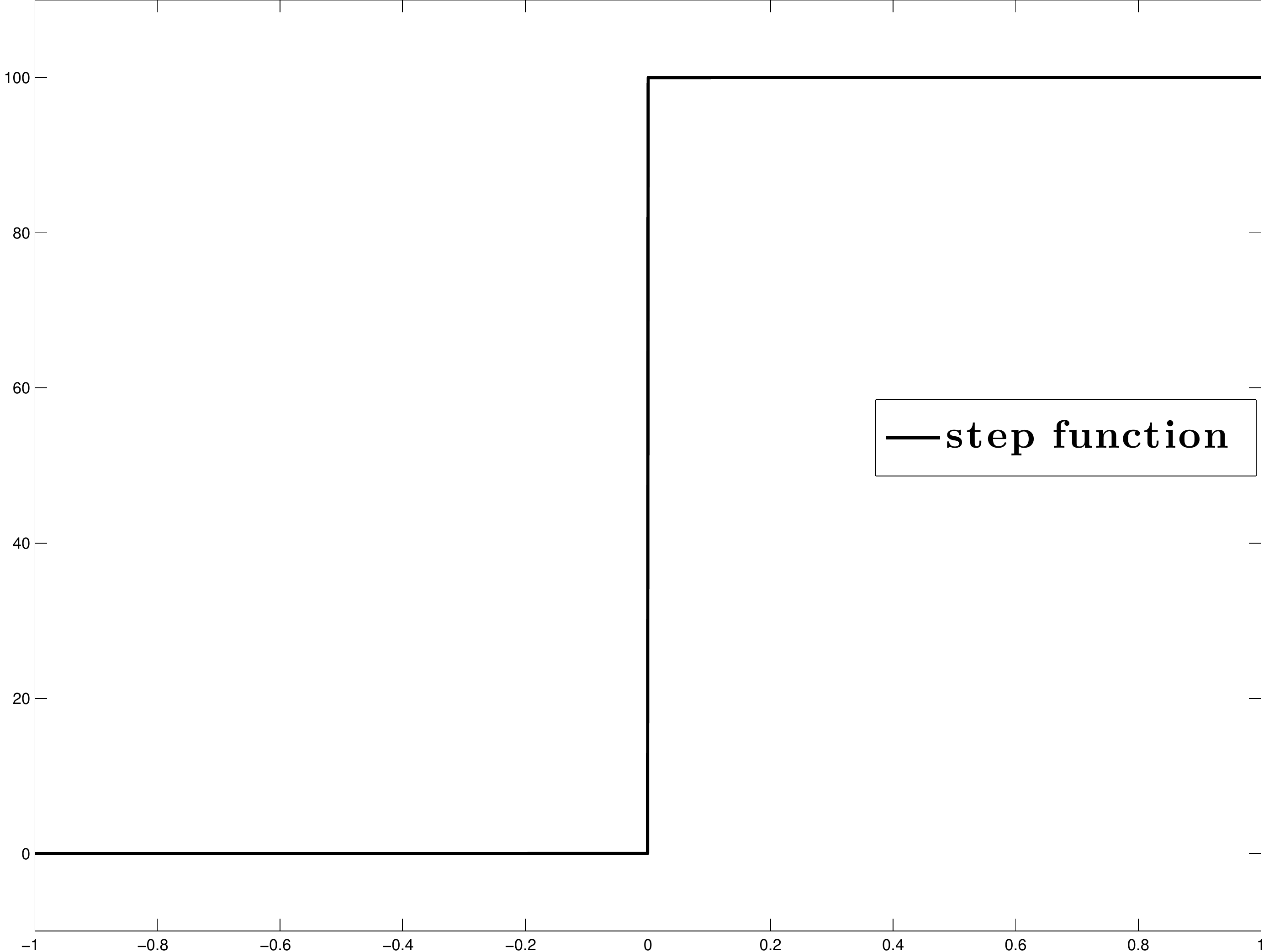}
                \caption{Original data} 
                \label{compare_ROF_sol:a}
\end{subfigure}
\begin{subfigure}[t]{0.32\textwidth}
                \centering                                                  
                \includegraphics[width=1\linewidth]{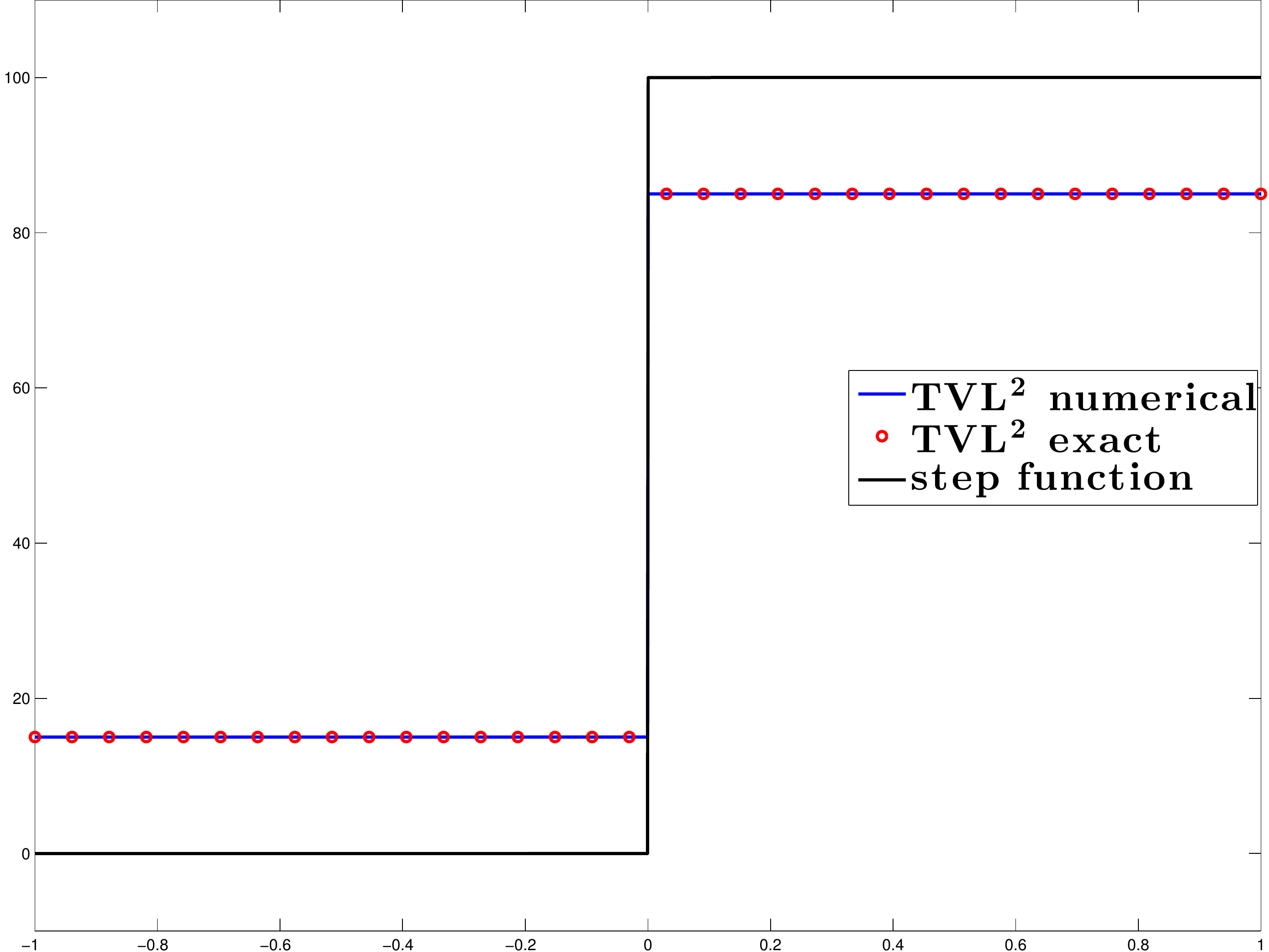}
                \caption{$(\mathrm{ROF})$: $\alpha=15$, $\beta=500$ }
              \label{compare_ROF_sol:b}
\end{subfigure}
\begin{subfigure}[t]{0.32\textwidth}
                \centering                                                  
                \includegraphics[width=1\linewidth]{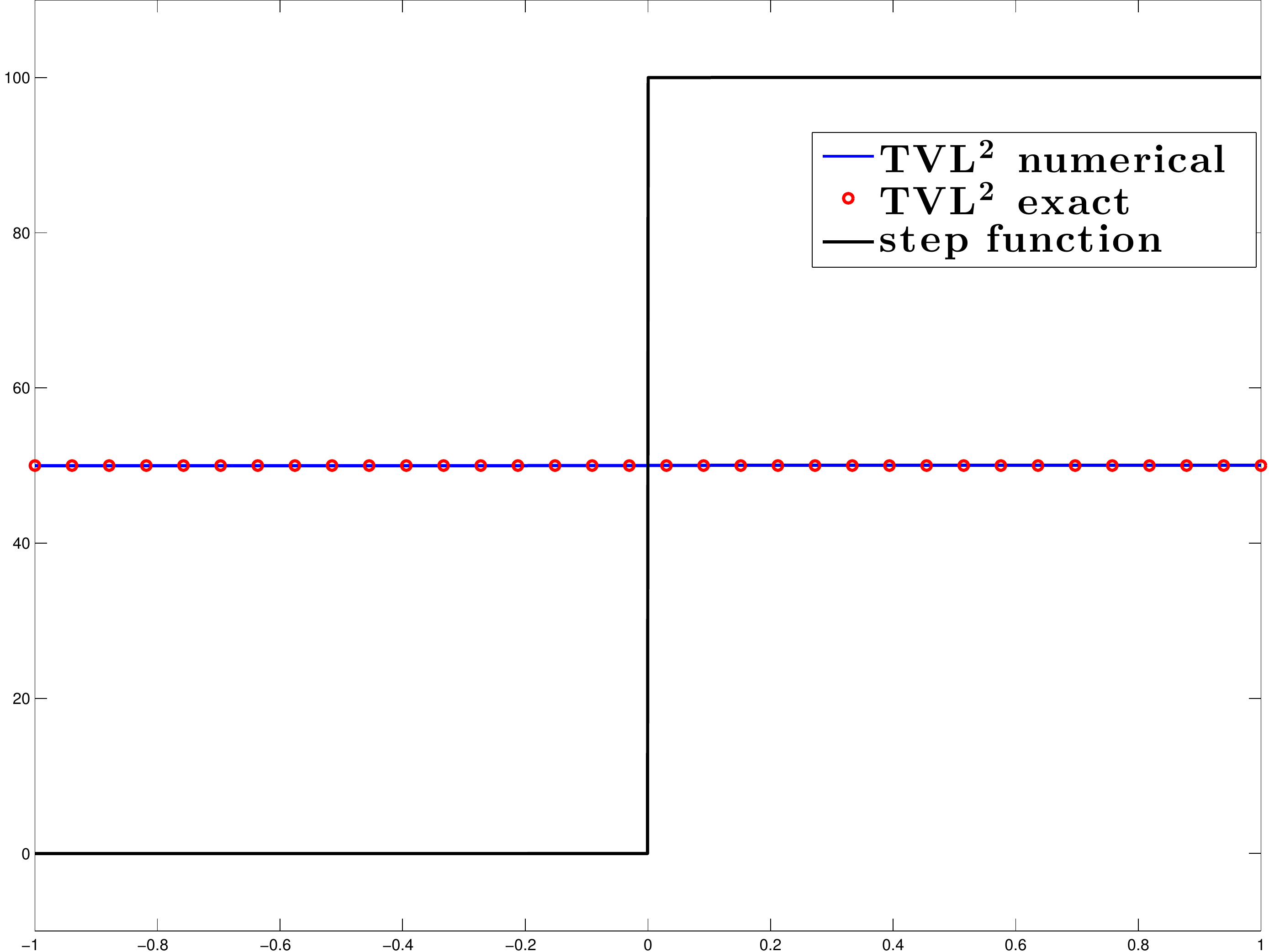}
                \caption{$(\mathrm{ROF}):$ $\alpha=60$, $\beta=1300$ }
			\label{compare_ROF_sol:c}
\end{subfigure}
\end{center}
\caption{Comparison between numerical solutions of \eqref{tvlp_min} and the corresponding analytical solutions obtained in Section \ref{section_exact}. The parameters $\alpha$ and $\beta$ are chosen so that conditions \eqref{pc_cond} and \eqref{c_cond} are satisfied.}
\label{compare_ROF_sol}
\end{figure}

Now, we proceed by computing the non-ROF solutions.
The numerical solutions are solved using the $2$-homogeneous analogue of \eqref{tvlp_min_p_hom}, since we have proved that the $1$-homogeneous and $p$-homogeneous problems are equivalent modulo an appropriate rescaling of the parameter $\beta$, see Proposition \ref{kostas_phomo_1homo}. In fact, as it is described in Figure \ref{tvl2_graph_1_hom}, in order to obtain solutions from the purple region, it suffices to seek solutions for the $2$-homogeneous \eqref{tvlp_min_p_hom}. Notice also that these solutions are exactly the solutions obtained solving a Huber TV problem, see Proposition \ref{equiv_tv_huber}. The analytical solutions are given in \eqref{cont_sol} and \eqref{disc_sol} and are compared with the numerical ones in Figure \ref{compare_TVL2_sol}, where we observe that they coincide. We also verify the equivalence between the $1$-homogeneous and $2$-homogeneous problems where $\alpha$ is fixed and $\beta$ is obtained from Proposition \ref{kostas_phomo_1homo}, see Figure \ref{compare_TVL2_sol:c}.

\begin{figure}[h]
\begin{center}
\begin{subfigure}[t]{0.32\textwidth}
                \centering                                                  
                \includegraphics[width=1\linewidth]{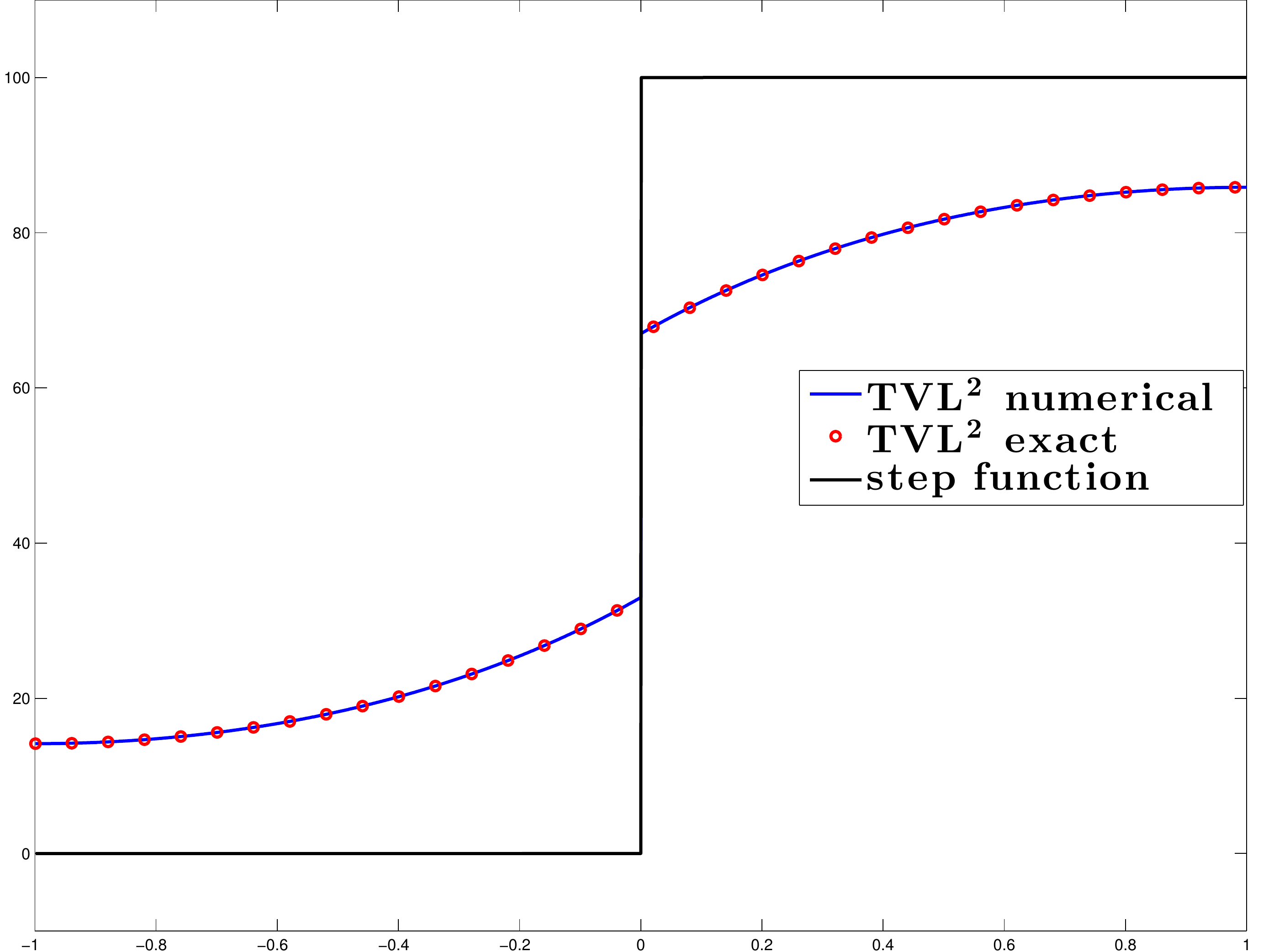}
                \caption{\centering$\mathrm{TVL^{2}}:$ $\alpha=20$, $\beta_{2-hom}=450$  } 
                \label{compare_TVL2_sol:a}
\end{subfigure}
\begin{subfigure}[t]{0.32\textwidth}
                \centering                                                  
                \includegraphics[width=1\linewidth]{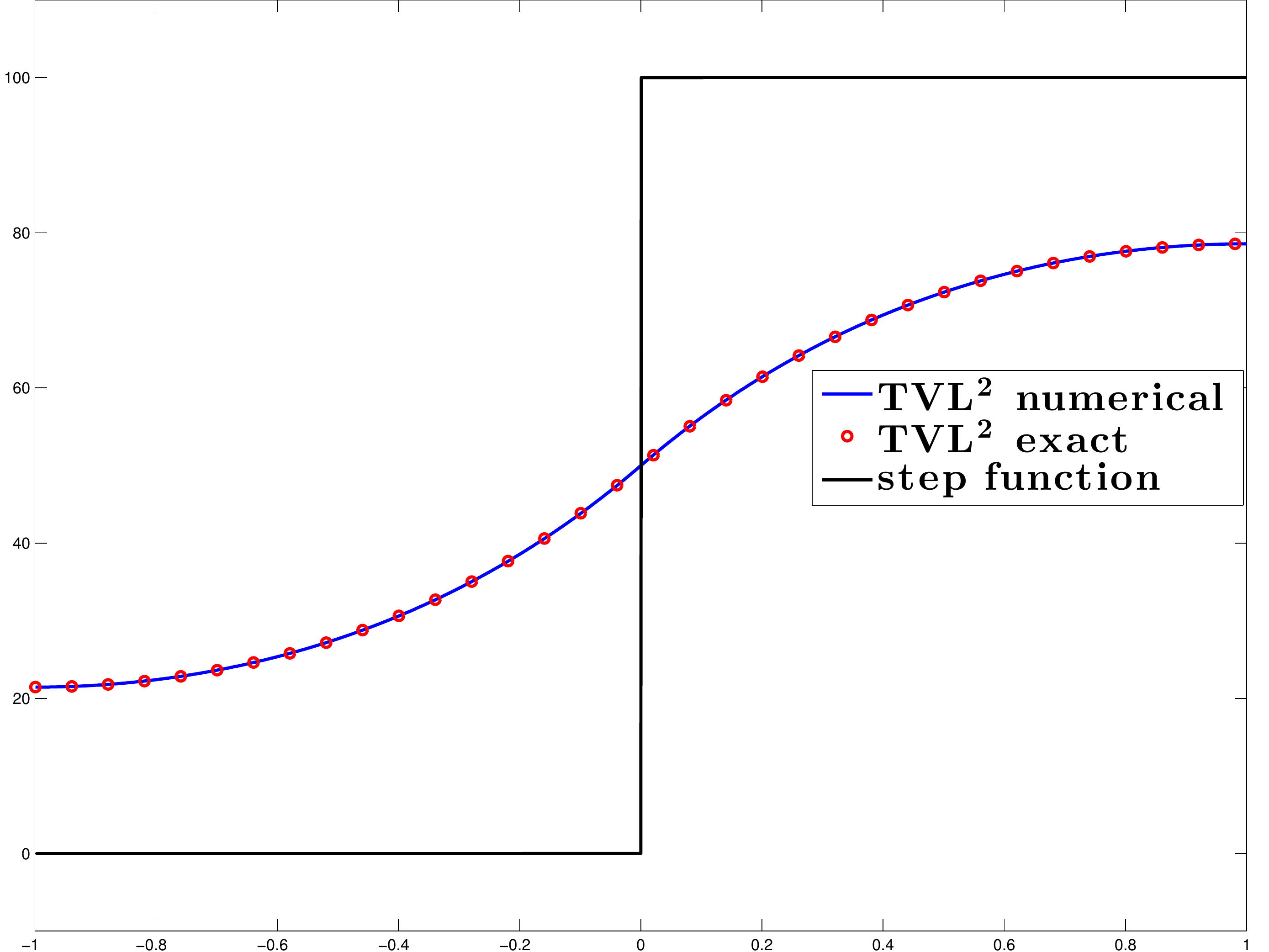}
                \caption{\centering$\mathrm{TVL^{2}}:$ $\alpha=60$, $\beta_{2-hom}=450$} 
              \label{compare_TVL2_sol:b}
\end{subfigure}
\begin{subfigure}[t]{0.32\textwidth}
                \centering                                                  
                \includegraphics[width=1\linewidth]{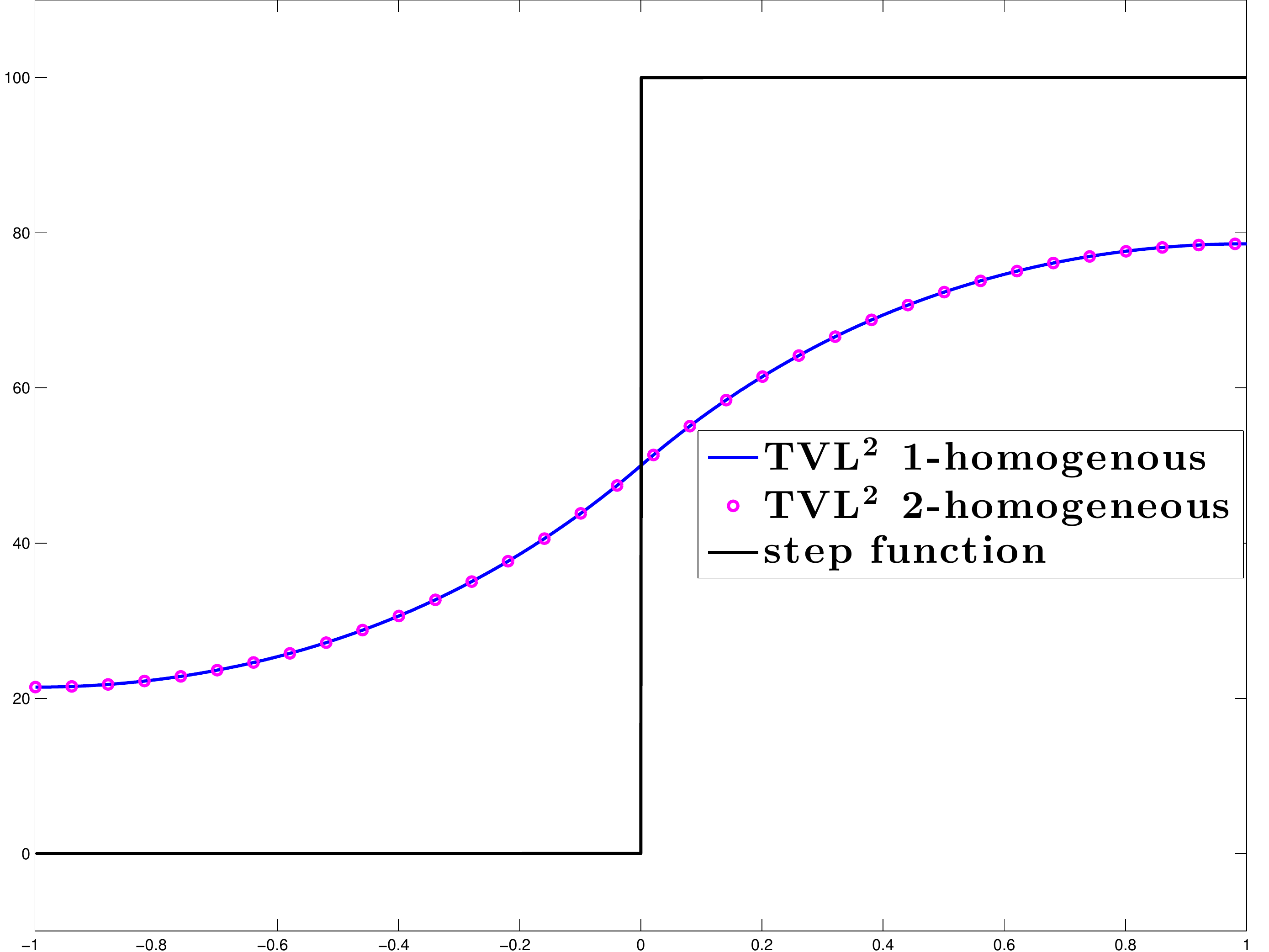}
                \caption{\centering $1$ and $2$-homogeneous:  $\alpha=15$, $\beta_{2-hom}$=450, $\beta_{1-hom}=\beta_{2-hom}\norm{2}{w}$ } 
			\label{compare_TVL2_sol:c}
\end{subfigure}
\end{center}
\caption{Comparison between numerical and analytical solutions obtained in Section \ref{section_exact} for the step function, by solving the $2$-homogeneous problem \eqref{tvlp_min_p_hom}. The parameters $\alpha$ and $\beta$ are chosen so that conditions \eqref{cont_sol} and \eqref{disc_sol} are satisfied. The last plot indicates the equivalence between the $2$-homogeneous  \eqref{tvlp_min_p_hom} and the $1$-homogeneous problem \eqref{tvlp_min}.}
\label{compare_TVL2_sol}
\end{figure}

We continue our experiments for general values of $p$ focusing on the geometric behaviour of the solutions as $p$ increases. In order to compare the solutions for $p\in(1,\infty)$, we fix the parameter $\alpha$ and choose appropriate values of $\beta$ and $p$. We choose $\alpha$ and $\beta$ so that they belong to the purple region in Figure \ref{tvl2_graph_1_hom},  i.e., $\beta<(\frac{2L}{q+1})^{\frac{1}{q}}\alpha$ and $\beta<\frac{h}{2} (\frac{2L^{q+1}}{q+1})^{\frac{1}{q}}$, hence non-ROF solutions are obtained. We set $p=\{\frac{4}{3}, \frac{3}{2}, 2, 3, 4, 10\}$ and for the solutions that preserve the discontinuity we select $\beta=\{72, 140, 430, 1350, 2400, 6800\}$ with fixed $\alpha=20$ (observe that $\beta<(\frac{2L}{q+1})^{\frac{1}{q}}\alpha$ is valid in any case), see Figure \ref{different_p_step:a}. For the continuous cases, we set $\alpha=60$ and $\beta=\{50, 110, 430, 1700, 3000, 9500\}$ (again the conditions $\alpha\geq\frac{hL}{2}$ and $\beta<\frac{h}{2} (\frac{2L^{q+1}}{q+1})^{\frac{1}{q}}$ hold), see Figure \ref{different_p_step:b}.
 We observe that for $p=\frac{4}{3}$, the solution has a similar behaviour to $p=2$, but with a steeper gradient at the discontinuity point. Moreover, the solution becomes almost constant near the boundary of $\Omega$. On the other hand, as we increase $p$, the slope of the solution near the discontinuity point reduces and it becomes almost linear with a relative small constant part near the boundary.\par
\begin{figure}[h]
\begin{center}
\begin{subfigure}[t]{7.7cm}
                \centering                                                  
                \includegraphics[width=7.7cm]{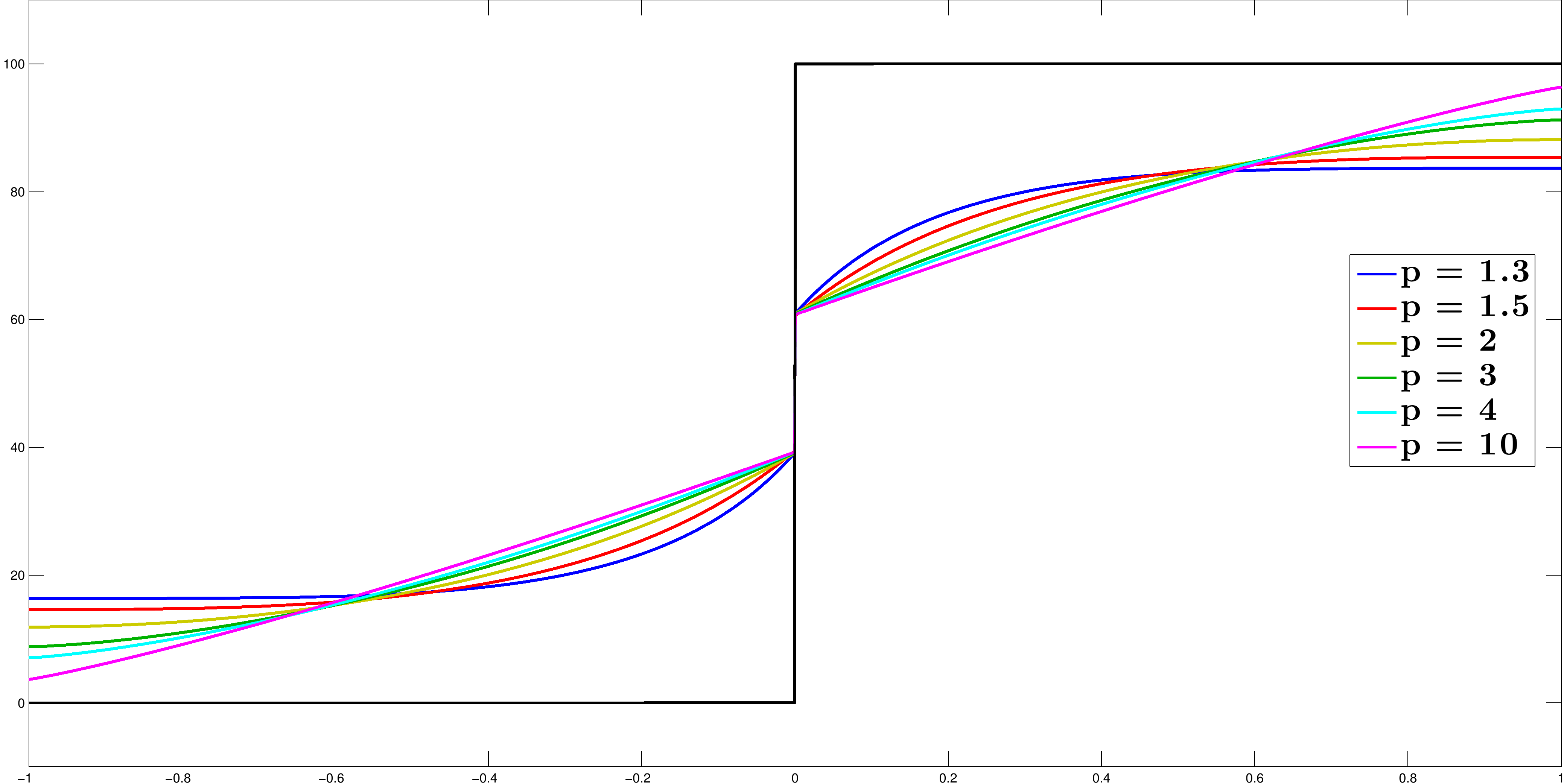}
                \caption{\centering$\mathrm{TVL}^p$ discontinuous solutions for $p=\{\frac{4}{3}, \frac{3}{2}, 2, 3, 4, 10\}$} 
                \label{different_p_step:a}
\end{subfigure}
\begin{subfigure}[t]{7.7cm}
                \centering                                                  
                \includegraphics[width=7.7cm]{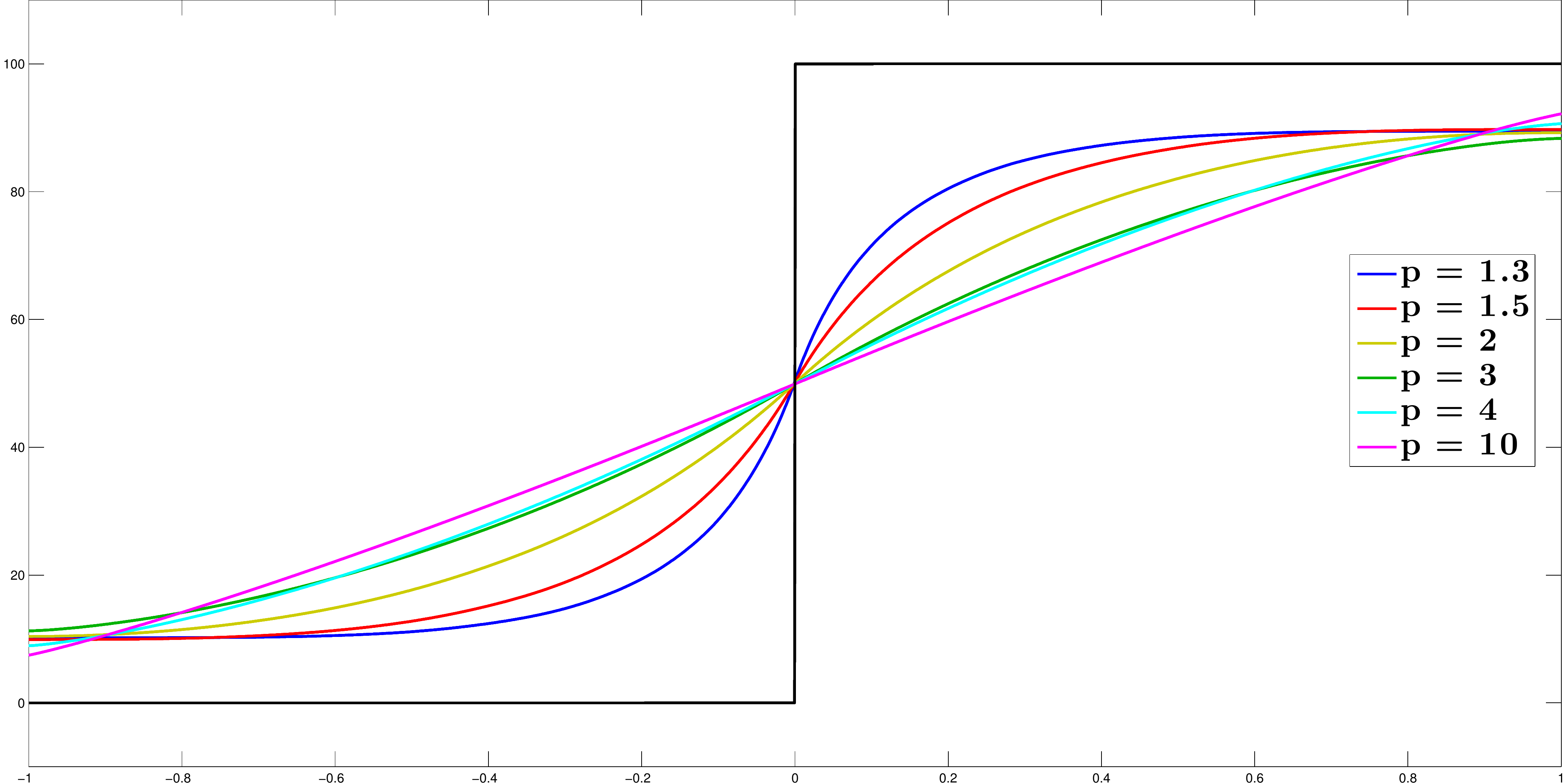}
               \caption{\centering$\mathrm{TVL}^p$ continuous solutions for $p=\{\frac{4}{3}, \frac{3}{2}, 2, 3, 4, 10\}$} 
                \label{different_p_step:b}
\end{subfigure}
\end{center}
\caption{Step function: The types of solutions for the problem \eqref{tvlp_min}  for different values of $p$.}
\label{different_p_step}
\end{figure}

\begin{figure}[h!]
                \centering                                                  
                \includegraphics[width=6cm]{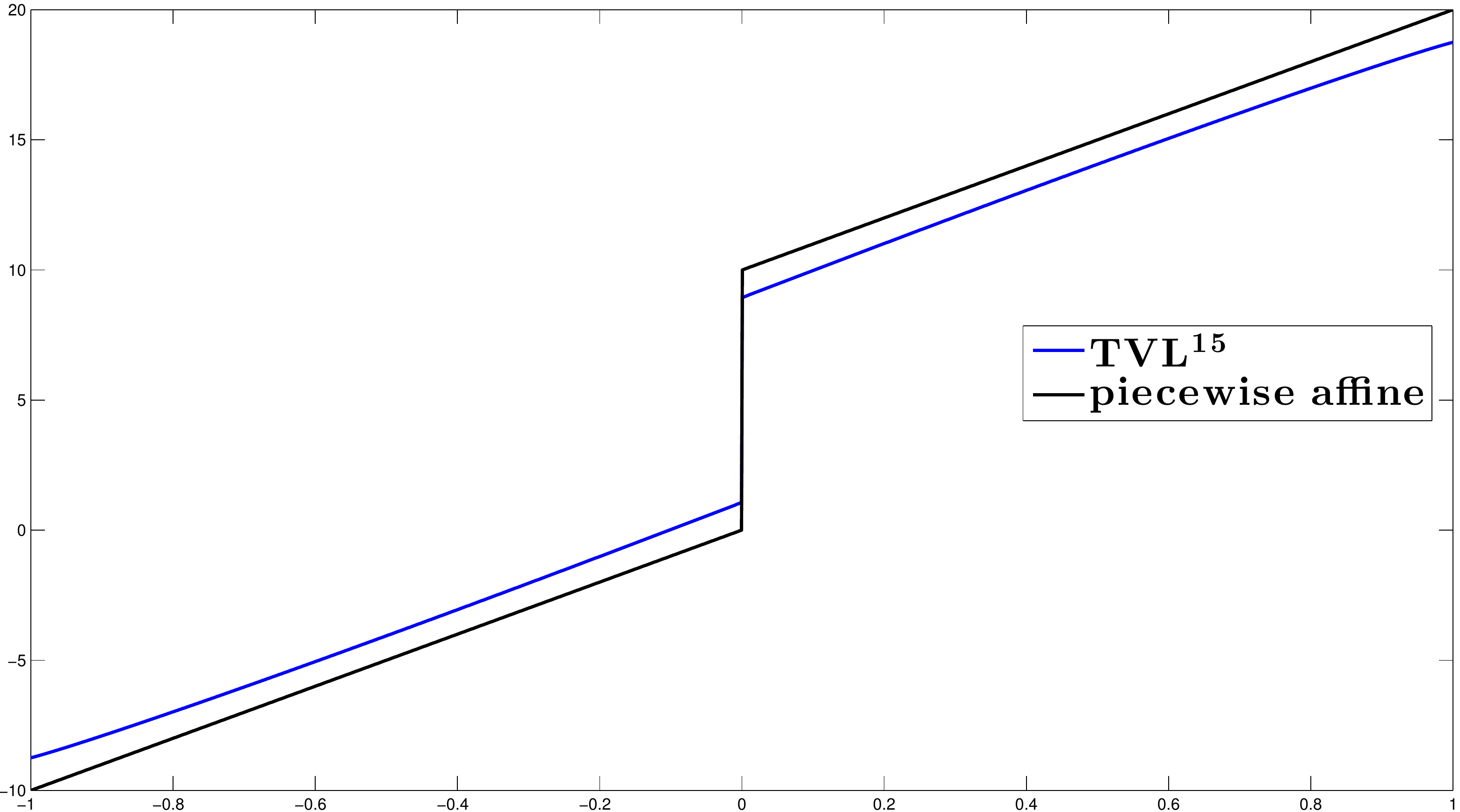}
                \caption{Piecewise affine data: $\mathrm{TVL}^{15}$ solution with $\alpha=1$, $\beta=620$.} 
                \label{affine1}
\end{figure}

The linear structure of the solutions that appears for large $p$ motivates us to examine the case of a piecewise linear data $f$ defined as
\begin{equation}
f(x)=
\begin{cases}
\lambda x&\mbox{ if }x\in(-L,0],\\
\lambda x+h&\mbox{ if }x\in(0,L],
\end{cases}
\end{equation}
see Figure \ref{affine1}. We set again $\Omega=[-1,1]$, $\lambda=\frac{1}{10}$ and the data are discretised in 2000 points. As we observe, the reconstruction for $p=15$ behaves almost linearly everywhere in $\Omega$ except near the boundary.  In the follow up paper \cite{partII}, where the case $p=\infty$ is examined in detail, the occurrence of this linear structure is justified.

In the last part of this section, we discuss the image decomposition approach presented in Section \ref{sec:infimal}. We treat a more complicated one dimensional noiseless signal with piecewise constant, affine and quadratic components and solve the discretised version of \eqref{inf_conv} using $\mathrm{CVX}$ under $\mathrm{MOSEK}$. We verify numerically the equivalence between \eqref{inf_conv} and \eqref{tvlp_min} for $p=2$, i.e., $(\nabla v,u+v)$ corresponds to $(w,\overline{u})$ where $(v,u)$ and $(w,\overline{u})$ are the solutions of \eqref{inf_conv} and \eqref{tvlp_min} respectively, see Figure \ref{infimal_convolution}. We also compare the decomposed parts $u,v$ for two different values of $p$ ($\frac{4}{3}$ and $10$). In order to have a reasonable comparison on the corresponding solutions, the parameters $\alpha, \beta$ are selected such that the residual $\norm{2}{f-u-v}$ is the same for both values of $p$. As we observe, the $v$ decomposition with $p=\frac{4}{3}$ promotes some \emph{flatness} on the solution compared to $p=2$, compare Figures \ref{infimal_convolution:b} and \ref{inf_conv:a}. On the other hand for $p=10$, the $v$ component promotes again almost affine structures, Figure \ref{inf_conv:b}. Notice, that in both cases the $v$ components are continuous. In fact, this is confirmed analytically for every $1<p<\infty$, since in dimension one $\mathrm{W}^{1,p}(\Omega)\subset C(\overline{\Omega})$.   
\begin{figure}[h]
\begin{center}
\begin{subfigure}[t]{6cm}
                \centering                                                  
                \includegraphics[width=6cm]{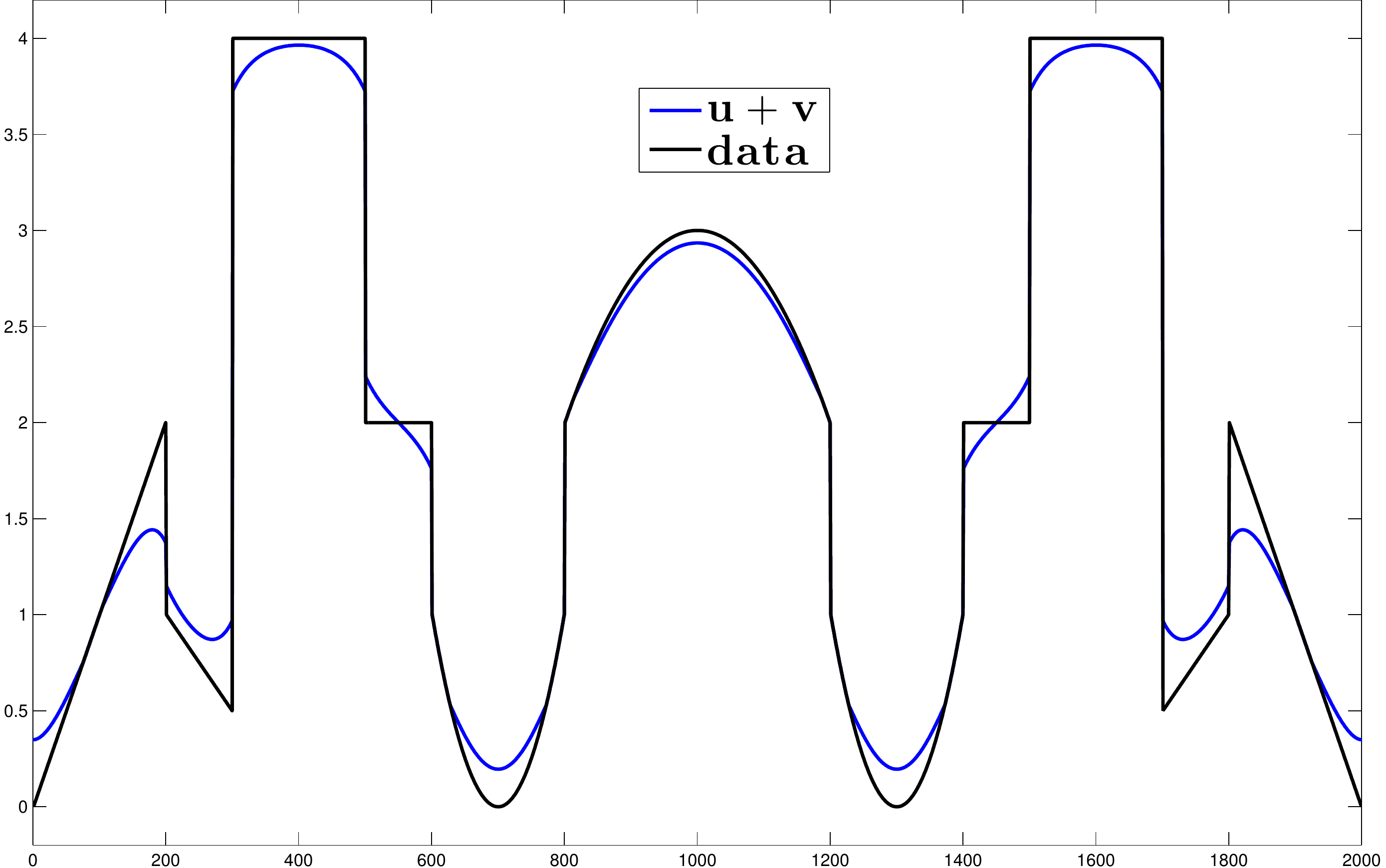}
                \caption{Solution $u+v$ of \eqref{inf_conv}} 
                \label{infimal_convolution:a}
\end{subfigure}
\begin{subfigure}[t]{6cm}
                \centering                                                  
                \includegraphics[width=6cm]{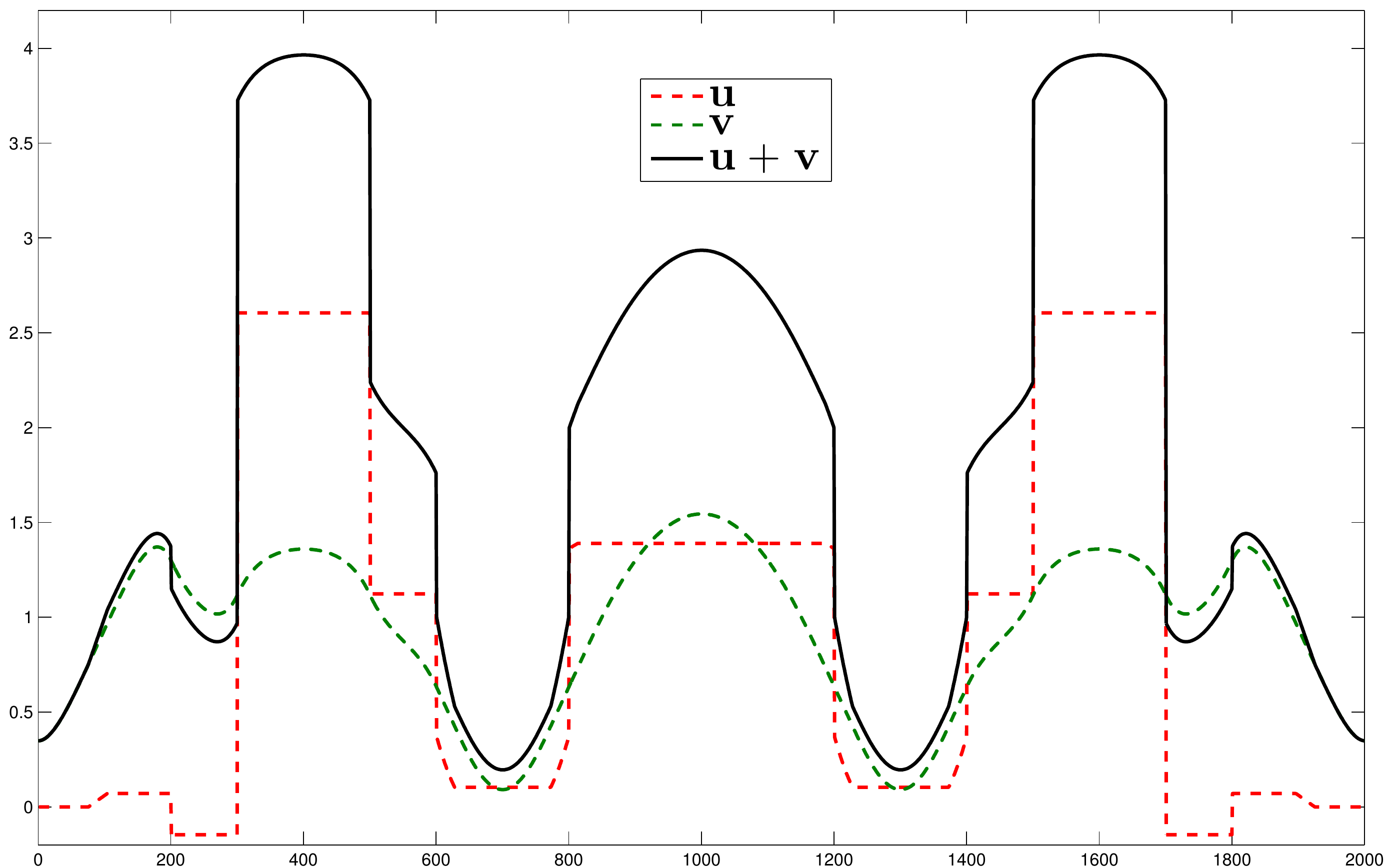}
                \caption{Decomposition into $u, v$ parts} 
                \label{infimal_convolution:b}
\end{subfigure}\\
\begin{subfigure}[t]{6cm}
                \centering                                                  
                \includegraphics[width=6cm]{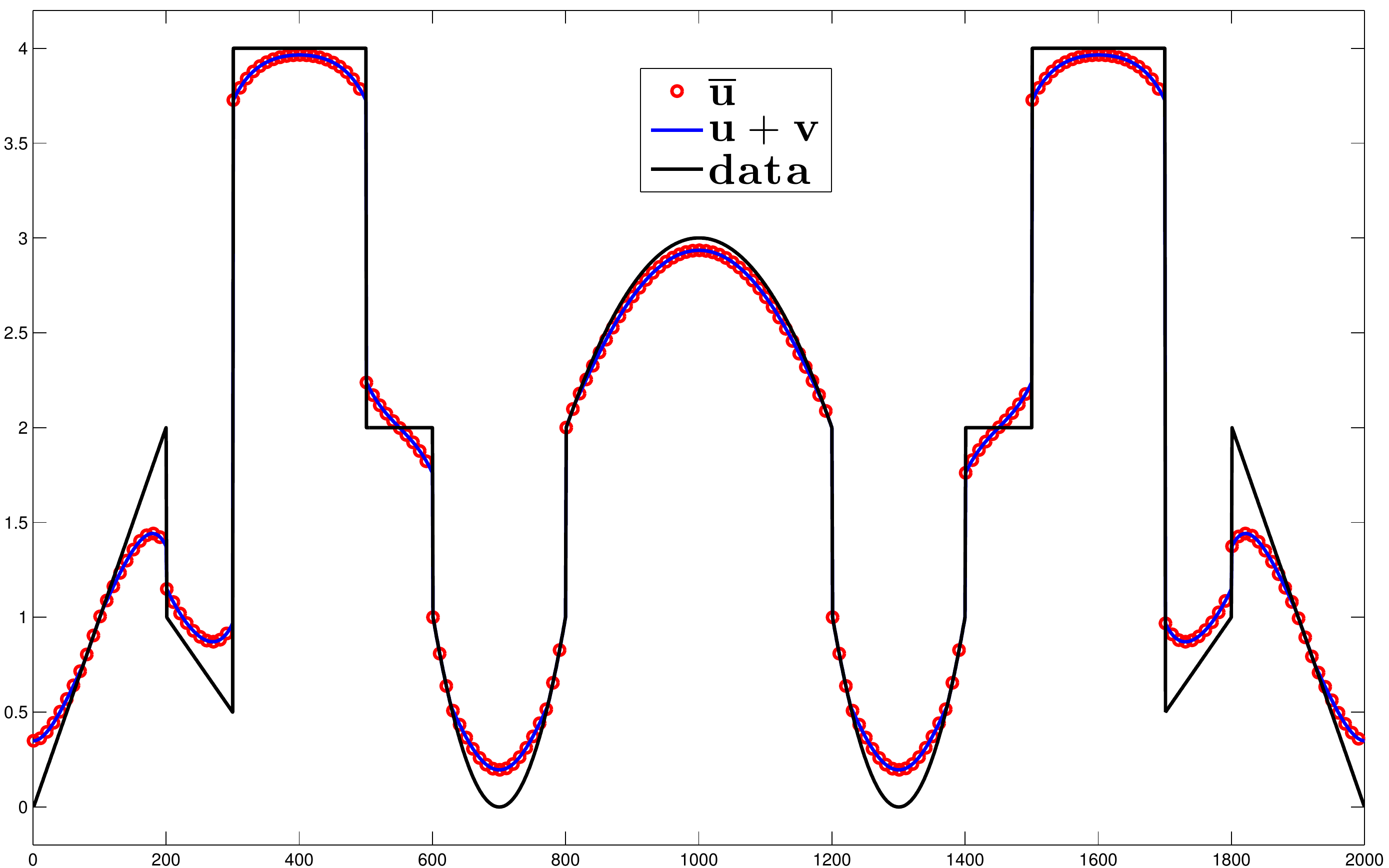}
                \caption{Equivalence of \eqref{tvlp_min} and \eqref{inf_conv}: $\overline{u}=u+v$} 
                \label{infimal_convolution:c}
\end{subfigure}
\begin{subfigure}[t]{6cm}
                \centering                                                  
                \includegraphics[width=6cm]{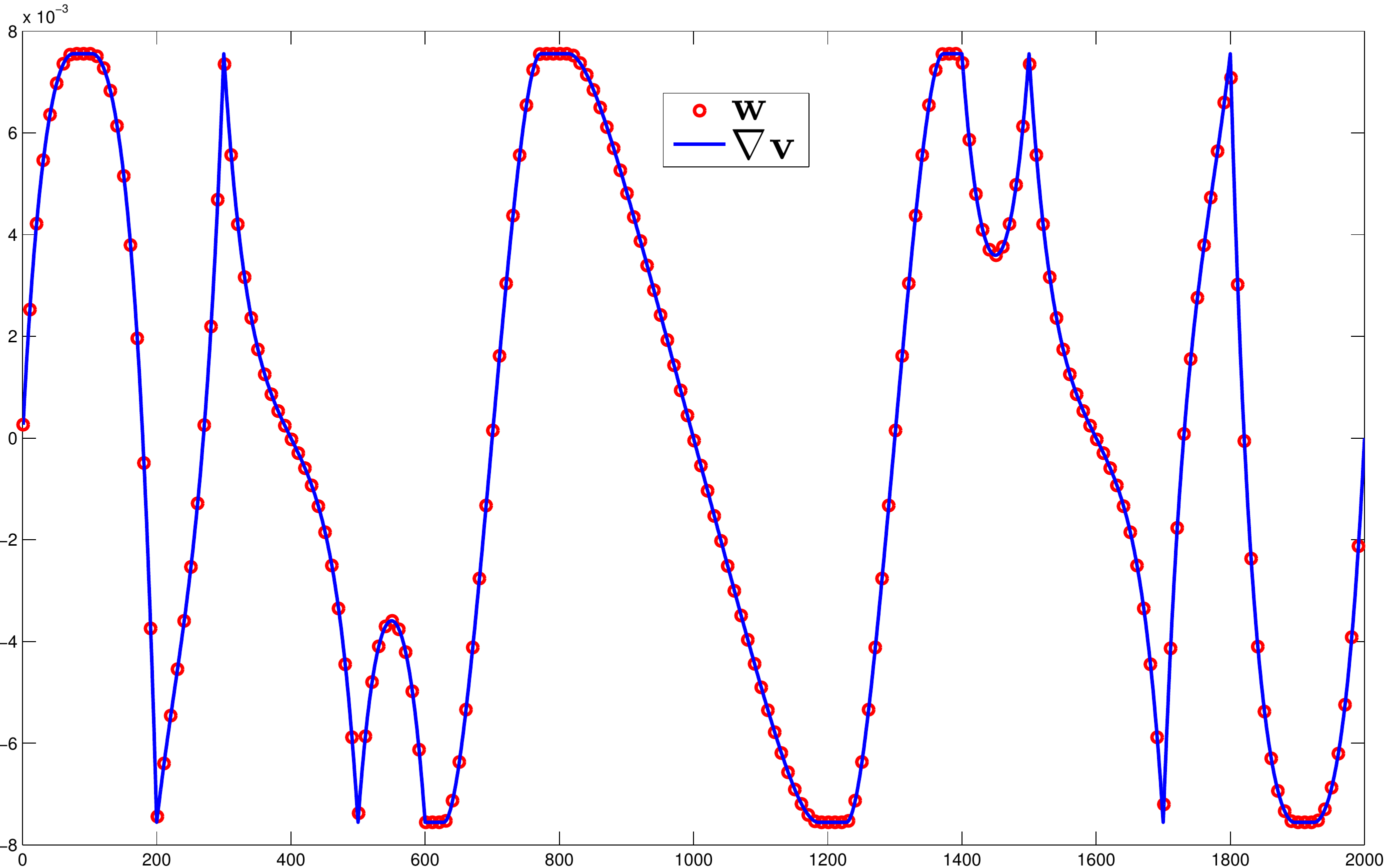}
                \caption{Equivalence of \eqref{tvlp_min} and \eqref{inf_conv}: $w=\nabla v$} 
                \label{iinfimal_convolution:d}
\end{subfigure}
\end{center}
\caption{Numerical results on the image decomposition approach \eqref{inf_conv} for $p=2$, see Section \ref{sec:infimal}.}
\label{infimal_convolution}
\end{figure}
\begin{figure}[h]
\begin{center}
\begin{subfigure}{6cm}
                \centering                                                  
                \includegraphics[width=6cm]{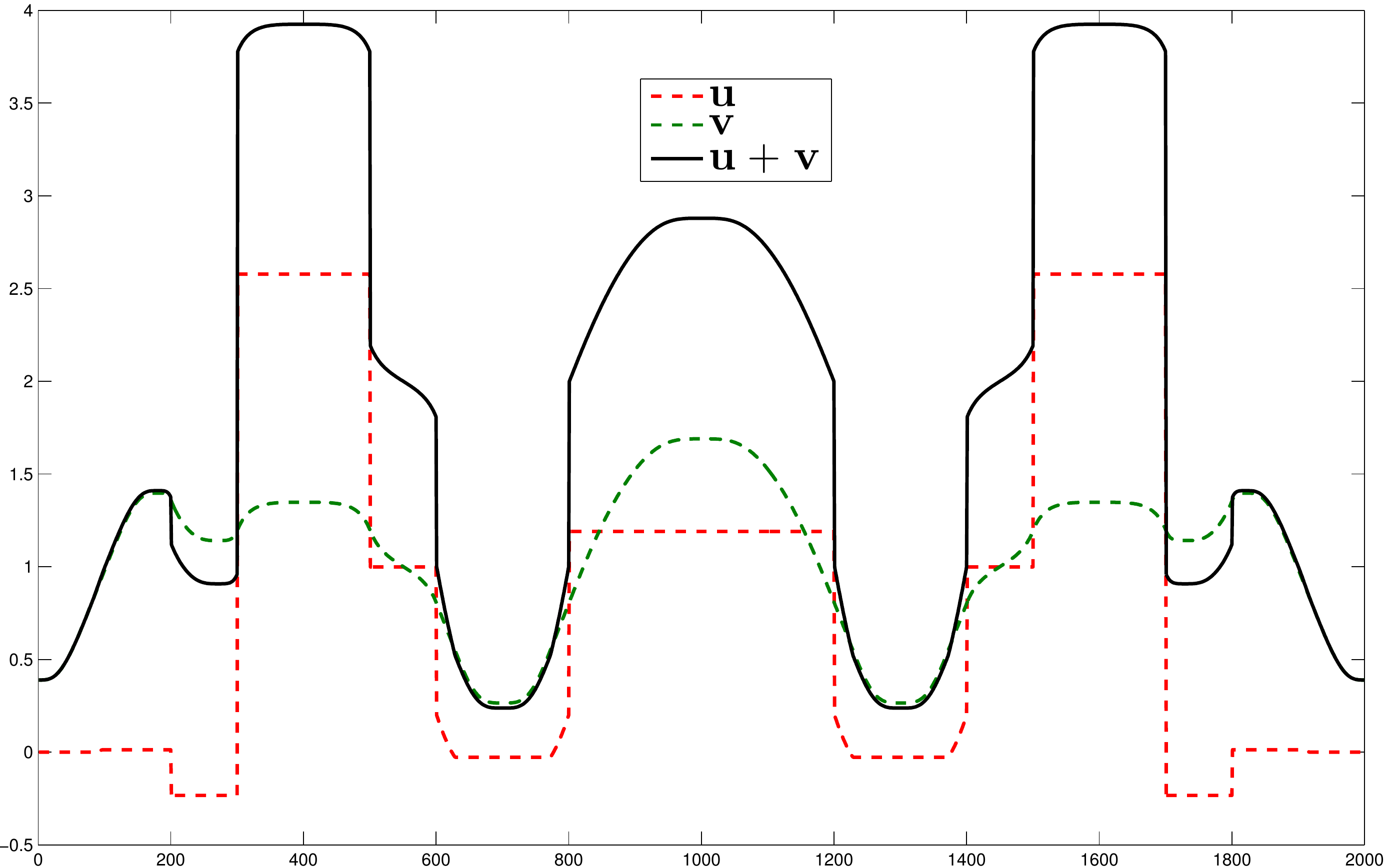}
                \caption{Decomposition of the data in Figure \ref{infimal_convolution:a} for $p=\frac{4}{3}$} 
                \label{inf_conv:a}
\end{subfigure}
\begin{subfigure}{6cm}
                \centering                                                  
                \includegraphics[width=6cm]{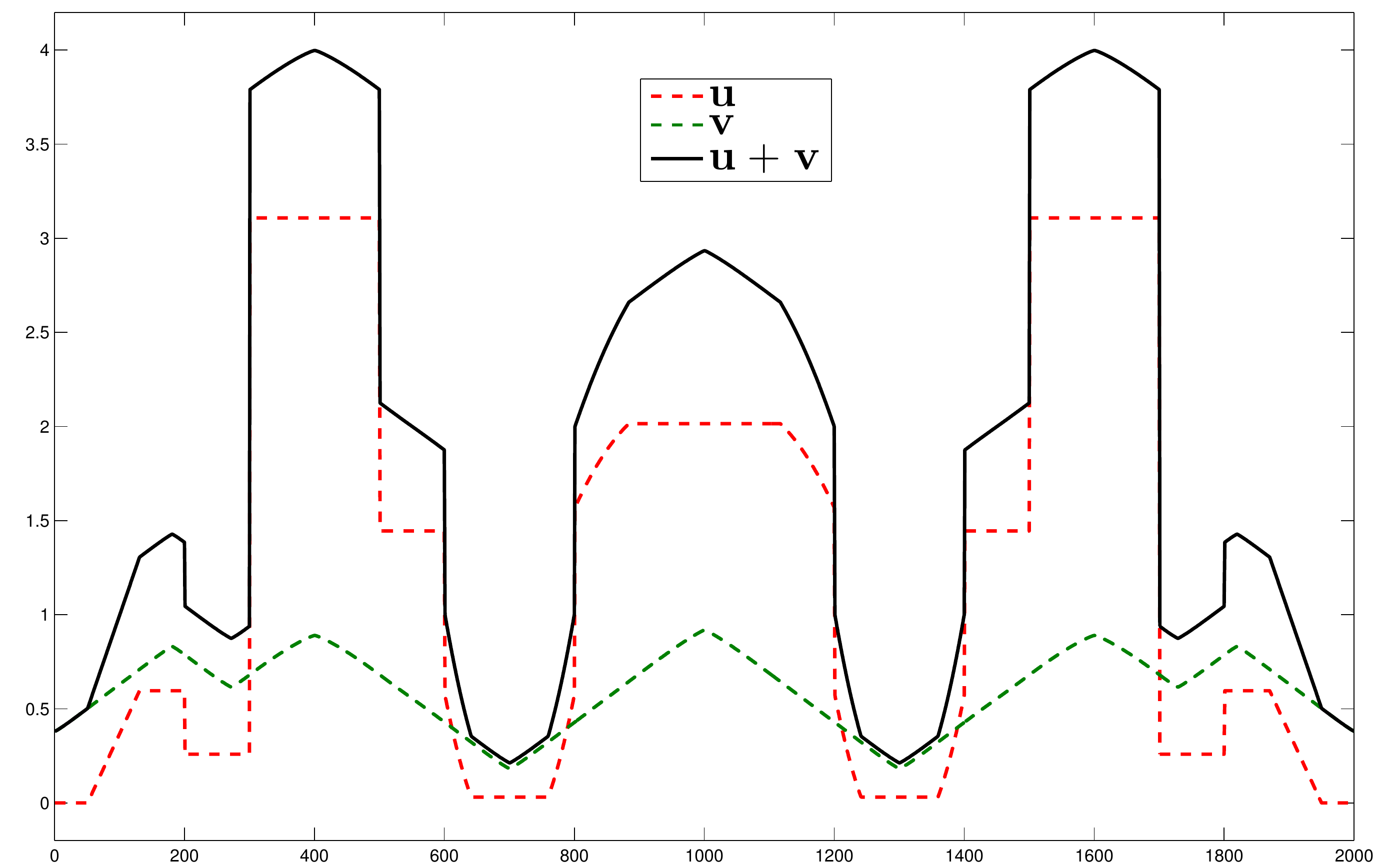}
                \caption{Decomposition of the data in Figure \ref{infimal_convolution:a} for $p=10$} 
                \label{inf_conv:b}
\end{subfigure}
\end{center}
\caption{Decomposition of the data in Figure \ref{infimal_convolution:a} into $u, v$ parts for $p=\frac{4}{3}$ and $p=10$. The value $p=\frac{4}{3}$ produces a $v$ component with flat structures  while $p=10$ produces a component with almost affine structures. In both cases we have $\norm{2}{f-u-v}=6.667$.} 
\label{infimal_convolution_1}
\end{figure}

\subsection{Two dimensional results}\label{sec:numerics:2d} 

In this section we consider the two dimensional case where $u\in\re^{n\times m}$, $w\in(\re^{n\times m})^2$ with $m>1$ and $\Omega$ denotes a rectangular/square image domain. We focus on image denoising tasks and on eliminating the staircasing effect for different values of $p$. 
We use here the split Bregman algorithm proposed in  Section \ref{sec:SB}.

We start with the image in Figure \ref{square_1}, i.e., a square with piecewise affine structures. The image size is $200\times 200$ pixels at a $[0,1]$ intensity range. The noisy image, Figure \ref{square_1:b}, is a corrupted version of the original image, Figure \ref{square_1:a}, with Gaussian noise of zero mean and variance $\sigma=0.01$.

\begin{figure}[h]
\begin{center}
\begin{subfigure}[t]{6cm}
                \centering                                                  
                \includegraphics[width=3.8cm]{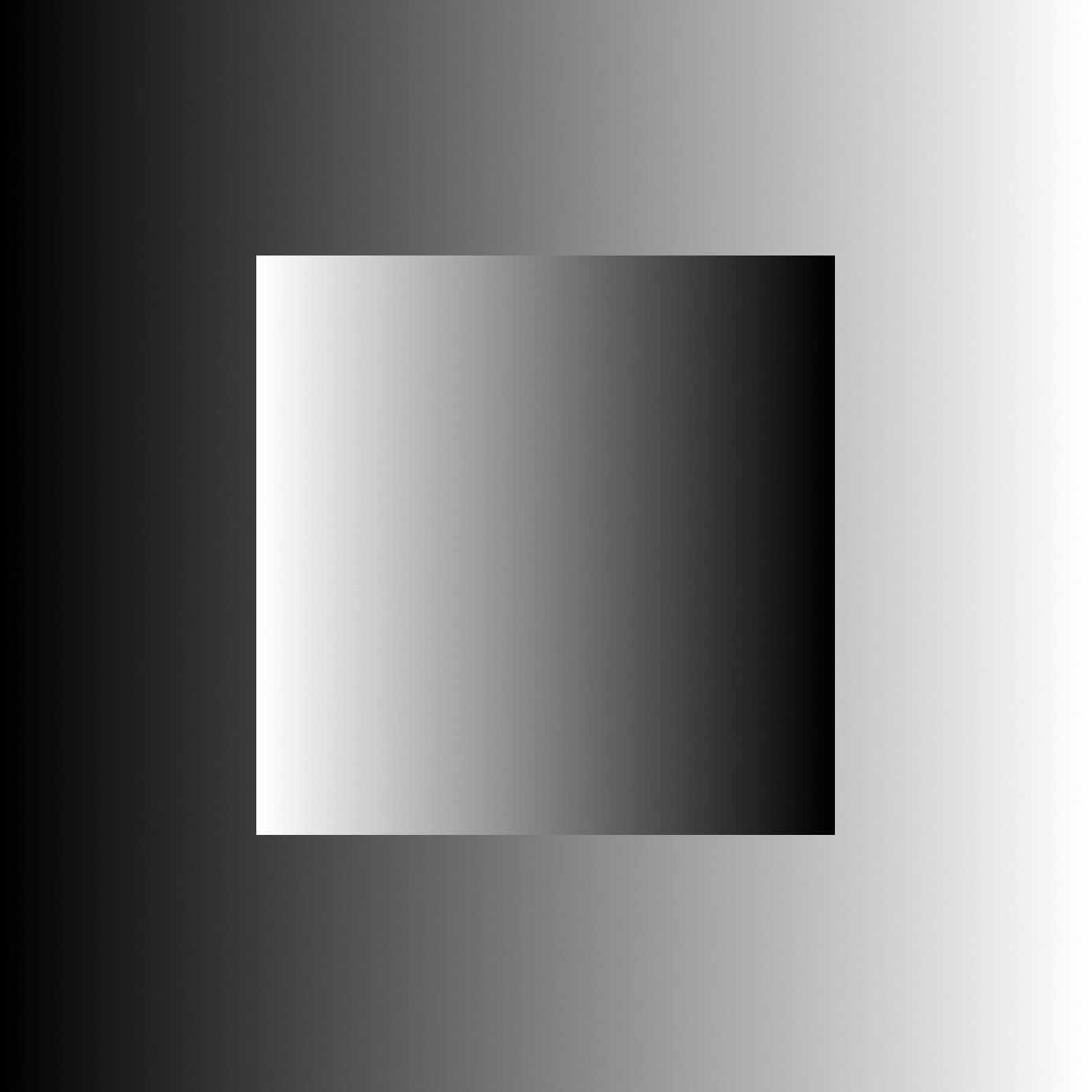}
                \caption{Square} 
                \label{square_1:a}
\end{subfigure}
\begin{subfigure}[t]{6cm}
                \centering                                                  
                \includegraphics[width=3.8cm]{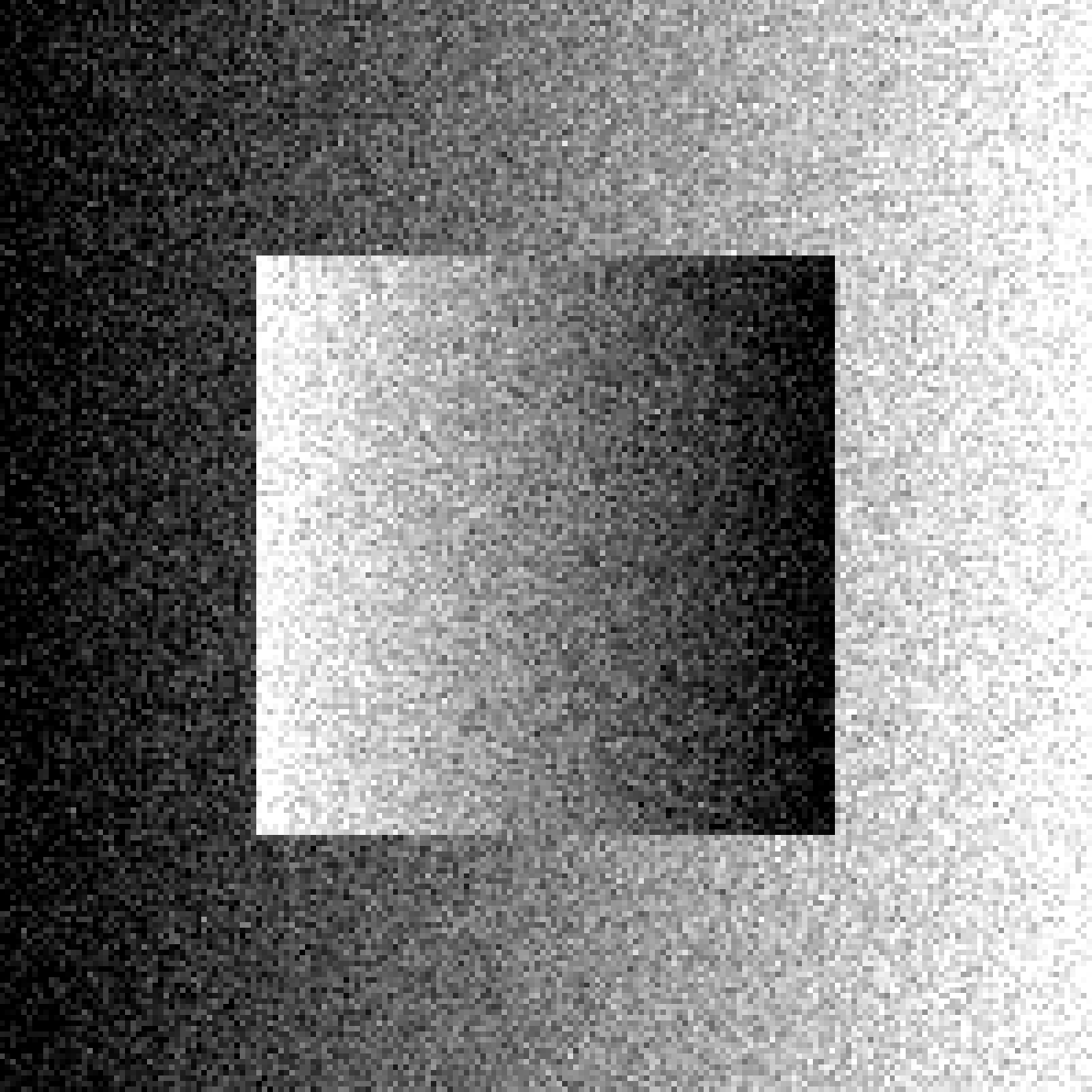}
                \caption{\centering Noisy square: PSNR=20.66 and SSIM=0.1791}
                \label{square_1:b}
\end{subfigure}
\end{center}
\caption{Square with piecewise affine structures and its noisy version with $\sigma=0.01$.}
\label{square_1}
\end{figure}
 
\begin{figure}[h]
\begin{center}
\begin{subfigure}[h]{5cm}
                \centering                                                  
                \includegraphics[width=5cm]{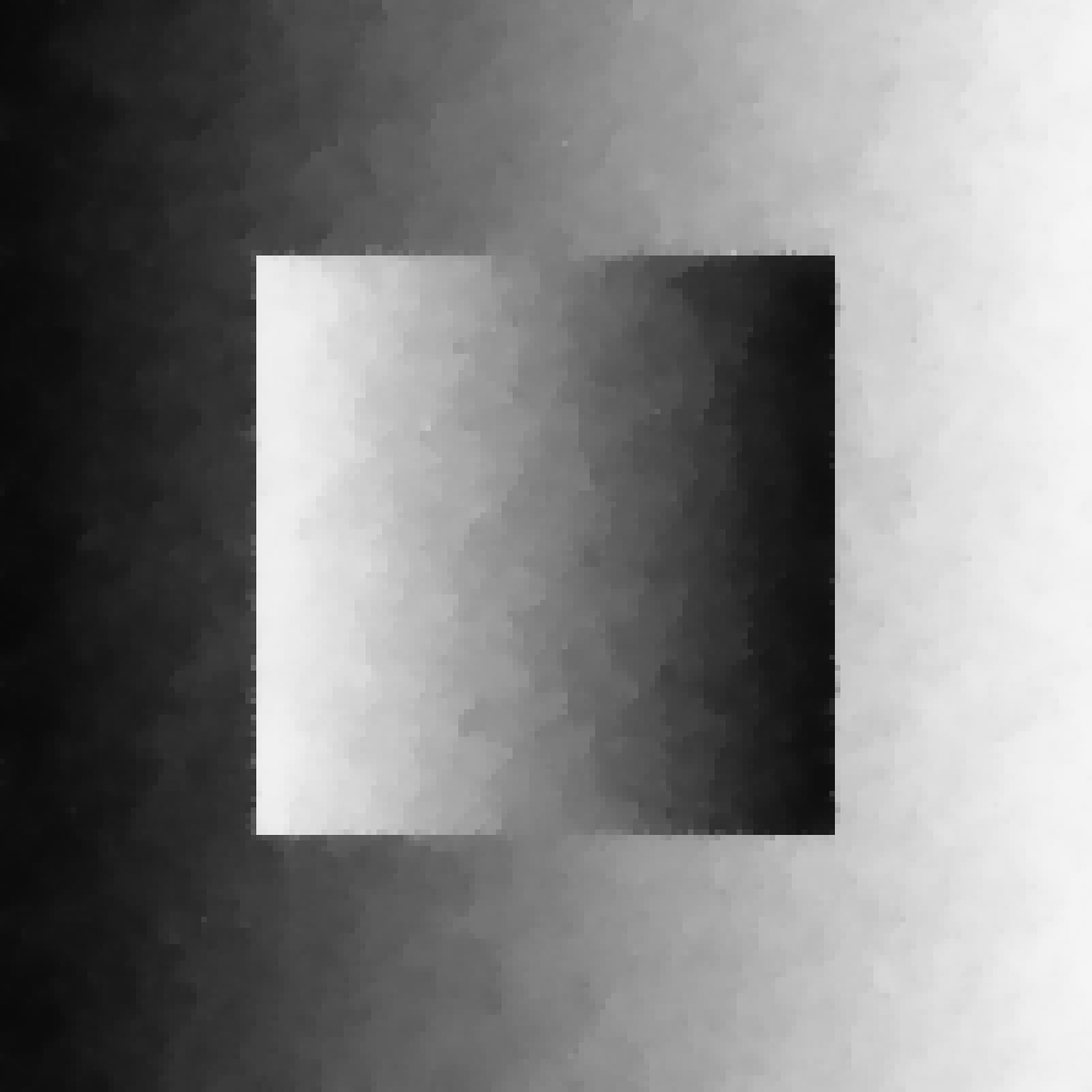}
                \caption{\centering$\mathrm{TVL}^{\frac{3}{2}}$: $\alpha=0.1$, $\beta=2.5$, PSNR=33.63 }
                \label{square_2:a}
\end{subfigure}
\begin{subfigure}[h]{5cm}
                \centering                                                  
                \includegraphics[width=5cm]{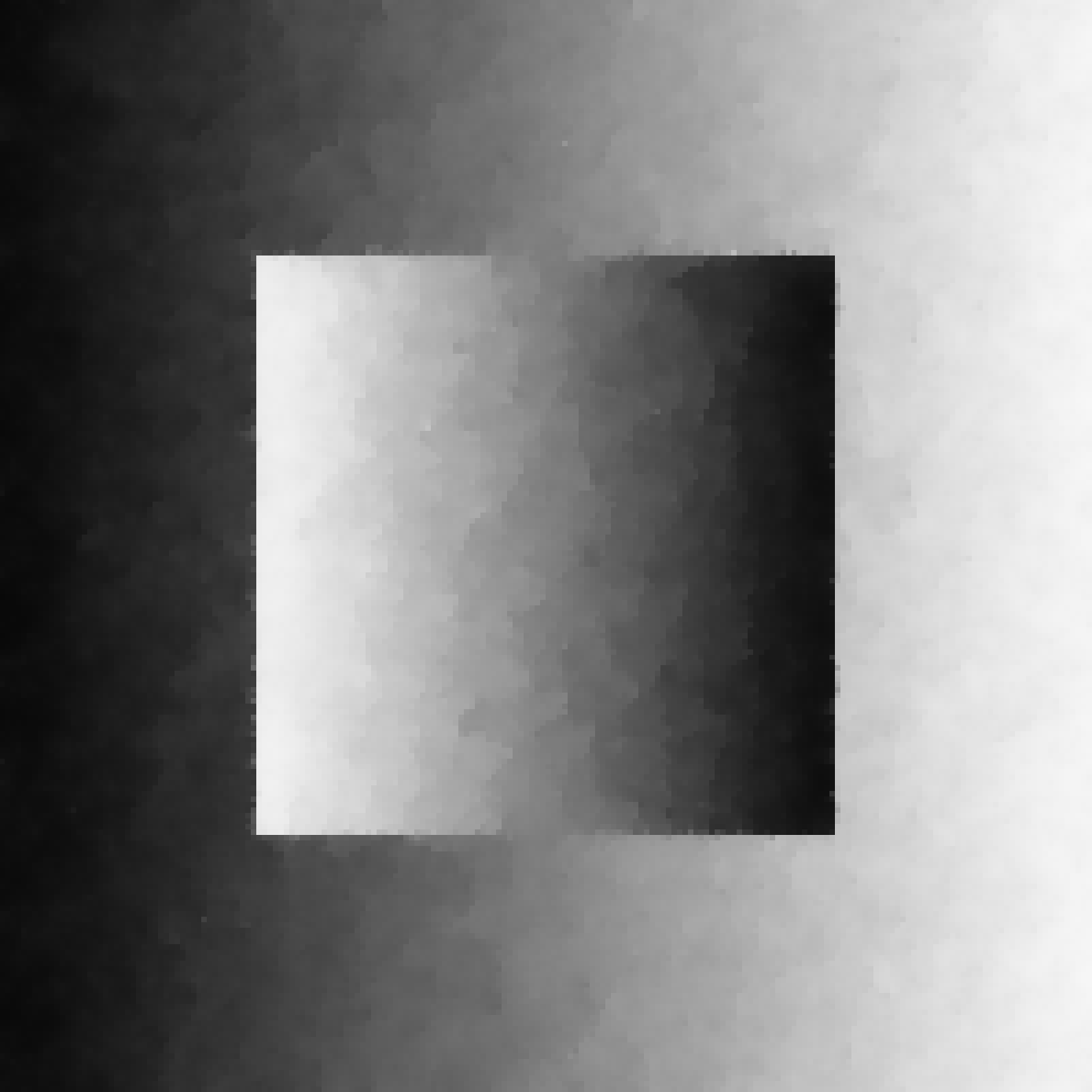}
                 \caption{\centering$\mathrm{TVL}^{2}$: $\alpha=0.1$, $\beta=13.5$, PSNR=33.68} 
                 \label{square_2:b}
\end{subfigure}
\begin{subfigure}[h]{5cm}
                \centering                                                  
                \includegraphics[width=5cm,height=5cm]{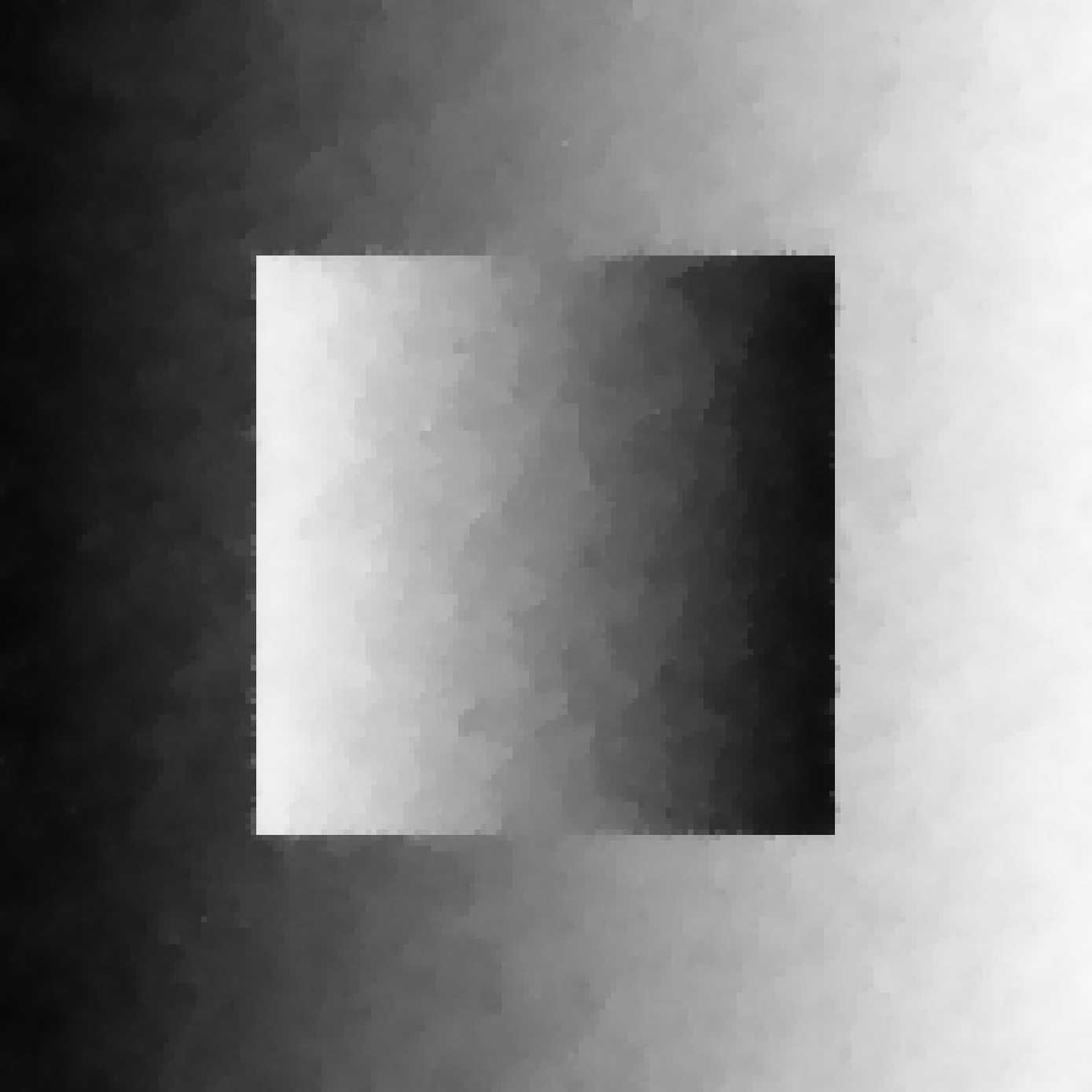}
                 \caption{\centering$\mathrm{TVL}^{3}$: $\alpha=0.1$, $\beta=76$, PSNR=33.70} 
                 \label{square_2:c}
\end{subfigure}\\
\begin{subfigure}[h]{5cm}
                \centering                                                  
                \includegraphics[width=5cm]{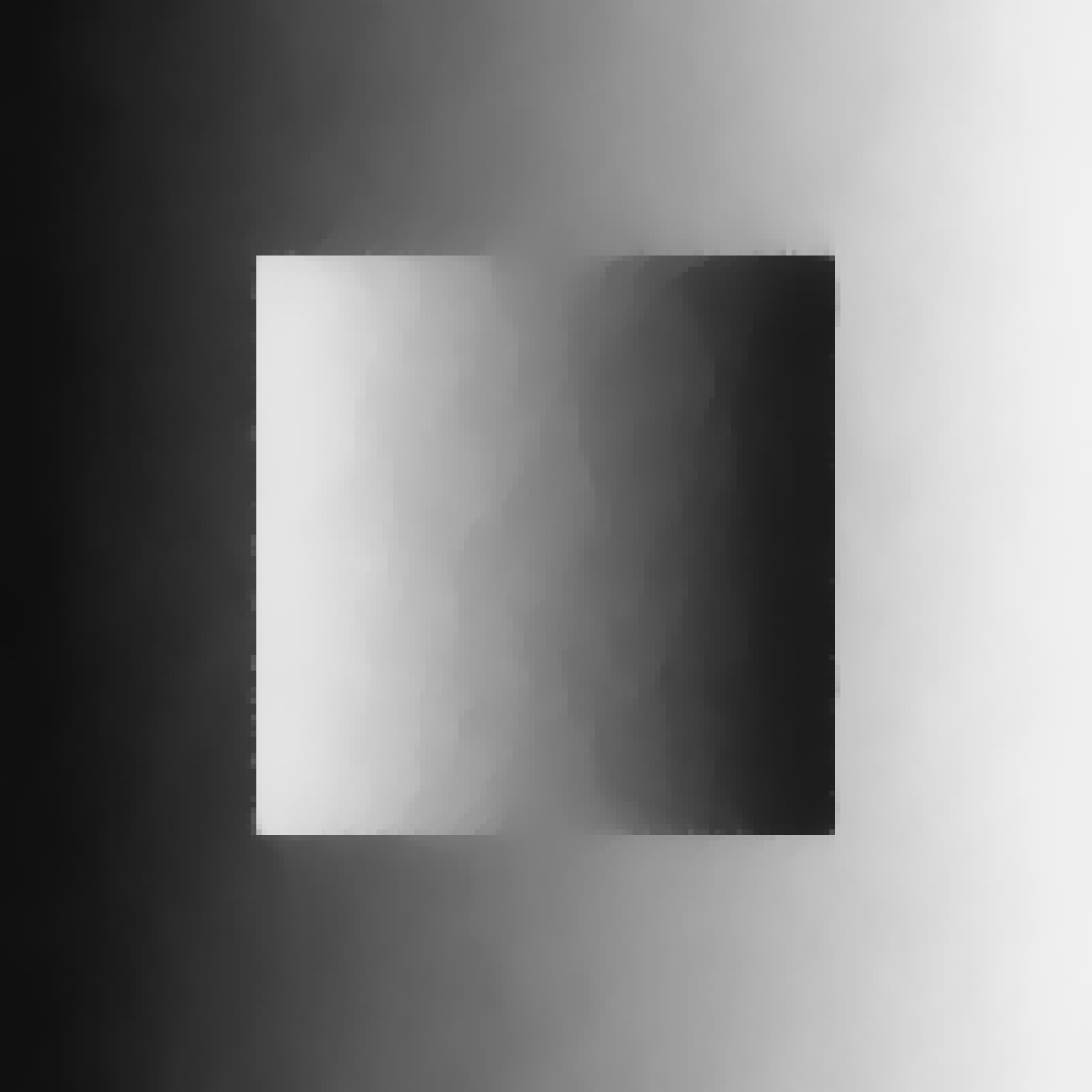}
                 \caption{\centering$\mathrm{TVL}^{\frac{3}{2}}$: $\alpha=0.3$, $\beta=7.7$, SSIM=0.9669} 
                 \label{square_2:d}
\end{subfigure}
\begin{subfigure}[h]{5cm}
                \centering                                                  
                \includegraphics[width=5cm]{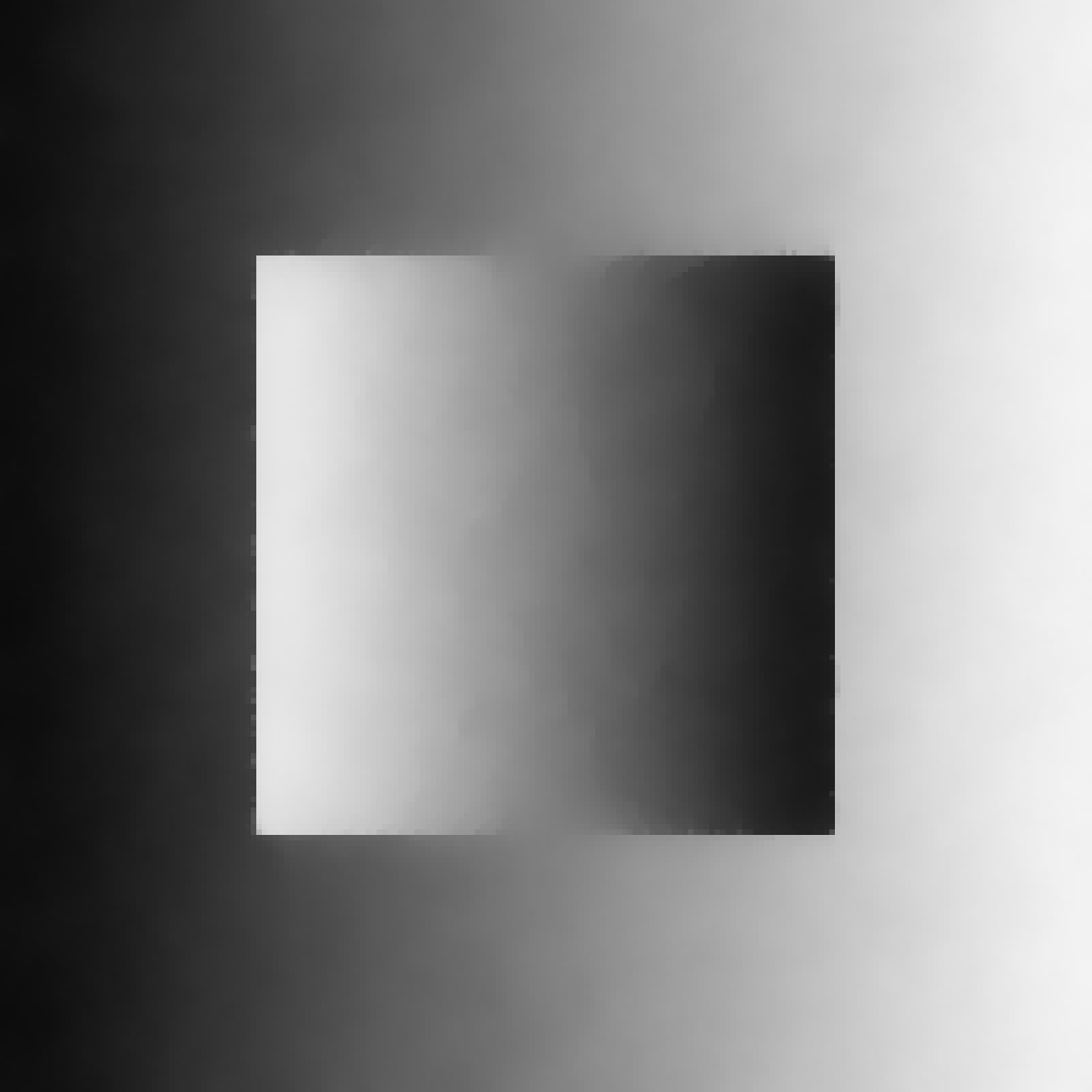}
                 \caption{\centering$\mathrm{TVL}^{2}$: $\alpha=0.3$, $\beta=34$, SSIM=0.9706} 
                 \label{square_2:e}
\end{subfigure}
\begin{subfigure}[h]{5cm}
                \centering                                                  
                \includegraphics[width=5cm]{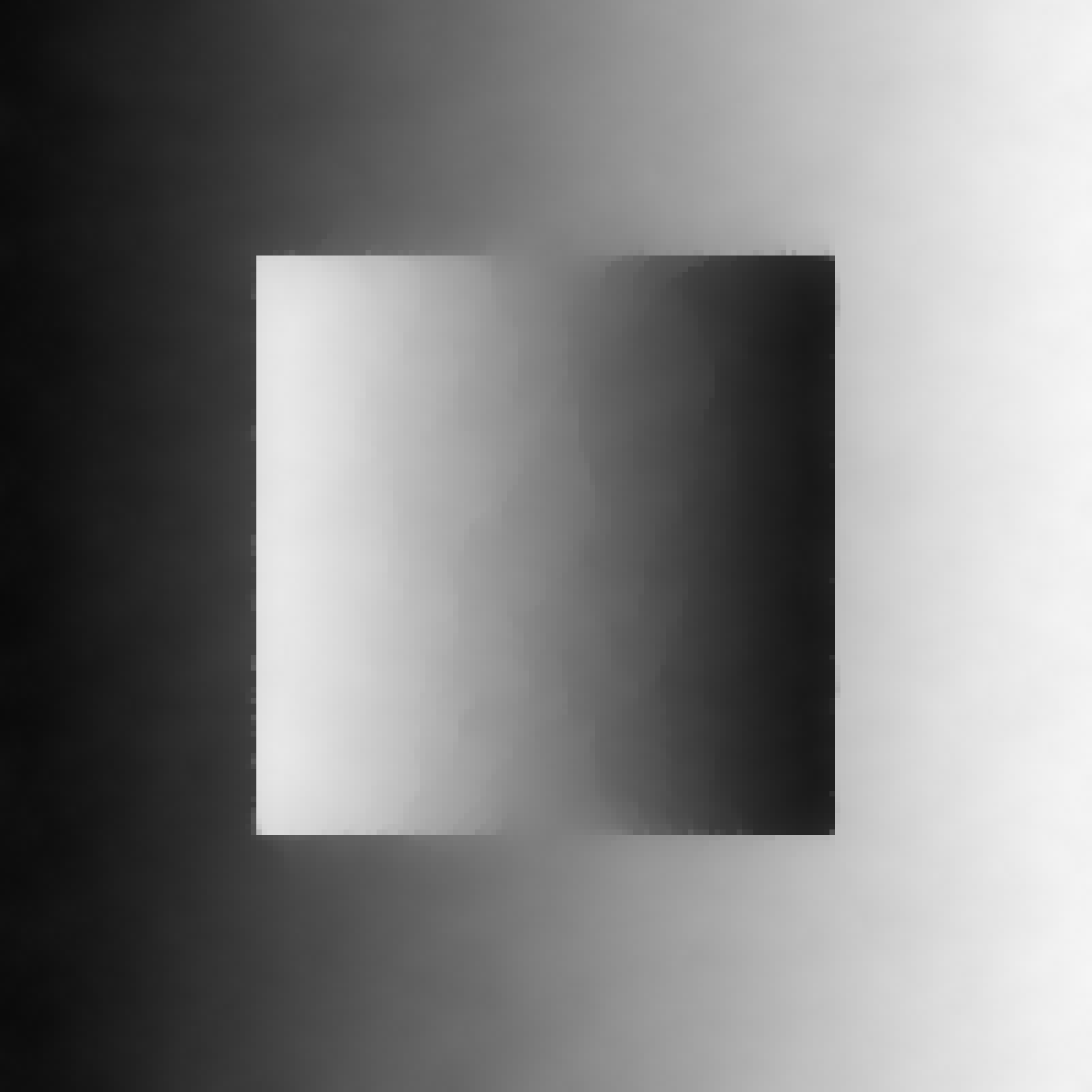}
                 \caption{\centering$\mathrm{TVL}^{3}$: $\alpha=0.3$, $\beta=182$, SSIM=0.9709} 
                 \label{square_2:f}
\end{subfigure}
\end{center}
\caption{Best reconstructions in terms of PSNR and SSIM for $p=\frac{3}{2}, 2, 3$.}
\label{square_2}
\end{figure}

In Figure \ref{square_2}, we present the best reconstructions results in terms of two quality measures, the \emph{Peak Signal to Noise Ratio} (PSNR) and the \emph{Structural Similarity Index} (SSIM), see \cite{Wang} for the definition of the latter. In each case, the  values of $\alpha$ and $\beta$ are selected appropriately for optimal PSNR and SSIM. 
Our stopping criterion is the relative residual error becoming less than $10^{-6}$ i.e.,
\begin{equation}
\frac{\norm{2}{u^{k+1}-u^{k}}}{\norm{2}{u^{k+1}}}\leq 10^{-6}.
\end{equation}
\begin{figure}[b]
\begin{center}
\begin{subfigure}[t]{3.8cm}
                \centering                                                  
                \includegraphics[width=3.8cm]{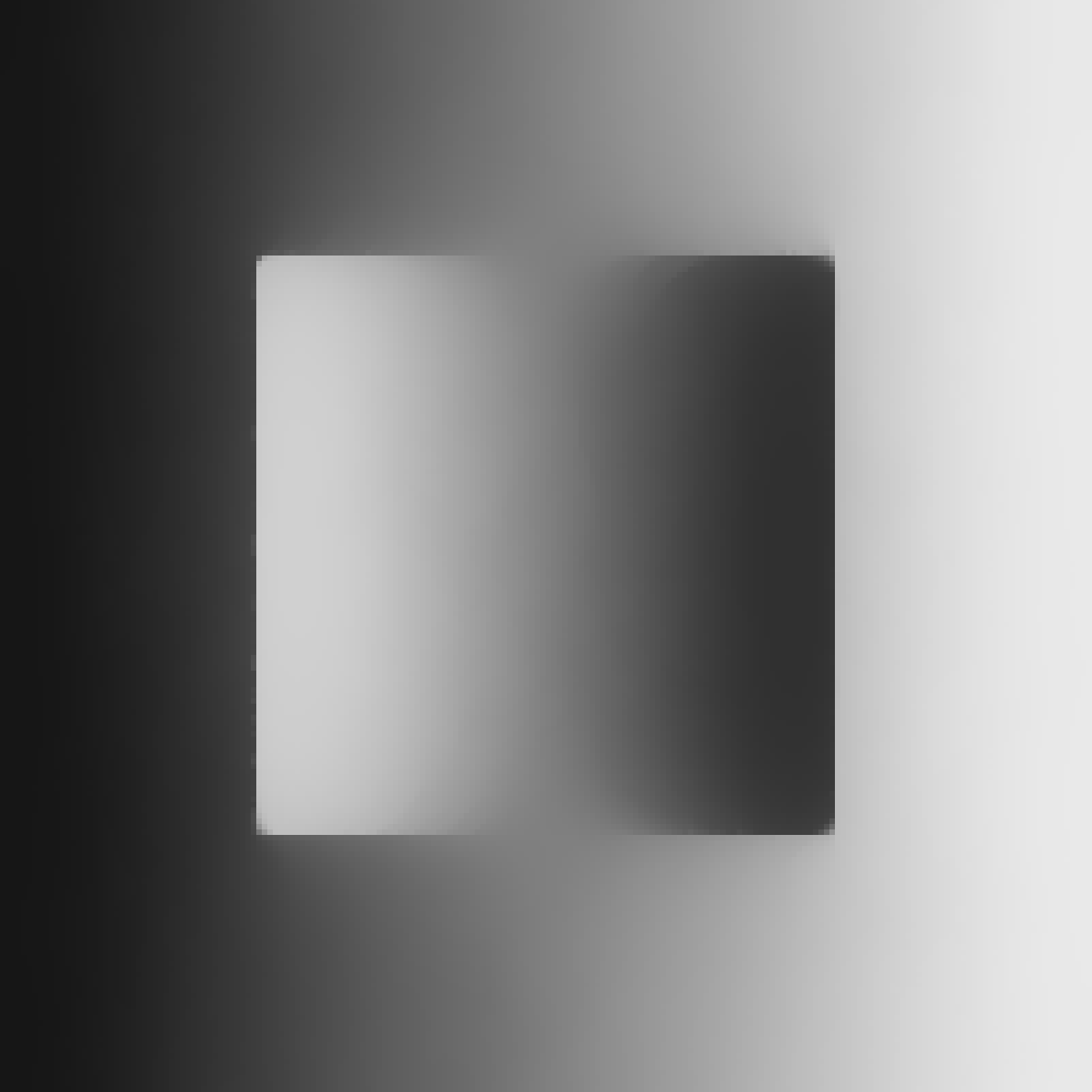}
\end{subfigure}
\begin{subfigure}[t]{3.8cm}
                \centering                                                  
                \includegraphics[width=3.8cm]{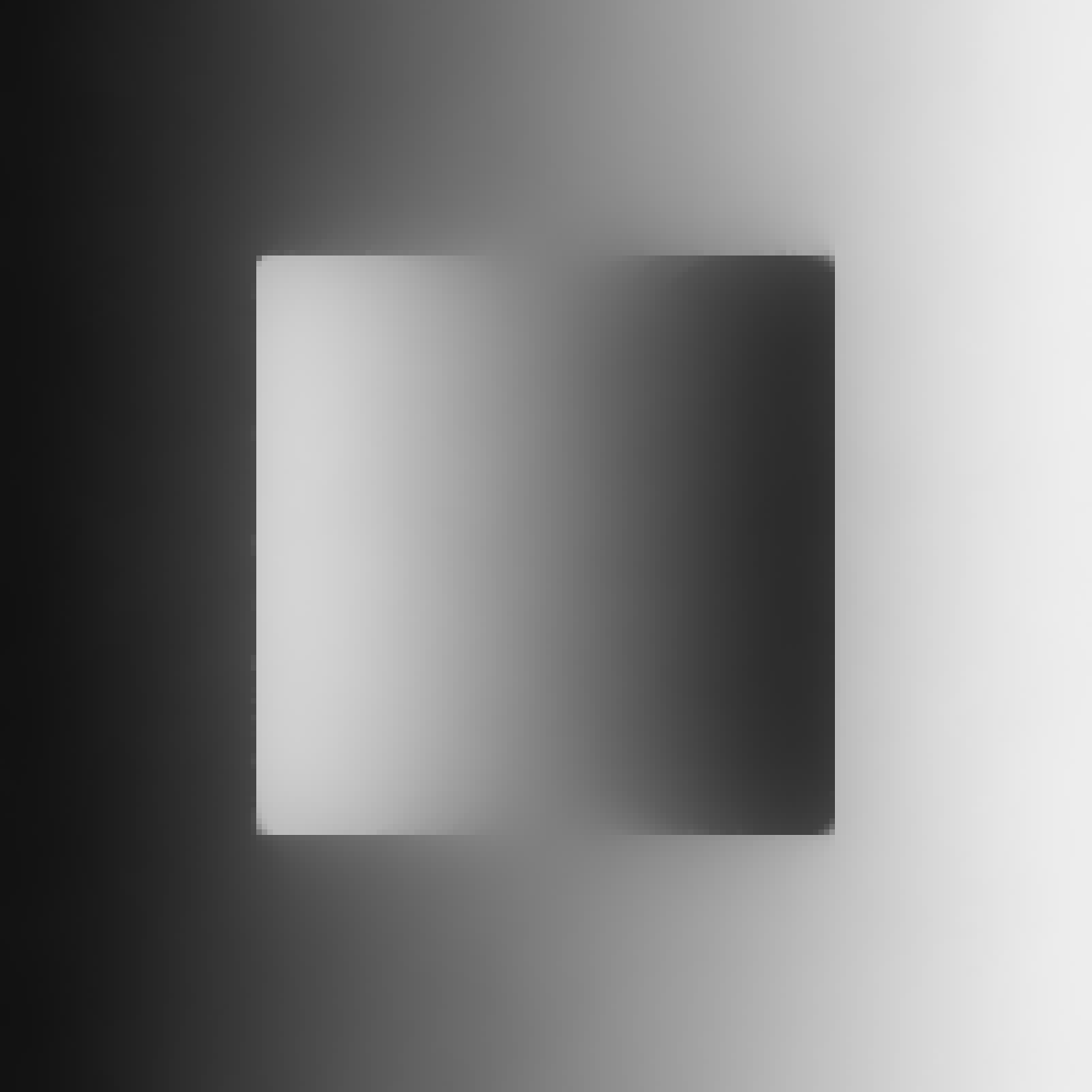}
\end{subfigure}
\begin{subfigure}[t]{3.8cm}
                \centering                                                  
                 \includegraphics[width=3.8cm]{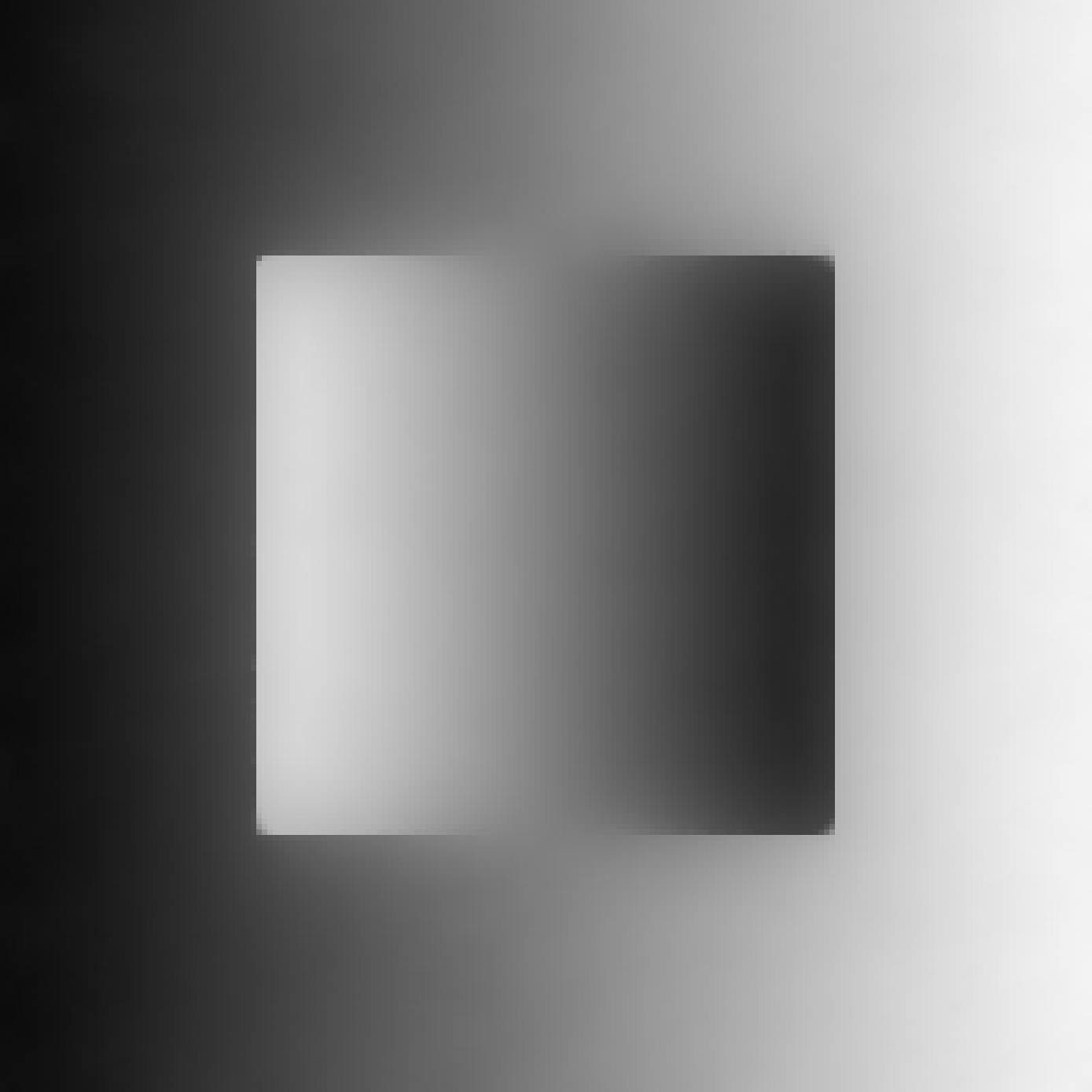}
\end{subfigure}
\begin{subfigure}[t]{3.8cm}
                \centering                                                  
                \includegraphics[width=3.8cm]{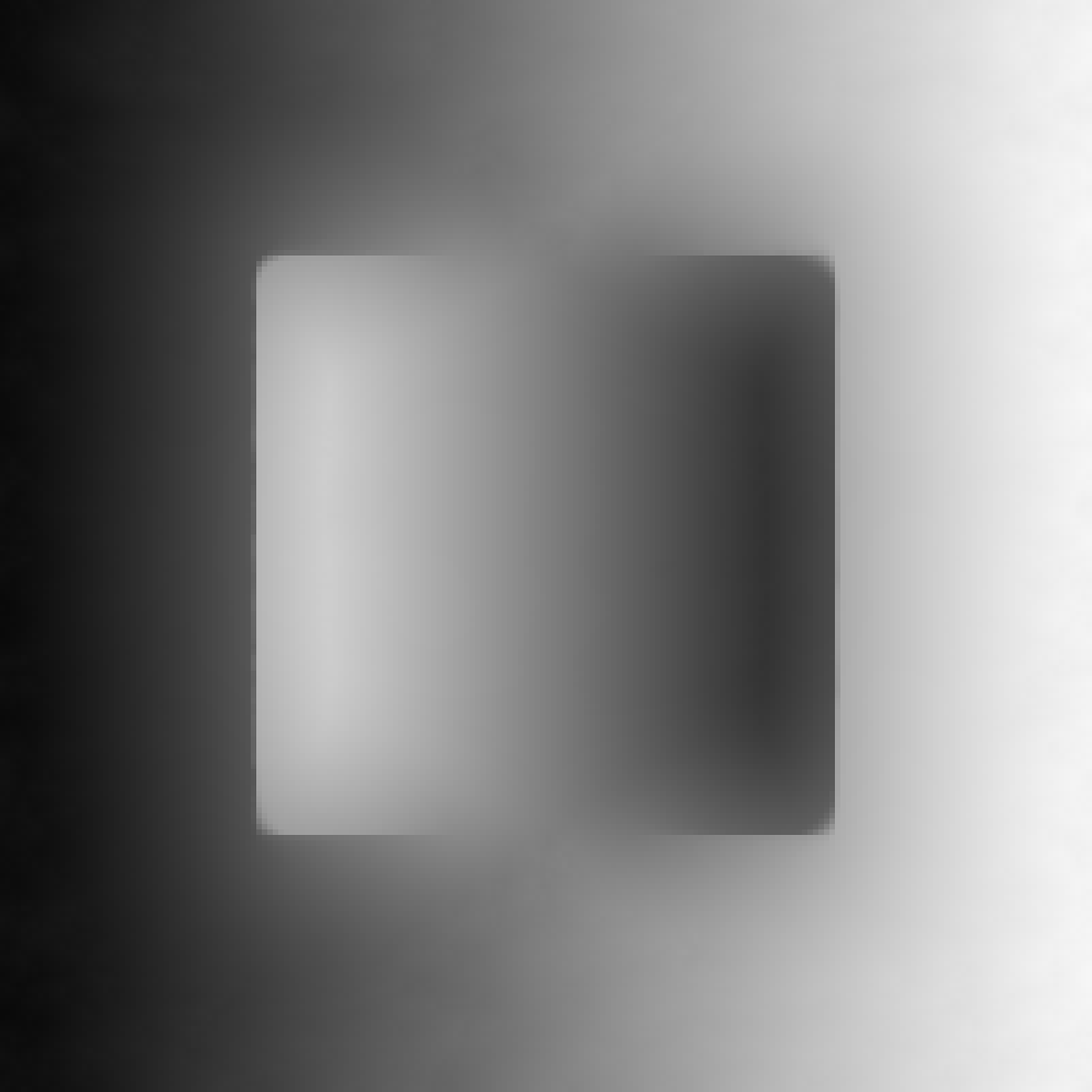} 
\end{subfigure}\\
\begin{subfigure}[t]{3.8cm}
                \centering                                                  
                \includegraphics[width=4cm]{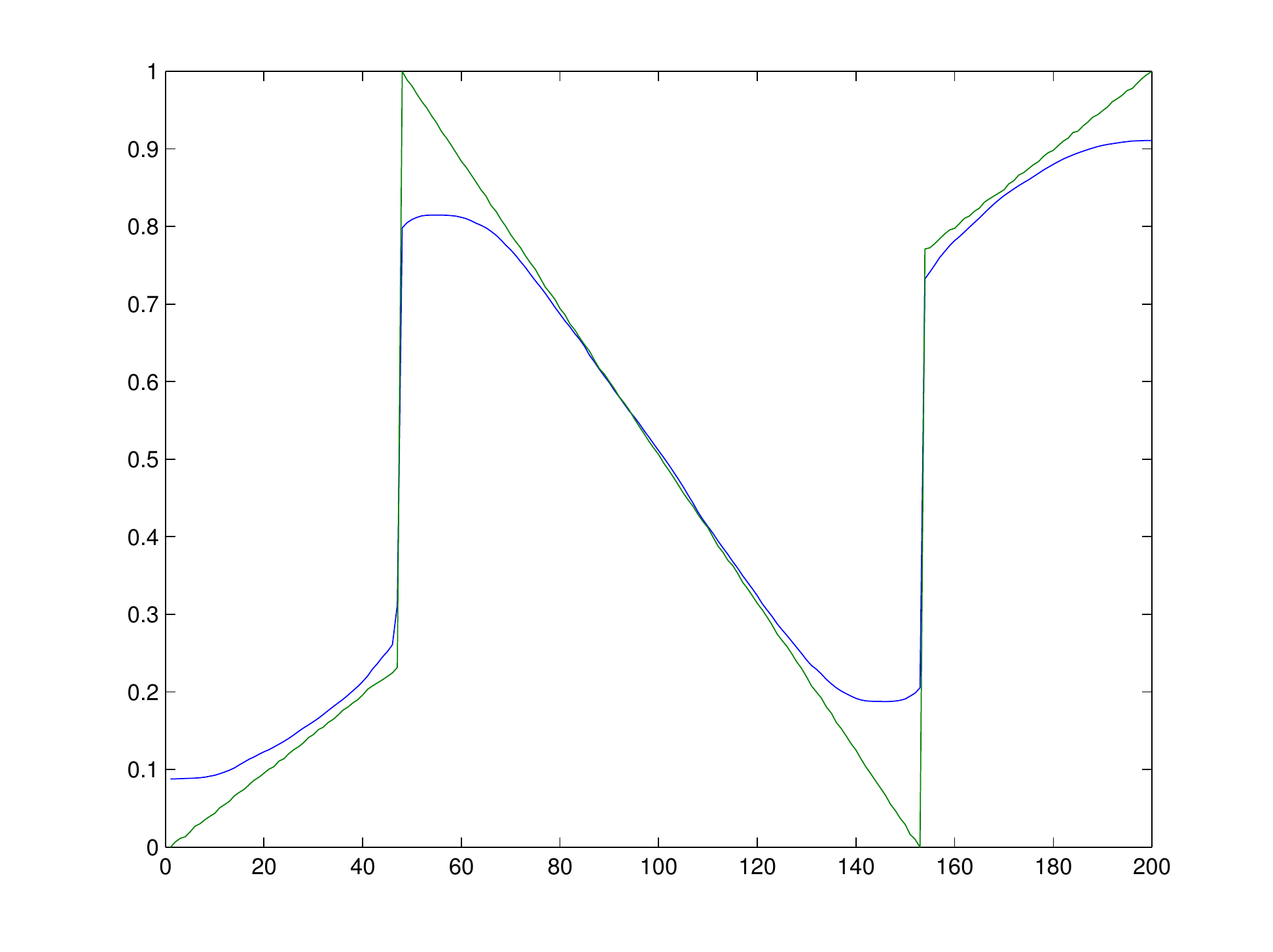}
                 \caption{\centering$\mathrm{TVL}^{\frac{3}{2}}$: $\alpha=1$, $\beta=25$, SSIM=0.9391} 
\end{subfigure}
\begin{subfigure}[t]{3.8cm}
                \centering                                                  
                \includegraphics[width=4cm]{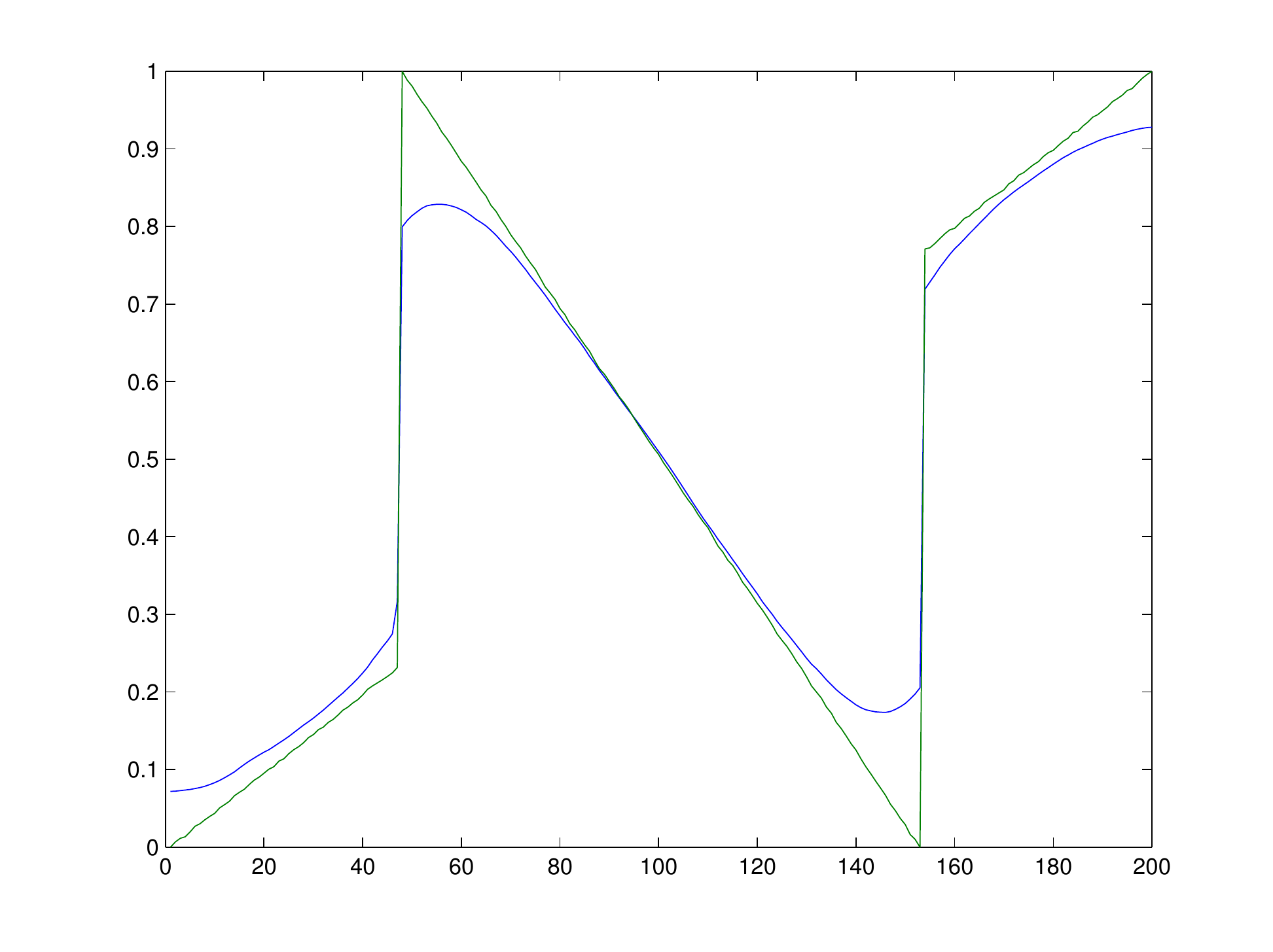}
                 \caption{\centering$\mathrm{TVL}^{2}$: $\alpha=1$, $\beta=116$, SSIM=0.9433} 
\end{subfigure}
\begin{subfigure}[t]{3.8cm}
                \centering                                                  
                \includegraphics[width=4cm]{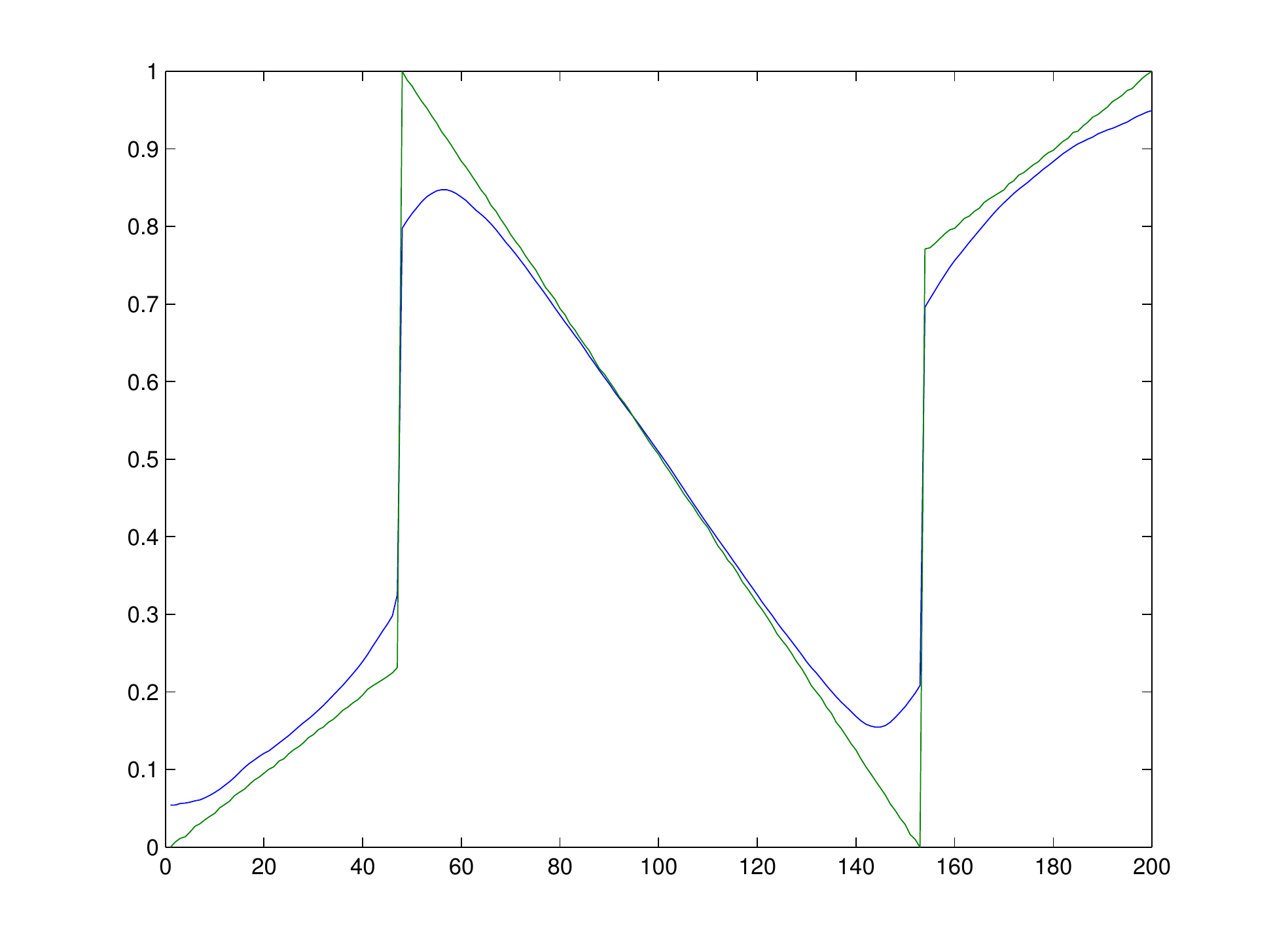}
                 \caption{\centering$\mathrm{TVL}^{3}$: $\alpha=1$, $\beta=438$, SSIM=0.9430} 
\end{subfigure}
\begin{subfigure}[t]{3.8cm}
                \centering                                                  
                \includegraphics[width=4cm]{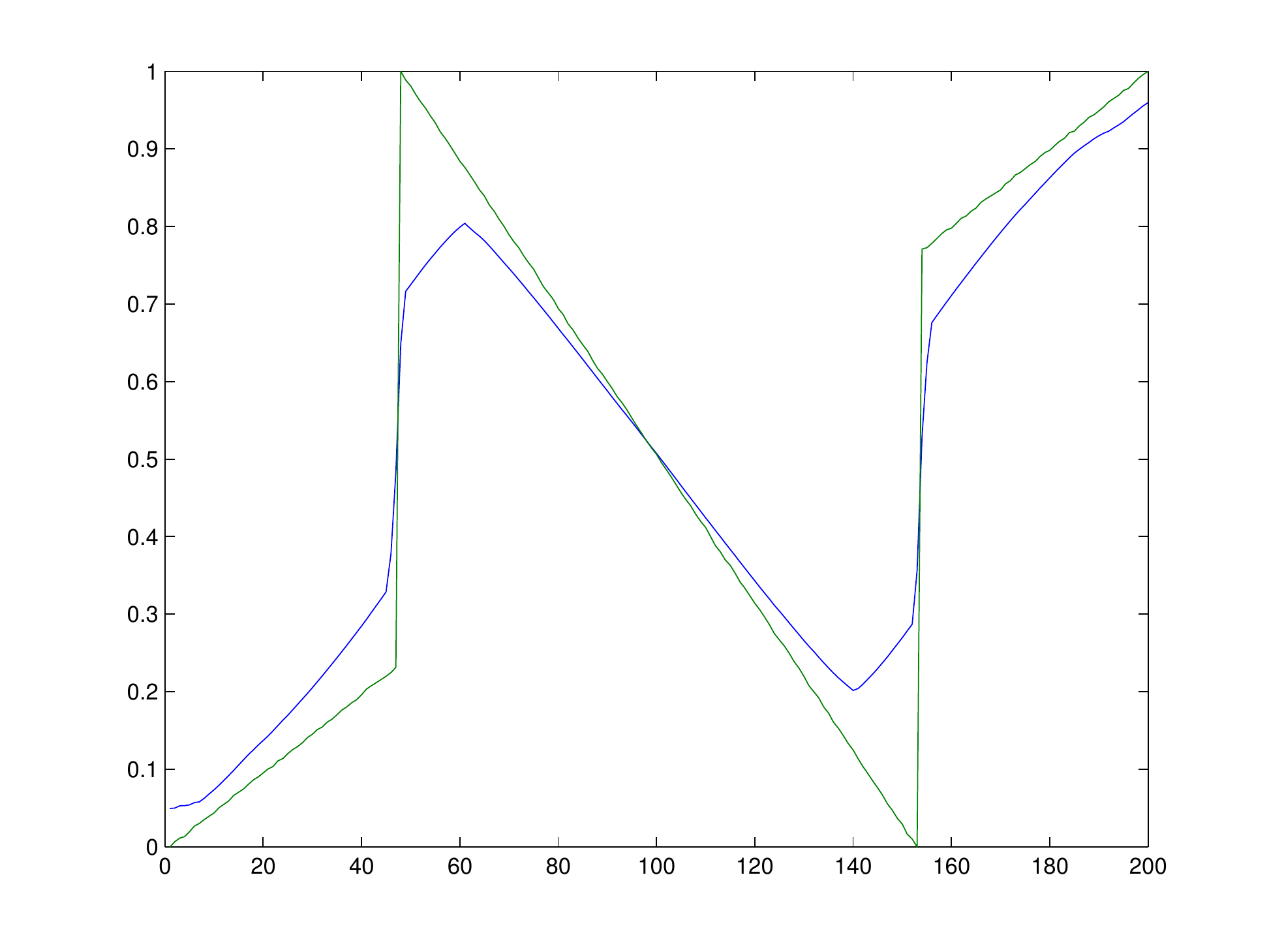}
                 \caption{\centering$\mathrm{TVL}^{7}$: $\alpha=2$, $\beta=5000$, SSIM=0.9001} 
\end{subfigure}
\end{center}
\caption{Staircasing elimination for $p=\frac{3}{2}, 2, 3$ and $7$. High values of $p$ promotes almost affine structures.}
\label{square_3}
\end{figure}
Finally, for computational efficiency, we fix $\lambda=10\alpha$ when $1<p<4$ and $\lambda=1000\alpha$ when $p\geq4$ (empirical rule). We observe that the best reconstructions in terms of the PSNR have no visual difference among $p=\frac{3}{2}, 2$ and 3 and  staircasing is present, Figures \ref{square_2:a}, \ref{square_2:b} and \ref{square_2:c}. This is one more indication that the PSNR -- which is based on the squares of the difference between the ground truth and the reconstruction -- does not correspond to the optimal visual results. However, the best reconstructions in terms of SSIM are visually better. They exhibit significantly reduced staircasing for $p=\frac{3}{2}$ and $p=3$ and is essentially absent in the case of $p=2$, see  Figures \ref{square_2:d}, \ref{square_2:e} and \ref{square_2:f}.

 We can also get a total staircasing elimination by setting higher values for the parameters $\alpha$ and $\beta$, as we show in Figure \ref{square_3}. There, one observes that on one hand as we increase $p$, almost affine structures are promoted -- see the middle row profiles in Figure \ref{square_3} -- and on the other hand these choices of $\alpha, \beta$ produce a serious loss of contrast that however can  be easily treated via the \emph{Bregman iteration}.

Contrast enhancement via Bregman iteration was introduced in \cite{Osher1}, see also \cite{TGVbregman} for an application to higher-order models. It involves solving a modified version of the minimisation problem. Setting $u^{0}=f$, for $k=1,2,\ldots$, we solve 
\begin{equation}
\begin{aligned}
u^{k+1}&=\underset{\substack{u\in\re^{n\times m}\\ w\in(\re^{n\times m})^2}}{\operatorname{argmin}}\; \frac{1}{2}\norm{2}{f+\tilde{v}^{k}-u}^{2}+ \alpha\norm{1}{\nabla u - w} + \beta\norm{p}{w},\\
\tilde{v}^{k+1}&=\tilde{v}^{k}+f-u^{k+1}.
\end{aligned}
\label{breg1}
\end{equation}

\begin{figure}[h]
\begin{center}
\begin{subfigure}[t]{5cm}
                \centering                                                  
                \includegraphics[width=5cm]{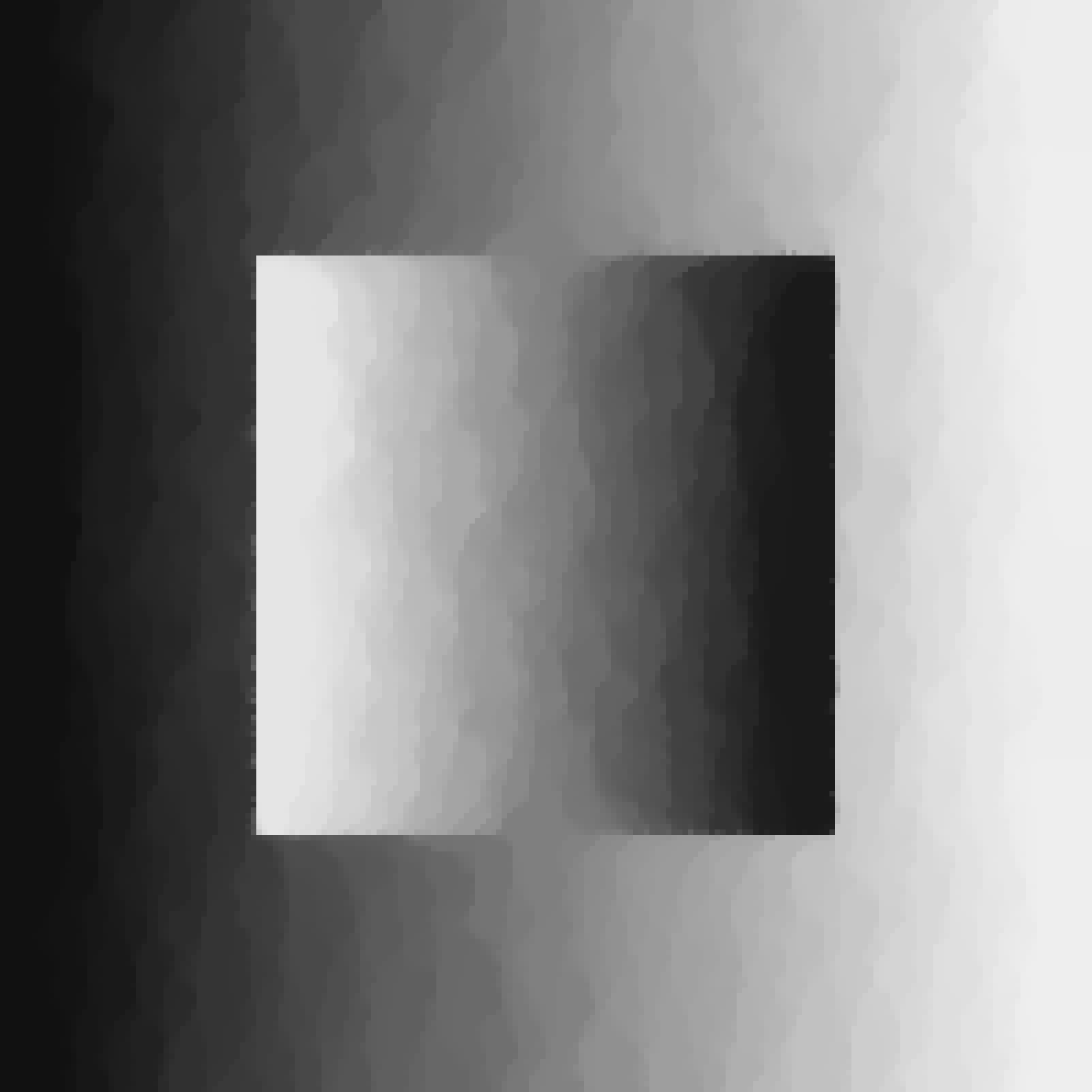}
                \caption{\centering$\mathrm{TV}$: $\alpha=0.2$, SSIM=0.9387} 
\end{subfigure}
\begin{subfigure}[t]{5cm}
                \centering                                                  
                \includegraphics[width=5cm]{no_stair_tvl_2_a_1_b_100-eps-converted-to.pdf}
                \caption{\centering$\mathrm{TVL}^{2}$: $\alpha=1$, $\beta=116$, SSIM=0.9433} 
\end{subfigure}
\begin{subfigure}[t]{5cm}
                \centering                                                  
                 \includegraphics[width=5cm]{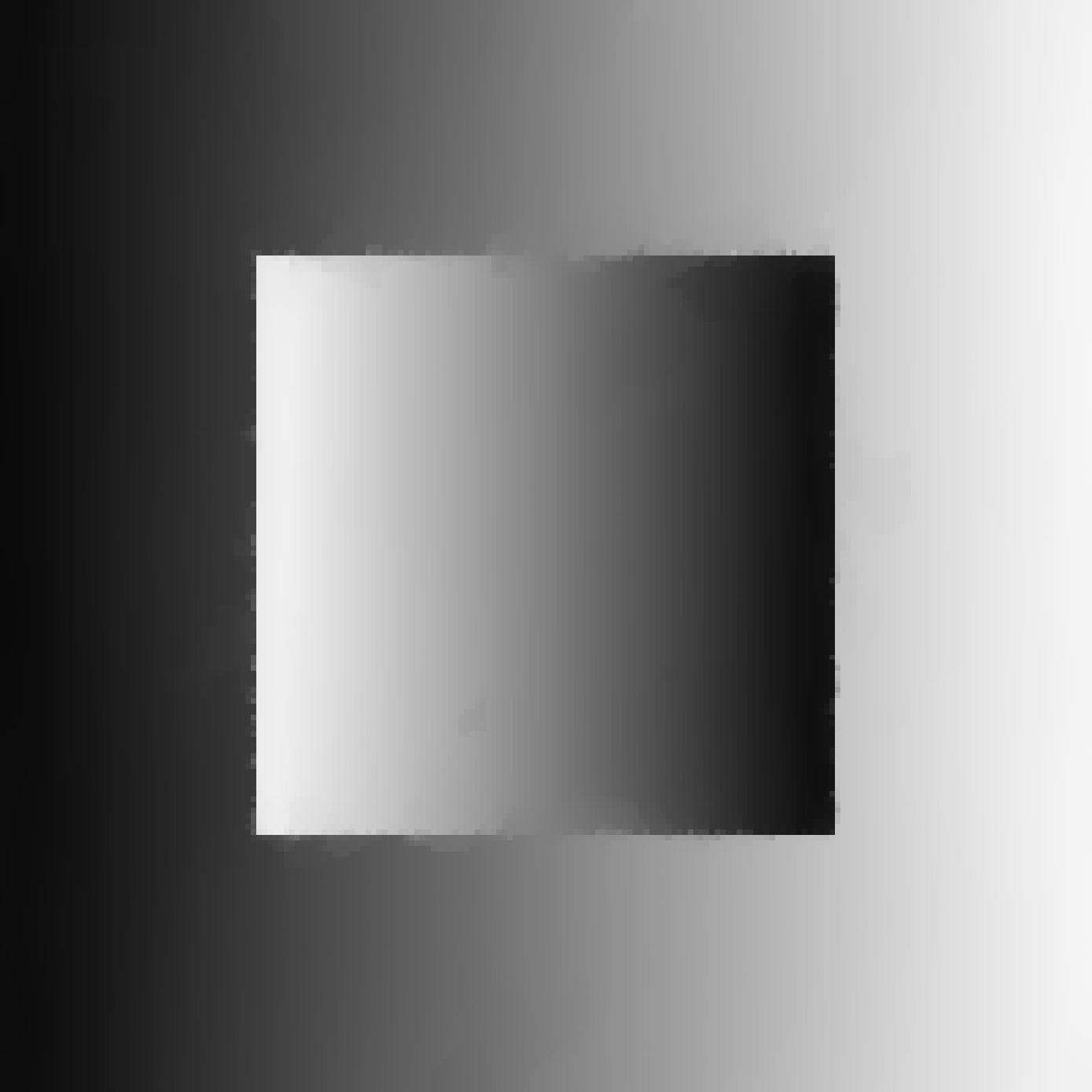}
                 \caption{\centering$\mathrm{TGV}^{2}$: $\alpha=0.12$, $\beta=0.55$, SSIM=0.9861} 
\end{subfigure}
\begin{subfigure}[t]{5cm}
                \centering                                                  
                \includegraphics[width=5cm]{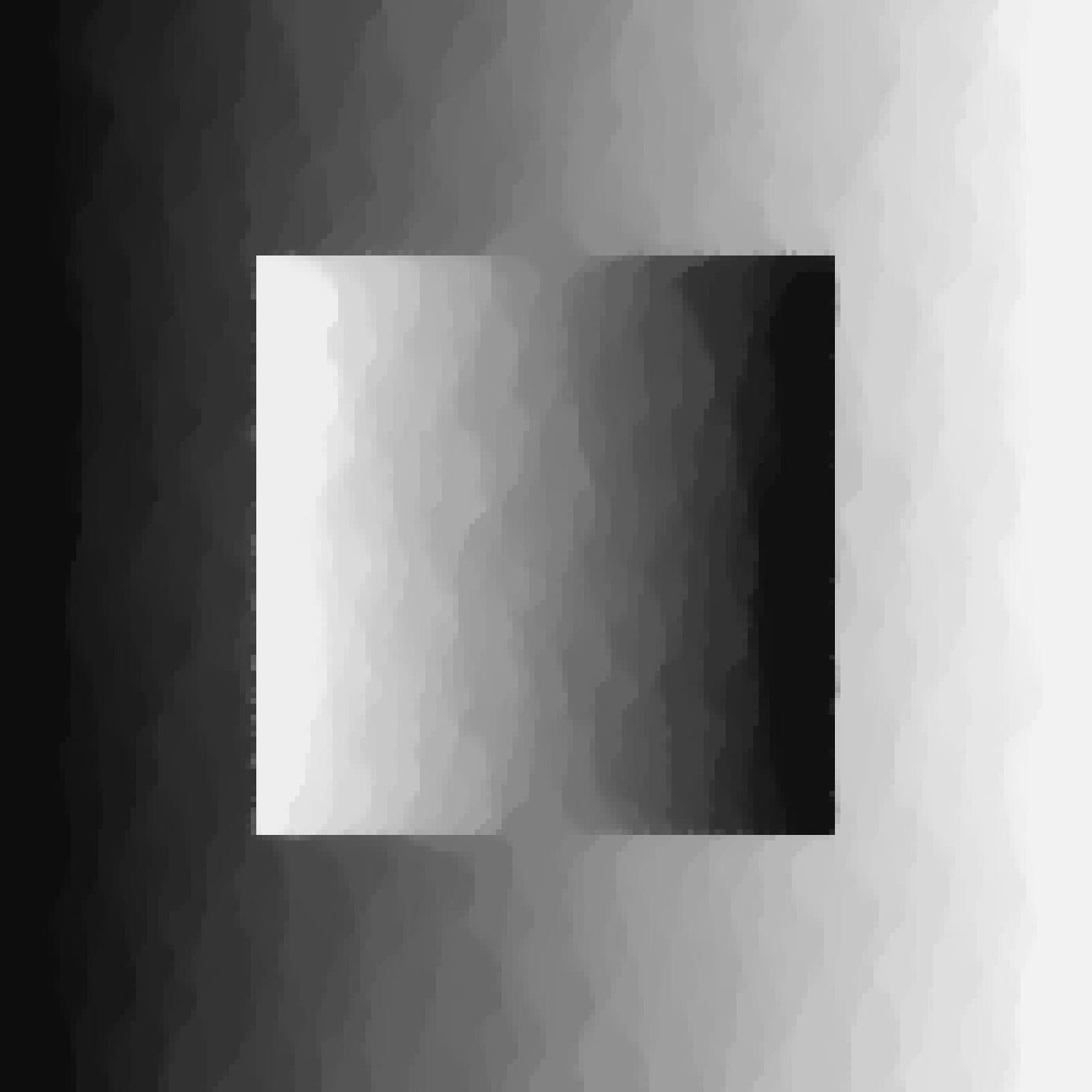}
                \caption{\centering Bregmanised $\mathrm{TV}$: $\alpha=1$, SSIM=0.9401, 4th iteration} 
\end{subfigure}
\begin{subfigure}[t]{5cm}
                \centering                                                  
                \includegraphics[width=5cm]{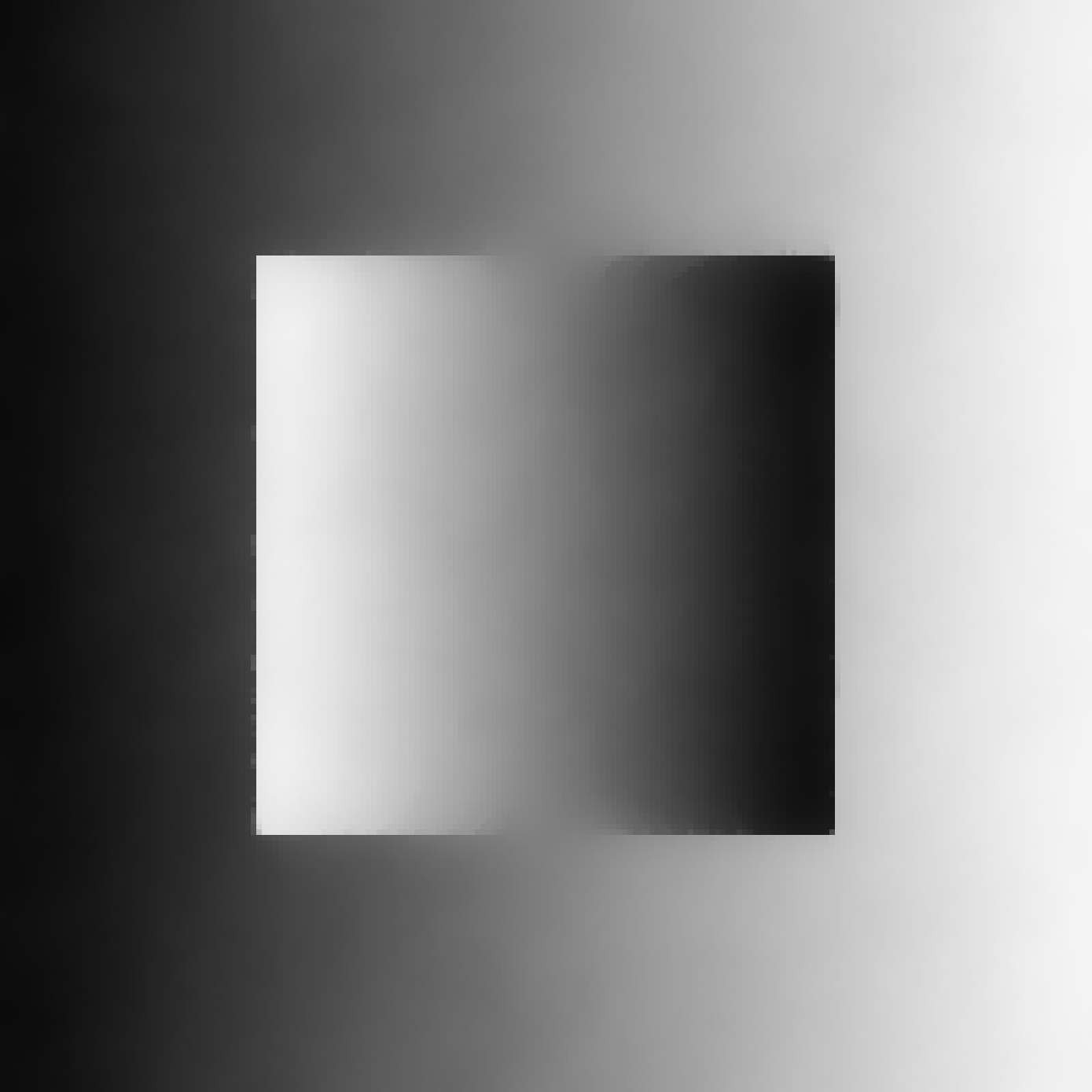}
                \caption{\centering Bregmanised $\mathrm{TVL}^{2}$: $\alpha=2$, $\beta=220$, SSIM=0.9778, 4th iteration} 
                \label{square_4:e}
\end{subfigure}
\begin{subfigure}[t]{5cm}
                \centering                                                  
                \includegraphics[width=5cm]{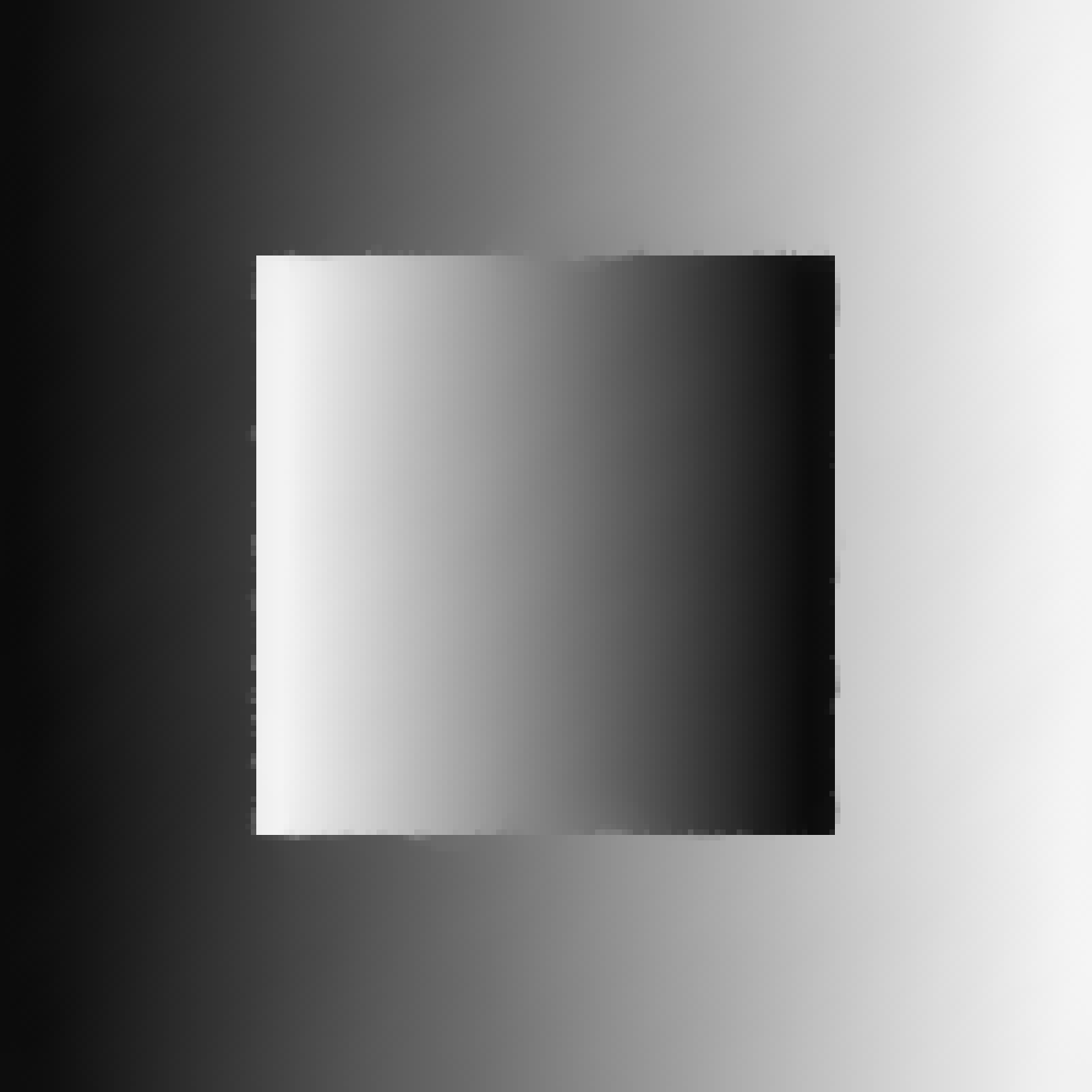}
                 \caption{\centering Bregmanised $\mathrm{TGV}^{2}$: $\alpha=2$, $\beta=10$, SSIM=0.9889, 8th iteration} 
                 \label{square_4:f}
\end{subfigure}
\end{center}
\caption{First Row: Best reconstruction in terms of SSIM for $\mathrm{TV}$, $\mathrm{TVL}^{2}$ and $\mathrm{TGV}^{2}$. Second Row: Best reconstruction in terms of SSIM for Bregmanised $\mathrm{TV}$, $\mathrm{TVL}^{2}$ and $\mathrm{TGV}^{2}$.}
\label{square_4}
\end{figure}
Instead of solving \eqref{discrete_tvlp} once for fixed $\alpha$ and $\beta$, we solve a sequence of similar problems adding back a noisy residual in each iteration which results to a contrast improvement. For stopping criteria regarding the Bregman iteration we refer to \cite{Osher1}. In Figure \ref{square_4} we present our best Bregmanised results in terms of SSIM. There, we notice that Bregman iteration  leads to a significant contrast improvement, in comparison to the results of Figure \ref{square_3}. In fact, we observe that the Bregmanised $\mathrm{TVL}^{2}$ (first-order regularisation), can achieve reconstructions that are visually close to the second-order Bregmanised $\mathrm{TGV}^{2}$, compare Figures \ref{square_4:e} and \ref{square_4:f}. The second-order $\mathrm{TGV}^{2}$ and Bregmanised $\mathrm{TGV}^{2}$ are solved using the Chambolle--Pock primal-dual method \cite{ChambollePock}.

We continue our experimental analysis with a radially symmetric image, see Figure \ref{radial_1}. In Figure \ref{radial_2}, we demonstrate that we can achieve staircasing-free reconstructions for $p=\frac{3}{2}, 2, 3$ and $7$. In fact, as we increase $p$, we obtain results that  preserve the spike in the centre of the circle, see Figure \ref{radial_2:d}. This provides us with another motivation to examine the $p=\infty$ case in \cite{partII}. The loss of contrast can be  again treated using the Bregman iteration \eqref{breg1}. The best results of the latter in terms of SSIM are presented in Figure \ref{radial_3}, for $p=2, 4$ and $7$ and they are also compared with the corresponding  Bregmanised $\mathrm{TGV}^{2}$.  We observe that we can obtain reconstructions that are visually close to the $\mathrm{TGV}^{2}$ ones and in fact notice that for $p=7$, the spike on the centre of the circle is better reconstructed compared to $\mathrm{TGV}^{2}$, see also the surface plots in Figure \ref{radial_3_surf}. 

\begin{figure}[h]
\begin{center}
\begin{subfigure}[h]{6cm}
                \centering                                                  
                \includegraphics[width=3.8cm]{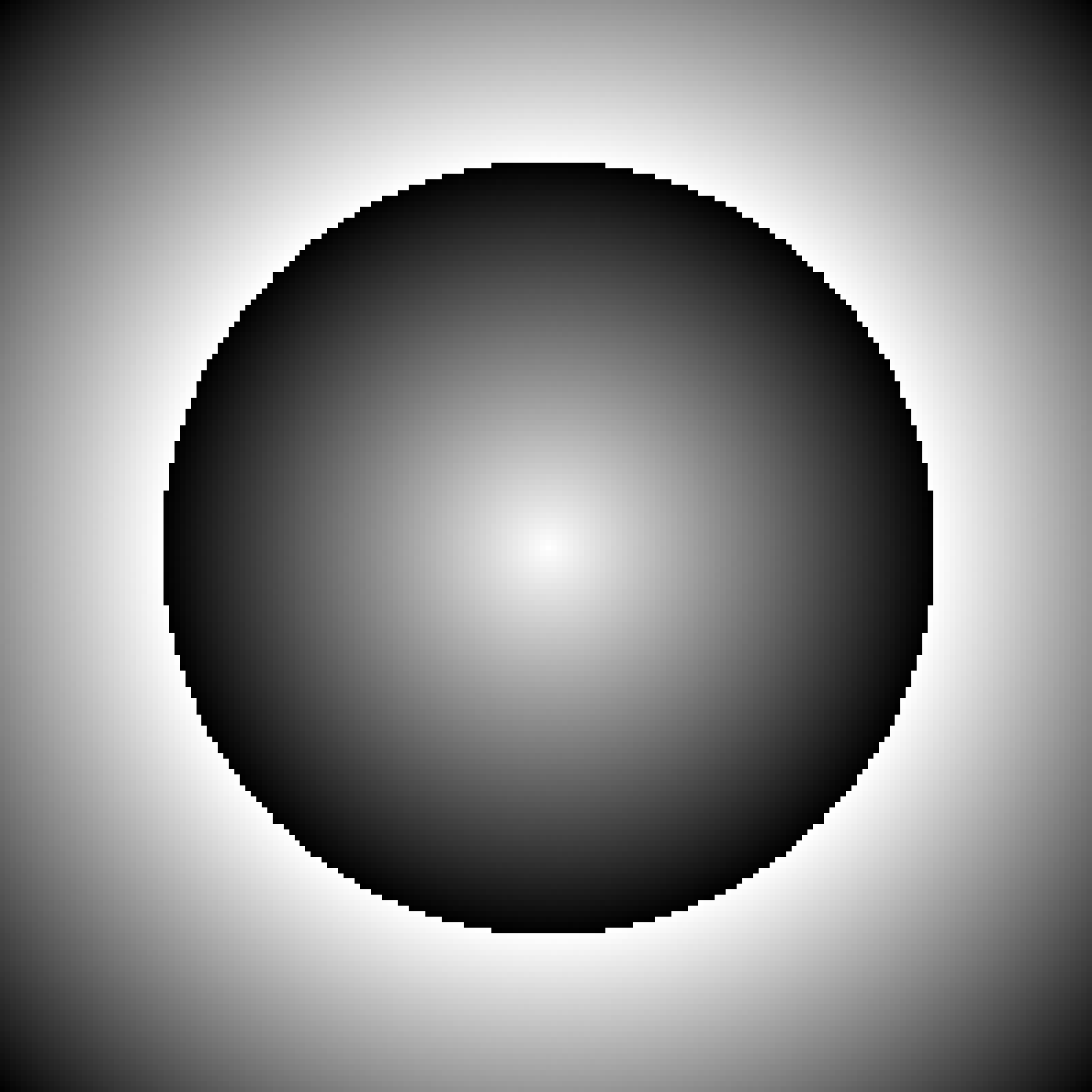}
                \caption{Circle} 
\end{subfigure}
\begin{subfigure}[h]{6cm}
                \centering                                                  
                \includegraphics[width=3.8cm]{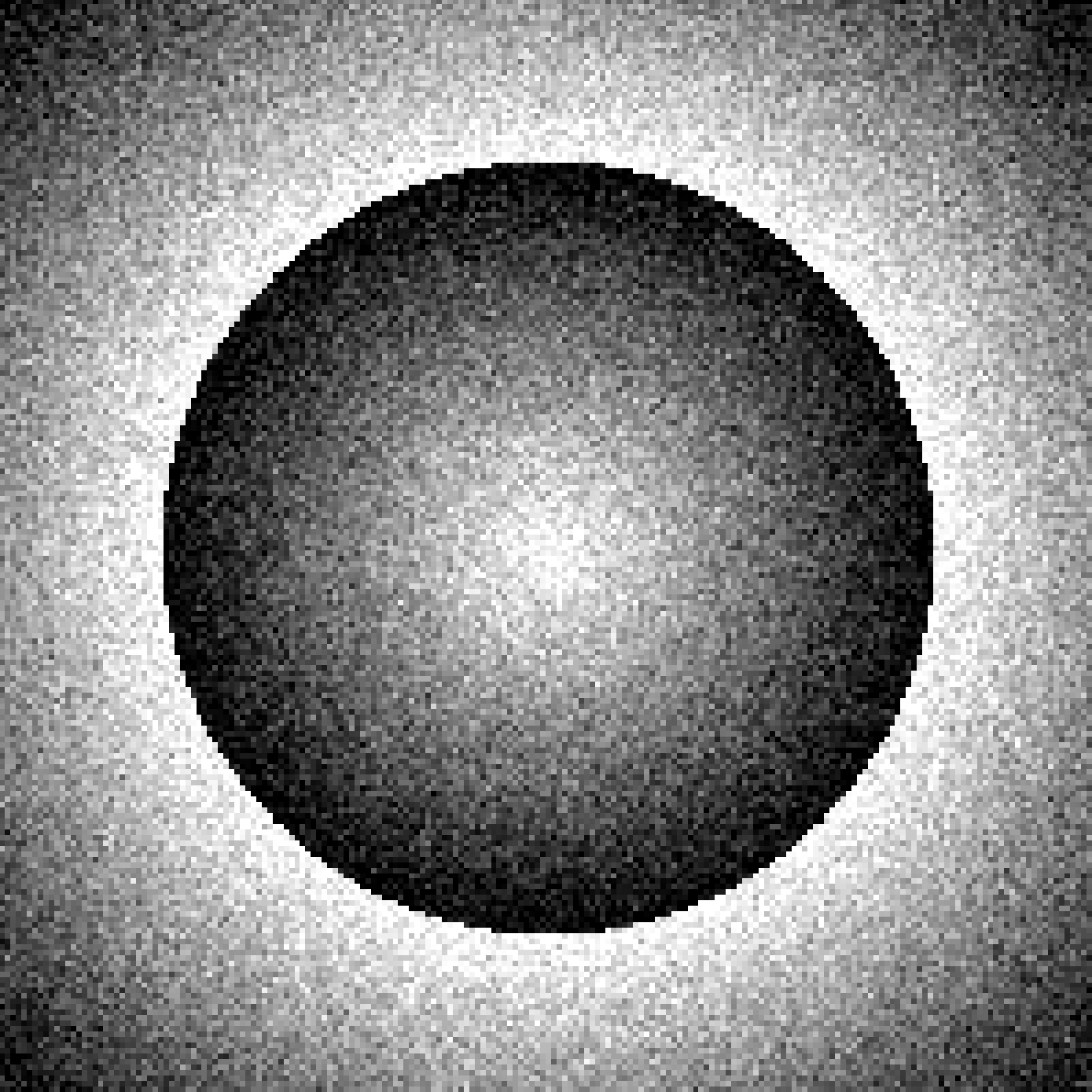}
                \caption{Noisy circle: SSIM=0.2457} 
\end{subfigure}
\end{center}
\caption{Image with symmetric radial structures and its noisy version with $\sigma=0.01$.}
\label{radial_1}
\end{figure}

\begin{figure}[h]
\begin{center}
\begin{subfigure}[t]{3.8cm}
                \centering                                                  
                \includegraphics[width=3.8cm]{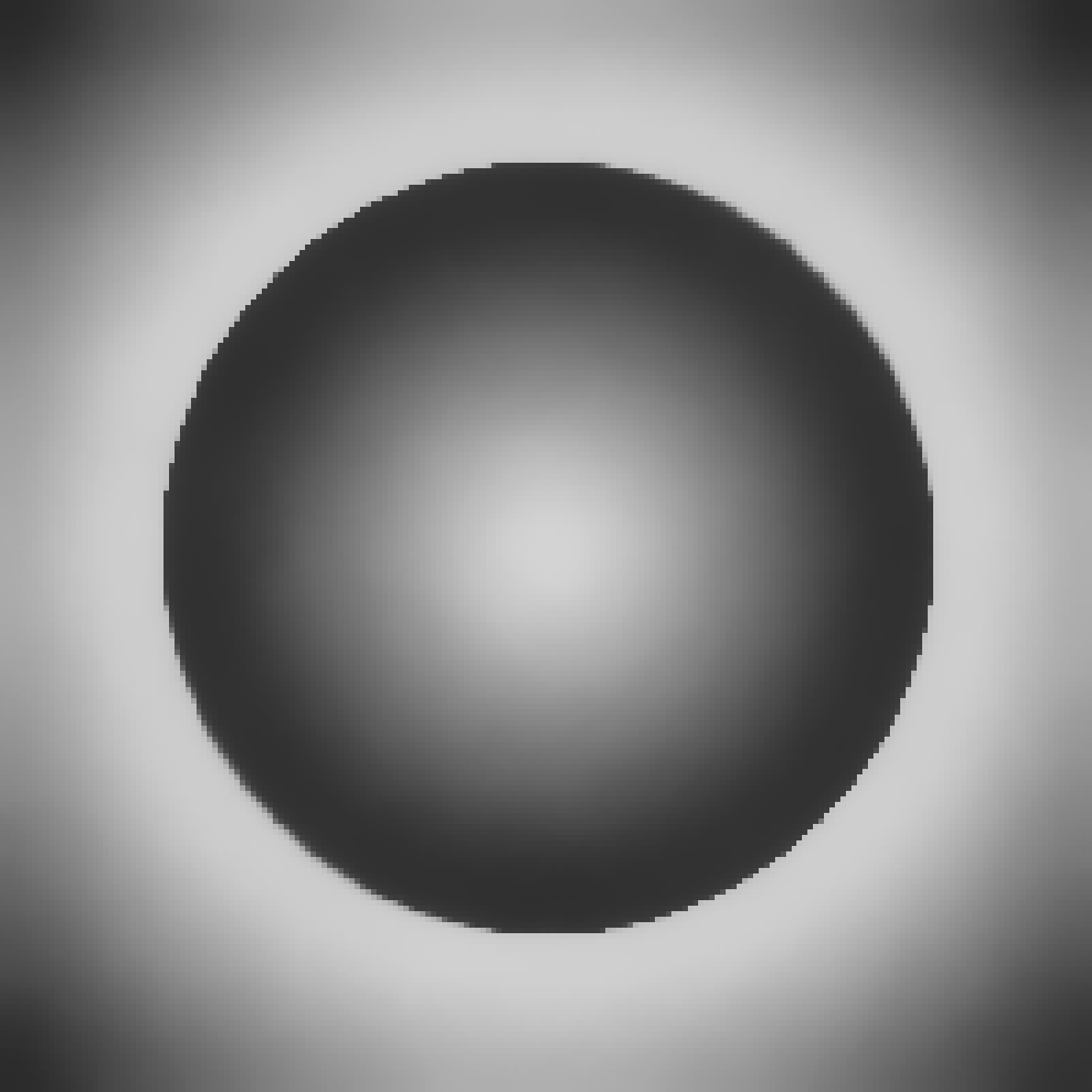}
\end{subfigure}
\begin{subfigure}[t]{3.8cm}
                \centering                                                  
                \includegraphics[width=3.8cm]{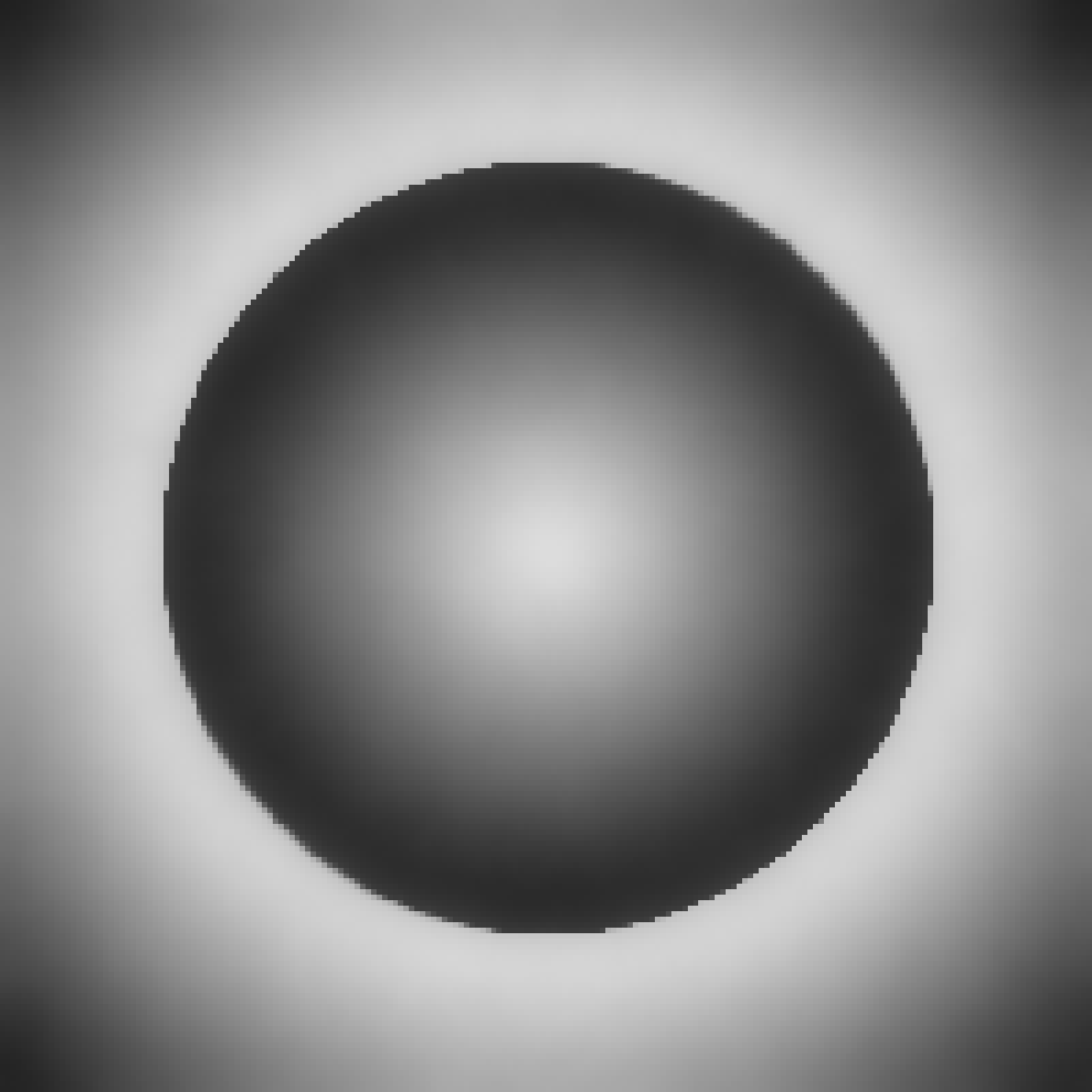}
\end{subfigure}
\begin{subfigure}[t]{3.8cm}
                \centering                                                  
                 \includegraphics[width=3.8cm]{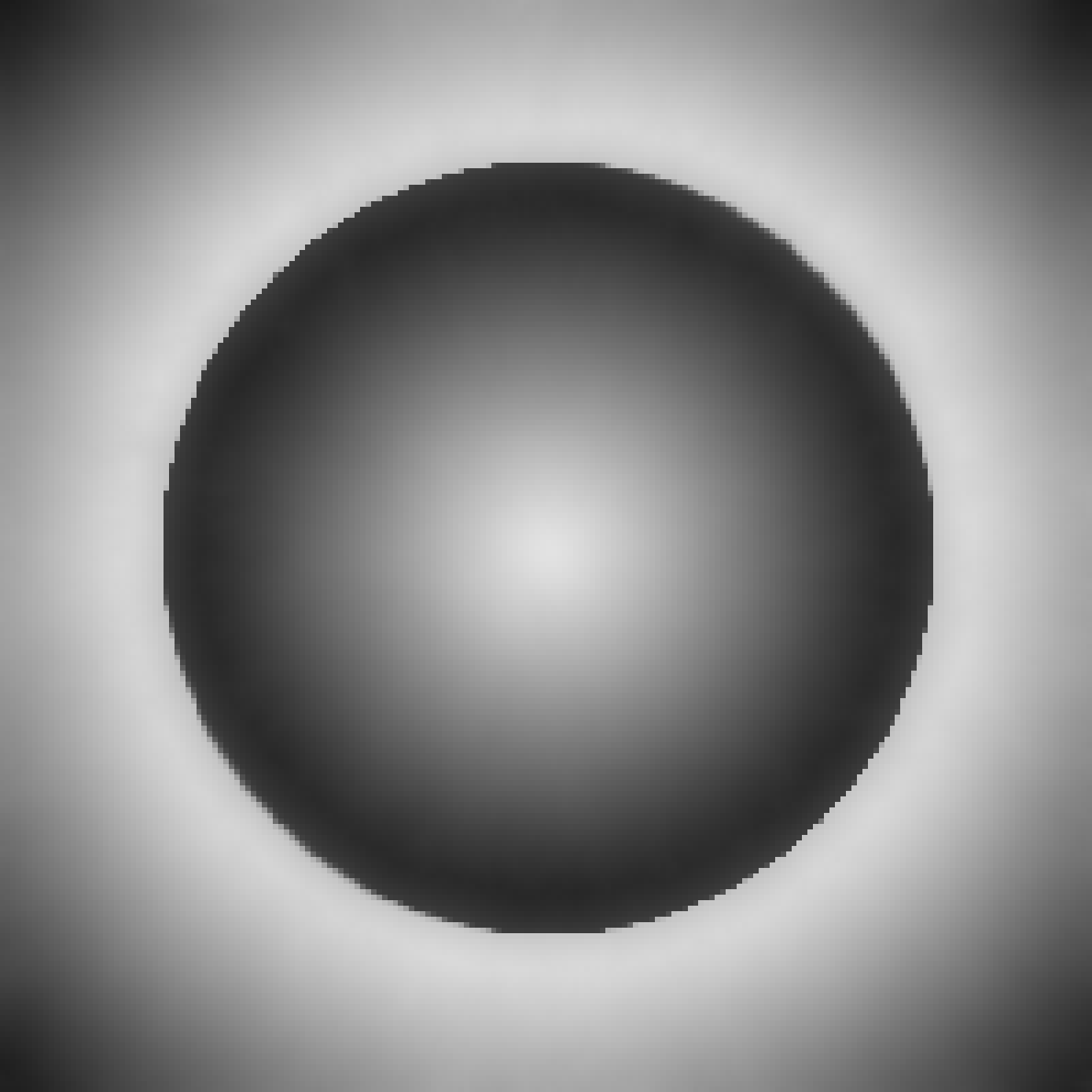}
\end{subfigure}
\begin{subfigure}[t]{3.8cm}
                \centering                                                  
                \includegraphics[width=3.8cm]{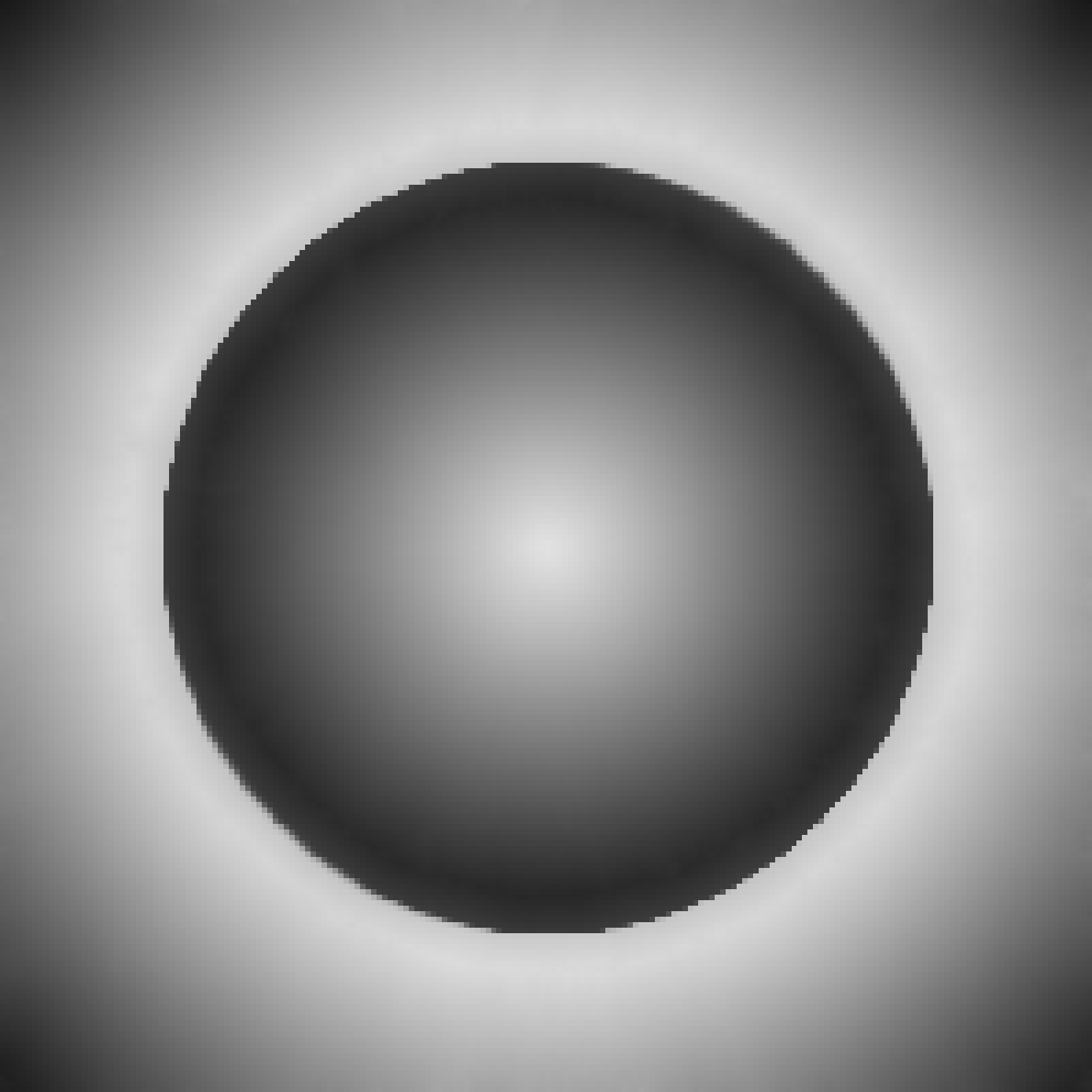} 
\end{subfigure}\\
\begin{subfigure}[t]{3.8cm}
                \centering                                                  
                \includegraphics[width=3.8cm]{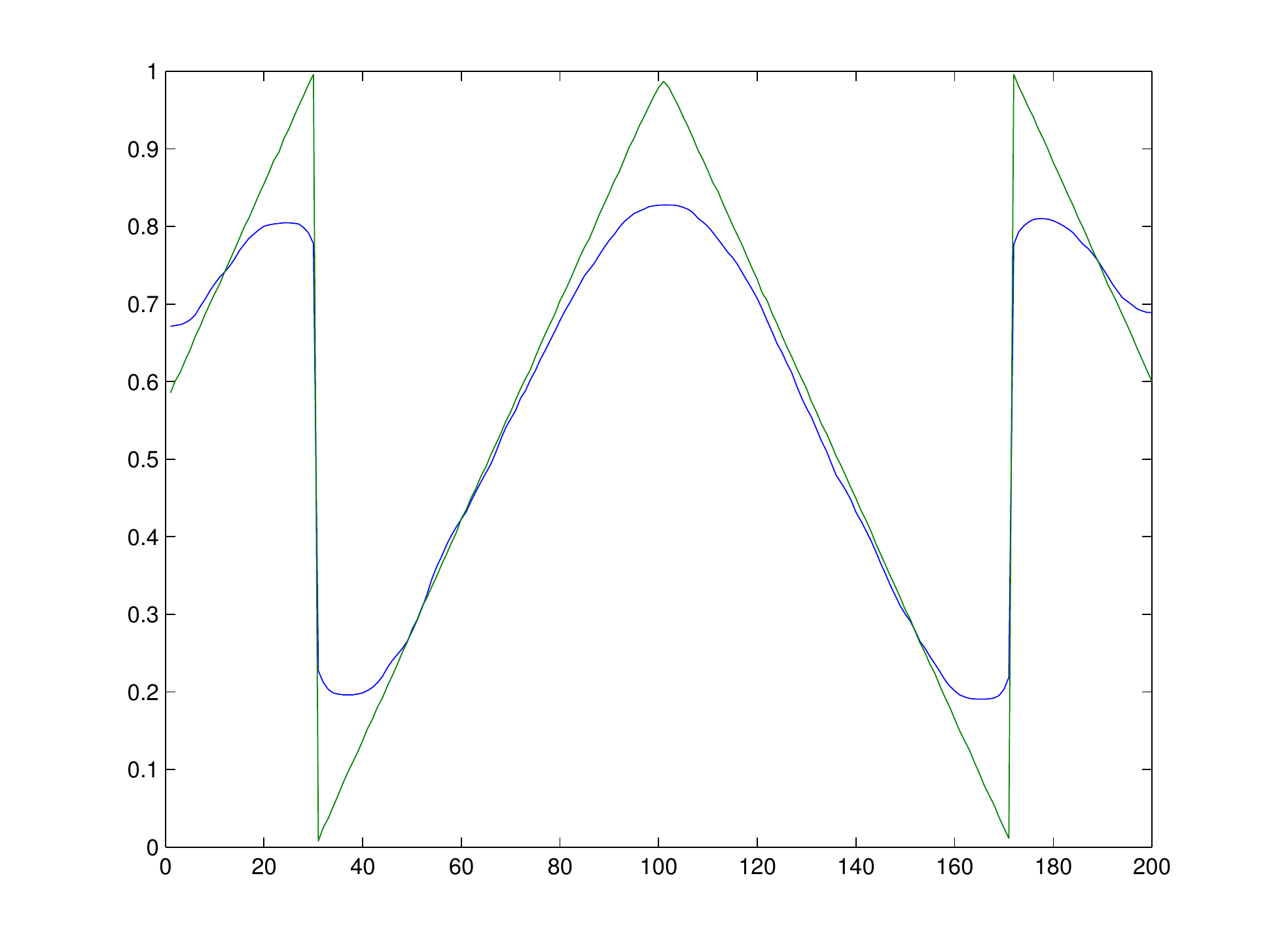}
                 \caption{\centering$\mathrm{TVL}^{\frac{3}{2}}$: $\alpha=0.8$, $\beta=17$, SSIM=0.8909} 
\end{subfigure}
\begin{subfigure}[t]{3.8cm}
                \centering                                                  
                \includegraphics[width=3.8cm]{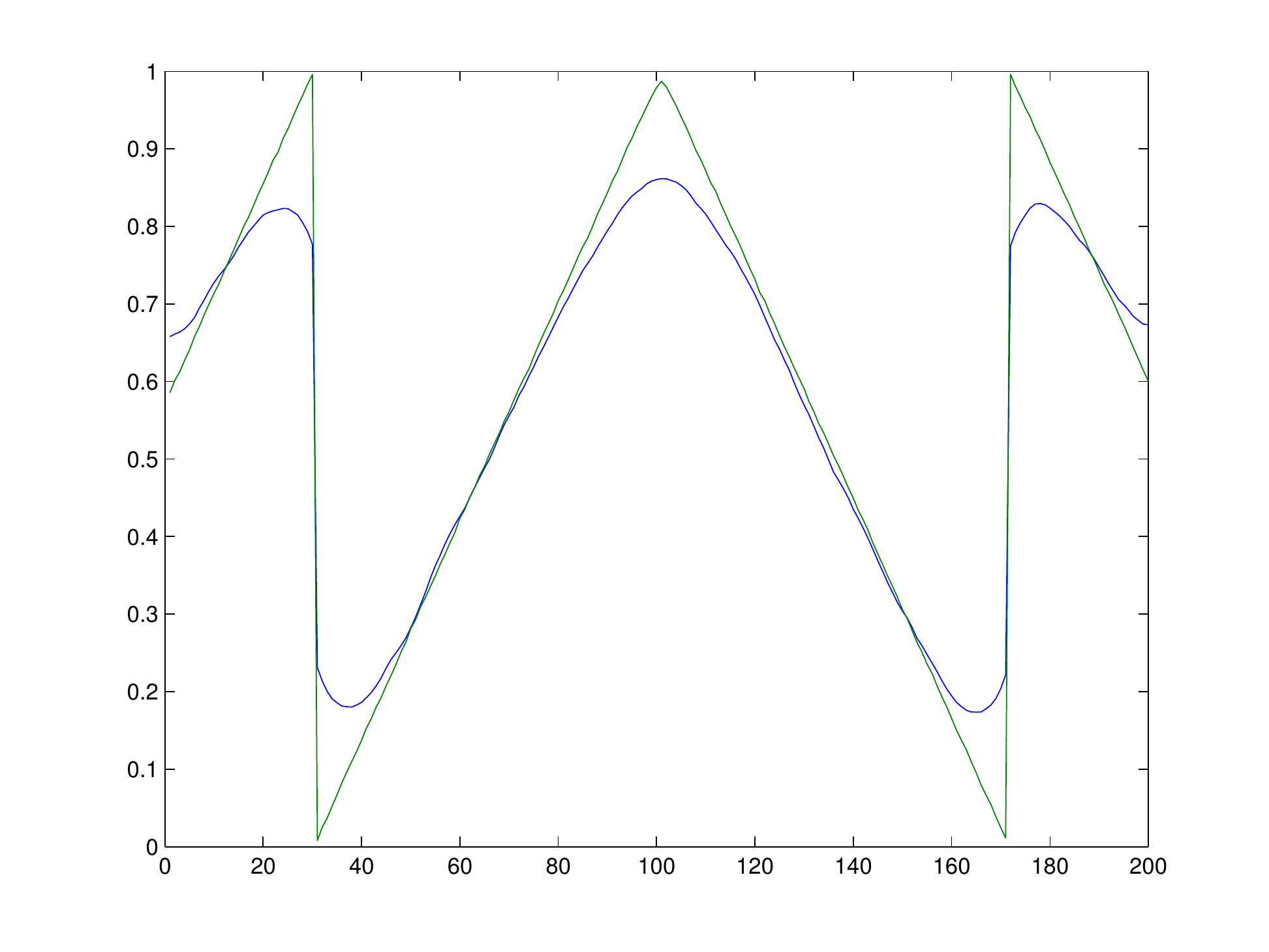}
                 \caption{\centering$\mathrm{TVL}^{2}$: $\alpha=0.8$, $\beta=79$, SSIM=0.8998} 
\end{subfigure}
\begin{subfigure}[t]{3.8cm}
                \centering                                                  
                \includegraphics[width=3.8cm]{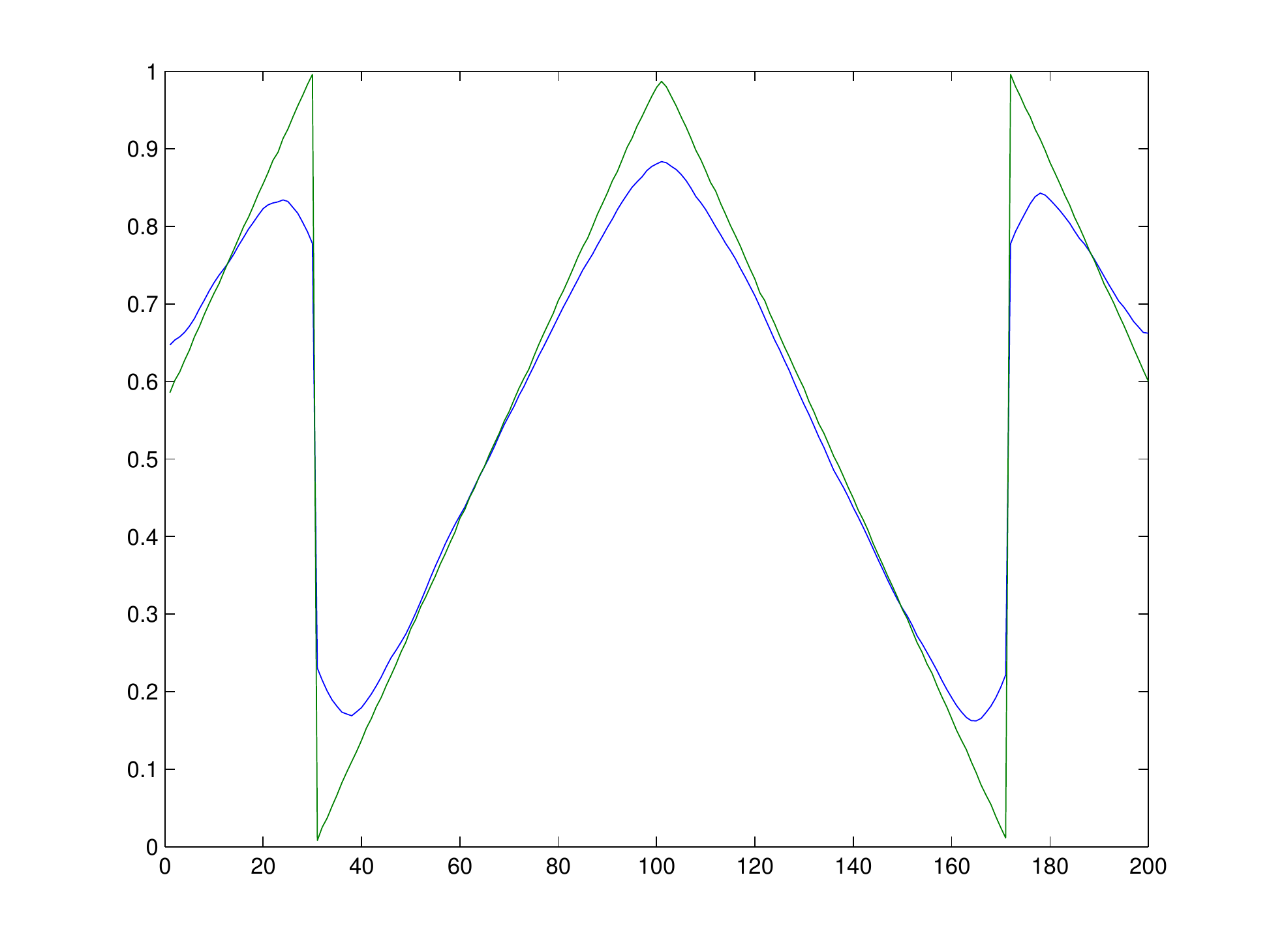}
                 \caption{\centering$\mathrm{TVL}^{3}$: $\alpha=0.8$, $\beta=405$, SSIM=0.9019} 
\end{subfigure}
\begin{subfigure}[t]{3.8cm}
                \centering                                                  
                \includegraphics[width=3.8cm]{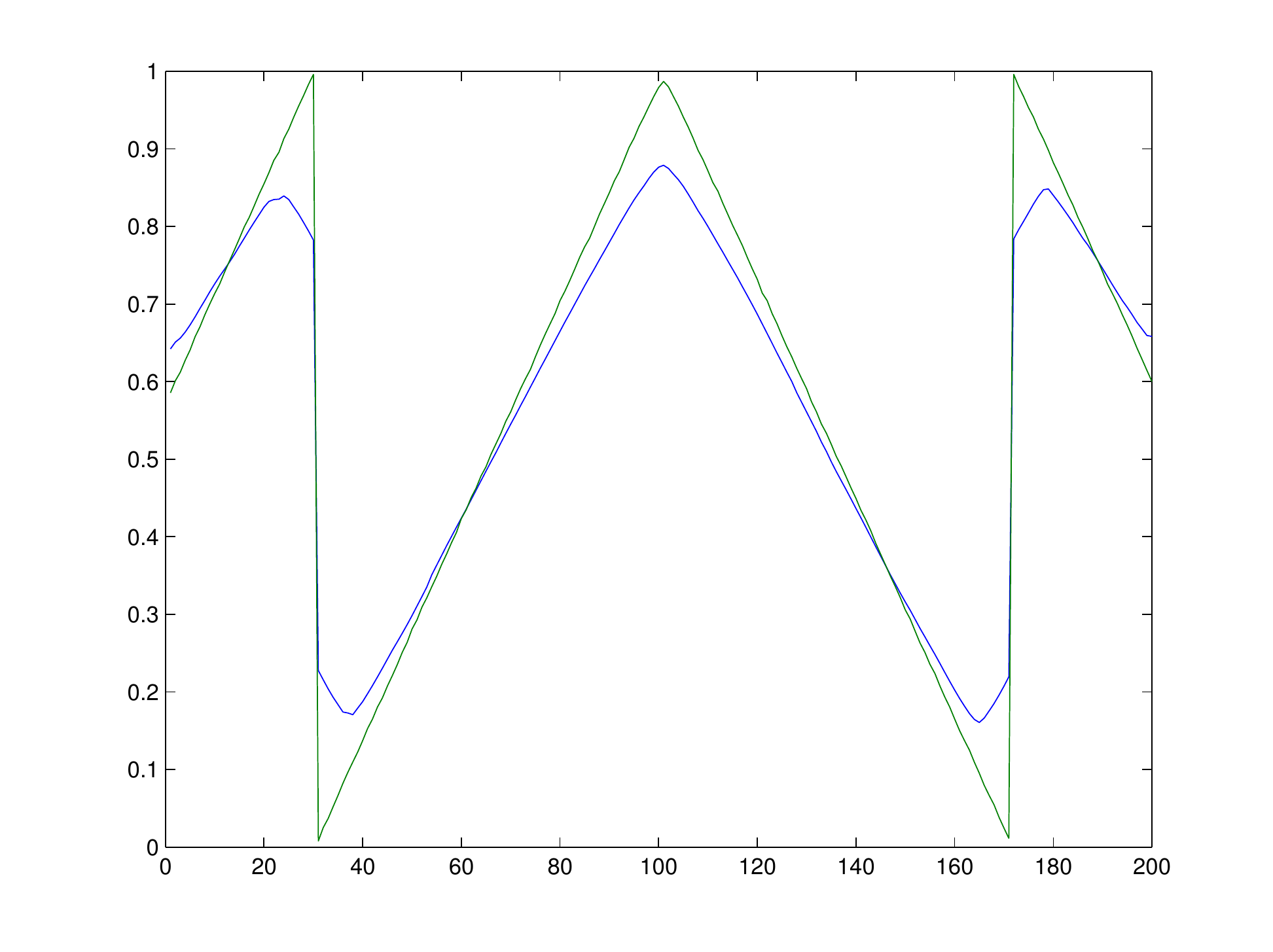}
                 \caption{\centering$\mathrm{TVL}^{7}$: $\alpha=0.8$, $\beta=3700$, SSIM=0.9024} 
                 \label{radial_2:d}
\end{subfigure}
\end{center}
\caption{Better preservation of spike-like structures for large values of $p$.}
\label{radial_2}
\end{figure}

\begin{figure}[h]
\begin{center}
\begin{subfigure}[t]{5cm}
                \centering                                                  
                \includegraphics[width=5cm]{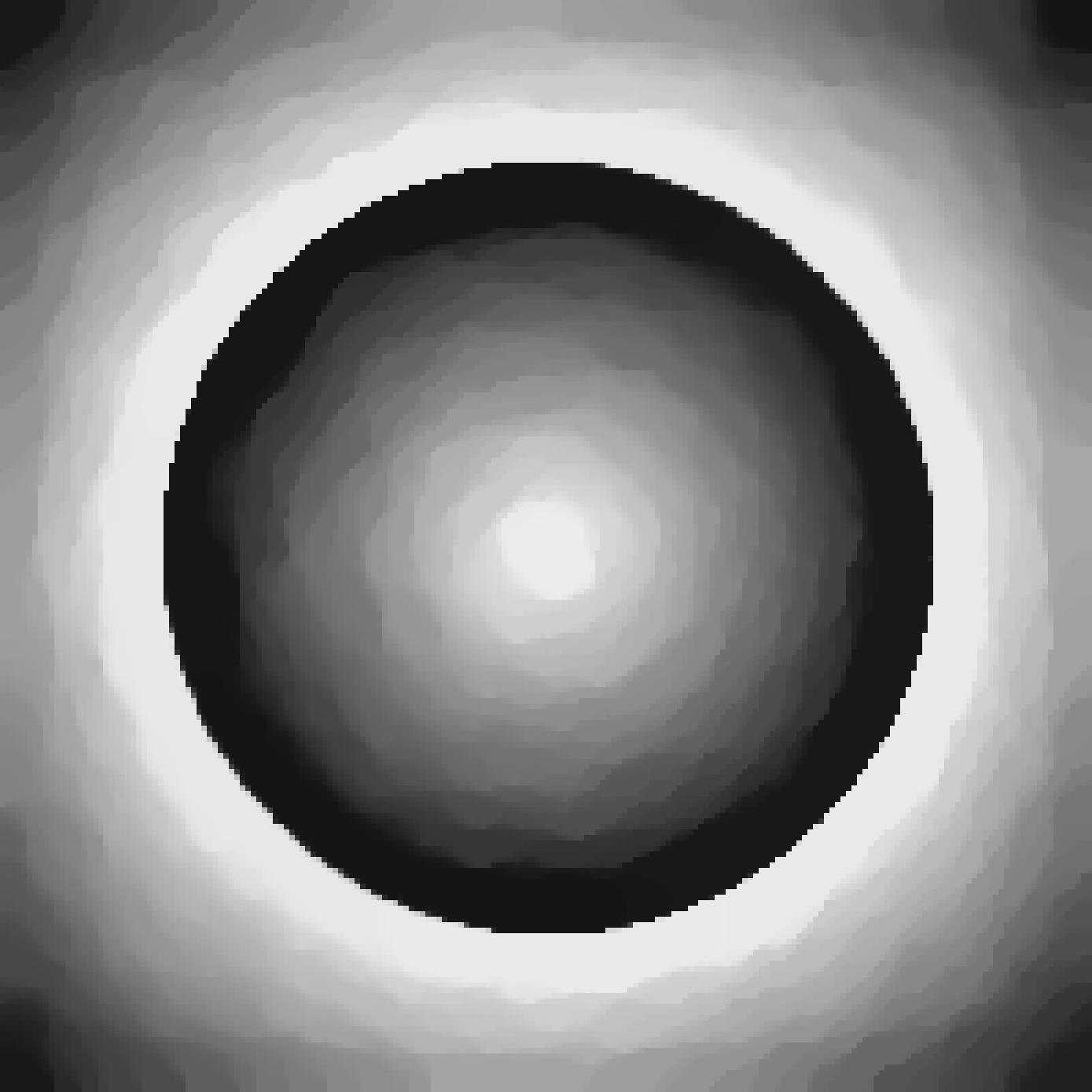}
                 \caption{\centering Bregmanised $\mathrm{TV}$: $\alpha=2$, SSIM=0.8912, 6th iteration}
\end{subfigure}
\begin{subfigure}[t]{5cm}
                \centering
                 \includegraphics[width=5cm]{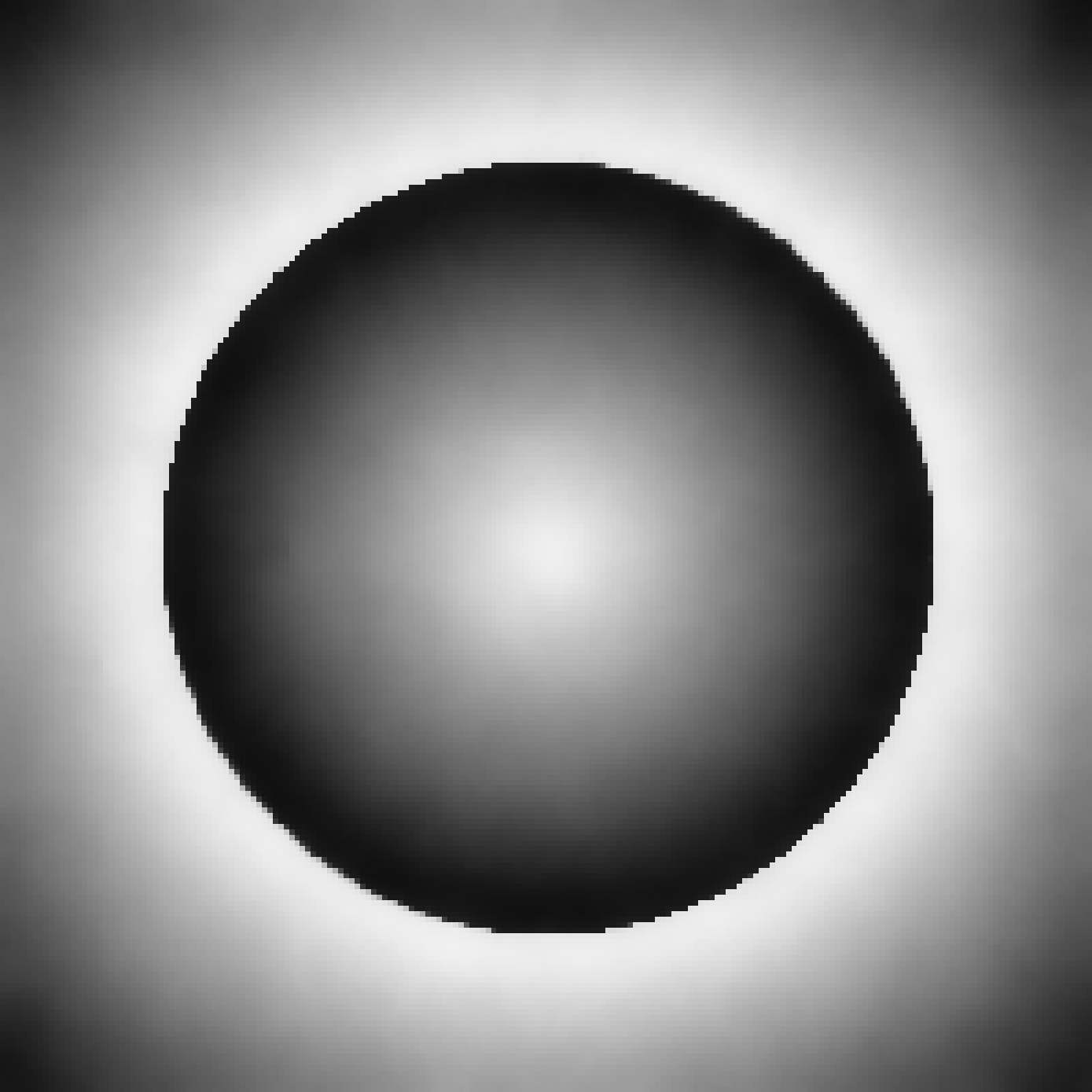}
               \caption{\centering Bregmanised $\mathrm{TVL^{2}}$: $\alpha=5$, $\beta=625$, SSIM=0.9718, 12th iteration}                                                  
\end{subfigure}
\begin{subfigure}[t]{5cm}
                \centering                                                  
                   \includegraphics[width=5cm]{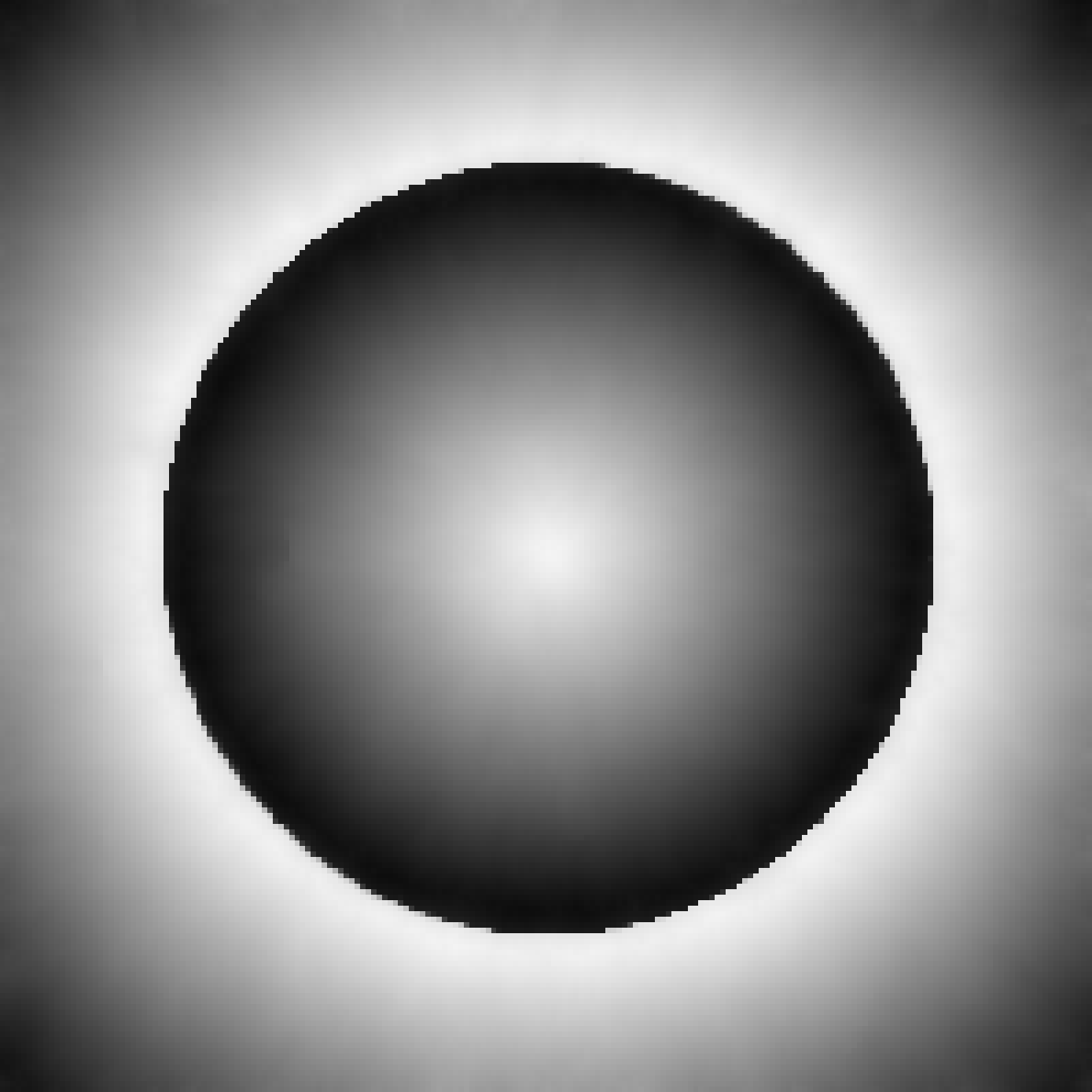}
                 \caption{\centering Bregmanised $\mathrm{TVL^{4}}$: $\alpha=5$, $\beta=8000$, SSIM=0.9802, 13th iteration} 
\end{subfigure}\\
\begin{subfigure}[t]{5cm}
                \centering                                                  
                   \includegraphics[width=5cm]{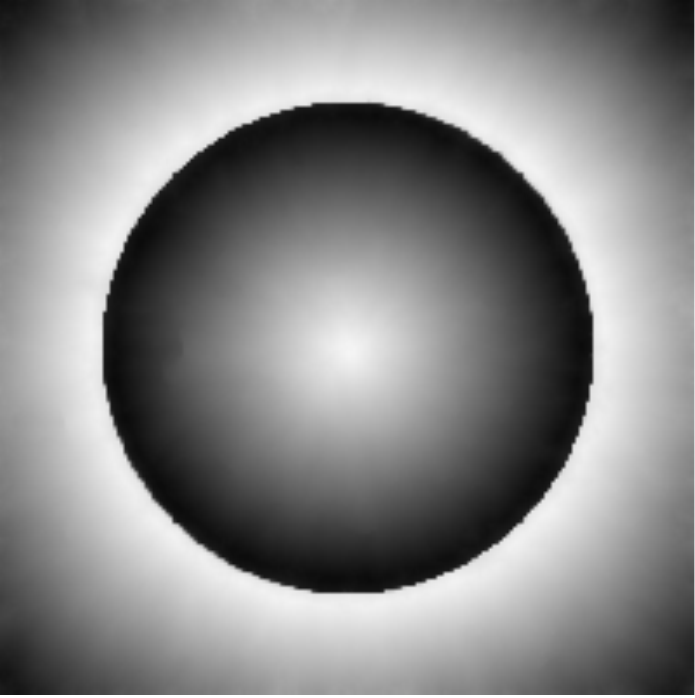}
                 \caption{\centering Bregmanised $\mathrm{TVL^{7}}$: $\alpha=3$, $\beta=15000$, SSIM=0.9807, 9th iteration} 
\end{subfigure}
\begin{subfigure}[t]{5cm}
                \centering                                                  
                \includegraphics[width=5cm]{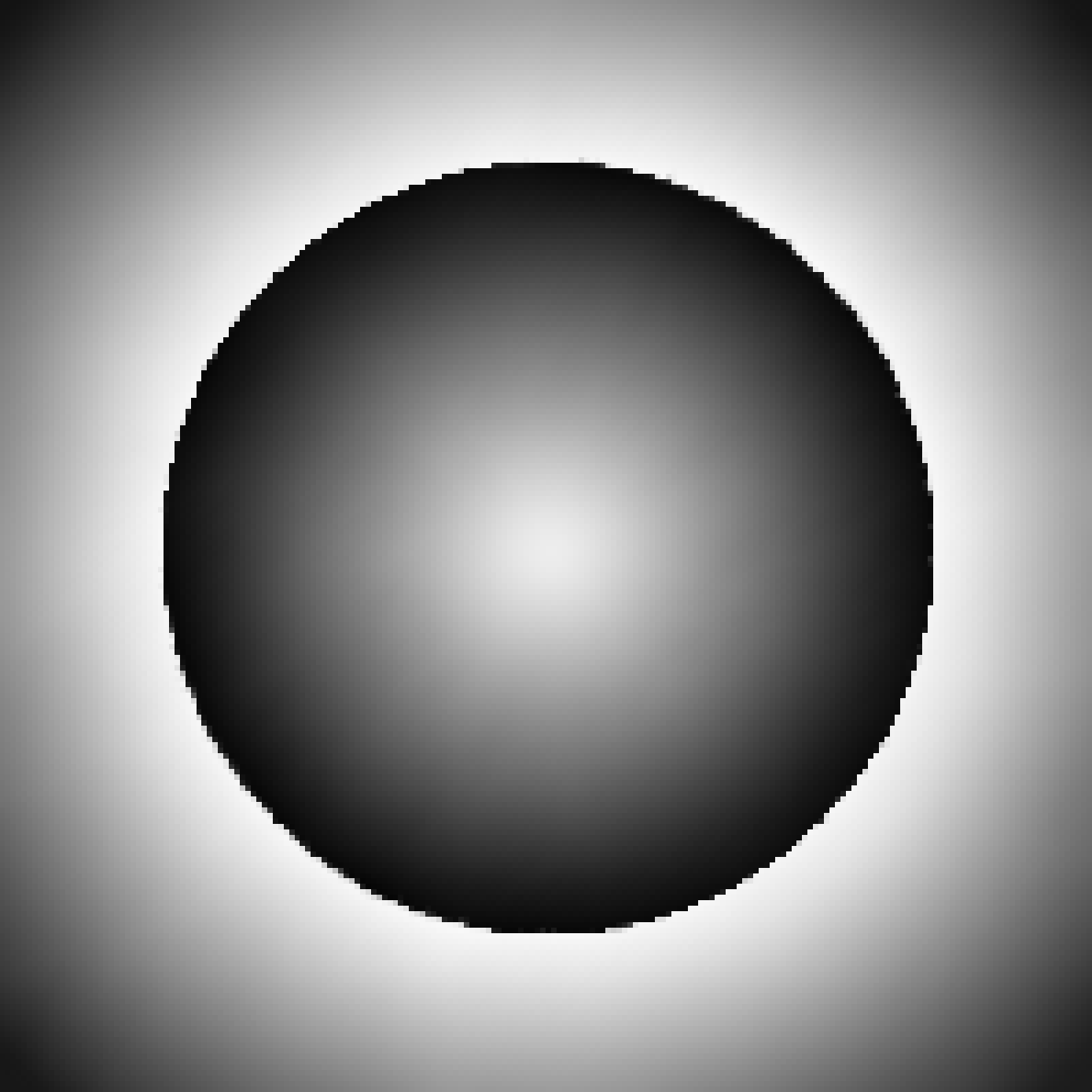}
                 \caption{\centering Bregmanised $\mathrm{TGV^{2}}$: $\alpha=2$, $\beta=10$,  SSIM=0.9913, 8th iteration} 
\end{subfigure}
\end{center}
\caption{Best reconstruction in terms of SSIM for Bregmanised $\mathrm{TVL}^{2}$, $\mathrm{TVL}^{4}$ and $\mathrm{TGV}^{2}$.}
\label{radial_3}
\end{figure}

\begin{figure}[h]
\begin{center}
\begin{subfigure}[t]{3.8cm}
                \centering                                                  
                \includegraphics[width=3.8cm]{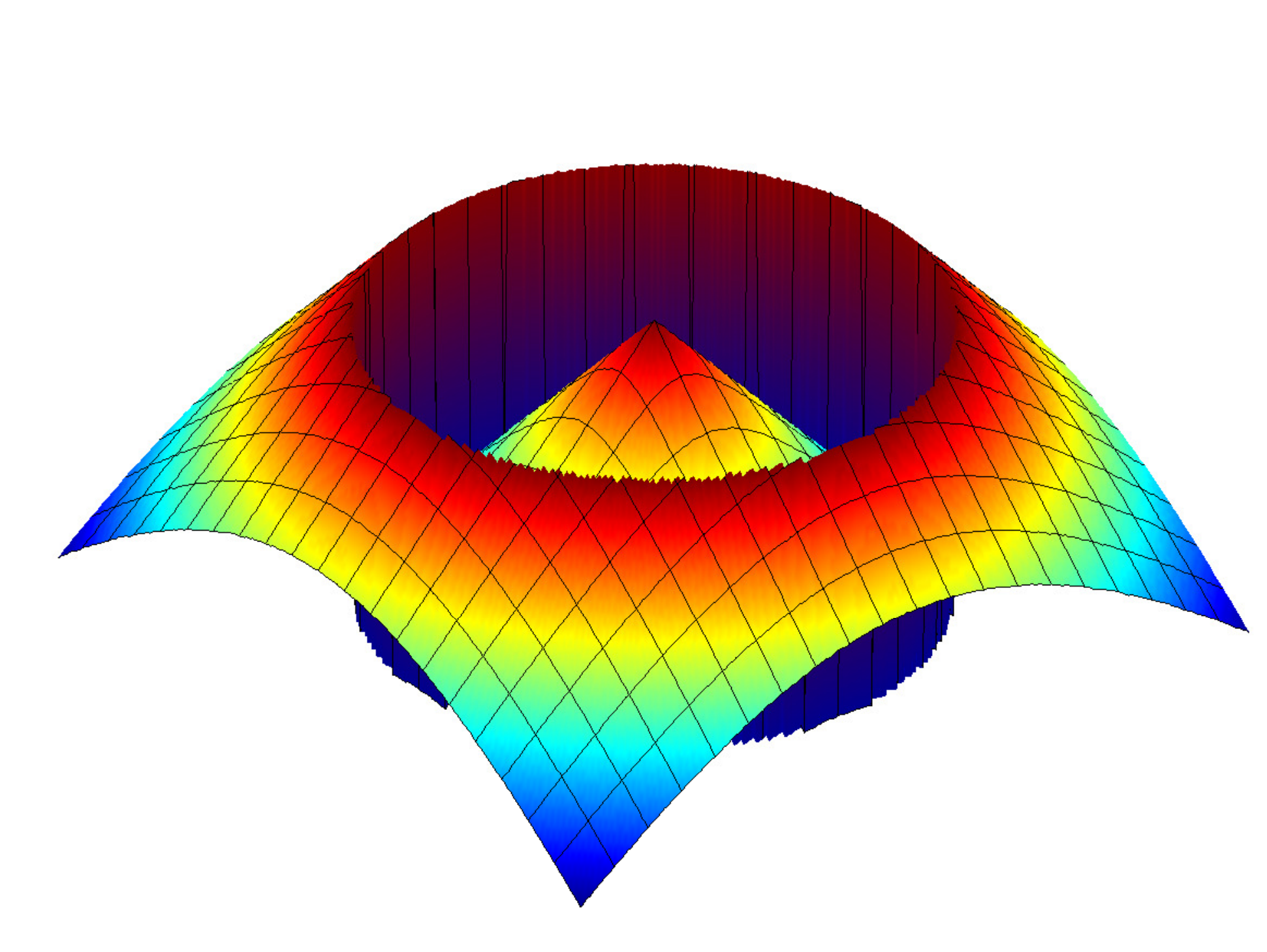}
                 \caption{\centering Original}
\end{subfigure}
\begin{subfigure}[t]{3.8cm}
                \centering                                                  
                \includegraphics[width=3.8cm]{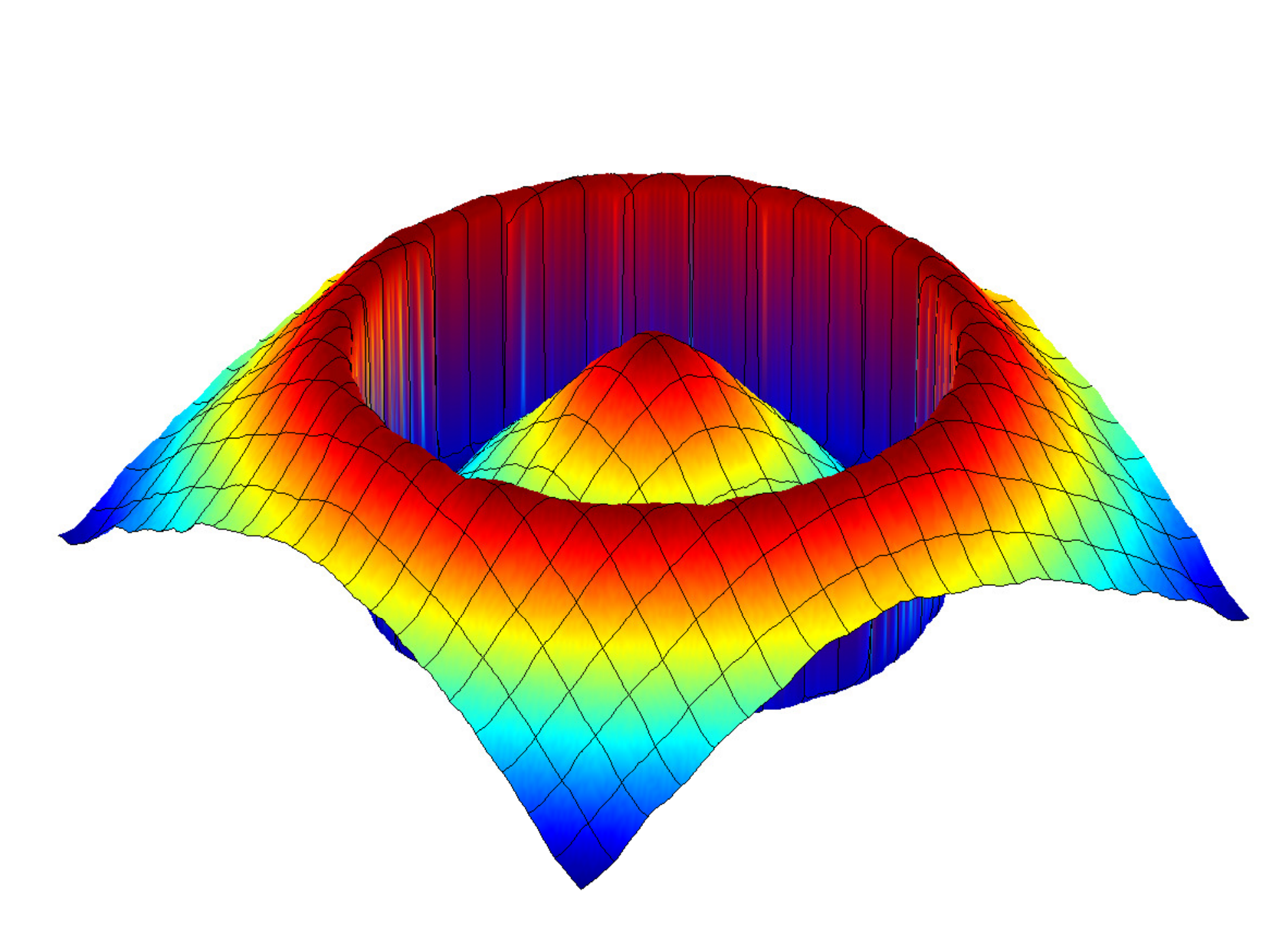}
                 \caption{\centering Bregmanised $\mathrm{TVL}^{2}$}
\end{subfigure}
\begin{subfigure}[t]{3.8cm}
                \centering                                                  
                \includegraphics[width=3.8cm]{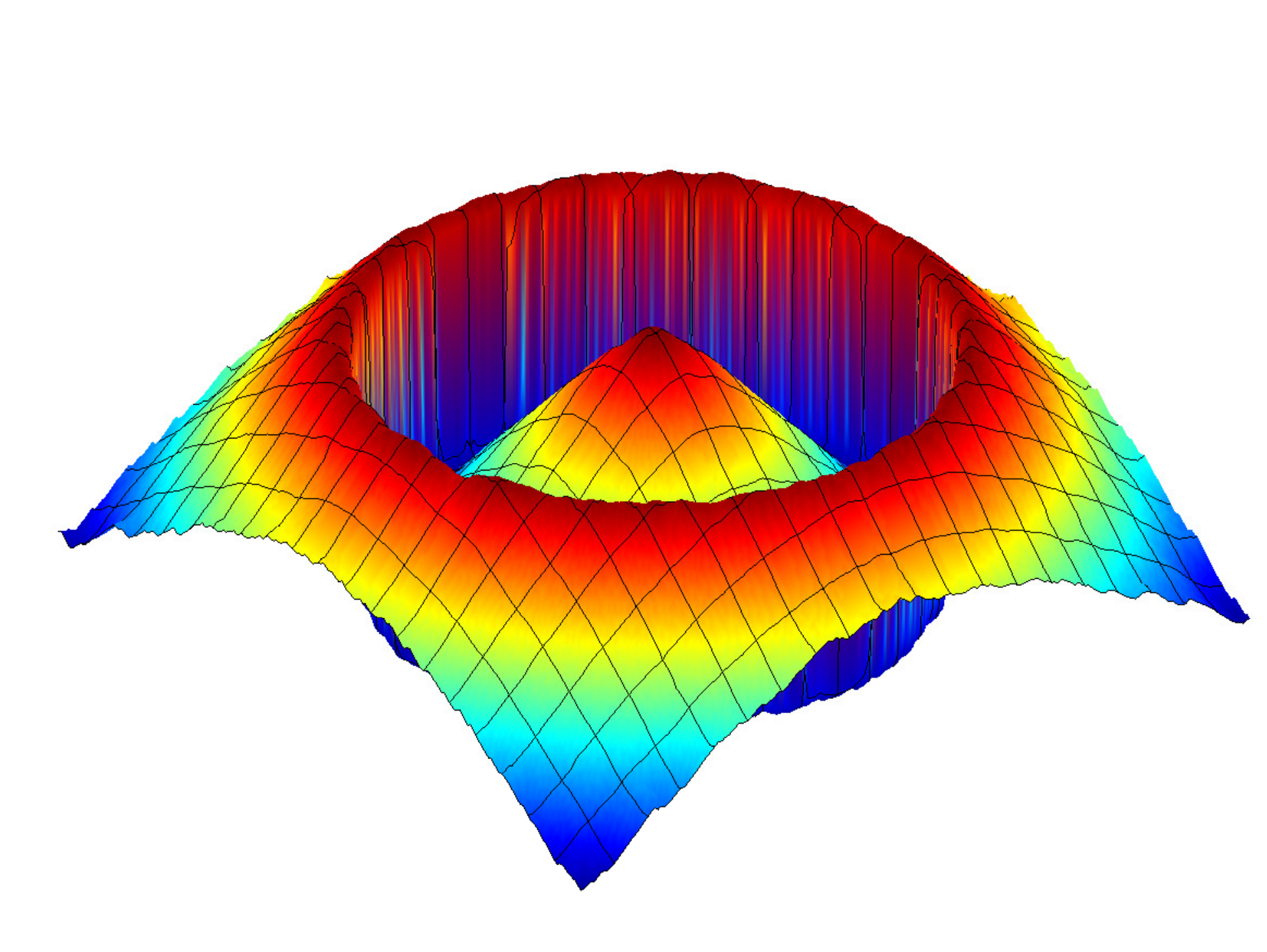}
                 \caption{\centering Bregmanised $\mathrm{TVL}^{7}$}
\end{subfigure}
\begin{subfigure}[t]{3.8cm}
                \centering                                                  
                \includegraphics[width=3.8cm]{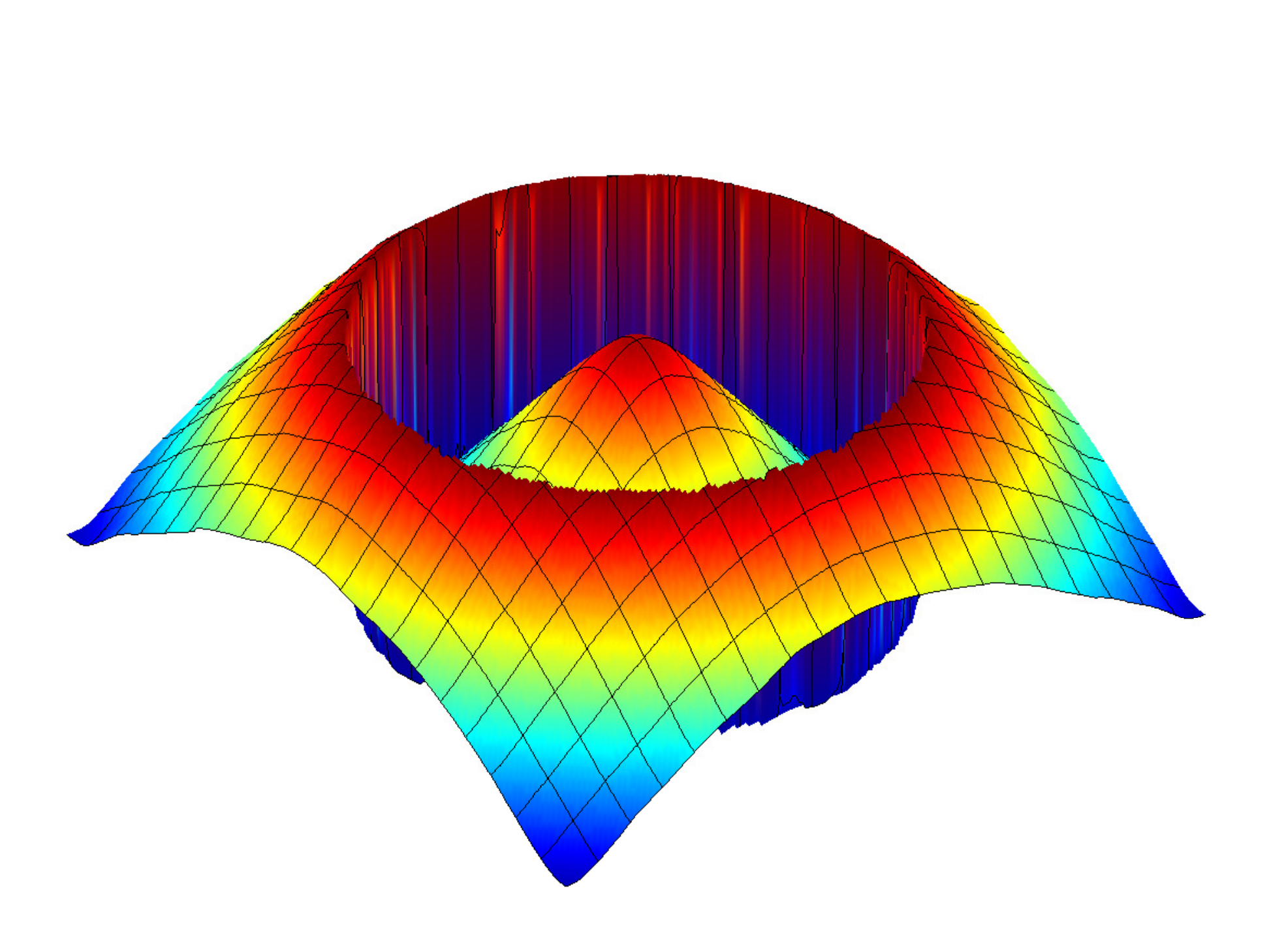}
                 \caption{\centering Bregmanised $\mathrm{TGV}$}
\end{subfigure}
\\
\begin{subfigure}[t]{3.8cm}
                \centering                                                  
                \includegraphics[width=3.8cm]{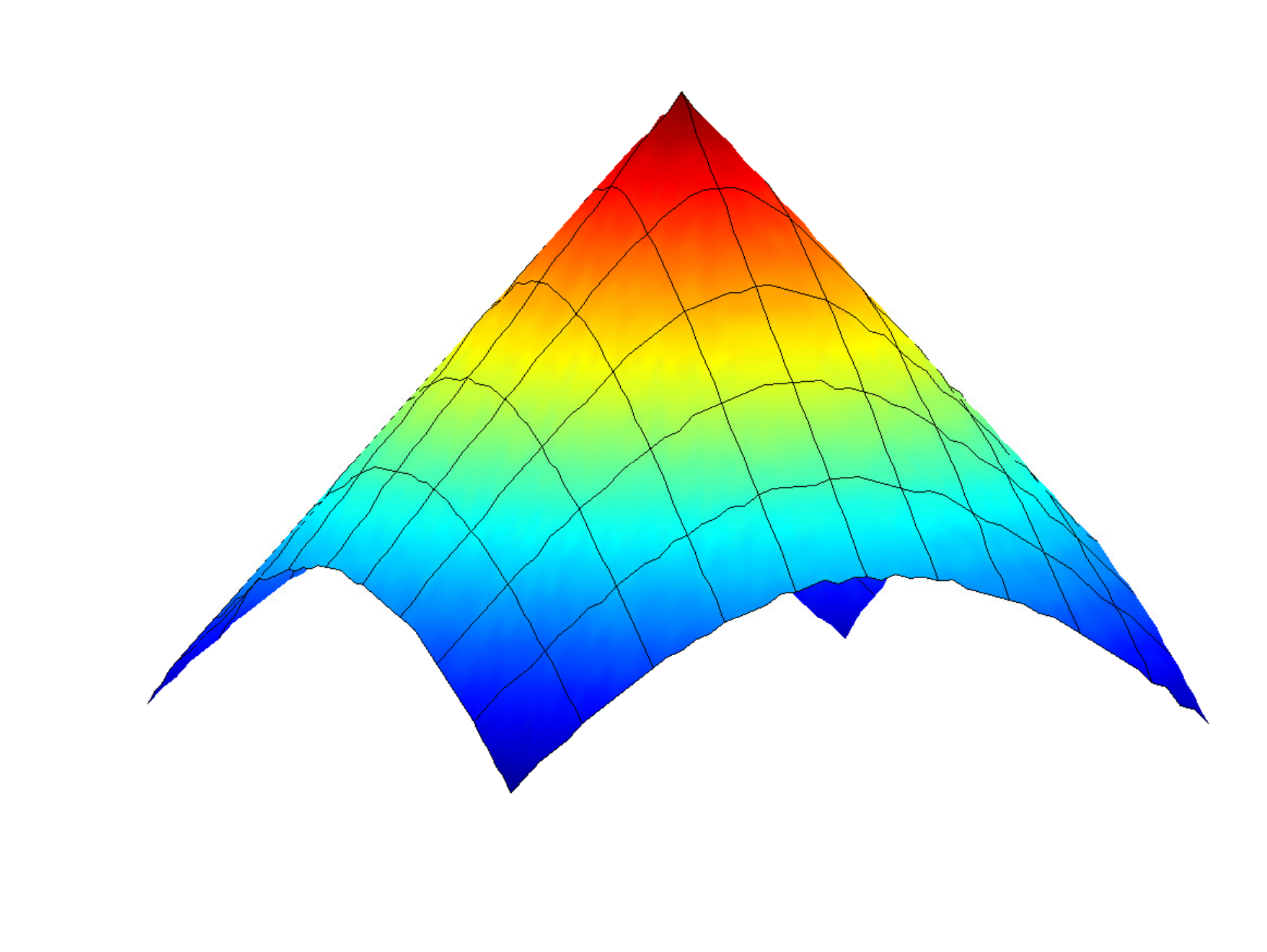}
                 \caption{\centering Original: \newline central part zoom}
\end{subfigure}
\begin{subfigure}[t]{3.8cm}
                \centering                                                  
                \includegraphics[width=3.8cm]{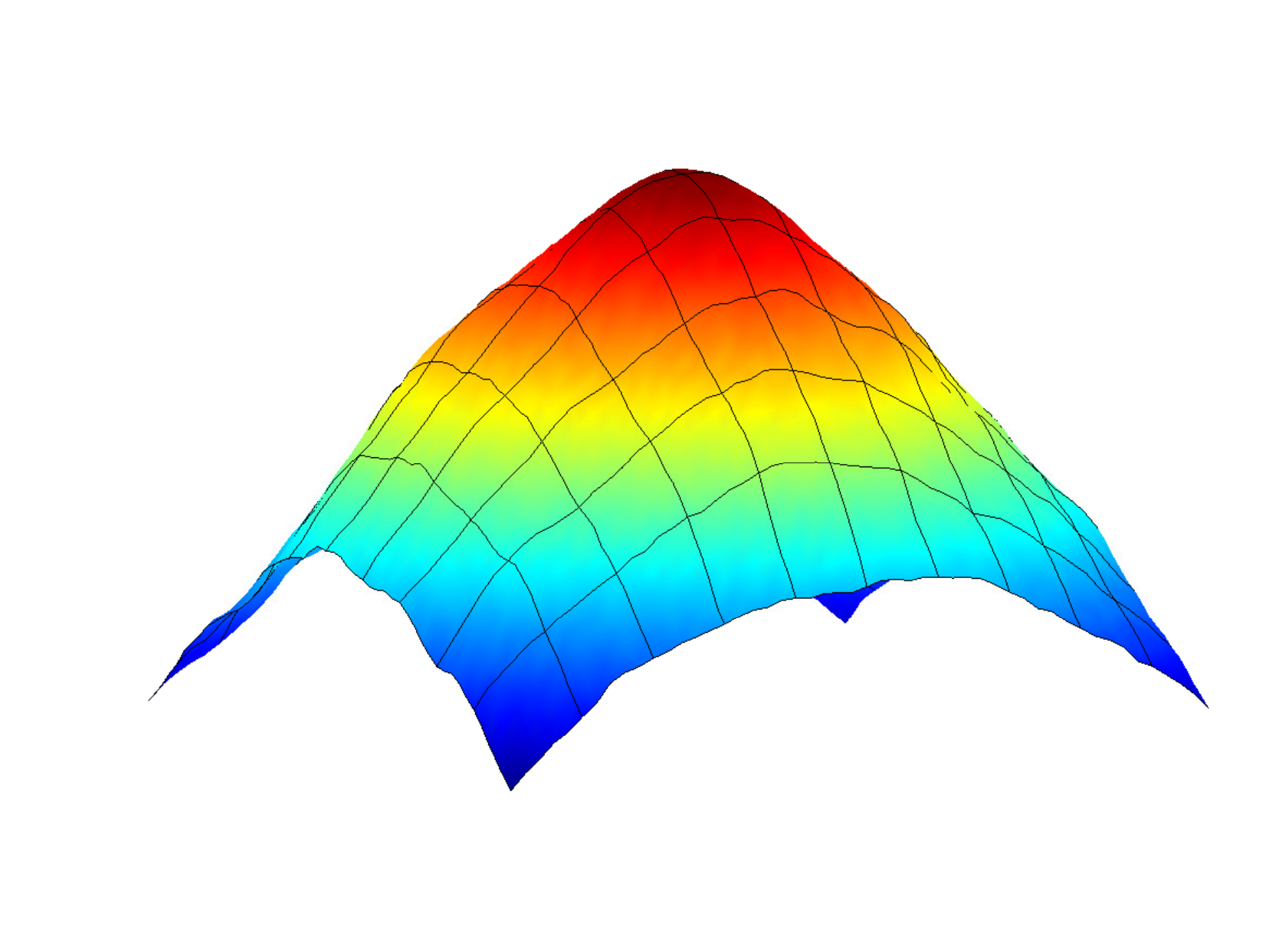}
                 \caption{\centering Bregmanised $\mathrm{TVL}^{2}$: central part zoom}
\end{subfigure}
\begin{subfigure}[t]{3.8cm}
                \centering                                                  
                \includegraphics[width=3.8cm]{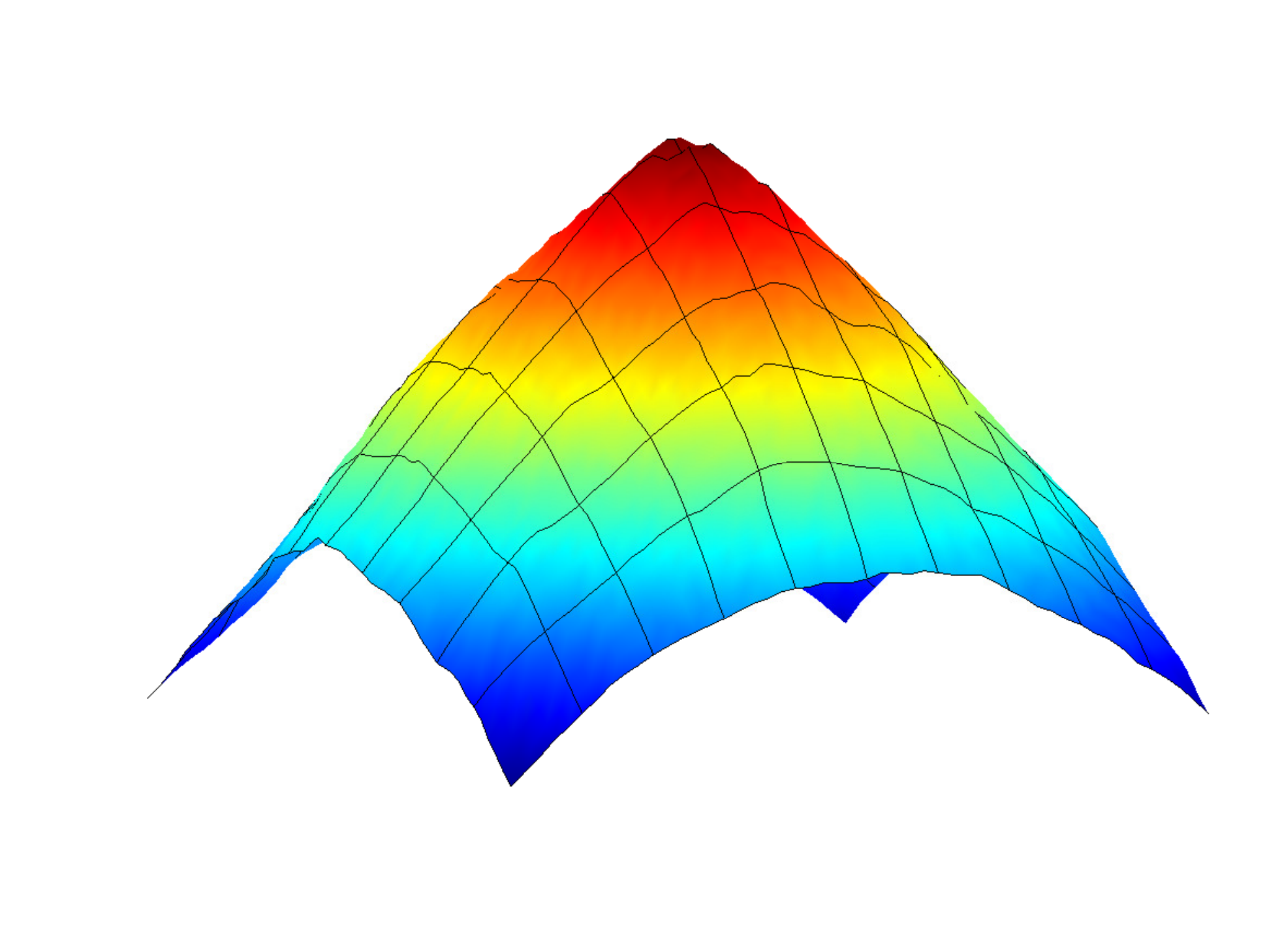}
                 \caption{\centering Bregmanised $\mathrm{TVL}^{7}$: central part zoom}
\end{subfigure}
\begin{subfigure}[t]{3.8cm}
                \centering                                                  
                \includegraphics[width=3.8cm]{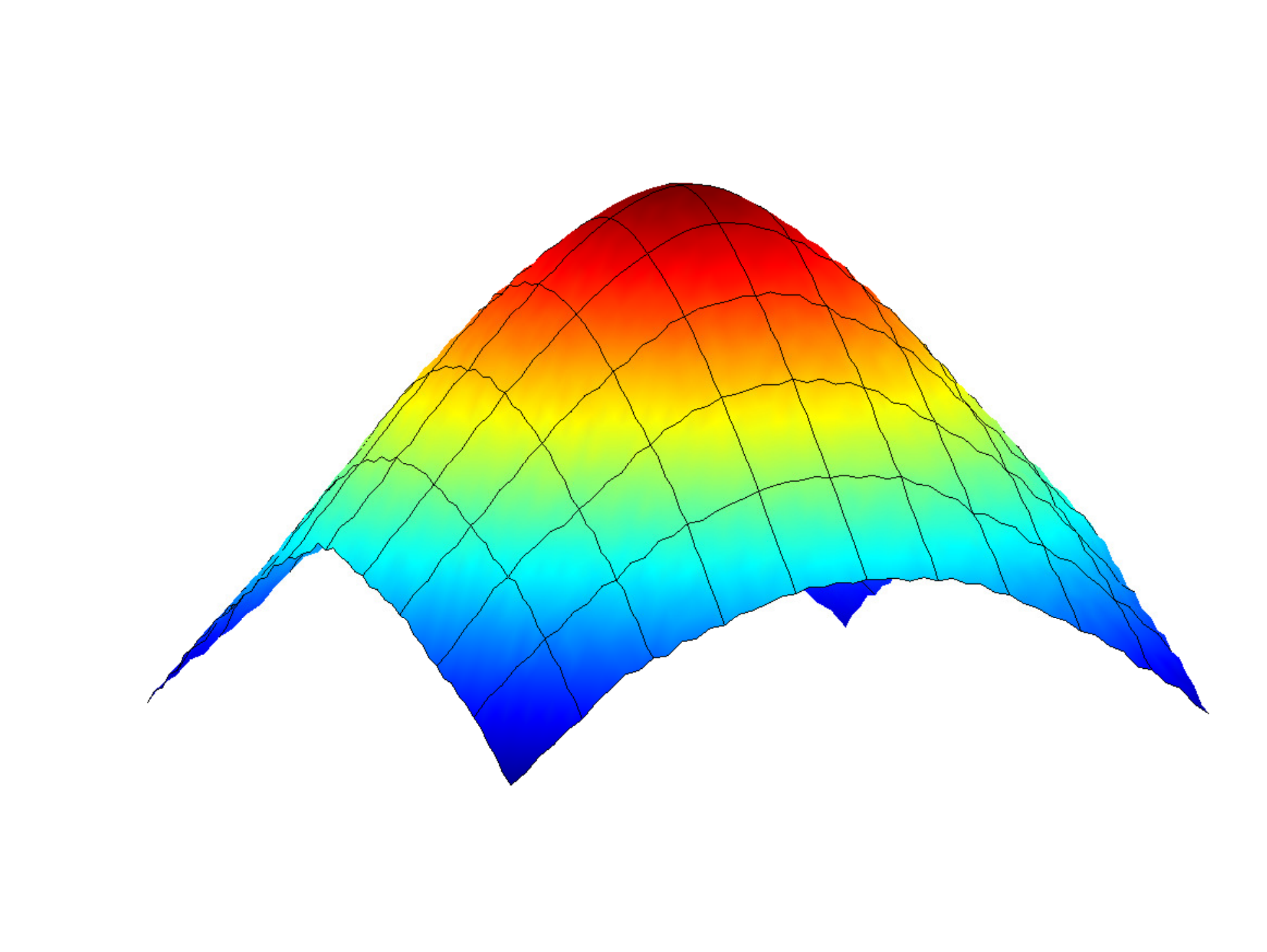}
                 \caption{\centering Bregmanised $\mathrm{TGV}$: central part zoom}
\end{subfigure}
\end{center}
\caption{Surface plots of the images in Figure \ref{radial_3}. Notice how high values of $p$, here for $p=7$, can preserve the sharp spike in the middle of the image.}
\label{radial_3_surf}
\end{figure}

\begin{figure}[h]
\begin{center}
\begin{subfigure}[t]{3.8cm}
                \centering                                                  
                \includegraphics[width=3.8cm]{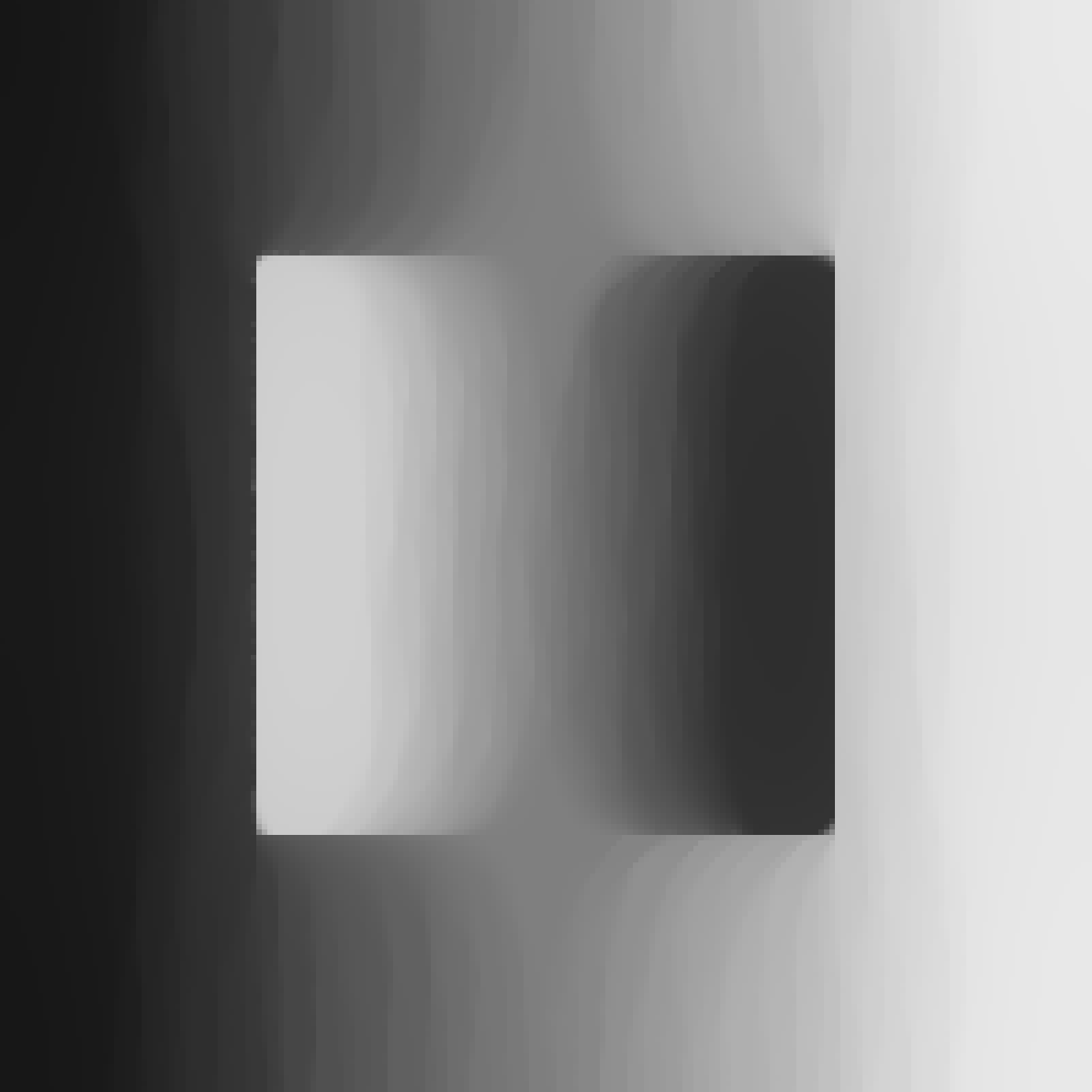}
                \caption{\centering Solution $u+v$ of \eqref{inf_conv}: $p=2$, $\alpha=0.8$, $\beta=120$, SSIM=0.9268} 
                \label{infimal_2D_1:a}
\end{subfigure}
\begin{subfigure}[t]{3.8cm}
                \centering                                                  
                \includegraphics[width=3.8cm]{no_stair_tvl_2_a_1_b_100-eps-converted-to.pdf}
                \caption{\centering $\mathrm{TVL}^{2}$: $\alpha=1$, $\beta=116$, SSIM=0.9433} 
                \label{infimal_2D_1:b}
\end{subfigure}
\begin{subfigure}[t]{3.8cm}
                \centering                                                  
                \includegraphics[width=3.8cm]{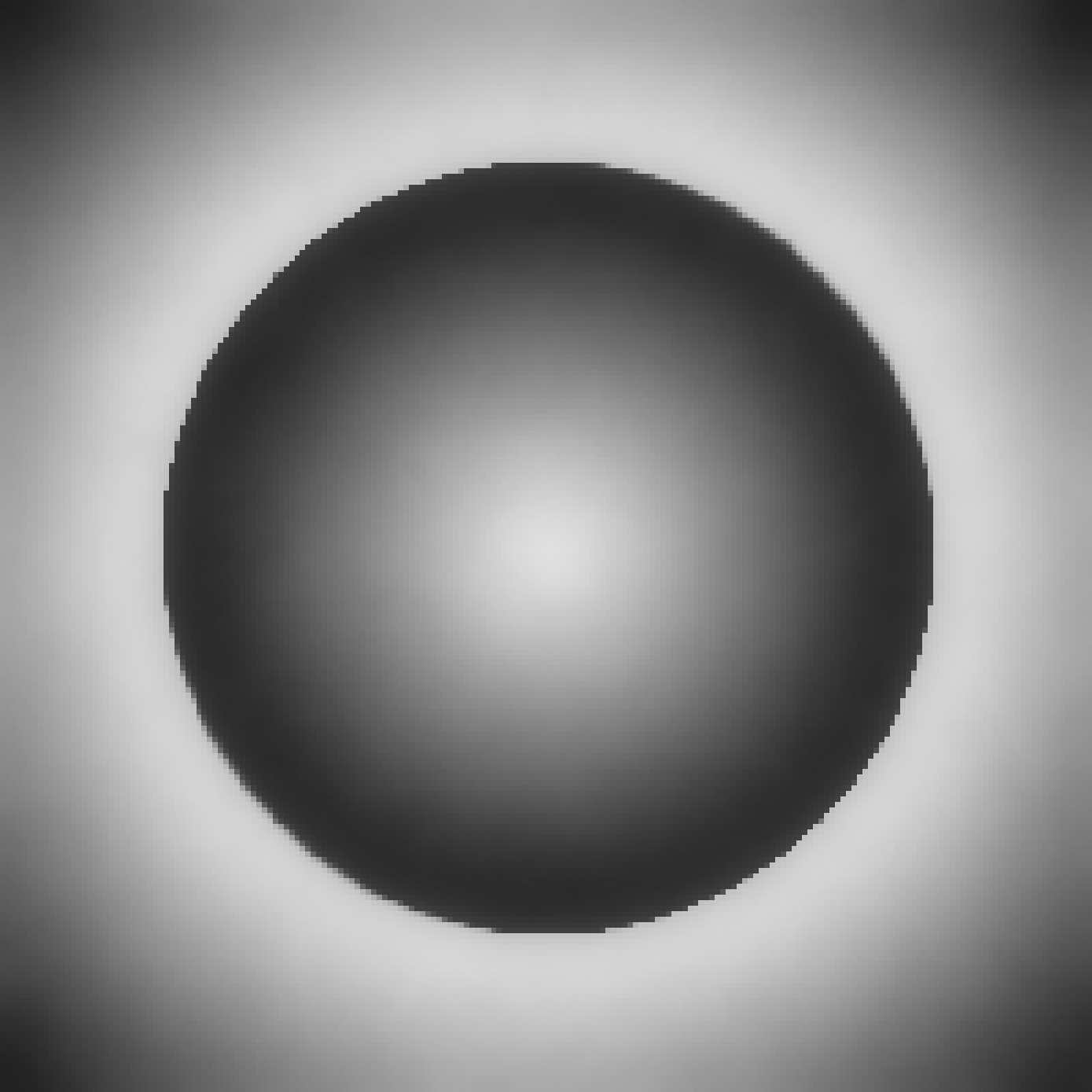}
                \caption{\centering Solution $u+v$ of \eqref{inf_conv} $p=2$: $\alpha=0.8$, $\beta=70$, SSIM=0.8994} 
                \label{infimal_2D_1:c}
\end{subfigure}
\begin{subfigure}[t]{3.8cm}
                \centering                                                  
                \includegraphics[width=3.8cm]{radial_p_2_a_0_8_b_35_0_8998-eps-converted-to.pdf}
                 \caption{\centering $\mathrm{TVL^{2}}$: $\alpha=0.8$, $\beta=79$, SSIM=0.8998} 
                 \label{infimal_2D_1:d}
\end{subfigure}
\end{center}
\caption{Comparison between the model \eqref{inf_conv} for $p=2$ and $\mathrm{TVL^{2}}$: Staircasing cannot be always eliminated.}
\label{infimal_2D_1}
\end{figure}

\begin{figure}[h]
\begin{center}
\hspace{-1cm}
\begin{subfigure}[t]{4.4cm}
                \centering                                                  
                \includegraphics[width=4.4cm]{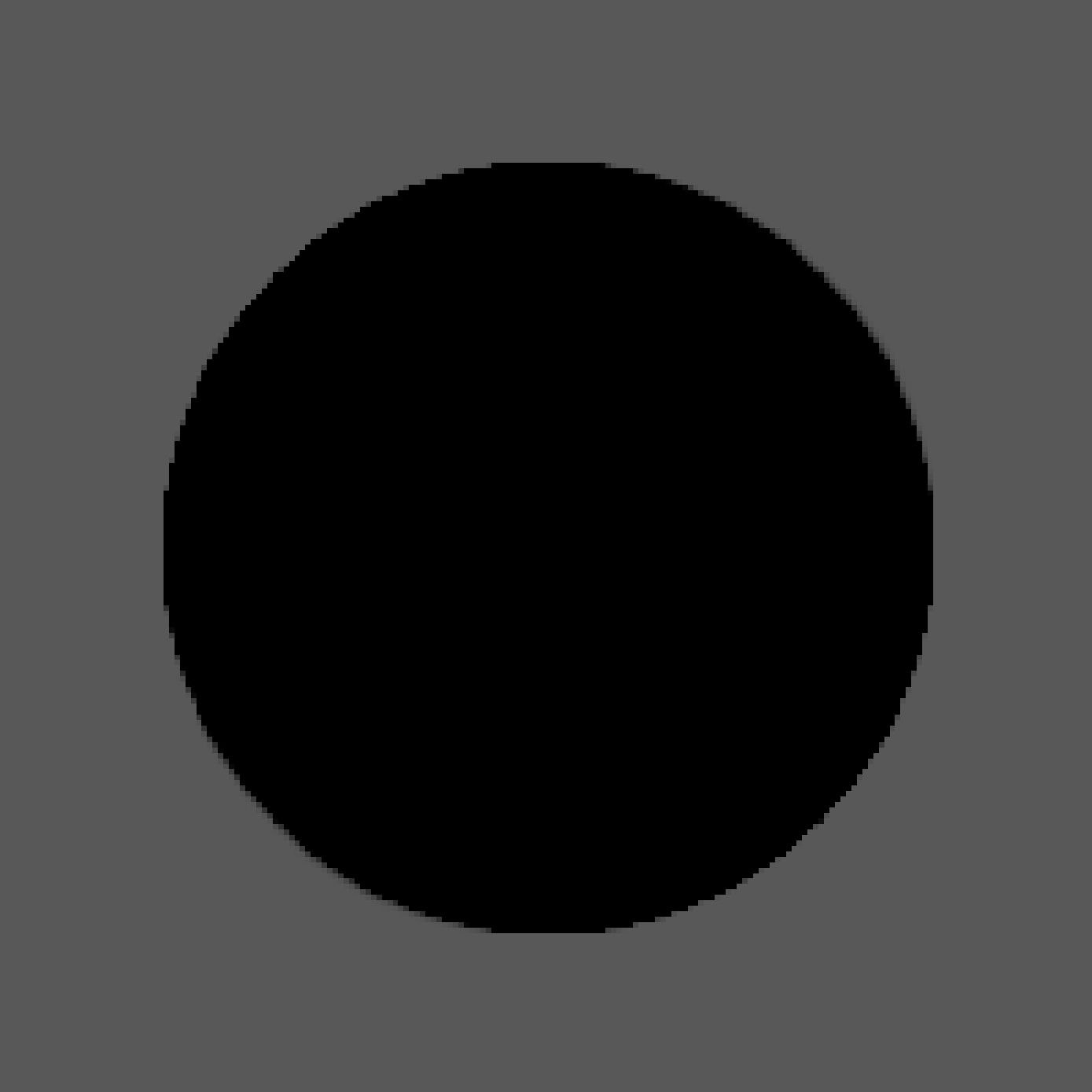}
                \caption{\centering Piecewise constant \newline \centering component $u$} 
\end{subfigure}
\begin{subfigure}[t]{4.4cm}
                \centering                                                  
                \includegraphics[width=4.4cm]{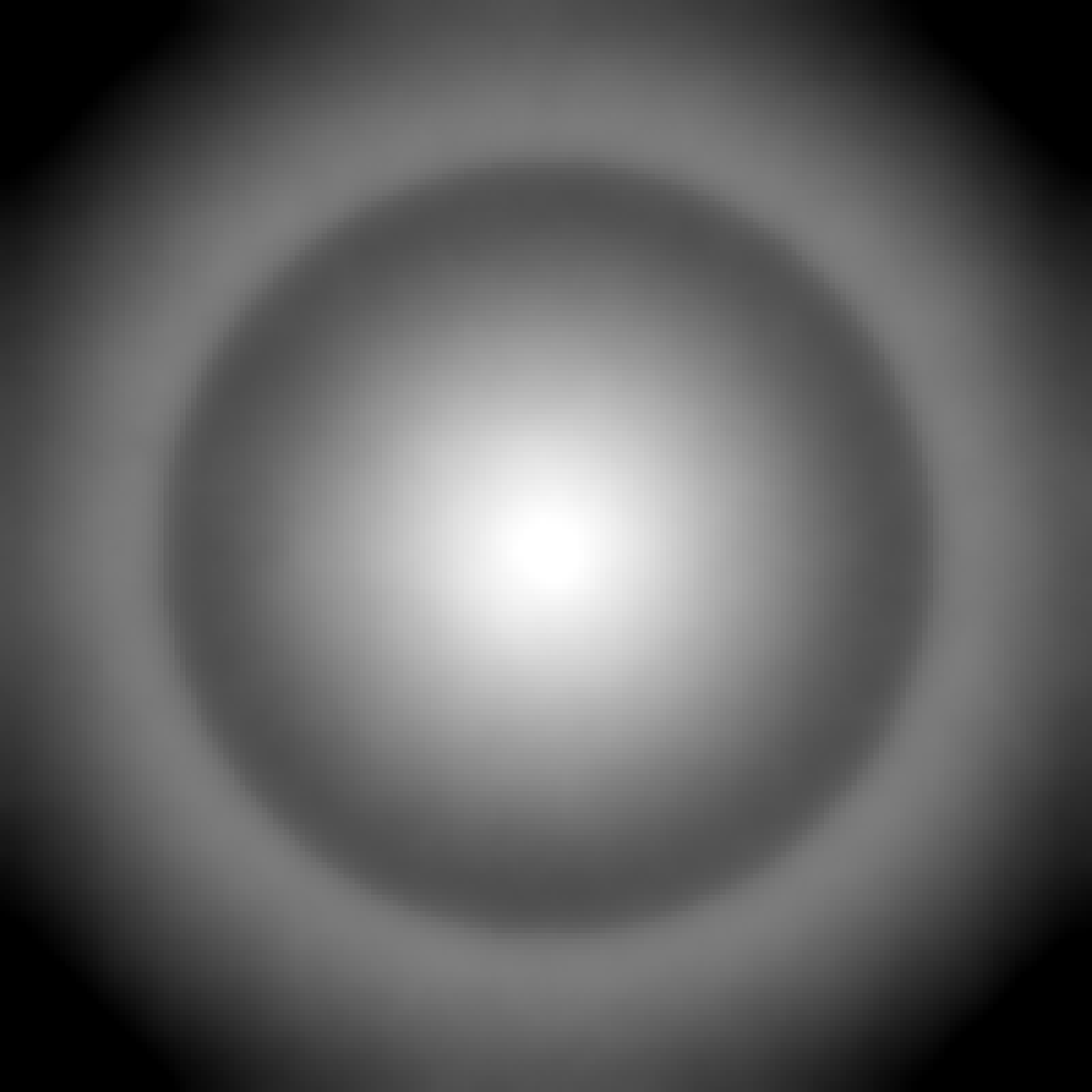}
                \caption{Smooth component $v$} 
\end{subfigure}
\begin{subfigure}[t]{4.4cm}
                \centering                                                  
                \includegraphics[height=4.4cm]{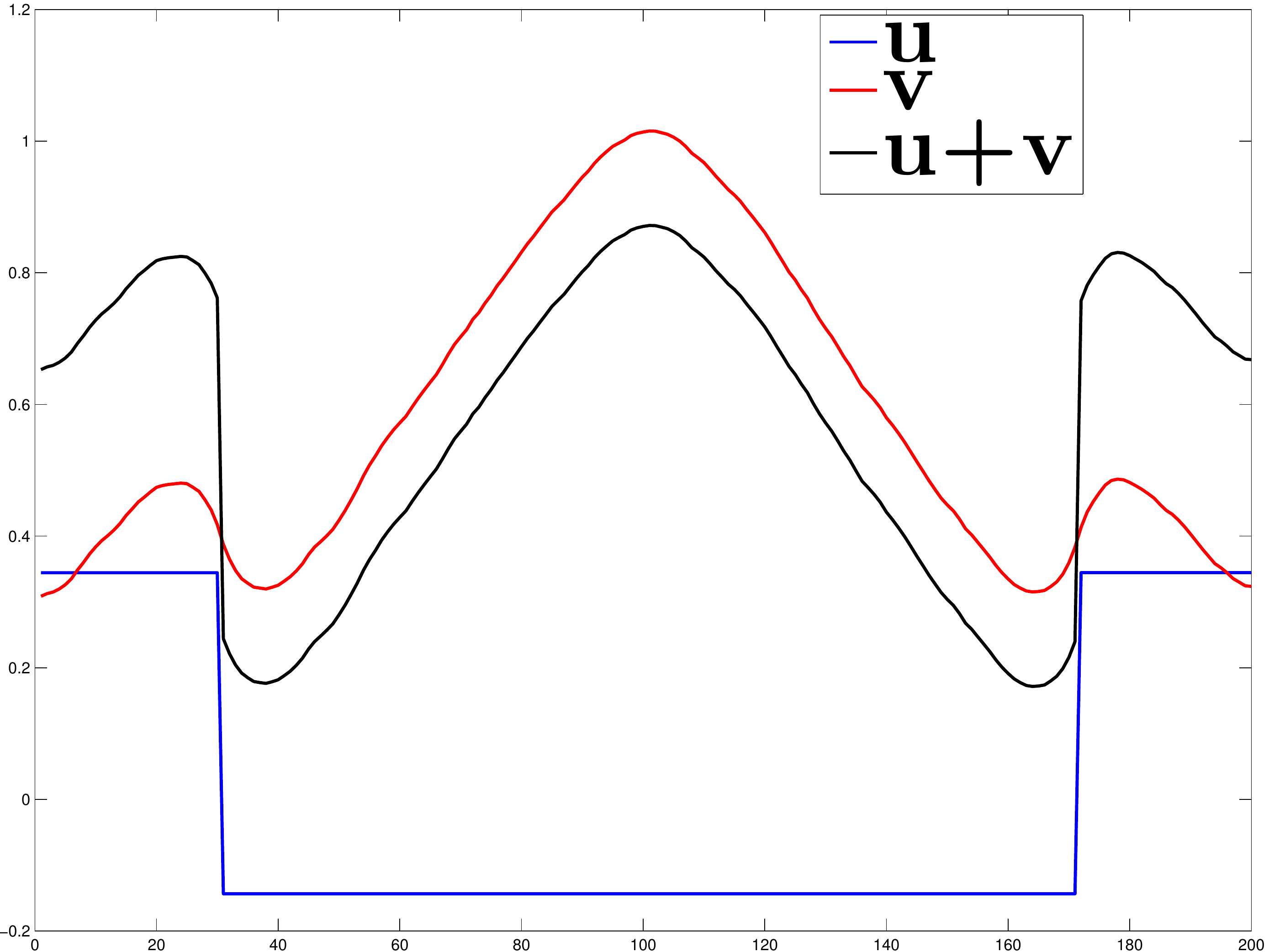}
                \caption{\centering Middle row profiles} 
\end{subfigure}
\end{center}
\caption{Geometric decomposition of the image in Figure \ref{infimal_2D_1:d} into a piecewise constant and smooth component, $u$ and $v$ respectively, by solving  \eqref{inf_conv}.}
\label{infimal_2D_2}
\end{figure}

We conclude with numerical results for the image decomposition approach of Section \ref{sec:infimal} which we solve again using the split Bregman algorithm. Recall that in dimension two, the solutions of \eqref{inf_conv} will not necessarily be the same with the ones of \eqref{tvlp_min}. In fact, we observe that  \eqref{inf_conv} cannot always eliminate the staircasing, see for instance Figure \ref{infimal_2D_1}. Even though, we can easily eliminate the staircasing both in the square and in the circle by applying $\mathrm{TVL}^{p}$ regularisation, Figures \ref{infimal_2D_1:b} and \ref{infimal_2D_1:d}, we cannot obtain equally satisfactory results by solving  \eqref{inf_conv}. While using the latter we can get rid of the staircasing in the circle, Figure \ref{infimal_2D_1:c}, this is not possible for the square, Figure \ref{infimal_2D_1:a}, where we observe -- after extensive experimentation -- that no values of $\alpha$ and $\beta$ lead to a staircasing elimination. This is analogous to the difference between $\mathrm{TGV}^{2}$ and the $\mathrm{TV}$--$\mathrm{TV}^{2}$ infimal convolution of Chambolle--Lions \cite{ChambolleLions}.  

However, as we mentioned before, the strength of the formulation \eqref{inf_conv} lies on its ability to efficiently decompose an image into piecewise constant and smooth parts. We show that in Figure \ref{infimal_2D_2}, for the image in Figure \ref{infimal_2D_1:c}. 

\section{Conclusion}
\label{sec:conclusion}

We have introduced a novel first-order, one-homogeneous $\mathrm{TV}$--$\mathrm{L}^{p}$ infimal convolution type functional for variational image regularisation. The $\mathrm{TVL}^{p}$ functional constitutes a very general class of regularisation functionals exhibiting diverse smoothing properties for different choices of $p$. In the case $p=2$ the well-known Huber $\mathrm{TV}$ regulariser is recovered. 

We studied the corresponding one dimensional denoising problem focusing on the structure of its solutions. We computed exact solutions of this problem for the case $p=2$ for simple one dimensional data. Hence, as an additional novelty in our paper we presented exact solutions of the one dimensional Huber $\mathrm{TV}$ denoising problem.
 
 Numerical experiments for several values of $p$ indicate that our model leads to an elimination of the staircasing effect. We show that we can further enhance our results by increasing the contrast via a Bregman iteration scheme and thus obtaining results of similar quality to those of $\mathrm{TGV}^{2}$. Furthermore, as $p$ increases the structure of the solutions changes from piecewise smooth to piecewise linear and the model, in contrast to $\mathrm{TGV}^{2}$, is capable of preserving sharp spikes in the reconstruction. This observation motivates a more detailed study of the $\mathrm{TVL}^{p}$ functionals for large $p$ and in particular for the case $p=\infty$.
 
 This concludes the first part of the study of the $\mathrm{TV}$--$\mathrm{L}^{p}$ model for $p< \infty$. The second part \cite{partII}, is devoted to the $p=\infty$ case. There we explore further, both in an analytical and an experimental level,  the capability of the $\mathrm{TVL}^{\infty}$ model to promote affine and  spike-like structures in the reconstructed image and we discuss several applications.

\subsection*{Acknowledgements}
The authors acknowledge support of the Royal Society International Exchange Award Nr. IE110314. This work is further supported by the King Abdullah University for Science and Technology (KAUST) Award No. KUK-I1-007-43, the EPSRC first grant Nr. EP/J009539/1 and the EPSRC grant Nr. EP/M00483X/1.
MB acknowledges further support by ERC via Grant EU FP 7-ERC Consolidator Grant 615216 LifeInverse. KP acknowledges further support by the Cambridge Centre for Analysis (CCA) and the  Engineering and Physical Sciences Research Council (EPSRC). EP acknowledges support by Jesus College, Cambridge and Embiricos Trust Scholarship.

\bibliographystyle{amsplain}
\bibliography{references}

\providecommand{\bysame}{\leavevmode\hbox to3em{\hrulefill}\thinspace}
\providecommand{\MR}{\relax\ifhmode\unskip\space\fi MR }
\providecommand{\MRhref}[2]{%
  \href{http://www.ams.org/mathscinet-getitem?mr=#1}{#2}
}
\providecommand{\href}[2]{#2}
\begin{thebibliography}{10}

\bibitem{Ambrosio}
L.~Ambrosio, N.~Fusco, and D.~Pallara, \emph{Functions of bounded variation and
  free discontinuity problems}, Oxford Science Publications, 2000.

\bibitem{attouch_brezis}
H.~Attouch and H.~Brezis, \emph{Duality for the sum of convex functions in
  general {B}anach spaces}, North-Holland Mathematical Library \textbf{34}
  (1986), 125--133.

\bibitem{TGVbregman}
M.~Benning, C.~Brune, M.~Burger, and J.~M{\"u}ller, \emph{Higher-order {TV}
  methods -- {E}nhancement via {B}regman iteration}, Journal of Scientific
  Computing \textbf{54} (2013), no.~2-3, 269--310,
  \url{http://dx.doi.org/10.1007/s10915-012-9650-3}.

\bibitem{Piffet}
M.~Bergounioux and L.~Piffet, \emph{A second-order model for image denoising},
  Set-Valued and Variational Analysis \textbf{18} (2010), no.~3-4, 277--306,
  \url{http://dx.doi.org/10.1007/s11228-010-0156-6}.

\bibitem{TGV}
K.~Bredies, K.~Kunisch, and T.~Pock, \emph{Total generalized variation}, SIAM
  Journal on Imaging Sciences \textbf{3} (2010), no.~3, 492--526,
  \url{http://dx.doi.org/10.1137/090769521}.

\bibitem{Bredies}
K.~Bredies, K.~Kunisch, and T.~Valkonen, \emph{Properties of
  {L}$^1$-{TGV}$^{\,2}$: The one-dimensional case}, Journal of Mathematical
  Analysis and Applications \textbf{398} (2013), no.~1, 438 -- 454,
  \url{http://dx.doi.org/10.1016/j.jmaa.2012.08.053}.

\bibitem{partII}
M.~Burger, K.~Papafitsoros, E.~Papoutsellis, and C.B. Sch\"onlieb,
  \emph{Infimal convolution regularisation functionals of {$\mathrm{BV}$} and
  $\mathrm{L}^{p}$ spaces, {P}art {II}: {T}he infinite $p$ case}, in
  preparation (2015).

\bibitem{tvzoo}
Martin Burger and Stanley Osher, \emph{A guide to the tv zoo}, Level Set and
  PDE Based Reconstruction Methods in Imaging, Springer, 2013, pp.~1--70.

\bibitem{ChambolleLions}
A.~Chambolle and P.L. Lions, \emph{Image recovery via total variation
  minimization and related problems}, Numerische Mathematik \textbf{76} (1997),
  167--188, \url{http://dx.doi.org/10.1007/s002110050258}.

\bibitem{ChambollePock}
A.~Chambolle and T.~Pock, \emph{A first-order primal-dual algorithm for convex
  problems with applications to imaging}, Journal of Mathematical Imaging and
  Vision \textbf{40} (2011), no.~1, 120--145,
  \url{http://dx.doi.org/10.1007/s10851-010-0251-1}.

\bibitem{chan2001high}
T.~Chan, A.~Marquina, and P.~Mulet, \emph{High-order total variation-based
  image restoration}, SIAM Journal on Scientific Computing \textbf{22} (2001),
  no.~2, 503--516, \url{http://dx.doi.org/10.1137/S1064827598344169}.

\bibitem{CEP07}
T.F. Chan, S.~Esedoglu, and F.E. Park, \emph{Image decomposition combining
  staircase reduction and texture extraction}, Journal of Visual Communication
  and Image Representation \textbf{18} (2007), no.~6, 464--486,
  \url{http://dx.doi.org/10.1016/j.jvcir.2006.12.004}.

\bibitem{Ekeland}
I.~Ekeland and R.~T\'emam, \emph{Convex analysis and variational problems},
  SIAM, Philadelphia, 1999.

\bibitem{ElionVese}
C.~Elion and L.~Vese, \emph{An image decomposition model using the total
  variation and the infinity laplacian}, Proc. SPIE \textbf{6498} (2007),
  64980W--64980W--10, \url{http://dx.doi.org/10.1117/12.716079}.

\bibitem{Osher}
T.~Goldstein and S.~Osher, \emph{The split {B}regman algorithm method for
  {$L_{1}$}-regularized problems}, SIAM Journal on Imaging Sciences \textbf{2}
  (2009), 323--343, \url{http://dx.doi.org/10.1137/080725891}.

\bibitem{hansen}
P.~Hansen, \emph{Discrete inverse problems}, Society for Industrial and Applied
  Mathematics, 2010, \url{http://dx.doi.org/10.1137/1.9780898718836}.

\bibitem{Huber}
P.J. Huber, \emph{Robust regression: Asymptotics, conjectures and monte carlo},
  The Annals of Statistics \textbf{1} (1973), 799--821,
  \url{http://www.jstor.org/stable/2958283}.

\bibitem{Kim}
Y.~Kim and L.~Vese, \emph{Image recovery using functions of bounded variation
  and {S}obolev spaces of negative differentiability}, Inverse Problems and
  Imaging \textbf{3} (2009), 43--68,
  \url{http://dx.doi.org/10.3934/ipi.2009.3.43}.

\bibitem{Kuijper}
A.~Kuijper, \emph{P-{L}aplacian driven image processing}, IEEE International
  Conference on Image Processing \textbf{5} (2007), 257--260,
  \url{http://dx.doi.org/10.1109/ICIP.2007.4379814}.

\bibitem{lefkimmiatis2010hessian}
S.~Lefkimmiatis, A.~Bourquard, and M.~Unser, \emph{Hessian-based norm
  regularization for image restoration with biomedical applications}, IEEE
  Transactions on Image Processing \textbf{21} (2012), 983--995,
  \url{http://dx.doi.org/10.1109/TIP.2011.2168232}.

\bibitem{LLT03}
M.~Lysaker, A.~Lundervold, and X.C. Tai, \emph{Noise removal using fourth-order
  partial differential equation with applications to medical magnetic resonance
  images in space and time}, IEEE Transactions on Image Processing \textbf{12}
  (2003), no.~12, 1579--1590, \url{http://dx.doi.org/10.1109/TIP.2003.819229}.

\bibitem{LT06}
M.~Lysaker and X.C. Tai, \emph{Iterative image restoration combining total
  variation minimization and a second-order functional}, International Journal
  of Computer Vision \textbf{66} (2006), no.~1, 5--18,
  \url{http://dx.doi.org/10.1007/s11263-005-3219-7}.

\bibitem{Osher1}
S.~Osher, M.~Burger, D.~Goldfarb, J.~Xu, and W.~Yin, \emph{An iterative
  regularization method for total variation based image restoration}, SIAM
  Multiscale Modeling \& Simulation \textbf{4} (2005), 460--489,
  \url{http://dx.doi.org/10.1137/040605412}.

\bibitem{papafitsoros}
K.~Papafitsoros and K.~Bredies, \emph{A study of the one dimensional total
  generalised variation regularisation problem}, Inverse Problems and Imaging
  \textbf{9} (2015), 511--550, \url{http://dx.doi.org/10.3934/ipi.2015.9.511}.

\bibitem{mineJMIV}
K.~Papafitsoros and C.B. Sch\"onlieb, \emph{A combined first and second order
  variational approach for image reconstruction}, Journal of Mathematical
  Imaging and Vision \textbf{48} (2014), no.~2, 308--338,
  \url{http://dx.doi.org/10.1007/s10851-013-0445-4}.

\bibitem{Ring}
W.~Ring, \emph{Structural properties of solutions to total variation
  regularisation problems}, ESAIM: Mathematical Modelling and Numerical
  Analysis \textbf{34} (2000), 799--810,
  \url{http://dx.doi.org/10.1051/m2an:2000104}.

\bibitem{Rudin}
L.~Rudin, S.~Osher, and E.~Fatemi, \emph{Nonlinear total variation based noise
  removal algorithms}, Physica D: Nonlinear Phenomena \textbf{60} (1992),
  259--268, \url{http://dx.doi.org/10.1016/0167-2789(92)90242-F}.

\bibitem{Luminita}
L.~Vese, \emph{A study in the {BV} space of a denoising-deblurring variational
  problem}, Applied Mathematics and Optimization \textbf{44} (2001), no.~2,
  131--161.

\bibitem{vogel}
C.~Vogel and M.~Oman, \emph{Iterative methods for total variation denoising},
  SIAM Journal on Scientific Computing \textbf{17} (1996), no.~1, 227--238,
  \url{http://dx.doi.org/10.1137/0917016}.

\bibitem{Wang08anew}
Y.~Wang, J.~Yang, W.~Yin, and Y.~Zhang, \emph{A new alternating minimization
  algorithm for total variation image reconstruction}, SIAM Journal of Imaging
  Sciences (2008), 248--272, \url{http://dx.doi.org/10.1137/080724265}.

\bibitem{Wang}
Z.~Wang, A.C. Bovik, H.R. Sheikh, and E.P. Simoncelli, \emph{Image quality
  assessment: From error visibility to structural similarity}, IEEE
  Transactions on Image Processing \textbf{13} (2004), 600--612,
  \url{http://dx.doi.org/10.1109/TIP.2003.819861}.

\end{thebibliography}

\appendix

\section{Radon Measures and functions of bounded variation}\label{sec:preliminaries}


In what follows $\Omega\subset\re^{d}$ is an open, bounded set with Lipschitz boundary whose Lebesgue measure is denoted by $|\Omega|$. We denote by $\mathcal{M}(\Omega,\re^{d})$ (and $\mathcal{M}(\Omega)$ if d=1) the space of finite Radon measures on $\Omega$. The total variation measure of $\mu\in\mathcal{M}(\Omega,\re^{d})$ is denoted by $|\mu|$, while we denote the polar decomposition of $\mu$ by $\mu=\mathrm{sgn}(\mu)|\mu|$, where $\mathrm{sgn}(\mu)=1$ $|\mu|$-almost everywhere.

Recall that the \emph{Radon norm} of a $\mathbb{R}^{d}$-valued distribution $\mathcal{T}$ on $\Omega$ is defined as 
\[\|\mathcal{T}\|_{\mathcal{M}}:= \sup \left\{  \scalprod{}{u}{\phi}:\; \phi\in C_{c}^{\infty}(\Omega,\mathbb{R}^{d}),\; \norm{\infty}{\phi}\leq1 \right \}.\]
It can be shown that $\|\mathcal{T}\|_{\mathcal{M}}<\infty$ if and only if $\mathcal{T}$ can be represented by a measure $\mu\in \mathcal{M}(\Omega,\re^{d})$ and in that case $\|\mu\|_{\mathcal{M}}= |\mu|(\Omega)$. 

A function $u\in \mathrm{L}^{1}(\Omega)$ is a function of bounded variation if its distributional derivative $Du$ is representable by a finite Radon measure.   We denote by $\mathrm{BV}(\Omega)$, the space of functions of bounded variation which is a Banach space under the norm $$\norm{\mathrm{BV(\mathrm{\Omega})}}{u}:=\norm{\mathrm{L}^{1}(\Omega)}{u}+\norm{\mathcal{M}}{Du}.$$ 
The term 
\[\norm{\mathcal{M}}{Du}=\sup \left \{\int_{\Omega}u\, \mathrm{div}\phi\,dx: \phi\in C_{c}^{\infty}(\Omega,\mathbb{R}^{d}),\;\|\phi\|_{\infty}\le 1 \right\},\]
is called the total variation of $u$, also commonly denoted by $\mathrm{TV}(u)$. From the Radon--Nikodym theorem, the measure $\mu$ can be decomposed into an absolutely continuous and a singular part with respect to the Lebesgue measure $\mathcal{L}^{d}$, that is $Du=D^{a}u+D^{s}u$. Here, $D^{a}u=\nabla u\mathcal{L}^{d}$, i.e.,  $\nabla u$ denotes the Radon--Nikodym derivative of $D^{a}u$ with respect to $\mathcal{L}^{d}$. When $d=1$, $\nabla u$ is simply denoted by $u'$. 

We will also use the following basic inequality regarding inclusions of $\mathrm{L}^{p}$ spaces
\begin{equation}\label{Lp_inclusion}
\norm{\mathrm{L}^{p_{1}}(\Omega)}{h}\leq |\Omega|^{\frac{1}{p_{1}}-\frac{1}{p_{2}}}\norm{\mathrm{L}^{p_{2}}(\Omega)}{h}, \quad 1\le p_{1}<p_{2}\le\infty.
\end{equation} 

Unless otherwise stated $q$ denotes the H\"older conjugate of the exponent $p$, i.e.,
\begin{equation}
q=
\begin{cases}
 \frac{p}{p-1}& \mbox{ if }p\in(1,\infty),\\
1 & \mbox{ if }p=\infty.
\end{cases}
\label{conj_expon}
\end{equation}
Regarding the subdifferential of the Radon norm we have that it can be  characterised,  at least for $C_{0}$ functions, as follows \cite{Bredies} 
\begin{equation}
\partial\norm{\mathcal{M}}{\cdot}(\mu)\cap C_{0}(\Omega)=\Sgn(\mu)\cap C_{0}(\Omega),
\label{sign2}
\end{equation}
where here $\Sgn(\mu)$  denotes the set-valued sign
\begin{equation}
\Sgn(\mu)=\left\{v\in\mathrm{L^{\infty}(\Omega)}\cap\mathrm{L^{\infty}(\Omega,|\mu|)}:\norm{\infty}{v}\leq1, v=\sgn(\mu), |\mu| - a.e.\right\}.
\label{sign1}
\end{equation} 
\end{document}